\documentclass[a4paper,11pt]{article}
\usepackage{xypic}
\usepackage{mathtools}
\usepackage{amsmath}
\usepackage{mathrsfs}
\usepackage{amssymb}
\usepackage{amsthm}
\usepackage{dsfont}
\usepackage{graphicx}
\usepackage{bmpsize}
\usepackage{times}
\usepackage{ textcomp }
\usepackage{color}
\usepackage{mathtools}
\usepackage{latexsym, empheq, fancybox}
\usepackage{mathrsfs}
\usepackage{exscale}
\usepackage{booktabs, array}
\usepackage[english]{babel}
\newcommand{\nc}{\newcommand}
\nc{\rnc}{\renewcommand}
\numberwithin{equation}{section}
\textwidth14cm
\oddsidemargin1cm
\newtheorem{theorem}{Theorem}[section]
\newtheorem{lemma}[theorem]{Lemma}

\newtheorem{corollary}[theorem]{Corollary}

\newtheorem{example}[theorem]{Example}
\newtheorem{remark}[theorem]{Remark}
\textwidth14cm
\oddsidemargin1cm
\usepackage{authblk}
\title{\Large\textbf{Hausdorff measure of sets of distributional chaotic pairs for shift maps }
}

\author{
Dalian Yuan
\thanks{Guangdong University of Education, Guangzhou 510303, P. R. China(E-mail: akqjok@qq.com)},
Ercai Chen
\thanks{School of Mathematical Sciences and Institute of Mathematics, Nanjing Normal University, Nanjing 210046, P. R. China,
and Center  of Nonlinear Science, Nanjing University, Nanjing 210093, P. R.China
(E-mail: ecchen@njnu.edu.cn)},\
\ \ and Zijie Lin
\thanks{School of Mathematical Sciences and Institute of Mathematics, Nanjing Normal University, Nanjing 210046, P. R. China
(E-mail: zjlin137@126.com)}
}

\begin{document}
\date{}
\maketitle

\begin{abstract}
Let $\sigma_{K}: \sum_{K}\rightarrow \sum_{K}$ be a shift map. For an interval $[p,q]\subset[0,1]$, let $D_{\sigma_{K}}([p,q])$ denote the set  of pairs for which the density spectrum of the $\epsilon$-approach time set equals $[p,q]$ when $\epsilon$ is small and $E_{\sigma_{K}}([p,q])$ the set of pairs for which the density spectrum of the $\epsilon$-approach time set converges to $[p,q]$ when $\epsilon\rightarrow 0^+$. Then $\dim_{H} D_{\sigma_{K}}([p,q])=\dim_{H} E_{\sigma_{K}}([p,q])=2-q$.
Moreover, $\mathscr{H}^{2-q}(E_{\sigma_{K}}([p,q]))=1$ when $q=0$
and $\mathscr{H}^{2-q}(E_{\sigma_{K}}([p,q]))=+\infty$ when $q>0$.
Meanwhile, $\mathscr{H}^{2-q}(D_{\sigma_{K}}([p,q]))=+\infty$ when $q=1$
and $\mathscr{H}^{2-q}(D_{\sigma_{K}}([p,q]))=0$ when $q<1$.

\end{abstract}
\noindent
{\bf Keywords.} Distributional density spectrum, Hausdorff measure, distributional chaotic pair, shift map.\\
{\bf MSC2010:} 37B05/10/20, 37C45, 28A75/78/80

\section{Introduction}
The notion of chaos to describe the approaching-and-dispersing processes between trajectories in a dynamical system was first used in \cite{LY}.
Suppose $(X,\rho,f)$ is a { topological dynamical system} ({TDS} for short), namely, $(X,\rho)$ is a compact metric space and $f$ a continuous surjective self-map on $X$.
Then $(x,y)\in X\times X$ is said to be a { Li--Yorke pair} (\cite{BGKM}) if
\[
\liminf_{i\rightarrow \infty}(f^i(x),f^i(y))=0\ \text{and}\ \limsup_{i\rightarrow \infty}(f^i(x),f^i(y))>0.
\]
A set $C\subset X$ is called a (Li--Yorke) { scrambled set} if each pair of different points in $C$ forms a Li--Yorke pair.
In general, $f$ is said to be { Li--Yorke chaotic} if it has an uncountable scrambled set. It is proved in \cite{LY} that an interval map that has a periodic point of period $3$ is Li--Yorke chaotic. The existence of asymptotic pairs and Li--Yorke scrambled sets contained in the stable sets are studied in \cite{Huang2015}.

Based on Li--Yorke chaos, different types of chaos, such as Devaney
chaos (\cite{Dev}),
generic chaos (\cite{Pio}),
$\omega$-chaos (\cite{Li}), and Strong chaos (\cite{Xio1})
have been studied.

Distributional chaos, which was first introduced in \cite{SS} and was generalized in \cite{BSS}, \cite{PS} and \cite{PS1}, has been the focus of chaos study for more than ten years.
By describing the
densities of trajectory approach time sets, distributional chaos reveals more rigorous complexity hidden in Li--Yorke chaos.

We will now briefly review the definitions of the three types of  distributional chaos.
Let $(X,\rho,f)$ be a TDS.
For ${x},{y}\in X$, define the lower distributional function $F_{{x},{y}}$ and  upper distributional function $F_{{x},{y}}^*$ from $(0,+\infty)$ to $[0,1]$ by
\begin{equation} \label{deqiis}
\begin{aligned}
&F_{x, y}(\epsilon) =\liminf _{n \rightarrow \infty} \frac{1}{n} \#\left(\left\{0 \leq i<n : \rho\left(f^{i}(x), f^{i}(y)\right)<\epsilon\right\}\right), \\
&F_{x, y}^{*}(\epsilon) =\limsup _{n \rightarrow \infty} \frac{1}{n} \#\left(\left\{0 \leq i<n : \rho\left(f^{i}(x), f^{i}(y)\right)<\epsilon\right\}\right),
\end{aligned}
\end{equation}
where $\#(\cdot)$ denotes the cardinality of a set.
A couple $({x},{y})\in X\times X$ is called a DC1 { pair} if
\[
F_{x, y}^{*}(\epsilon) \equiv 1 \text { on }(0,+\infty) \text { and } F_{x, y}(\epsilon) \equiv 0 \text { on some }\left(0, \epsilon_{0}\right],
\]
a DC2 { pair} if
\[
F_{x,y}^*(\epsilon)\equiv 1 \text { on }(0,+\infty) \text { and } F_{{x},{y}}(\epsilon)<1 \text { on some }\left(0,\epsilon_{0}\right],
\]
and a DC3 { pair} if
\[
F_{{x},{y}}(\epsilon)<F_{{x},{y}}^*(\epsilon) \text{ on some } \left(\epsilon_{0},\epsilon_{1}\right ].
\]
A set $C\subset X$ is said to be a $\rm {DC}i$ ($i=1,2$ or 3) { scrambled set} if each pair of different points in $C$ forms a $\rm {DC}i$ pair. In general, $f$ is said to be $\rm {DC}i$ chaotic if it has an uncountable $\rm {DC}i$ scrambled set.

A pair $(x,y)\in X\times X$ is said to be a { mean Li--Yorke pair} if
$$
\liminf_{n\rightarrow\infty}\dfrac{1}{n}\sum\limits_{i=0}^{n-1}\rho(f^i(x),f^i(y))=0\text{ and } \limsup_{n\rightarrow\infty}\dfrac{1}{n}\sum\limits_{i=0}^{n-1}\rho(f^i(x),f^i(y))>0.
$$
A set $C\subset X$ is called a mean Li--Yorke chaotic set  if each pair of different points in $C$ forms a mean Li--Yorke pair.
In general, $f$ is said to be { mean Li--Yorke chaotic} if it has an uncountable mean Li--Yorke chaotic set. It is proved in \cite{Huang2014} that the intersections of the sets of asymptotic tuples and mean Li--Yorke tuples with the set of topological entropy tuples are dense in the set of topological entropy tuples.
It is observed in \cite{Dow} that DC2 chaos is equivalent to mean Li--Yorke chaos (see \cite{Dow} for details).

Cardinality (uncountable or not) is a simple description of the size of a scrambled set.
In fact, measures are widely used to characterize the sizes of scrambled sets.
The Lebesgue measures of scrambled sets are investigated in \cite{Smi1}, \cite{Smi2} and \cite{Mis}. The Bowen entropy dimensions of scrambled sets are studied in \cite{Hua} and \cite{FHYZ}.
The Hausdorff dimensions of strong scrambled sets and  DC1 scrambled sets are discussed in \cite{Xio1} and \cite{OS}, respectively.
 In the more recent paper \cite{BL}, the Lebesgue measure of Li--Yorke
pairs for interval maps is thoroughly discussed. 
In the present paper we study the Hausdorff measure of the set of distributional chaos pairs for shift maps.

A pair $(x,y)$ from a TDS is a DC1 pair if and only if the approach time sets of $x,y$ have upper density 1 and lower density 0, and is a DC2 pair if and only if the approach time sets have upper density 1 and lower density $<1$.
So the set of DC1 pairs and the set of DC2 pairs are saturated sets with diverging Birkhoff averages of approach time sets.
They are fractals generated by the distributional functions.
This viewpoint motivates the application of multifractal analysis to the study of chaos, so as to investigate distributional chaos in a more refined way than in terms of DC1 and DC2.

To give a more detailed description of our results, we introduce several definitions and notations. Let $\mathcal{C}([0,1])$ be the set of nonempty compact sub-intervals of ${[0,1]}$. Let $(X,\rho,f)$ be a TDS.
For $[p,q]\in\mathcal{C}([0,1])$, define
$$
\begin{aligned}
&E_f({[p,q]})=\{(x,y)\in X\times X:\lim_{\epsilon\rightarrow 0^{+}}F_{{x},{y}}^*(\epsilon)=q,\ \lim_{\epsilon\rightarrow 0^{+}}F_{{x},{y}}(\epsilon)=p\}, \\
&D_f({[p,q]})=\{(x,y)\in X\times X:F_{{x},{y}}^*(\epsilon)=q,\ F_{{x},{y}}(\epsilon)=p\ \text{on some}\ (0,\epsilon_{0}]\}.
\end{aligned}
$$
For $\mathcal{J}\subset\mathcal{C}([0,1])$, write
$$
E_{f}(\mathcal{J})=\bigcup_{I\in \mathcal{J}}E_{f}(I),\ D_{f} (\mathcal{J})=\bigcup_{I\in \mathcal{J} }D_{f}(I).
$$
The sets $E_f([p,q])$ and $[p,q]\in\mathcal{C}({[0,1]})$ form a spectral decomposition of the product space $X\times X$, while $E_f({[p,q]})$ and $D_f({[p,q]})$ are generalizations of the relations DC1 and DC2. In fact, for the map $f$, the relation DC1 equals $D_f({[0,1]})$ and the relation DC2 equals $ E_f({\{[p,1]:0\leq p<1\}})$.

For $[p,q]\in\mathcal{C}([0,1])$, we calculate the Hausdorff measures of the sets $E_{\sigma_K}({[p,q]})$ and $D_{\sigma_K}({[p,q]})$. They are as follows.
\begin{theorem} \label{t:main}
Let $[p,q]\in \mathcal{C}({[0,1]})$. Then
$$
\mathscr{H}^{2-q}(E_{\sigma_K}([p,q]))=\left\{
\begin{aligned}
&1,&q&=0,\\
&+\infty,&0&<q\leq 1
\end{aligned}
\right.
$$
and
$$
\mathscr{H}^{2-q}(D_{\sigma_K}([p,q]))=\left\{
\begin{aligned}
&0,&0&\leq q<1,\\
&+\infty,&q&=1.
\end{aligned}
\right.
$$
\end{theorem}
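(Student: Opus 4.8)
\emph{Reduction to a single difference coordinate.}
Metrize $\Sigma_K$ (with alphabet $\mathbb{Z}/K\mathbb{Z}$) by $\rho(x,y)=K^{-\min\{j\ge 0:\,x_j\neq y_j\}}$, so that $\mathscr{H}^1(\Sigma_K)=1$, and $\Sigma_K\times\Sigma_K$ by the maximum metric. Then $\rho(\sigma_K^{\,i}x,\sigma_K^{\,i}y)<\epsilon$ holds precisely when $x$ and $y$ agree on the block $\{i,\dots,i+m\}$, with $m=m(\epsilon)=\lfloor\log_K(1/\epsilon)\rfloor\to\infty$ as $\epsilon\to0^+$. Hence the $\epsilon$-approach time set, and therefore membership in $D_{\sigma_K}([p,q])$ and $E_{\sigma_K}([p,q])$, depends on $(x,y)$ only through the difference sequence $z=z(x,y)\in\Sigma_K$ given by $z_i=y_i-x_i$; in terms of $z$ the relevant sets are $A_m(z)=\{i:\,z_i=z_{i+1}=\cdots=z_{i+m}=0\}$, and $F^*_{x,y}(\epsilon)=\overline d(A_{m(\epsilon)}(z))$, $F_{x,y}(\epsilon)=\underline d(A_{m(\epsilon)}(z))$, where $\overline d,\underline d$ denote upper and lower density in $\mathbb{N}$. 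Since $(x,y)\mapsto(x,z(x,y))$ is an isometric bijection of $\Sigma_K\times\Sigma_K$, we have $\mathscr{H}^s(D_{\sigma_K}([p,q]))=\mathscr{H}^s(\Sigma_K\times\mathcal{F}_D([p,q]))$ and $\mathscr{H}^s(E_{\sigma_K}([p,q]))=\mathscr{H}^s(\Sigma_K\times\mathcal{F}_E([p,q]))$, where $\mathcal{F}_D([p,q])$ is the set of $z$ with $\overline d(A_m(z))=q$ and $\underline d(A_m(z))=p$ for all large $m$, and $\mathcal{F}_E([p,q])$ the set of $z$ with $\overline d(A_m(z))\to q$ and $\underline d(A_m(z))\to p$ as $m\to\infty$. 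A cover of $F\subseteq\Sigma_K$ by $N$ cylinders of diameter $K^{-n}$ yields a cover of $\Sigma_K\times F$ by $K^nN$ cylinders of diameter $K^{-n}$, whose $(1+t)$-dimensional cost $K^nN\cdot K^{-(1+t)n}=NK^{-tn}$ is the $t$-dimensional cost of the original cover; tensoring a Frostman measure on $F$ with the uniform Bernoulli measure $\nu$ on $\Sigma_K$ (which equals $\mathscr{H}^1$ and satisfies $\nu(B(x,r))\le r$) gives the reverse estimates, in the sharpened form $\mathscr{H}^s(\cdot)=+\infty$ whenever a probability measure $\mu$ on the set satisfies $\mu(B(\cdot,r))\le r^s\phi(r)$ with $\phi(r)\to0$. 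Thus the theorem is equivalent to $\mathscr{H}^{1-q}(\mathcal{F}_E([p,q]))=1$ when $q=0$ and $=+\infty$ when $0<q\le1$, and $\mathscr{H}^{1-q}(\mathcal{F}_D([p,q]))=0$ when $q<1$ and $=+\infty$ when $q=1$.

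\emph{The upper bounds (the main obstacle).}
The sets $\mathcal{F}_D([p,q])$ and $\mathcal{F}_E([p,q])$ are dense in $\Sigma_K$ (every finite word is a prefix of a member, since a finite change leaves all densities unchanged), so their box dimension is $1$ and one must cover efficiently with cylinders of \emph{varying} lengths. For a member $z$ and each level $m$, the density hypotheses on $A_m(z)$ force, along a sequence of windows $[0,n_k)\to\mathbb{N}$, the proportion of coordinates lying in $0$-runs of $z$ that are long enough to reach level $m$ — equivalently the proportion carrying no free choice, those coordinates being $0$ — to be essentially $q$, with the run structure rigidly constrained; from this one extracts a ``good scale'' $n_k=n_k(z)$ at which the prefix $z|_{[0,n_k)}$ lies in a family of size $\le K^{(1-q)n_k}\psi(n_k)$ with $\psi$ subexponential. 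Covering each $z$ by its good-scale cylinder, and using that cylinders in an ultrametric space are pairwise nested or disjoint to pass to a subcover, gives $\dim_H\mathcal{F}_\bullet([p,q])\le 1-q$ and, with the constants controlled by $\mathscr{H}^1(\Sigma_K)=1$, the value $\le1$ when $q=0$. For $\mathcal{F}_D([p,q])$ with $q<1$ one must do strictly better: requiring $\overline d(A_m)=q$ and $\underline d(A_m)=p$ for \emph{all} $m\ge m_0$ (not merely in the limit) rigidifies the \emph{locations} of the long $0$-runs enough that the good-scale families have size $o(K^{(1-q)n_k})$, driving the $(1-q)$-dimensional covering cost to $0$; this passage from ``$O(\cdot)$'' to ``$o(\cdot)$'' separating $\mathcal{F}_D$ from $\mathcal{F}_E$ is the delicate heart of the proof. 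When $q=1$ the mechanism degenerates: $\overline d(A_m)=1$ forces the non-zero coordinates of $z$ to have density $0$, so the good-scale families are of subexponential size and $\dim_H\mathcal{F}_D([p,1])=0$.

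\emph{The lower bounds.}
When $q=0$ in the $E$-case, $\mathcal{F}_E([0,0])$ has full $\mathscr{H}^1$-measure: by Birkhoff's ergodic theorem for $\nu$, $\nu$-a.e.\ $z$ satisfies $d(A_m(z))=\nu([0^{m+1}])=K^{-(m+1)}\to0$, so $\mathscr{H}^1(\mathcal{F}_E([0,0]))=1$; together with the trivial bound $\le\mathscr{H}^1(\Sigma_K)=1$ and the fact that the $\mathscr{H}^1$-null complement contributes no $\mathscr{H}^2$-mass to $\Sigma_K\times(\,\cdot\,)$, this yields $\mathscr{H}^2(E_{\sigma_K}([0,0]))=1$. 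For $0<q\le1$ in the $E$-case and $q=1$ in the $D$-case, I would first build by a Moran construction a compact $C\subseteq\mathcal{F}_\bullet([p,q])$ with $\mathscr{H}^{1-q}(C)>0$: on a sparse, unbounded sequence of blocks insert $0$-runs whose lengths grow just fast enough to realise the upper density $q$ (the lower density $p$ is tuned through how often such blocks recur, and convergence of the spectra in the $E$-case is automatic from the growth of the run lengths), leaving the complementary regions with the full $(K-1)$-fold choice, and estimate the natural Moran measure by a Frostman argument. To upgrade to $+\infty$ for $q>0$, prepend any of the $K^c$ words of length $c$ to the members of $C$: this stays inside $\mathcal{F}_\bullet([p,q])$ (a finite change) and produces $K^c$ disjoint similar copies of $C$ scaled by $K^{-c}$, whence $\mathscr{H}^{1-q}(\mathcal{F}_\bullet([p,q]))\ge K^c\cdot K^{-(1-q)c}\mathscr{H}^{1-q}(C)=K^{qc}\mathscr{H}^{1-q}(C)\to\infty$ as $c\to\infty$. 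When $q=1$ the ``free'' regions disappear and the construction gives merely an infinite set $\mathcal{F}_D([p,1])$, so $\Sigma_K\times\mathcal{F}_D([p,1])$ contains infinitely many disjoint isometric copies of $\Sigma_K$ and has $\mathscr{H}^1=+\infty$.

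\emph{Assembling.}
The upper bounds give $\dim_H D_{\sigma_K}([p,q])\le\dim_H E_{\sigma_K}([p,q])\le2-q$ (using $D_{\sigma_K}([p,q])\subseteq E_{\sigma_K}([p,q])$), together with $\mathscr{H}^{2-q}(D_{\sigma_K}([p,q]))=0$ for $q<1$ and the claimed finiteness; the lower bounds give $\dim_H\ge2-q$ and the claimed infiniteness, and the ergodic argument pins the exact value $1$ at $q=0$. The step I expect to be hardest is the adaptive covering of the second paragraph: forcing the leading exponent to be exactly $1-q$ independently of $p$ requires a precise accounting of how the long-run structure of $z$ meets the density hypotheses at every level $m$ at once, and separating the three regimes ($\mathscr{H}^{1-q}$ equal to $0$, a positive finite number, or $+\infty$) requires understanding exactly how much extra freedom at intermediate scales is bought by relaxing ``$F^*(\epsilon)\equiv q$ near $0$'' to ``$F^*(\epsilon)\to q$''; the upper-bound covers and the lower-bound Moran sets have to be calibrated against each other so that they meet at the critical gauge.
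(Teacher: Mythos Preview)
Your reduction via the difference coordinate $z=y-x$ is correct and genuinely different from the paper: the authors instead conjugate $\sigma_K\times\sigma_K$ to $\sigma_{K^2}$ via $\pi_K$, which halves dimensions rather than subtracting~$1$. Your product formula $\mathscr{H}^{1+t}(\Sigma_K\times F)=\mathscr{H}^t(F)$ is exact in this ultrametric setting, and the prepending trick (prepend $K^c$ words, gain a factor $K^{qc}$) is a clean route to $+\infty$ that the paper does not use; for the $E$-case with $q>0$ they instead run a delicate integral estimate against the generic-point measure (Lemma~\ref{l:6.4}). Your treatment of $q=0$ via the ergodic theorem and of $q=1$ for $D$ via ``infinitely many copies of $\Sigma_K$'' are essentially the paper's arguments.

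There is, however, a concrete gap in your lower bound for $0<q<1$. Giving the complementary region ``the full $(K-1)$-fold choice'' (i.e.\ forbidding the symbol $0$ there) produces a Moran set of Hausdorff dimension $(1-q)\log_K(K-1)<1-q$, so $\mathscr{H}^{1-q}(C)=0$ and the prepending trick yields nothing; for $K=2$ the set is a single point. The fix is to allow the full $K$ symbols in the complement and then restrict to the $\nu$-generic part (this is exactly what the paper does in Lemma~\ref{l:6.3}), but then one must check that the Frostman bound holds at the exact exponent $1-q$: this requires choosing $N\in\mathcal{M}([p,q])$ so that $\#(N^c\cap[0,n))-(1-q)n\to+\infty$, which is not automatic and is the content of the paper's Example~\ref{l:2.24}, equation~(\ref{e:2.31}).

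Your upper bound for $D$ with $q<1$ is the other real gap: the passage from $O(K^{(1-q)n})$ to $o(K^{(1-q)n})$ is the crux, and ``rigidifies the locations'' is not yet an argument. The paper's mechanism (Lemma~\ref{l:6.6}) is worth knowing: from $\overline d(A_m)=q$ for \emph{all} $m\ge n$ one deduces that the number of maximal $0$-runs seen up to the good scale has density zero; this lets them pass to blocks of length $sn$ and use a \emph{three}-part partition (diagonal blocks, fully off-diagonal blocks, mixed blocks) in which the mixed part has vanishing frequency, driving the variational bound strictly below $1-\frac{q}{2}$ for large $s$ and hence $\mathscr{H}^{1-\frac{q}{2}}=0$ on each piece of a countable exhaustion. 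Some device of this kind is unavoidable, since the family of admissible $n$-prefixes really does have size comparable to $K^{(1-q)n}$ at the scales witnessing the upper density.
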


As an application, we get the following corollaries for the size of mean Li--Yorke chaos in symbolic space.
\begin{corollary}
Let $\rm{LY}(\sigma_K)$ be the set of all Li--Yorke pairs of the symbolic space $(\Sigma_K,\sigma_K).$ Then
 $\dim_{H}\rm{LY}(\sigma_K)=2,\mathscr{H}^2(\rm{LY}(\sigma_K))=1$.
\end{corollary}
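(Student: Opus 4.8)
The plan is to deduce both assertions from the single fact that the complement $(\Sigma_K\times\Sigma_K)\setminus\mathrm{LY}(\sigma_K)$ is $\mathscr{H}^2$-null. Granting this, and recalling that $\Sigma_K\times\Sigma_K$ with its usual metric satisfies $\dim_H(\Sigma_K\times\Sigma_K)=2$ and $\mathscr{H}^2(\Sigma_K\times\Sigma_K)=1$, monotonicity gives $\mathscr{H}^2(\mathrm{LY}(\sigma_K))\le 1$, while finite subadditivity of the outer measure $\mathscr{H}^2$ gives $1=\mathscr{H}^2(\Sigma_K\times\Sigma_K)\le\mathscr{H}^2(\mathrm{LY}(\sigma_K))+\mathscr{H}^2\big((\Sigma_K\times\Sigma_K)\setminus\mathrm{LY}(\sigma_K)\big)=\mathscr{H}^2(\mathrm{LY}(\sigma_K))$; hence $\mathscr{H}^2(\mathrm{LY}(\sigma_K))=1$, and then $\dim_H\mathrm{LY}(\sigma_K)=2$ because a set of positive $\mathscr{H}^2$-measure has Hausdorff dimension at least $2$ while $\mathrm{LY}(\sigma_K)\subseteq\Sigma_K\times\Sigma_K$. (Independently, $\dim_H\mathrm{LY}(\sigma_K)\ge 2$ can be read off Theorem~\ref{t:main}: for $0<q<1$, monotonicity of $\epsilon\mapsto F^*_{x,y}(\epsilon)$ shows every $(x,y)\in E_{\sigma_K}([0,q])$ has $F^*_{x,y}(\epsilon)\ge q>0$ for all $\epsilon$, hence $\liminf_i\rho(\sigma_K^i x,\sigma_K^i y)=0$, while a pair with $\lim_{\epsilon\to0^+}F^*_{x,y}(\epsilon)=q<1$ cannot be asymptotic and so has $\limsup_i\rho(\sigma_K^i x,\sigma_K^i y)>0$; thus $E_{\sigma_K}([0,q])\subseteq\mathrm{LY}(\sigma_K)$, and $\mathscr{H}^{2-q}(E_{\sigma_K}([0,q]))=+\infty$ forces $\dim_H\mathrm{LY}(\sigma_K)\ge 2-q$ for every such $q$.)

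It remains to show the complement is $\mathscr{H}^2$-null. A pair $(x,y)$ fails to be Li--Yorke exactly when $\limsup_i\rho(\sigma_K^i x,\sigma_K^i y)=0$ (equivalently, for the shift, $x_j=y_j$ for all sufficiently large $j$; let $A$ be the set of such pairs) or $\liminf_i\rho(\sigma_K^i x,\sigma_K^i y)>0$ (the non-proximal pairs; let $N$ be this set). For $A$ we write $A=\bigcup_{M\ge 1}A_M$ with $A_M=\{(x,y):x_j=y_j\ \forall j\ge M\}$; fixing $(x_0,\dots,x_{M-1})$ and $(y_0,\dots,y_{M-1})$ the corresponding slice of $A_M$ is a similar copy of $\Sigma_K$ scaled by $K^{-M}$, so $A_M$ is a finite union of such copies, and since $\dim_H\Sigma_K=1$ we get $\mathscr{H}^2(A_M)=0$, hence $\mathscr{H}^2(A)=0$.

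For $N$, since $\rho$ takes values in the discrete set $\{0\}\cup\{K^{-j}:j\ge 0\}$ we may write $N=\bigcup_{n\ge 0}\bigcup_{m\ge 0}(\sigma_K\times\sigma_K)^{-m}(B_n)$, where $B_n=\{(x,y):\rho(\sigma_K^i x,\sigma_K^i y)\ge K^{-n}\ \text{for all}\ i\ge 0\}$. Identifying $\Sigma_K\times\Sigma_K$ with the full shift on the alphabet $\{1,\dots,K\}^2$ via $i\mapsto(x_i,y_i)$, the set $B_n$ is the subshift of finite type of those sequences having no block of $n+1$ consecutive diagonal symbols $(a,a)$; chopping a word of length $L$ into blocks of length $n+1$, each full block is completed by one of at most $K^{2(n+1)}-K^{n+1}$ words, so the number $M_L$ of admissible words of length $L$ satisfies $\limsup_{L\to\infty}\tfrac1L\log_K M_L\le\tfrac1{n+1}\log_K\big(K^{2(n+1)}-K^{n+1}\big)<2$. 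Covering $B_n$ by its $M_L$ cylinders of diameter $K^{-L}$ then yields $\dim_H B_n<2$, hence $\mathscr{H}^2(B_n)=0$; and each $(\sigma_K\times\sigma_K)^{-m}(B_n)$ is a finite union of copies of $B_n$ scaled by $K^{-m}$, hence also $\mathscr{H}^2$-null. Therefore $\mathscr{H}^2(N)=0$, so $\mathscr{H}^2\big((\Sigma_K\times\Sigma_K)\setminus\mathrm{LY}(\sigma_K)\big)=\mathscr{H}^2(A\cup N)=0$, completing the proof.

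The routine inputs are the values $\dim_H(\Sigma_K\times\Sigma_K)=2$ and $\mathscr{H}^2(\Sigma_K\times\Sigma_K)=1$, the elementary description of the non-Li--Yorke pairs, and the self-similar scaling of Hausdorff measure. The main obstacle is the estimate $\mathscr{H}^2(N)=0$: one must make precise that forcing the two orbits to remain $\epsilon$-separated for all large times (for some fixed $\epsilon>0$) genuinely cuts the Hausdorff dimension below $2$, which is exactly where the word-complexity bound for the subshift $B_n$ enters, and one must then check that neither the countable union over $n$ nor the countable family of shift-preimages reintroduces full $\mathscr{H}^2$-measure.
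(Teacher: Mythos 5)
Your proposal is correct and follows essentially the same route as the paper: the paper likewise writes $\mathrm{LY}(\sigma_K)=\Sigma_K\times\Sigma_K\setminus(\mathrm{Dist}(\sigma_K)\cup\mathrm{Asym}(\sigma_K))$, shows $\mathscr{H}^2(\mathrm{Dist}(\sigma_K))=\mathscr{H}^2(\mathrm{Asym}(\sigma_K))=0$, and concludes $\mathscr{H}^2(\mathrm{LY}(\sigma_K))=\mathscr{H}^2(\Sigma_K\times\Sigma_K)=1$. Your word-complexity bound for the ``no run of $n+1$ diagonal symbols'' subshift is the same count the paper performs for the product sets $Y_n=\prod_{i\geq 0}(W_{K^2,n}\setminus E_K^n)$ after conjugating by $\pi_K$, so the difference is only presentational.
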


\begin{corollary}
Let $\rm{MLY}(\sigma_K)$ be the set of all mean Li--Yorke pairs of the symbolic space $(\Sigma_K,\sigma_K).$ Then
 $\dim_{H}\rm{MLY}(\sigma_K)=1,\mathscr{H}^1(\rm{MLY}(\sigma_K))=+\infty$.

\end{corollary}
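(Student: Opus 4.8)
The plan is to reduce the corollary to the mean-distributional characterization of mean Li--Yorke pairs and then sandwich $\mathrm{MLY}(\sigma_K)$ between two sets whose Hausdorff measure is controlled by Theorem~\ref{t:main} on one side and by a direct covering estimate on the other. First I would record the elementary dictionary, essentially the pointwise form of the equivalence observed in \cite{Dow}, between the mean quantities and the distributional functions. Writing $a_n(x,y)=\frac1n\sum_{i=0}^{n-1}\rho(\sigma_K^i x,\sigma_K^i y)$ and using Chebyshev's inequality together with the boundedness of $\rho$, one checks that $\liminf_n a_n(x,y)=0$ holds exactly when $F^*_{x,y}(\epsilon)=1$ for every $\epsilon>0$, and that $\limsup_n a_n(x,y)>0$ holds exactly when $F_{x,y}(\epsilon_0)<1$ for some $\epsilon_0>0$. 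Since $F^*_{x,y}$ is non-decreasing in $\epsilon$, the first condition is the same as $\lim_{\epsilon\to0^+}F^*_{x,y}(\epsilon)=1$, i.e. $q=1$. Consequently $\mathrm{MLY}(\sigma_K)=\bigcup_{0\le p<1}E_{\sigma_K}([p,1])$, which is the DC2 relation in the notation of the introduction.

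For the value of $\mathscr H^1$ and the lower bound on the dimension I would use the single level set $E_{\sigma_K}([0,1])$. It is contained in $\mathrm{MLY}(\sigma_K)$, because $\lim_{\epsilon\to0^+}F^*_{x,y}(\epsilon)=1$ gives $\liminf_n a_n=0$, while $\lim_{\epsilon\to0^+}F_{x,y}(\epsilon)=0$ forces $F_{x,y}(\epsilon_0)<1$ for small $\epsilon_0$ and hence $\limsup_n a_n>0$. Applying Theorem~\ref{t:main} with $p=0$, $q=1$ gives $\mathscr H^{1}(E_{\sigma_K}([0,1]))=+\infty$, so by monotonicity $\mathscr H^{1}(\mathrm{MLY}(\sigma_K))=+\infty$, and in particular $\dim_H\mathrm{MLY}(\sigma_K)\ge1$.

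The remaining, and main, point is the upper bound $\dim_H\mathrm{MLY}(\sigma_K)\le1$; here one cannot pass through the individual level sets, since $\mathrm{MLY}(\sigma_K)$ is an uncountable union of dimension-$1$ sets and such unions need not have dimension $1$. Instead I would bound the larger set $W=\{(x,y):\liminf_n a_n(x,y)=0\}$, which contains $\mathrm{MLY}(\sigma_K)$ by definition. Fix $\delta>0$. Since $\rho(\sigma_K^i x,\sigma_K^i y)\ge c\,\mathds 1_{\{x_i\ne y_i\}}$ for a constant $c>0$, the inequality $a_n(x,y)<\delta$ forces the first $n$ coordinates of $x$ and $y$ to differ in fewer than $(\delta/c)n$ places; thus $W\subseteq\limsup_n B_n^{\delta}$, where $B_n^{\delta}$ is the union of the length-$n$ product cylinders on which the two prefixes have Hamming distance $<(\delta/c)n$. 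Each such cylinder has diameter $\asymp K^{-n}$, and their number is at most $K^{n}\sum_{j<(\delta/c)n}\binom{n}{j}(K-1)^{j}\le K^{n(1+\gamma(\delta))}$, where $\gamma(\delta)\to0$ as $\delta\to0^+$ by the standard entropy estimate. Covering $W$ by $\bigcup_{n\ge N}B_n^{\delta}$ and summing $(K^{-n})^{s}$ over these cylinders shows $\mathscr H^{s}(W)=0$ for every $s>1+\gamma(\delta)$, whence $\dim_H W\le1+\gamma(\delta)$; letting $\delta\to0^+$ gives $\dim_H W\le1$ and therefore $\dim_H\mathrm{MLY}(\sigma_K)\le1$.

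The hard part is exactly this covering step, and the care it needs is twofold: controlling the metric so that the event $\{a_n<\delta\}$ is read off from finitely many coordinates, which is handled by the crude lower bound $\rho\ge c\,\mathds 1_{\{x_i\ne y_i\}}$ that discards all but the coarsest scale yet already suffices; and the binomial count yielding $\gamma(\delta)\to0$. Combining the two bounds gives $\dim_H\mathrm{MLY}(\sigma_K)=1$ together with $\mathscr H^1(\mathrm{MLY}(\sigma_K))=+\infty$, as asserted.
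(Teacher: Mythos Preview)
Your argument is correct. The identification $\mathrm{MLY}(\sigma_K)=E_{\sigma_K}(\{[p,1]:0\le p<1\})$ via the Downarowicz equivalence is fine, and the lower bound through $E_{\sigma_K}([0,1])$ and Theorem~\ref{t:main} is exactly what the paper intends.

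Your upper bound, however, takes a genuinely different and more elementary route than the paper. The paper obtains $\dim_H\mathrm{MLY}(\sigma_K)\le 1$ as the special case $\mathcal{J}=\{[p,1]:0\le p<1\}$ of Theorem~\ref{detose} (via Lemma~\ref{detonx}), which in turn is proved by pushing the problem through the conjugacies $\pi_K$ and $\tau_{K^2,n}$ into $\Sigma_{K^{2n}}$ and then invoking the variational inequality of Lemma~\ref{detoso} for the partitions $\mathcal{K}_n=(\tau_{K^2}(E_K^n),\{0,\dots,K^{2n}-1\}\setminus\tau_{K^2}(E_K^n))$, finally letting $n\to\infty$. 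You instead bound the larger set $W=\{(x,y):\liminf_n a_n=0\}$ directly by a Hamming-ball covering: since in the symbolic metric $\rho(\sigma_K^ix,\sigma_K^iy)\ge\mathds 1_{\{x_i\ne y_i\}}$, the event $a_n<\delta$ forces the length-$n$ prefixes to differ in fewer than $\delta n$ places, and the standard entropy estimate for $\sum_{j<\delta n}\binom{n}{j}(K-1)^j$ gives a cylinder count $K^{n(1+\gamma(\delta))}$ with $\gamma(\delta)\to 0$. This bypasses the whole Section~\ref{desoos} machinery. The trade-off is that the paper's framework handles all values of $q$ uniformly (the $n\to\infty$ limit is what produces the $2-q$), whereas your direct argument is tailored to $q=1$; but for the corollary at hand it is both shorter and self-contained.
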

The main body of this paper is organized as follows.

In Section \ref{desooi}, some necessary definitions and notations are specified.

In Section \ref{desoow}, the distributional functions $\mathcal{F}_{f}$ and $\mathcal{E}_{f}$ are defined. The distributional chaotic relations $E_{f}({[p,q]})$, $D_{f}({[p,q]})(f)$, $E_{f} ({\mathcal{J}})$ and $D_{f}({\mathcal{J}})$ are introduced. Some properties of certain invariances of these relations are discussed.

Section \ref{desoou} is a review of the basic properties of Hausdorff measure on symbolic spaces. Some useful lemmas are proved.

In Section \ref{desoos}, we give an useful variational inequality for calculating the Hausdoff dimensions and Hausdorff measures of the sets $E_{\sigma_K}([p,q])$ and $D_{\sigma_K}([p,q])$.

In Section \ref{desooz}, we study the Hausdorff dimensions of $E_{\sigma_K}({\mathcal{J}})$ and $D_{\sigma_K}(\mathcal{J})$. It is proved that for $\emptyset\neq\mathcal{J}\subset\mathcal{C}({[0,1]})$,
\begin{equation} \label{deqivi}
\dim_H E_{\sigma_K}({\mathcal{J}})=\dim_H  D_{\sigma_K}({\mathcal{J}})=2-\inf\{\sup I:{I\in\mathcal{J}}\}.
\end{equation}
In particular, for $[p,q]\in\mathcal{C}({[0,1]})$,
\begin{equation} \label{q1203141204}
\dim_H E_{\sigma_K}({[p,q]})=\dim_H  D_{\sigma_K}({[p,q]})=2-q.
\end{equation}
From (\ref{deqivi}) we have that the Hausdorff dimension of the set of DC1 (or DC2) pairs  for $\sigma_K$ is 1. We will prove Corollary 1.2.

Section \ref{desooe} is the proof of Theorem \ref{t:main}.

\section{Some definitions and notations}\label{desooi}
For a number $a$ and sets of numbers $B,C$,  we make use of the following notation:
$$
\begin{aligned}
&a+B=B+a=\{a+b:b\in B\},\ aB=Ba=\{ab:b\in B\},\\
&B+C=\{b+c:b\in B,\ c\in C\},\ BC=\{bc:b\in B,\ c\in C\}.\\
\end{aligned}
$$
When $X$ is a set, $\mathcal{P}(X)$ denotes the power set of $X$. For $A\subset X$, $A^c$ denotes the complement of $A$, $X\setminus A$. We use $\Delta=\Delta(X)$ to denote the diagonal $\{(x,x):x\in X\}$ in $X\times X$.
In this paper we use $\rho$ to denote any metric. Suppose that $X$ is a metric space. For $\epsilon>0$ we use $\Delta_\epsilon$ to denote the set $\{(x,y)\in X\times X:\rho(x,y)<\epsilon\}$. For  $x\in X$ and nonempty sets $A,B\subset X$, define
$$
\begin{aligned}
&B_\epsilon(x)=\{y\in X:\rho(x,y)<\epsilon\},\ B_\epsilon(A)=\bigcup_{y\in A}B_\epsilon(y),\\
&\rho(A,B)=\inf\{\rho(a,b):a\in A,\ b\in B\},\ \rho(x,A)=\rho(A,x)=\rho(\{x\},A).
\end{aligned}
$$
We use $|\cdot|$ to denote the diameter of a set.

Let $Y=\prod_{0\leq i<\alpha} X_i$, where $\alpha$ is an ordinal number $\leq\omega_{0}$. If $x\in Y$, we use $x_j$ to denote the $(j+1)$th coordinate of $x$. If $(y_i)$ is a sequence in $Y$, we use $y_{i,j}$ to denote the $(j+1)$th coordinate of $y_i$, i.e., $y_{i,j}=(y_i)_j$.

For a product $Y=\prod_{0\leq i<n}X_i$ of finitely many metric spaces, unless otherwise specified, we endow $Y$ with the sup metric
$$
\rho(x,y)=\sup_{0\leq i<n}\rho(x_i,y_i).
$$
Suppose $X$ is a nonempty separable metric space. We use $\mathcal{C}(X)$ to denote the set of nonempty compact connected subsets of $X$. For a sequence $\alpha=(x_n)_{n\geq 0}$ of points in $X$, we use $\omega(\alpha)$ to denote the set of limit points of $\alpha$ set, i.e.,
$$
\omega(\alpha)=\{x\in X: \text{for each neighborhood}\ U of\ x, x_{n}\in U \text{ for infinitely many}\ n\}.
$$

\begin{lemma}\label{detvxv}
Let $X$ be a nonempty compact metric space. Suppose $(x_i)_{i\geq 0}$ is a sequence of points in $X$ with $\lim_{i\rightarrow \infty}\rho(x_i,x_{i+1})=0$. Then $\omega(x_i:i\geq0)\in\mathcal{C}(x)$.
\end{lemma}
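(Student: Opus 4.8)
The plan is to verify, one at a time, the three properties that membership in $\mathcal{C}(X)$ requires: writing $\alpha=(x_i)_{i\geq0}$, that $\omega(\alpha)$ is nonempty, compact, and connected. Nonemptiness is immediate from compactness of $X$: any sequence in a compact metric space has a convergent subsequence, whose limit lies in $\omega(\alpha)$. Compactness is equally routine, since $\omega(\alpha)=\bigcap_{N\geq0}\overline{\{x_i:i\geq N\}}$ is an intersection of closed subsets of the compact space $X$, hence closed and therefore compact. So the real content of the lemma is connectedness, and this is the only place where the hypothesis $\lim_{i\to\infty}\rho(x_i,x_{i+1})=0$ is used.

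For connectedness I would argue by contradiction. Suppose $\omega(\alpha)=A\cup B$ with $A,B$ nonempty, disjoint, and closed; being closed subsets of the compact set $\omega(\alpha)$ they are compact and disjoint, so $\delta:=\rho(A,B)>0$. The first auxiliary step is to show $\rho(x_i,\omega(\alpha))\to0$ as $i\to\infty$. If this failed, some subsequence $(x_{i_k})$ would satisfy $\rho(x_{i_k},\omega(\alpha))\geq\epsilon_0$ for a fixed $\epsilon_0>0$; extracting a convergent sub-subsequence by compactness of $X$ would produce a point $z\in\omega(\alpha)$ with $\rho(z,\omega(\alpha))\geq\epsilon_0$ by continuity of $y\mapsto\rho(y,\omega(\alpha))$, which is absurd.

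Next, choose $N$ so large that $\rho(x_i,\omega(\alpha))<\delta/3$ and $\rho(x_i,x_{i+1})<\delta/3$ simultaneously for all $i\geq N$. Since $\rho(A,B)=\delta$, the $\delta/3$-neighborhoods of $A$ and of $B$ are disjoint, so each $x_i$ with $i\geq N$ lies within $\delta/3$ of exactly one of $A$, $B$; call this its ``label''. A one-line triangle-inequality estimate shows that if $x_i$ and $x_{i+1}$ carried different labels then $\rho(x_i,x_{i+1})>\delta/3$, contradicting the choice of $N$. Hence all the points $x_i$ with $i\geq N$ share a common label, say $A$, so $\rho(x_i,A)<\delta/3$ for every $i\geq N$. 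Passing to limits, every limit point of $\alpha$ is within $\delta/3$ of $A$ and hence at distance at least $2\delta/3$ from $B$; therefore $\omega(\alpha)\cap B=\emptyset$, contradicting $B\neq\emptyset$. This shows $\omega(\alpha)$ is connected, which finishes the proof.

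I expect no genuine obstacle here — the argument is a standard compactness exercise — but the two mildly delicate points are the auxiliary claim $\rho(x_i,\omega(\alpha))\to0$ and the bookkeeping that consecutive points lying in disjoint $\delta/3$-neighborhoods of $A$ and $B$ is incompatible with $\rho(x_i,x_{i+1})<\delta/3$, forcing the ``label'' to stabilize; everything else is a direct unwinding of the definitions.
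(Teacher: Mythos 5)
Your proof is correct and complete. The paper actually omits the proof of this lemma entirely (``We omit the proof of Lemma 2.1, for it is easy''), so there is nothing to compare against; your argument --- nonemptiness and compactness from $\omega(\alpha)=\bigcap_{N}\overline{\{x_i:i\geq N\}}$, then connectedness via the separation $\delta=\rho(A,B)>0$, the auxiliary fact $\rho(x_i,\omega(\alpha))\to 0$, and the stabilizing ``label'' forced by $\rho(x_i,x_{i+1})<\delta/3$ --- is exactly the standard argument the authors had in mind, and all the delicate points you flag are handled correctly.
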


We omit the proof of Lemma \ref{detvxv}, for it is easy.

Let $\alpha=({n_i})_{i\geq 0}$ be a sequence of positive integers with infinitely many $n_i\geq 2$. Write
$$\Sigma_\alpha=\prod_{i\geq 0}\{0,\cdots,n_{i}-1\}=\{(x_i)_{i\geq 0}:x_i\in\{0,\cdots,n_{i}-1\},\ i\geq 0\}.$$
For $x,y\in\Sigma_\alpha$, write
$$
\delta(x,y)=\inf \{i\geq0:x_i\neq y_i\},
$$
where $\delta(x,x)=+\infty$.
Endow $\Sigma_\alpha$ with the metric
$$
\rho(x,y)=\prod_{0\leq i<\delta(x,y)}n_i^{-1}.
$$
Write
$$
W_{\alpha,i}=\prod_{0\leq j<i}\{0,\cdots,n_{j}-1\},\ W_\alpha=\bigcup_{i\geq 0}W_{\alpha,i}.
$$
If $\omega\in W_{\alpha,i}$, then $\omega$ is called a {\bf word} with {\bf length} $|\omega|=i$. Write
$$
[\omega]=\left\{x\in\Sigma_\alpha:x_{0}\cdots x_{i-1}=\omega\right\},
$$
where $x_{0}\cdots x_{i-1}$ is the concatenation of letters $x_{0}\cdots x_{i-1}$. In addition, $[\omega]$ is said to be a {\bf cylinder} in $\Sigma_\alpha$ of {\bf length} $i$.
For $W\subset W_\alpha$, write
$$
[W]=\bigcup\left\{[\omega]:\omega\in W_\alpha\right\}.
$$
Suppose each $n_i=k$. Then we write $\Sigma_k$, $W_{k,i}$, $W_k$ for $\Sigma_\alpha$, $W_{\alpha,i}$, $W_\alpha$ respectively.

The properties stated in the lemma below are direct.

\nc{\detozi}{Lemma \ref{detozi}}
\begin{lemma}\label{detozi}
Let $\alpha=({n_i})_{i\geq 0}$  be a sequence of positive integers with infinitely many $n_i\geq2$.
Then $\Sigma_\alpha=\prod_{i\geq 0}\{0,\cdots,n_{i}-1\}$ is a Cantor space.
For each word $\omega\in W_\alpha$, the cylinder $[\omega]$ is closed and open with diameter $|[\omega]|=\prod_{0\leq i<|\omega|} n_i^{-1}$.
The set $\{[\omega]:\omega\in W_\alpha\}\cup\{\emptyset\}$ is a base of the topology of $\Sigma_\alpha$.
\end{lemma}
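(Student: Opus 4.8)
The plan is to verify the three assertions of the lemma --- the diameter formula together with the clopenness of cylinders, the base property, and the Cantor-space property --- in an order that lets each step build on the previous one, exploiting throughout that $\rho$ is manifestly an ultrametric. Set $P_i:=\prod_{0\le j<i}n_j^{-1}$. Since every factor satisfies $n_j^{-1}\le 1$, the sequence $(P_i)$ is non-increasing with $P_0=1$, and because infinitely many $n_j\ge 2$ we have $P_i\to 0$. For distinct $x,y$ one has $\rho(x,y)=P_{\delta(x,y)}$, so $\rho$ depends only on the first disagreement; combined with $\delta(x,z)\ge\min\{\delta(x,y),\delta(y,z)\}$ and monotonicity of $(P_i)$, this gives the strong triangle inequality $\rho(x,z)\le\max\{\rho(x,y),\rho(y,z)\}$.

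Next I would treat the cylinders. For $\omega\in W_{\alpha,i}$ and $x,y\in[\omega]$ we have $\delta(x,y)\ge i$, hence $\rho(x,y)=P_{\delta(x,y)}\le P_i$, so $|[\omega]|\le P_i$. For the reverse bound I would use the hypothesis that infinitely many $n_j\ge 2$: letting $k\ge i$ be the least index with $n_k\ge 2$, the intermediate factors satisfy $n_i=\dots=n_{k-1}=1$, so $P_k=P_i$; choosing two extensions of $\omega$ that agree up to coordinate $k$ and differ there realizes $\rho=P_k=P_i$, whence $|[\omega]|=P_i=\prod_{0\le j<i}n_j^{-1}$. This is the one place where the assumption is genuinely needed, and where the possible presence of singleton coordinates ($n_j=1$) must be handled with care. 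Openness of $[\omega]$ follows from the ultrametric: if $\rho(x,y)<P_i$ then $P_{\delta(x,y)}<P_i$ forces $\delta(x,y)\ge i$ by monotonicity, so $y\in[\omega]$; thus the ball $\{y:\rho(x,y)<P_i\}\subseteq[\omega]$ for every $x\in[\omega]$. Closedness is immediate from the finite disjoint decomposition $\Sigma_\alpha=\bigsqcup_{\omega'\in W_{\alpha,i}}[\omega']$, since the complement of $[\omega]$ is a finite union of the other (open) cylinders.

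For the base property I would show conversely that every open ball is itself a cylinder. Given $x$ and $r>0$, let $M=\min\{i:P_i<r\}$, which is finite since $P_i\to0$; then monotonicity yields $\rho(x,y)<r$ iff $\delta(x,y)\ge M$, i.e.\ $B_r(x)=[x_0\cdots x_{M-1}]$. Hence the open balls, which form a base of the metric topology, are all cylinders, so $\{[\omega]:\omega\in W_\alpha\}\cup\{\emptyset\}$ is a base of the topology of $\Sigma_\alpha$.

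Finally, to identify $\Sigma_\alpha$ as a Cantor space I would invoke Brouwer's characterization and check its four hypotheses. Metrizability is built in. Compactness follows from Tychonoff's theorem applied to the finite discrete factors, once one notes --- from the two inclusions above --- that the metric topology coincides with the product topology. The space is perfect because, given $x$ and $\epsilon>0$, choosing an index $k$ with $n_k\ge 2$ and $P_k<\epsilon$ (possible since $P_i\to0$ and infinitely many $n_j\ge 2$) and altering only the $k$-th coordinate of $x$ produces a distinct point within distance $P_k<\epsilon$, so no point is isolated. The clopen cylinder base shows $\Sigma_\alpha$ is zero-dimensional, hence totally disconnected. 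These four properties give that $\Sigma_\alpha$ is homeomorphic to the classical Cantor set. The only real subtlety throughout is the bookkeeping around coordinates with $n_j=1$, which contribute trivial factors to $P_i$; everything else is a routine consequence of the ultrametric structure.
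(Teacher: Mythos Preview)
Your argument is correct and carefully handles the one genuine subtlety, namely the possible presence of coordinates with $n_j=1$ when establishing the exact diameter of a cylinder. The paper itself does not give a proof of this lemma at all: it simply declares the properties ``direct'' and moves on. So there is no alternative argument to compare against; you have supplied a complete proof where the authors chose to omit one, and your use of Brouwer's characterization together with the explicit verification that open balls coincide with cylinders is a clean way to organize the verification.
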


Throughout this paper, $K\geq2$ denotes a fixed natural number.
By Lemma \ref{detozi}, $\Sigma_K$ is a Cantor space, each cylinder $[\omega]$ is a closed and open subset of $\Sigma_K$ with diameter $K^{-|\omega|}$, and the set $\{[\omega]:\omega\in W_K\}\cup\{\emptyset\}$ is a topological base for $\Sigma_K$.
Define the {\bf shift map} $\sigma_K$ on $\Sigma_K$ as the map $(\sigma_{K}(x))_i=x_{i+1}$, $i\geq 0$.
It is a $K$ to $1$ continuous map.

Suppose $\emptyset\neq A_i\subset W_{K,n_{i}}$, $i\geq 0$. We write
$$
\prod_{i\geq 0} A_i=\left\{\omega_{0}\omega_{1}\cdots \in\Sigma_K: w_i\in A_j,\ j\geq 0\right\}.
$$
where $\omega_{0}\omega_{1}\cdots$ is the concatenation of words $\omega_{0},\omega_{1},\cdots$.

Let $N\subset\mathbb{N}$. For $n\geq1$, write $\zeta_n(N)=\#(N\cap\{0,\cdots,n-1\})$ and $\mu_n(N)=\frac{\zeta_{n}(N)}{n}$.
Put $\mu(N)=\omega(\mu_n(N):n\geq1)$ and call it the $\mathbf{density\ spectrum}$  of $N$.
By Lemma \ref{detvxv}, $\mu(N)$ is a nonempty subinterval of $[0,1]$, i.e., $\mu(N)\in\mathcal{C} ({[0,1]})$.
We call $\mu_*(N):=\inf \mu(N)$ the $\mathbf{lower \ density}$ of $N$ and $\mu^*(N):=\sup \mu(N)$ the $\mathbf{upper\ density}$ of $N$.
When $\mu(N)=[p,p]$, we also write $\mu(N)=p$.

Define a partial order $\preceq$ on $\mathcal{C}({[0,1]})$ by
$$I\preceq J\leftrightarrow\inf I\leq\inf J\ \text{and}\ \sup I\leq\sup J.$$

The properties stated in the lemma below are direct.

\begin{lemma}\label{dstoxm}
Let $N$, $M\subset\mathbb{N}$. Write $N=\{n_i:0\leq i<\#( N)\}$ with $n_i<n_{i+1}$.
\begin{enumerate} \rnc{\labelenumi}{(\alph{enumi})}
\item If $M\subset N$, then $\mu(M)\preceq\mu(N)$.
\item If $\liminf_{i\rightarrow\infty}(n_{i+1}-n_i)\geq k\geq1$, then $$\mu(N+\{0,\cdots,k-1\})=\mu((N-\{0,\cdots,k-1\})\cap\mathbb{N})=k\mu N.$$
    So, if $\lim_{i\rightarrow\infty}(n_{i+1}-n_i)=+\infty$, then $\mu (N)=0$.
\item $\mu(kN+\{0,\cdots,k-1\})=\mu((kN-\{0,\cdots,k-1\})\cap\mathbb{N})=\mu (N)$ for $k\geq 1$.            $\hfill \Box$
\end{enumerate}
\end{lemma}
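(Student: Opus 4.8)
The plan is to reduce all three assertions to elementary bookkeeping on the counting functions $\zeta_n$, combined with three stable features of the limit-point operator $\omega$ underlying the density spectrum. Writing $a_n\asymp b_n$ to mean $a_n-b_n\to 0$, I will first record that: (i) $\omega(a_n)=\omega(b_n)$ whenever $a_n\asymp b_n$; (ii) $\omega(ca_n)=c\,\omega(a_n)$ for any constant $c\geq 0$; and (iii) repeating each term of a sequence a bounded number of times leaves its set of limit points unchanged, so that $\omega(a_{\lfloor n/k\rfloor})=\omega(a_n)$ since $n\mapsto\lfloor n/k\rfloor$ lists each $a_m$ exactly $k$ times. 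All three are immediate from the definition of a limit point, and in combination they let me upgrade an asymptotic identity $\mu_n(N')\asymp c\,\mu_{\phi(n)}(N)$ between normalized counting functions into the set identity $\mu(N')=c\,\mu(N)$. Part (a) is then immediate: $M\subset N$ gives $\zeta_n(M)\leq\zeta_n(N)$, hence $\mu_n(M)\leq\mu_n(N)$ for all $n$; passing to $\liminf$ and $\limsup$ yields $\mu_*(M)\leq\mu_*(N)$ and $\mu^*(M)\leq\mu^*(N)$, which, since $\mu(M)$ and $\mu(N)$ are intervals by Lemma~\ref{detvxv}, is precisely $\mu(M)\preceq\mu(N)$.

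For part (b), the hypothesis $\liminf_i(n_{i+1}-n_i)\geq k$ means that there is an index $i_0$ beyond which consecutive elements of $N$ are at least $k$ apart, so the length-$k$ blocks $\{n_i,\dots,n_i+k-1\}$ are pairwise disjoint once $i\geq i_0$. Counting elements of $N+\{0,\dots,k-1\}$ below $n$, each such block lying fully inside $[0,n)$ contributes exactly $k$; the finitely many blocks with $i<i_0$ and the at most one block straddling the boundary $n$ contribute only a bounded amount (the gap condition forces at most one $n_i$ in any window of length $k$). This gives $\zeta_n(N+\{0,\dots,k-1\})=k\,\zeta_n(N)+O(1)$, whence $\mu_n(N+\{0,\dots,k-1\})\asymp k\,\mu_n(N)$ and, by (i) and (ii), $\mu(N+\{0,\dots,k-1\})=k\,\mu(N)$. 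The backward set $(N-\{0,\dots,k-1\})\cap\mathbb{N}$ is treated identically, its blocks being $\{n_i-k+1,\dots,n_i\}\cap\mathbb{N}$, again disjoint for $i\geq i_0$. For the final clause, if $n_{i+1}-n_i\to+\infty$ then the gap hypothesis holds for every $k$, so $k\,\mu(N)=\mu(N+\{0,\dots,k-1\})\subseteq[0,1]$ forces $\mu^*(N)\leq 1/k$ for all $k$; letting $k\to\infty$ gives $\mu^*(N)=0$, i.e.\ $\mu(N)=0$.

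For part (c), note that $kN=\{kn_i\}$ automatically has all gaps $\geq k$, with no exceptional initial range. Writing $n=qk+s$ with $0\leq s<k$, a direct count shows $\zeta_n(kN+\{0,\dots,k-1\})=k\,\zeta_q(N)+r_n$ with $0\leq r_n<k$, since each $n_i<q$ contributes the full block $\{kn_i,\dots,kn_i+k-1\}\subset[0,n)$. Dividing by $n=qk+s$ and using that $s$ is bounded while $q\to\infty$, one gets $\mu_n(kN+\{0,\dots,k-1\})\asymp\mu_q(N)=\mu_{\lfloor n/k\rfloor}(N)$, so (i) and (iii) yield $\mu(kN+\{0,\dots,k-1\})=\mu(N)$. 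The backward set $(kN-\{0,\dots,k-1\})\cap\mathbb{N}$ is handled in the same way, with the reindexing $\lceil n/k\rceil$ in place of $\lfloor n/k\rfloor$.

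I expect no conceptual obstacle, as the lemma is elementary; the only genuine care needed is the bookkeeping of the bounded additive errors (the finitely many initial overlapping blocks, the single boundary block, and the residue $r_n$) and, more importantly, the fact that $\mu(N)$ is an \emph{interval} rather than a single density. This is exactly why I route everything through the operator $\omega$ and its stability properties (i)--(iii) rather than through ordinary limits: it guarantees that the bounded errors and the $k$-fold repetition in the reindexing leave the entire limit-point set, hence $\mu_*$ and $\mu^*$ simultaneously, unchanged.
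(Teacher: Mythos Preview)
The paper does not actually supply a proof of this lemma: it is preceded by the sentence ``The properties stated in the lemma below are direct'' and the statement itself terminates with a $\Box$, so there is no argument in the paper to compare against. Your proposal therefore fills in what the authors regarded as routine.

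Your argument is correct and is exactly the kind of elementary bookkeeping the authors had in mind. The three stability observations (i)--(iii) about $\omega$ are sound, and they do the work you claim: (i) lets you discard bounded additive errors, (ii) handles the scaling by $k$ in part (b), and (iii) absorbs the reindexing $n\mapsto\lfloor n/k\rfloor$ in part (c). The counting in part (b) (eventual disjointness of the length-$k$ blocks gives $\zeta_n(N+\{0,\dots,k-1\})=k\zeta_n(N)+O(1)$) and in part (c) (writing $n=qk+s$ gives $\zeta_n(kN+\{0,\dots,k-1\})=k\zeta_q(N)+r_n$ with $0\le r_n<k$) are both accurate, and the deduction of $\mu(N)=0$ from $k\mu(N)\subset[0,1]$ for all $k$ is clean. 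One cosmetic point: in (a) you invoke Lemma~\ref{detvxv} to say $\mu(M)$ and $\mu(N)$ are intervals, but this is not needed for $\preceq$, which is defined purely via $\inf$ and $\sup$ and follows immediately from $\mu_*(M)\le\mu_*(N)$ and $\mu^*(M)\le\mu^*(N)$.
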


\section{Distributional functions and distributional chaotic relations}\label{desoow}

Let  $(X,\rho,f)$ be a TDS. For $x\in X$ and $A\subset X$, define the {\bf recurrence time set} $N_f(x,A)$ by
$$N_f(x,A)=\{i\geq 0:f^i(x)\in A\}.$$
Define $\mathcal{F}_{f} :(X\times X)\times(0,+\infty)\rightarrow\mathcal{C}({[0,1]})$ by

\begin{equation} \label{deqozw}
\mathcal{F}_{f} ((x,y),\epsilon)=\mu({N_{f\times f}((x,y),\Delta_{\epsilon})}).
\end{equation}
For $(x,y)\in X\times X$,

\begin{equation} \label{deqozz}
\begin{aligned}
0<\epsilon_{0}<\epsilon_1&\Rightarrow N_{f\times f}((x,y),\Delta_{\epsilon_{0}})\subset N_{f\times f}((x,y),\Delta_{\epsilon_{1}})\\
&\Rightarrow
\mathcal{F}_{f}((x,y),\epsilon_0)\preceq \mathcal{F}_{f}((x,y),\epsilon_1).
\end{aligned}
\end{equation}
By (\ref{deqozz}), we define $ \mathcal{E}_f:X\times X\rightarrow\mathcal{C}({[0,1]})$ by

\begin{equation} \label{deqozx}
\mathcal{E}_f(x,y)=[\lim_{\epsilon\rightarrow 0^+}\inf\mathcal{F}_{f}((x,y),\epsilon),\lim_{\epsilon\rightarrow 0^+}\sup\mathcal{F}_{f}((x,y),\epsilon)].
\end{equation}
For $[p,q]\in\mathcal{C}({[0,1]})$. Write
$$
E_{f}({[p,q]})=\{(x,y)\in X\times X:\mathcal{E}_{f}({x,y})=[p,q]\}
$$
and define
$$
D_{f}({[p,q]})=\{(x,y)\in X\times X:\mathcal{F}_{f}((x,y),\epsilon)\equiv [p,q]\ \text{on some interval}\ (0,\epsilon_0]\}.
$$
Note that $D_{f}({[p,q]})\subset E_{f}({[p,q]})$.
For $\mathcal{J}\subset\mathcal{C}({[0,1]})$, put
$$
E_f(\mathcal{J})=\bigcup_{I\in\mathcal{J}}E_{f}(I),\ D_{f}(\mathcal{J})=\bigcup_{I\in\mathcal{J}}D_{f}(I).
$$

\begin{remark} \label{dstios}
{\rm The distributional chaotic relation with respect to DC1 is $D_f({[0,1]})$ and the distributional chaotic relation with respect to DC2 is $E_f({\{[p,1]:0\leq p<1\}})$.}
\end{remark}

\begin{lemma}\label{detoss}
Let $(X,f)$ be a TDS and $n\geq1$. Then
$$
\mathcal{E}_{f^n}=\mathcal{E}_f
$$
and, for ${[p,q]}\in\mathcal{C}({[0,1]})$,
$$
E_{f^n}({[p,q]})=E_{f}({[p,q]})\ \text{and}\ D_{f^n}({[p,q]})=D_{f}({[p,q]}).
$$
\end{lemma}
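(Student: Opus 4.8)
The plan is to reduce the whole lemma to a comparison between the two recurrence time sets $M_\epsilon:=N_{f\times f}((x,y),\Delta_\epsilon)$ and $M'_\epsilon:=N_{f^{n}\times f^{n}}((x,y),\Delta_\epsilon)$ attached to a fixed pair $(x,y)$. Because $(f\times f)^{n}=f^{n}\times f^{n}$, we have $i\in M'_\epsilon$ exactly when $ni\in M_\epsilon$, i.e.\ $nM'_\epsilon=M_\epsilon\cap n\mathbb{N}$, and by definition $\mathcal{F}_f((x,y),\epsilon)=\mu(M_\epsilon)$, $\mathcal{F}_{f^{n}}((x,y),\epsilon)=\mu(M'_\epsilon)$. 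The naive identity $\mu(M'_\epsilon)=\mu(M_\epsilon)$ is false in general, since intersecting with the progression $n\mathbb{N}$ distorts densities; the key idea is to use uniform continuity to thicken $M_\delta$ into a union of solid blocks of length $n$, on which $i\mapsto ni$-type rescalings leave the density spectrum unchanged by Lemma~\ref{dstoxm}(c).

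First I would record the uniform-continuity input. Since $X$ is compact, each of $f^{0},\dots,f^{n-1}$ is uniformly continuous, so for every $\epsilon>0$ there is $\delta=\delta(\epsilon)\in(0,\epsilon]$ with
\[
\rho(a,b)<\delta\ \Longrightarrow\ \rho(f^{j}a,f^{j}b)<\epsilon\quad(0\le j<n),
\]
and hence $M_\delta+\{0,\dots,n-1\}\subseteq M_\epsilon$. From this I would extract two inclusions. On one side, each $i\in M'_\delta$ gives $ni\in M_\delta$, hence $\{ni,\dots,ni+n-1\}\subseteq M_\epsilon$; therefore $nM'_\delta+\{0,\dots,n-1\}\subseteq M_\epsilon$, and Lemma~\ref{dstoxm}(a),(c) give $\mu(M'_\delta)=\mu\bigl(nM'_\delta+\{0,\dots,n-1\}\bigr)\preceq\mu(M_\epsilon)$. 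On the other side, for $k\in M_\delta$ exactly one of the $n$ consecutive integers $k,\dots,k+n-1$ lies in $n\mathbb{N}$, and it lies in $M_\epsilon$; writing it as $ni$ we get $i\in M'_\epsilon$ and $k\in nM'_\epsilon-\{0,\dots,n-1\}$, so $M_\delta\subseteq(nM'_\epsilon-\{0,\dots,n-1\})\cap\mathbb{N}$. This last set differs from $nM'_\epsilon+\{0,\dots,n-1\}$ by only $O(n)$ integers in each initial segment, hence has the same density spectrum, so again Lemma~\ref{dstoxm}(a),(c) give $\mu(M_\delta)\preceq\mu(M'_\epsilon)$. The upshot is that for every $\epsilon>0$ there is $\delta(\epsilon)\in(0,\epsilon]$ with $\mu(M'_{\delta(\epsilon)})\preceq\mu(M_\epsilon)$ and $\mu(M_{\delta(\epsilon)})\preceq\mu(M'_\epsilon)$.

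To conclude $\mathcal{E}_{f^{n}}=\mathcal{E}_f$ I would let $\epsilon\to0^{+}$, which forces $\delta(\epsilon)\to0^{+}$; by (\ref{deqozz}) the four functions $\inf\mathcal{F}_f((x,y),\cdot)$, $\sup\mathcal{F}_f((x,y),\cdot)$, $\inf\mathcal{F}_{f^{n}}((x,y),\cdot)$ and $\sup\mathcal{F}_{f^{n}}((x,y),\cdot)$ are monotone and have limits as $\epsilon\to0^{+}$, so the two chains of $\preceq$-inequalities sandwich the endpoints and give $\mathcal{E}_{f^{n}}(x,y)=\mathcal{E}_f(x,y)$; as $(x,y)$ is arbitrary this yields $\mathcal{E}_{f^{n}}=\mathcal{E}_f$ and hence $E_{f^{n}}([p,q])=E_f([p,q])$. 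For the $D$-relation I would use the same two inequalities together with the monotonicity of $\mu(M_{\bullet})$ and $\mu(M'_{\bullet})$ in $\preceq$: if $\mu(M_\epsilon)\equiv[p,q]$ on some $(0,\epsilon_0]$, then for $\epsilon$ small one inequality gives $[p,q]=\mu(M_{\delta(\epsilon)})\preceq\mu(M'_\epsilon)$ while the other gives $\mu(M'_{\delta(\epsilon)})\preceq\mu(M_\epsilon)=[p,q]$, hence $\mu(M'_\epsilon)\preceq[p,q]$ for all small $\epsilon$ by monotonicity; since $I\preceq[p,q]$ and $[p,q]\preceq I$ force $I=[p,q]$ for $I\in\mathcal{C}([0,1])$, we get $(x,y)\in D_{f^{n}}([p,q])$, and the reverse implication is symmetric, so $D_{f^{n}}([p,q])=D_f([p,q])$.

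The main obstacle is precisely the density distortion caused by $i\mapsto ni$: without the thickening step there is no control of $\mu(M'_\epsilon)$ by $\mu(M_\epsilon)$, and it is the interplay of uniform continuity (which turns every hit of $M_\delta$ into a solid block of length $n$) with Lemma~\ref{dstoxm}(c) (invariance of the density spectrum of block-unions under such rescalings) that makes the comparison go through. Everything else is routine bookkeeping with the partial order $\preceq$ and the monotonicity (\ref{deqozz}).
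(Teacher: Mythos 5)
Your proof is correct and follows essentially the same route as the paper's: uniform continuity produces the two block inclusions $nM'_{\delta}+\{0,\dots,n-1\}\subseteq M_{\epsilon}$ and $M_{\delta}\subseteq(nM'_{\epsilon}-\{0,\dots,n-1\})\cap\mathbb{N}$, Lemma \ref{dstoxm}(a),(c) converts these into the sandwich $\mathcal{F}_{f^{n}}\preceq\mathcal{F}_{f}\preceq\mathcal{F}_{f^{n}}$ at nested scales, and the limit $\epsilon\to0^{+}$ plus the order argument handle $\mathcal{E}$ and $D$. The only differences are cosmetic (you obtain the second inclusion directly via the unique multiple of $n$ in each length-$n$ block rather than by complementation, and you dispense with the paper's auxiliary function $\mathcal{G}_{f,\epsilon}$ in the $D$ step), so no further comment is needed.
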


\begin{proof}
By the uniform continuity of $f$, we may choose positive numbers $\epsilon_i\rightarrow0^+$ such that, for each $(x,y)\in X\times X$ and $i\geq0$,

\begin{equation} \label{deqoos}
\rho({x,y})<\epsilon_{i+1}\Rightarrow \rho({f^j(x),f^j(y)})<\epsilon_i\ \text{for}\ 0\leq j<n.
\end{equation}

Let $(x,y)\in X\times X$. We are to verify

\begin{equation} \label{deqoox}
nN_{f^n\times f^n}(({x,y}),\Delta_{\epsilon_{i+1}})+\{0,\cdots,n-1\}\subset N_{f\times f}(({x,y}),\Delta_{\epsilon_i})
\end{equation}
and
\begin{equation} \label{deqoou}
(nN_{f^n\times f^n}((x,y),\Delta_{\epsilon_i}^c)-\{0,\cdots,n-1\})\cap\mathbb{N}\subset N_{f\times f}((x,y),\Delta_{\epsilon_{i+1}}^c).
\end{equation}
Suppose $i\in N_{f^n\times f^n}((x,y),\Delta_{\epsilon_{i+1}})$ and $0\leq j<n$.
Then $\rho(f^{in}(x),f^{in}(y))<\epsilon_{i+1}$.
By (\ref{deqoos}), $\rho(f^{in+j}(x),f^{in+j}(y) )<\epsilon_{i}$, which means $in+j\in N_{f\times f}((x,y),\Delta_{\epsilon_i})$.
So (\ref{deqoox}) holds.
Suppose $i\geq0$ satisfies $i\in N_{f^n\times f^n}((x,y),\Delta_{\epsilon_i}^c)$, $0\leq j<n$ and $in-j\geq0$.
Now $\rho( {f^{in}(x),f^{in}(y)})\geq\epsilon_{i}$. By (\ref{deqoos}), $\rho(f^{in-j}(x),f^{in-j}(y))\geq\epsilon_{i+1}$, which means $in-j\in N_{f\times f}((x,y),\Delta_{\epsilon_{i+1}}^c)$. So (\ref{deqoou}) holds.

Eq\. (\ref{deqoou}) leads to

\begin{equation} \label{deqooe}
\begin{aligned}
&N_{f\times f}((x,y),\Delta_{\epsilon_{i+1}})\\
=&(N_{f\times f}((x,y),\Delta_{\epsilon_{i+1}}^c))^c\\
\subset&((nN_{f^n\times f^n}((x,y),\Delta_{\epsilon_i}^c)-\{0,\cdots,n-1\}\cap\mathbb{N})^c\\
=&\mathbb{N}\setminus(nN_{f^n\times f^n}((x,y),\Delta_{\epsilon_i}^c)-\{0,\cdots,n-1\})\\
=&(n(N_{f^n\times f^n}((x,y),\Delta_{\epsilon_i}^c))^c-\{0,\cdots,n-1\})\cap\mathbb{N}\\
=&(nN_{f^n\times f^n}((x,y),\Delta_{\epsilon_i})-\{0,\cdots,n-1\})\cap\mathbb{N}.
\end{aligned}
\end{equation}
Eqs (\ref{deqoox}) and (\ref{deqooe}) lead to

\begin{equation} \label{qivioveviuo}
\begin{aligned}
&nN_{f^n\times f^n}( ({x,y}),\Delta_{\epsilon_{i+2}})+\{0,\cdots,n-1\}\\
\subset&N_{f\times f}((x,y),\Delta_{\epsilon_{i+1}})\\
\subset&(nN_{f^n\times f^n}((x,y),\Delta_{\epsilon_i})-\{0,\cdots,n-1\})\cap\mathbb{N}.
\end{aligned}
\end{equation}
Applying (c) of Lemma \ref{dstoxm} to (\ref{qivioveviuo}), we get

\begin{equation} \label{deqooz}
\mathcal{F}_{f^n}((x,y),\epsilon_{i+2})\preceq \mathcal{F}_{f}((x,y),\epsilon_{i+1})\preceq \mathcal{F}_{f^n}((x,y),\epsilon_{i}).
\end{equation}
Letting $i\rightarrow\infty$ in (\ref{deqooz}) we obtain

\begin{equation} \label{deqons}
\mathcal{E}_{f^n}(x,y)=\mathcal{E}_{f}(x,y).
\end{equation}
So $\mathcal{E}_{f^n}=\mathcal{E}_f$ and, for $[p,q]\in\mathcal{C}({[0,1]})$,
$$
E_{f^n}({[p,q]}) =\mathcal{E}_{f^n}^{-1}([p,q])=\mathcal{E}_{f}^{-1}([p,q])=E_{f}({[p,q]}).
$$

For $(x,y)\in X\times X$ and $\epsilon>0$, put

\begin{equation} \label{deqwov}
\begin{aligned}
&\mathcal{G}_{f,\epsilon}(x,y)\\
=&[\inf \mathcal{F}_f((x,y),\epsilon)-\inf \mathcal{E}_f(x,y),\sup \mathcal{F}_f((x,y),\epsilon)-\sup \mathcal{E}_f(x,y)]\\
\in&\mathcal{C} ({[0,1]}).
\end{aligned}
\end{equation}
Then, by (\ref{deqooz}) and (\ref{deqons}), we have

\begin{equation} \label{deqoio}
\mathcal{G}_{f^n,\epsilon_{i+2}}(x,y)\preceq \mathcal{G}_{f,\epsilon_{i+1}}(x,y)\preceq \mathcal{G}_{f^n,\epsilon_{i}}(x,y).
\end{equation}
So

\begin{equation} \label{deqozn}
\text{if}\ \mathcal{G}_{f,\epsilon_{i}}(x,y)=[0,0]\ \text{then}\ \mathcal{G}_{f^n,\epsilon_{i+1}}(x,y)=[0,0]
\end{equation}
and

\begin{equation} \label{deqiio}
\text{if}\ \mathcal{G}_{f^n,\epsilon_{i}}(x,y)=[0,0]\ \text{then}\ \mathcal{G}_{f,\epsilon_{i+1}}(x,y)=[0,0].
\end{equation}
Now (\ref{deqozn}), (\ref{deqiio}), (\ref{deqwov}) and (\ref{deqons}) imply
$$
(x,y)\in D_{f^n}({[p,q]})\Leftrightarrow (x,y)\in D_{f}({[p,q]}).
$$
Then $D_{f^n}({[p,q]})=D_f({[p,q]})$.
\end{proof}

\section{Hausdorff measure on symbolic spaces}\label{desoou}

We will now briefly review the concept of Hausdorff measure and Hausdorff dimension. See \cite{Fal} for more details. Let $X$ be a separable metric space.
Then $\mathcal{A}\subset\mathcal{P}(X)$ is called a {\bf cover} of $X$ if $\bigcup\mathcal{A}=X$. A cover $\mathcal{A}$ with $|\mathcal{A}|:=\sup_{A\in\mathcal{A}}|A|<\delta$ is called a {\bf $\delta$-cover}. Let $\mathscr{C}(X)$ denote the set of countable covers of $X$ and $\mathscr{C}(X,\delta)$ the set of countable $\delta$-covers of $X$.
For $0\leq s\leq+\infty$ and $\delta>0$, define the {\bf $\mathscr{H}_\delta^s$ measure} of $X$ as
$$
\mathscr{H}_\delta^s(X)=\inf_{\mathcal{A}\in\mathscr{C}(X,\delta)}\sum_{A\in\mathcal{A}}|A|^s,
$$
where $0^0=0^{+\infty}=0$. It is obvious that $\mathscr{H}_\delta^s(X)$ is nondecreasing while $\delta$ decreases. Then define the {\bf $s$-dimensional Hausdorff measure} of $X$ as
$$
\mathscr{H}^s(X)=\lim_{\delta\rightarrow 0^+}\mathscr{H}_\delta^s(X)=\sup_{\delta>0}\mathscr{H}_\delta^s(X)\in[0,+\infty].
$$
For $0\leq s<t$, by
$$
\mathscr{H}_\delta^s(X)\geq\delta^{s-t}\mathscr{H}_\delta^t(X),\ 0<\delta<1,
$$
if $\mathscr{H}^t(X)>0$, then $\mathscr{H}^s(X)=+\infty$. Then there is a unique value $\dim_{H} X\in[0,+\infty]$, called the {\bf Hausdorff dimension} of $X$, such that
$$
0\leq s<\dim_{H} X\Rightarrow \mathscr{H}^s(X)=+\infty\ \text{and}\ \dim_{H} X<s\leq+\infty\Rightarrow \mathscr{H}^s(X)=0.
$$

The next two lemmas are well known.

\begin{lemma}\label{detoex}
Let $X$ be a separable metric space and $\mathcal{A}$ a countable set of subsets of $X$. Then, for $0\leq s\leq+\infty$,
\begin{equation} \label{deqvuu}
\sup_{A\in\mathcal{A}}\mathscr{H}^s(A)\leq\mathscr{H}^s\left(\bigcup\mathcal{A}\right)\leq\sum_{A\in\mathcal{A}}\mathscr{H}^s(A).
\end{equation}
Thus
\begin{equation} \label{deqvus}
\dim_{H}\bigcup\mathcal{A}=\sup_{A\in\mathcal{A}}\dim_{H} A.\ \Box
\end{equation}
\end{lemma}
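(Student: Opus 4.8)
The plan is to prove the two measure inequalities in \eqref{deqvuu} directly from the definitions of $\mathscr{H}^s_\delta$ and $\mathscr{H}^s$, and then to deduce the dimension identity \eqref{deqvus} from them formally, using the characterization of $\dim_H$ recorded just above the statement. Both inequalities rest on elementary properties of covers: monotonicity for the left one, and countable subadditivity (with an $\eta$-splitting trick) for the right one.

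For the left inequality I would use monotonicity under inclusion. Fix $A\in\mathcal{A}$; since $A\subset\bigcup\mathcal{A}$, every countable $\delta$-cover $\mathcal{B}$ of $\bigcup\mathcal{A}$ restricts to the countable cover $\{B\cap A:B\in\mathcal{B}\}$ of $A$, whose members have diameters no larger than the corresponding members of $\mathcal{B}$. Hence $\mathscr{H}^s_\delta(A)\le\mathscr{H}^s_\delta(\bigcup\mathcal{A})$, and letting $\delta\to0^+$ gives $\mathscr{H}^s(A)\le\mathscr{H}^s(\bigcup\mathcal{A})$; taking the supremum over $A\in\mathcal{A}$ yields the left inequality.

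For the right inequality, enumerate the countable family as $\mathcal{A}=\{A_i:i\ge1\}$. If $\sum_{A}\mathscr{H}^s(A)=+\infty$ there is nothing to prove, so assume this sum is finite; then each $\mathscr{H}^s_\delta(A_i)\le\mathscr{H}^s(A_i)<+\infty$. Fix $\delta>0$ and $\eta>0$, and for each $i$ choose a countable $\delta$-cover $\mathcal{B}_i$ of $A_i$ with $\sum_{B\in\mathcal{B}_i}|B|^s\le\mathscr{H}^s_\delta(A_i)+\eta 2^{-i}$. Then $\bigcup_i\mathcal{B}_i$ is a countable $\delta$-cover of $\bigcup\mathcal{A}$, so
\[
\mathscr{H}^s_\delta\Big(\bigcup\mathcal{A}\Big)\le\sum_i\sum_{B\in\mathcal{B}_i}|B|^s\le\sum_i\mathscr{H}^s_\delta(A_i)+\eta\le\sum_i\mathscr{H}^s(A_i)+\eta .
\]
Letting $\eta\to0^+$ and then $\delta\to0^+$ gives the right inequality. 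The extreme cases $s=0$ and $s=+\infty$ are handled by the conventions $0^0=0^{+\infty}=0$ and cause no difficulty, since for $\delta<1$ every diameter in a $\delta$-cover is $<1$.

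Finally I would derive \eqref{deqvus}. The left half of \eqref{deqvuu} gives $\mathscr{H}^s(A)\le\mathscr{H}^s(\bigcup\mathcal{A})$ for each $A$ and each $s$, so $\dim_H A\le\dim_H\bigcup\mathcal{A}$ by the definition of $\dim_H$, and hence $\sup_{A\in\mathcal{A}}\dim_H A\le\dim_H\bigcup\mathcal{A}$. For the reverse inequality, let $s>\sup_{A\in\mathcal{A}}\dim_H A$ (if this supremum is $+\infty$ the reverse inequality is vacuous). Then $s>\dim_H A$, so $\mathscr{H}^s(A)=0$, for every $A$; the right half of \eqref{deqvuu} then gives $\mathscr{H}^s(\bigcup\mathcal{A})\le\sum_{A}\mathscr{H}^s(A)=0$, whence $\dim_H\bigcup\mathcal{A}\le s$. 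Letting $s$ decrease to $\sup_{A}\dim_H A$ completes the proof. The only genuinely substantive point is the $\eta 2^{-i}$ splitting in the subadditivity step, which is exactly where the countability hypothesis on $\mathcal{A}$ is used; everything else is monotonicity and the definition of Hausdorff dimension.
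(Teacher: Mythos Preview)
Your proof is correct and follows the standard textbook argument. The paper itself does not prove this lemma at all: it is introduced with the phrase ``The next two lemmas are well known'' and the statement is simply punctuated with a $\Box$, so there is no proof in the paper to compare against. Your write-up is exactly the kind of routine verification the authors are implicitly deferring to the reader or to a reference such as \cite{Fal}.
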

\begin{lemma}\label{detviw}
Suppose $X,Y$ are separable metric spaces and $\pi$ is a surjective map from $X$ to $Y$. Let $s,c\in(0,+\infty)$. If for some $\delta_0>0$,
$$
\rho(\pi(x),\pi(y))\leq c(\rho(x,y))^s\ \text{while}\ x,y\in X\ \text{with}\ 0<\rho(x,y)<\delta_0,
$$
then
$$
\mathscr{H}^t(Y)\leq c^t\mathscr{H}^{st}(X)\ \text{for}\ t\in(0,+\infty),
$$
and thus
$$
\dim_H Y\leq \frac{1}{s}\dim_H X.\ \Box
$$
\end{lemma}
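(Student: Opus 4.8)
The plan is to push covers of $X$ forward through $\pi$ and to bound the diameters of their images by means of the H\"older estimate. The one observation that drives the whole argument is that, since $\pi$ is surjective, the image family $\{\pi(A):A\in\mathcal{A}\}$ of any cover $\mathcal{A}$ of $X$ is again a cover of $Y$, and that its mesh is controlled by the mesh of $\mathcal{A}$.

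First I would fix $t\in(0,+\infty)$ together with a scale $0<\delta<\delta_0$, and take an arbitrary countable $\delta$-cover $\mathcal{A}\in\mathscr{C}(X,\delta)$. For $A\in\mathcal{A}$ and $x,y\in A$ we have $\rho(x,y)\le|A|\le\delta<\delta_0$, so either $x=y$, whence $\rho(\pi(x),\pi(y))=0$, or $0<\rho(x,y)<\delta_0$, whence the hypothesis gives $\rho(\pi(x),\pi(y))\le c\,\rho(x,y)^s\le c|A|^s$. Taking the supremum over $x,y\in A$ yields the diameter bound $|\pi(A)|\le c|A|^s\le c\delta^s$. Hence $\{\pi(A):A\in\mathcal{A}\}$ is a countable cover of $Y$ of mesh at most $c\delta^s$, i.e.\ it lies in $\mathscr{C}(Y,\delta')$ with $\delta':=c\delta^s$, and $\delta'\to0^+$ as $\delta\to0^+$.

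Next, directly from the definition of $\mathscr{H}^t_{\delta'}$ as an infimum over $\delta'$-covers, this particular cover gives
\[
\mathscr{H}^t_{\delta'}(Y)\le\sum_{A\in\mathcal{A}}|\pi(A)|^t\le\sum_{A\in\mathcal{A}}\bigl(c|A|^s\bigr)^t=c^t\sum_{A\in\mathcal{A}}|A|^{st}.
\]
Since $\mathcal{A}\in\mathscr{C}(X,\delta)$ was arbitrary, taking the infimum over all such covers yields $\mathscr{H}^t_{\delta'}(Y)\le c^t\mathscr{H}^{st}_{\delta}(X)$. Letting $\delta\to0^+$, so that $\delta'\to0^+$, and invoking $\mathscr{H}^r=\lim_{\delta\to0^+}\mathscr{H}^r_\delta$ on both sides produces the first claim $\mathscr{H}^t(Y)\le c^t\mathscr{H}^{st}(X)$.

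Finally I would feed this measure inequality into the defining property of Hausdorff dimension. If $t>\tfrac1s\dim_H X$ then $st>\dim_H X$, so $\mathscr{H}^{st}(X)=0$ and therefore $\mathscr{H}^t(Y)=0$, forcing $\dim_H Y\le t$; as $t>\tfrac1s\dim_H X$ was arbitrary we conclude $\dim_H Y\le\tfrac1s\dim_H X$. The only step requiring attention is the diameter bound $|\pi(A)|\le c|A|^s$, since the H\"older hypothesis is assumed only on $0<\rho(x,y)<\delta_0$: one must dispose of the coincident case $x=y$ separately and restrict to covers of mesh below $\delta_0$, which is harmless because we pass to the limit $\delta\to0^+$. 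Beyond this bookkeeping there is no genuine obstacle; the lemma is the standard behaviour of Hausdorff measure under a H\"older change of variables.
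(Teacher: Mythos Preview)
Your argument is correct and is the standard proof that Hausdorff measure behaves well under H\"older maps: push forward a fine cover, bound the diameters of the images via the H\"older inequality, and pass to the limit. The paper itself does not supply a proof of this lemma; it is listed among results that are ``well known'' and closed with a $\Box$, so there is nothing further to compare against.
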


\begin{lemma}\label{detouu}
Let $X$, $Y$ be separable metric spaces. Suppose $\pi:X\rightarrow Y$ is surjective with
$$
\lim_{\delta\rightarrow 0^+}\inf\left\{\frac{\ln\rho( {\pi(x),\pi(y)})}{\ln\rho (x,y)}:(x,y)\in X\times X,\ 0<\rho(x,y)<\delta\right\}=s.
$$
Then
\begin{equation} \label{deqouv}
s\dim_HY\leq\dim_HX.
\end{equation}
So, if $s>0$, or $s\geq0$ and $\dim_HX>0$, then
\begin{equation} \label{deqouw}
\dim_HY\leq\frac{1}{s}\dim_HX.
\end{equation}
\end{lemma}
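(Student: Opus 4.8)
The plan is to reduce the statement to the H\"older estimate of Lemma \ref{detviw} by approximating $s$ from below.

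Fix any real number $r$ with $0<r<s$ (if $s=+\infty$, this means any $r\in(0,+\infty)$). By hypothesis there is a $\delta_0\in(0,1)$ such that
$$
\frac{\ln\rho(\pi(x),\pi(y))}{\ln\rho(x,y)}\geq r \quad\text{for all }x,y\in X\text{ with }0<\rho(x,y)<\delta_0.
$$
On this range $\ln\rho(x,y)<0$, so multiplying through by $\ln\rho(x,y)$ and exponentiating gives $\rho(\pi(x),\pi(y))\leq\rho(x,y)^{r}$ (the case $\rho(\pi(x),\pi(y))=0$ being trivial). Hence $\pi$ satisfies the hypothesis of Lemma \ref{detviw} with exponent $r$ and constant $c=1$, so $\mathscr{H}^{t}(Y)\leq\mathscr{H}^{rt}(X)$ for all $t\in(0,+\infty)$, and in particular $\dim_{H}Y\leq\frac{1}{r}\dim_{H}X$.

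Now let $r\uparrow s$. This yields $\dim_{H}Y\leq\frac{1}{s}\dim_{H}X$, with the conventions $\frac{1}{+\infty}=0$ and $0\cdot(+\infty)=0$ in case $s=+\infty$ or $\dim_{H}X=+\infty$. Multiplying by $s$ (and noting $s\dim_{H}Y\leq 0\leq\dim_{H}X$ trivially when $s\leq 0$) gives (\ref{deqouv}). Finally, (\ref{deqouw}) follows from (\ref{deqouv}): if $s>0$ we simply divide; if $s=0$ and $\dim_{H}X>0$, then $\frac{1}{s}\dim_{H}X=+\infty$ and the inequality holds vacuously.

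The argument is routine; the only points needing care are the sign of $\ln\rho(x,y)$ when passing from the logarithmic ratio to the power estimate, the treatment of pairs with $\pi(x)=\pi(y)$ (where the map is not injective), and the extended-arithmetic conventions in the degenerate cases $s\in\{0,+\infty\}$ and $\dim_{H}X\in\{0,+\infty\}$; none of these presents a genuine obstacle, so I expect no real difficulty here.
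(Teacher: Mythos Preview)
Your proof is correct and follows essentially the same route as the paper: approximate $s$ from below by some $\tau\in(0,s)$, convert the logarithmic ratio condition into a H\"older bound $\rho(\pi(x),\pi(y))\le\rho(x,y)^\tau$ on small scales, invoke Lemma~\ref{detviw}, and then let $\tau\nearrow s$. The only cosmetic difference is that the paper passes to the product form $\tau\dim_HY\le\dim_HX$ before taking the limit, whereas you take the limit in the quotient form $\dim_HY\le\frac{1}{r}\dim_HX$ first; both are fine.
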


\begin{proof}
If $s\leq0$, the inequality (\ref{deqouv}) is obvious. Then suppose $s>0$. Let $0<\tau<s$. Pick $0<\delta_0<1$ such that
\nc{\deqoiv}{(\ref{deqoiv})}
$$
\begin{aligned}
\frac{\ln\rho( {\pi(x),\pi(y)})}{\ln\rho (x,y)}\geq\tau,\ &\text{i.e.}\ \rho(\pi(x),\pi(y))\leq(\rho(x,y))^\tau,\\
&\text{for}\ (x,y)\in X\times X\ \text{and}\ 0<\rho(x,y)<\delta_0.
\end{aligned}
$$
By Lemma(\ref{detviw}), $\dim_H Y\leq\frac{1}{\tau}\dim_H X$, i.e. $\tau\dim_HY\leq\dim_HX$. Letting $\tau\nearrow s$ we get (\ref{deqouv}).
\end{proof}

\begin{lemma}\label{detouw}
Let $\alpha=({n_i})_{i\geq 0}$ be a sequence of positive integers with infinitely many $n_i\geq2$. Let $\Sigma_\alpha=\prod_{i\geq 0}\{0,\cdots,n_{i}-1\}$. Then $\dim_H\Sigma_\alpha=1$ and $\mathscr{H}^1(\Sigma_\alpha)=1$.
\end{lemma}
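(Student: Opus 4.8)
The plan is to establish the two inequalities $\mathscr{H}^{1}(\Sigma_\alpha)\le 1$ and $\mathscr{H}^{1}(\Sigma_\alpha)\ge 1$ separately; together they give $\mathscr{H}^{1}(\Sigma_\alpha)=1$, and since $0<\mathscr{H}^{1}(\Sigma_\alpha)<+\infty$, the definition of Hausdorff dimension immediately forces $\dim_H\Sigma_\alpha=1$. Throughout I would write $\delta_i=\prod_{0\le j<i}n_j^{-1}$, so that $(\delta_i)$ is non-increasing with $\delta_i\to 0$ (there are infinitely many $n_j\ge 2$), $\delta_0=1$, and, by Lemma~\ref{detozi}, each cylinder $[\omega]$ with $|\omega|=i$ has diameter $\delta_i$.

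For the upper bound I would use the natural cylinder covers. For each $i$, the family $\{[\omega]:\omega\in W_{\alpha,i}\}$ covers $\Sigma_\alpha$, consists of $\#W_{\alpha,i}=\prod_{0\le j<i}n_j$ sets, and each member has diameter $\delta_i$; hence $\sum_{\omega\in W_{\alpha,i}}|[\omega]|^{1}=\bigl(\prod_{0\le j<i}n_j\bigr)\delta_i=1$. Given $\delta>0$, choosing $i$ with $\delta_i<\delta$ produces a countable $\delta$-cover of total $1$-mass equal to $1$, so $\mathscr{H}_\delta^{1}(\Sigma_\alpha)\le 1$ for every $\delta>0$, i.e. $\mathscr{H}^{1}(\Sigma_\alpha)\le 1$.

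For the lower bound I would run a mass-distribution argument with the uniform measure $\mu$ on $\Sigma_\alpha$ determined by $\mu([\omega])=\delta_{|\omega|}$ for $\omega\in W_\alpha$; this assignment is consistent on cylinders and extends to a Borel probability measure on the Cantor space $\Sigma_\alpha$. The key point — and the only place where the ultrametric nature of $\rho$ enters — is the claim that $\mu(A)\le |A|$ for every $A\subset\Sigma_\alpha$. To prove it, assume $0<|A|<1$ and let $i$ be the largest index with $|A|<\delta_i$ (it exists, since $\delta_0=1>|A|$ and $\delta_i\to 0$). Because $\rho$ takes values only in $\{0\}\cup\{\delta_k:k\ge 0\}$ and $(\delta_k)$ is non-increasing, any two points of $A$, being at distance $\le|A|<\delta_i$, must agree in all coordinates $<i$, and also in coordinate $i$ — otherwise their distance would be exactly $\delta_i>|A|$. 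Hence $A$ lies in a single cylinder $[\omega]$ with $|\omega|=i+1$, so $\mu(A)\le\mu([\omega])=\delta_{i+1}\le|A|$, the last inequality by maximality of $i$. (The cases $|A|\in\{0,1\}$ are immediate, using $\mu(\Sigma_\alpha)=1$.) Consequently, for any countable cover $\mathcal{A}$ of $\Sigma_\alpha$ one gets $\sum_{A\in\mathcal{A}}|A|^{1}\ge\sum_{A\in\mathcal{A}}\mu(A)\ge\mu(\Sigma_\alpha)=1$, whence $\mathscr{H}_\delta^{1}(\Sigma_\alpha)\ge 1$ for all $\delta>0$ and therefore $\mathscr{H}^{1}(\Sigma_\alpha)\ge 1$.

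I expect the only genuine obstacle to be the inequality $\mu(A)\le|A|$ in the previous paragraph; everything else is routine bookkeeping with the definitions of $\mathscr{H}_\delta^{s}$, $\mathscr{H}^{s}$ and $\dim_H$ recalled in this section. Combining the two bounds yields $\mathscr{H}^{1}(\Sigma_\alpha)=1$, and then $\dim_H\Sigma_\alpha=1$ as observed above.
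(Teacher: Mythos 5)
Your proof is correct, and the upper bound ($\mathscr{H}^1_\delta(\Sigma_\alpha)\le 1$ via the level-$i$ cylinder covers) is exactly the paper's. For the lower bound, however, you take a genuinely different route. The paper argues directly with covers: it enlarges each $A_i$ to a cylinder $B_i$ with $|B_i|<|A_i|+2^{-i}\epsilon$, uses compactness of $\Sigma_\alpha$ to extract a finite subcover of cylinders, refines them all to a common level $l$, and observes that $|[\omega_{i_j}]|$ equals the sum of the diameters of its level-$l$ subcylinders, so the total is at least $\sum_{\omega\in W_{\alpha,l}}|[\omega]|=1$. You instead construct the uniform product measure $\mu$ with $\mu([\omega])=\delta_{|\omega|}$ and prove the mass distribution inequality $\mu(A)\le|A|$ for every $A$, using the ultrametric fact that a set of diameter $<\delta_i$ sits inside a single length-$(i+1)$ cylinder and the maximality of $i$ to get $\delta_{i+1}\le|A|$; countable subadditivity then finishes. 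Your key inequality is argued correctly, including the edge cases. What each approach buys: the paper's argument is self-contained modulo compactness and needs no measure theory beyond the definition of $\mathscr{H}^s$ (in particular no appeal to the Carath\'eodory/Kolmogorov extension that your construction of $\mu$ silently relies on — that is the one step you assert rather than prove, though it is standard); your argument is shorter, avoids the $\epsilon$-bookkeeping and the finite-subcover extraction, and isolates the reusable principle (the mass distribution principle from \cite{Fal}), so it would also give lower bounds for subsets of $\Sigma_\alpha$ carrying other natural measures. Both are valid proofs of the lemma.
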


\begin{proof}
Let $\delta>0$. Choose $k$ with $\prod_{0\leq i\leq k}n_i^{-1}<\delta$. Then $\{[\omega]:\omega\in W_{\alpha,k}\}$ is a finite $\delta$-cover of $\Sigma_\alpha$ with
$$
\sum_{\omega\in W_{\alpha,k}}|[\omega]|=\prod_{0\leq i<k} n_i\cdot\prod_{0\leq i<k}n_i^{-1}=1.
$$
Since $\delta>0$ was arbitrary, we have $\mathscr{H}^1({\Sigma_\alpha})\leq1$ and thus $\dim_H\Sigma_\alpha\leq1$.

Let $(A_i)_{i\geq0}$ be a countable cover of $\Sigma_\alpha$. We are to show

\begin{equation} \label{deqoui}
\sum_{i\geq0}|A_i|\geq1.
\end{equation}
Let $\epsilon>0$. For each $i$, let $B_i$ be a cylinder containing $A_i$ with
$$
|B_i|<|A_i|+2^{-i}\epsilon.
$$
Now $(B_i)$ is a sequence of closed and open sets covering $\Sigma_\alpha$. Since $\Sigma_\alpha$ is compact and each $B_i$ is open, we can choose a finite subcover $(B_{i_{j}})_{0\leq j< k}$. Suppose $B_{i_{j}}=[\omega_{i_{j}}]$, $0\leq j< k$. Let $l_{i_{j}}$ be the length of $\omega_{i_{j}}$ and put $l=\sup_{0\leq j< k}l_{i_{j}}$. Then
$$
\begin{aligned}
|[\omega_{i_j}]|&=\prod_{0\leq i<l_{i_j}}n_i^{-1}\\
&=\prod_{l_{i_j}\leq i<l}n_i\cdot\prod_{0\leq i<l}n_i^{-1}\\
&=\sum\{|[\omega]|:\omega\in W_{\alpha,l},\ \omega|_{\{0,\cdots,l_{i_j}-1\}}=\omega_{i_j}\}.
\end{aligned}
$$
Now
$$
\begin{aligned}
\sum|A_i|&\geq\sum|B_i|-2\epsilon\\
&\geq\sum_{0\leq j<k}|B_{i_j}|-2\epsilon\\
&=\sum_{0\leq j<k}|[\omega_{i_j}]|-2\epsilon\\
&\geq\sum\{|[\omega]|:\omega\in W_{\alpha,l}\}-2\epsilon\\
&=\prod_{0\leq i<l}n_i\cdot\prod_{0\leq i<l}n_i^{-1}-2\epsilon\\
&=1-2\epsilon.
\end{aligned}
$$
Since $\epsilon>0$ was arbitrary, we have $\sum|A_i|\geq1$. Since $(A_i)$ was arbitrary, we have $\mathscr{H}^{1}{(\Sigma_\alpha)}\geq1$ and thus $\dim_H\Sigma_\alpha\geq1$.
\end{proof}

\begin{lemma}\label{detoue}
Suppose, for $i\geq 0$, that $\emptyset\neq A_i\subset W_{K,{n_i}}$ and $\#( A_i)=a_i$. Then
\begin{equation} \label{deqwow}
\dim_H\prod_{i\geq 0} A_i\geq\liminf_{j\rightarrow \infty}\frac{\sum_{0\leq i<j}\ln a_{i}}{\sum_{0\leq i<j+1}\ln K^{n_i}}.
\end{equation}
\end{lemma}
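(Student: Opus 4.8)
The plan is to realize $\prod_{i\ge0}A_i$ as a bi-H\"older image of the full product space $\Sigma_{\alpha'}$ over the alphabet sizes $\alpha'=(a_i)_{i\ge0}$, whose Hausdorff dimension is already pinned down by Lemma~\ref{detouw}, and then to read off the wanted lower bound from the metric-distortion estimate of Lemma~\ref{detouu}. First I would dispose of the degenerate case: writing
$$
L:=\liminf_{j\to\infty}\frac{\sum_{0\le i<j}\ln a_i}{\sum_{0\le i<j+1}\ln K^{n_i}},
$$
we may assume $L>0$, for otherwise (\ref{deqwow}) is trivial. Since each $n_i\ge1$, the denominators are $\ge(j+1)\ln K\to\infty$, so $L>0$ forces $\sum_{0\le i<j}\ln a_i\to\infty$; as every $\ln a_i\ge0$, this means $a_i\ge2$ for infinitely many $i$, so $\alpha'=(a_i)_{i\ge0}$ is an admissible sequence, $\Sigma_{\alpha'}=\prod_{i\ge0}\{0,\dots,a_i-1\}$ is a Cantor space, and $\dim_H\Sigma_{\alpha'}=1$ by Lemma~\ref{detouw}.

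Next I would set up the coding map. Fixing an enumeration $A_i=\{\omega^{(0)}_i,\dots,\omega^{(a_i-1)}_i\}$ for each $i$, define $\pi:\Sigma_{\alpha'}\to\prod_{i\ge0}A_i$ by letting $\pi(x)$ be the concatenation $\omega^{(x_0)}_0\omega^{(x_1)}_1\cdots$. Because the block lengths $n_i$ are fixed, every word of $\prod_{i\ge0}A_i$ is parsed uniquely into its consecutive blocks, so $\pi$ is a bijection and $\pi^{-1}:\prod_{i\ge0}A_i\to\Sigma_{\alpha'}$ is surjective. Put $N_j=\sum_{0\le i<j}n_i$. If $u\ne v$ in $\prod_{i\ge0}A_i$ first differ at coordinate $l$ and $j$ is the index with $N_j\le l<N_{j+1}$, then $u$ and $v$ share their first $j$ blocks and differ inside block $j$, so $\pi^{-1}(u)$ and $\pi^{-1}(v)$ first differ at coordinate $j$; hence $\rho(\pi^{-1}(u),\pi^{-1}(v))=\prod_{0\le i<j}a_i^{-1}$ while $\rho(u,v)=K^{-l}$, so
$$
\frac{\ln\rho(\pi^{-1}(u),\pi^{-1}(v))}{\ln\rho(u,v)}=\frac{\sum_{0\le i<j}\ln a_i}{l\,\ln K}\ \ge\ \frac{\sum_{0\le i<j}\ln a_i}{N_{j+1}\ln K}=\frac{\sum_{0\le i<j}\ln a_i}{\sum_{0\le i<j+1}\ln K^{n_i}}.
$$
As $\rho(u,v)\to0^+$ one has $l\to\infty$, hence $j\to\infty$ (because $N_j\le l$ and $N_j\to\infty$), so the limit $s$ occurring in the hypothesis of Lemma~\ref{detouu} for the surjection $\pi^{-1}$ — which lies in $[0,1]$ since $a_i\le K^{n_i}$ — satisfies $s\ge L$.

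Finally, Lemma~\ref{detouu} applied to $\pi^{-1}:\prod_{i\ge0}A_i\to\Sigma_{\alpha'}$ gives, via (\ref{deqouv}), that $s\cdot\dim_H\Sigma_{\alpha'}\le\dim_H\prod_{i\ge0}A_i$; since $\dim_H\Sigma_{\alpha'}=1$ and $s\ge L$, this yields $\dim_H\prod_{i\ge0}A_i\ge L$, which is (\ref{deqwow}). (Equivalently, one may use Lemma~\ref{detviw} directly: for every $\tau<L$ the displayed estimate gives $\rho(\pi^{-1}(u),\pi^{-1}(v))\le\rho(u,v)^{\tau}$ once $\rho(u,v)$ is small, hence $\dim_H\Sigma_{\alpha'}\le\frac{1}{\tau}\dim_H\prod_{i\ge0}A_i$, and one lets $\tau\nearrow L$.) The hard part is the metric comparison in the second paragraph, and in particular the block-boundary bookkeeping $N_j\le l<N_{j+1}$: it is exactly this that produces the shifted index ``$j+1$'' in the denominator of the asserted bound, reflecting the fact that two words of $\prod_{i\ge0}A_i$ differing inside block $j$ may still agree on as many as $N_{j+1}-1$ coordinates. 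Everything else (checking $\alpha'$ is admissible, the uniqueness of the block parse, the trivial cases) is routine.
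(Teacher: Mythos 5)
Your proposal is correct and follows essentially the same route as the paper: both code $\prod_{i\ge0}A_i$ against the full product space $\Sigma_{(a_i)}$ via the block-enumeration bijection, establish the identical distortion estimate $\frac{\ln\rho(\pi(x),\pi(y))}{\ln\rho(x,y)}\ge\frac{\sum_{0\le i<j}\ln a_i}{\sum_{0\le i<j+1}\ln K^{n_i}}$ on the annulus $N_j\le\delta(x,y)<N_{j+1}$, and conclude by Lemma~\ref{detouu} together with $\dim_H\Sigma_{(a_i)}=1$ from Lemma~\ref{detouw}. Your handling of the degenerate case (reducing ``$L>0$'' to ``infinitely many $a_i\ge2$'') matches the paper's dismissal of the finitely-many-$a_i\ge2$ case.
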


\begin{proof}
Write $X=\prod_{i\geq 0} A_i$ and $Y=\prod_{i\geq 0}\{0,\cdots, a_{i}-1\}$. If there are at most finitely many $a_i\geq2$, then (\ref{deqwow}) is obviously true. Then suppose there are infinitely many $a_i\geq2$. Note that, by Lemma \ref{deqouw}, $\dim_H Y=1$.

Write $A_i=\{\omega_{i,j}:0\leq j< a_{i}\}$. Let $\pi:X \rightarrow Y$, and suppose that
$$
\pi(\omega_{0,j_{0}},\omega_{1,j_{1}}\cdots)= j_{0}j_{1}\cdots.
$$
It is obvious that $\pi$ is a bijection.

For $x$, $y\in X$ with
$$
\sum_{0\leq i<j}{n_i}\leq\delta(x,y)<\sum_{0\leq i<j+1}{n_i},
$$
we have $\delta({\pi(x)},{\pi(y)})=j$ and
$$
\frac{\ln\rho( {\pi(x),\pi(y)})}{\ln\rho (x,y)}=\frac{\sum_{0\leq i<j}\ln a_i}{\ln K^{\delta(x,y)}}\geq\frac{\sum_{0\leq i<j}\ln a_i}{\sum_{0\leq i<j+1}\ln K^{n_i}}.
$$
So
$$
\begin{aligned}
\lim_{\delta\rightarrow 0^+}\inf&\left\{\frac{\ln\rho( {\pi(x),\pi(y)})}{\ln\rho (x,y)}:(x,y)\in X\times X,\ 0<\rho(x,y)<\delta\right\}\\
\geq&\liminf_{j\leftarrow \infty}\frac{\sum_{0\leq i<j}\ln a_i}{\sum_{0\leq i<j+1}\ln K^{n_i}}.
\end{aligned}
$$
Applying Lemma \ref{detouu} to this last inequality and using the fact that $\dim_H Y=1$, we get (\ref{deqwow}).
\end{proof}

Let $(X,f),(Y,{g})$ be TDSs and $\pi:X\to Y$ be surjective and continuous. If $\pi$ satisfies $\pi\circ f=g\circ\pi$, then we call $\pi$ a {\bf semi-conjugation} from $f$ to $g$. In this case we say $g$ is a {\bf factor} of $f$ and $f$ an {\bf extension} of $g$. If in addition $\pi$ is injective, then we call $\pi$ a {\bf conjugation} from $f$ to $g$.

Define
$$
\tau_K:W_K\rightarrow\mathbb{N},\ \tau_K(\omega)=\sum_{0\leq i<{|\omega|}}K^i\omega_i.
$$
For $n\geq1$, define
$$
\tau_{K,n}:\Sigma_K\rightarrow\Sigma_{K^n},\ (\tau_{K,n}(x))_i=\tau_K(x|_{\{in,\cdots,(i+1)n-1\}}),\ i\geq0.
$$

\begin{lemma}\label{detosx}
\rnc{\labelenumi}{(\alph{enumi})}
\begin{enumerate}
Let $n\geq1$.
\item $\tau_{K,n}$ is a conjugation from $\sigma_K^n$ to $\sigma_{K^n}$.
\item For $x,y\in\Sigma_K$,
$$
\rho(x,y)\leq\rho(\tau_{K,n}(x),\tau_{K,n}(y))\leq K^n\rho(x,y).
$$
\item For $X\subset\Sigma_K$ and $0\leq s\leq+\infty$,
$$
\mathscr{H}^{s}(X)\leq\mathscr{H}^{s}(\tau_{K,n}(X))\leq K^{sn}\mathscr{H}^{s}(X).
$$
So
$$
\dim_H\tau_{K,n}(X)=\dim_H X.
$$
\end{enumerate}
\end{lemma}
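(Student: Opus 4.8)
The plan is to dispatch the three assertions in turn, (a) and (b) by direct manipulation of words and the metric, and (c) as a formal consequence of (b) together with Lemma~\ref{detviw}.

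For (a) I would first record the elementary fact that, for each fixed $\ell\geq1$, the restriction of $\tau_K$ to $W_{K,\ell}$ is a bijection onto $\{0,1,\dots,K^\ell-1\}$: it is the base-$K$ expansion written least significant digit first, so uniqueness of that expansion gives injectivity and the equality $\#(W_{K,\ell})=K^\ell$ gives surjectivity. Applying this with $\ell=n$ to the consecutive length-$n$ blocks of a point of $\Sigma_K$ shows that $\tau_{K,n}\colon\Sigma_K\to\Sigma_{K^n}$ is a bijection. I would then verify the intertwining identity $\tau_{K,n}\circ\sigma_K^{\,n}=\sigma_{K^n}\circ\tau_{K,n}$ by evaluating the $(i+1)$-st coordinate of both sides, each of which equals $\tau_K\bigl(x|_{\{(i+1)n,\dots,(i+2)n-1\}}\bigr)$. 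Since the preimage under $\tau_{K,n}$ of a cylinder of length $m$ in $\Sigma_{K^n}$ is exactly a cylinder of length $mn$ in $\Sigma_K$, the map $\tau_{K,n}$ is continuous by Lemma~\ref{detozi}; being a continuous bijection from a compact space to a Hausdorff space it is a homeomorphism, and together with the intertwining identity this makes it a conjugation from $\sigma_K^{\,n}$ to $\sigma_{K^n}$.

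For (b), recall that $\rho(x,y)=K^{-\delta(x,y)}$ on $\Sigma_K$ and $\rho(u,v)=(K^n)^{-\delta(u,v)}=K^{-n\delta(u,v)}$ on $\Sigma_{K^n}$. The crux is the identity $\delta(\tau_{K,n}(x),\tau_{K,n}(y))=\lfloor\delta(x,y)/n\rfloor$ for $x\neq y$. Write $\delta(x,y)=qn+r$ with $0\leq r<n$. Blocks $0,\dots,q-1$ of $x$ and of $y$ lie in coordinates $0,\dots,qn-1$, all of which are $<\delta(x,y)$, hence they coincide, so $\tau_{K,n}(x)$ and $\tau_{K,n}(y)$ agree in coordinates $0,\dots,q-1$; block $q$ of $x$ and of $y$ first differs at its relative position $r$, and $\tau_K$ is injective on $W_{K,n}$, so $\tau_{K,n}(x)$ and $\tau_{K,n}(y)$ differ in coordinate $q$. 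Hence $\rho(\tau_{K,n}(x),\tau_{K,n}(y))=K^{-nq}$ while $\rho(x,y)=K^{-qn-r}$, and their ratio is $K^r\in[1,K^{n}]$; this is exactly the two-sided estimate, and the case $x=y$ is trivial.

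For (c), with $0<t<\infty$ I would apply Lemma~\ref{detviw} twice. Taking $s=1$ and $c=K^n$, the upper bound in (b) gives $\mathscr{H}^t(\tau_{K,n}(X))\leq K^{nt}\mathscr{H}^t(X)$; applying the same lemma to the bijection $\tau_{K,n}^{-1}\colon\tau_{K,n}(X)\to X$, which by the lower bound in (b) is nonexpanding (so one may take $s=1$, $c=1$), gives $\mathscr{H}^t(X)\leq\mathscr{H}^t(\tau_{K,n}(X))$. The cases $t=0$ and $t=+\infty$ are checked directly: both subsets sit in spaces of diameter $\leq1$, so their $(+\infty)$-measures vanish, and $\tau_{K,n}$, being a bijection, preserves the property of having positive diameter, whence the $0$-measures agree. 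The displayed chain of inequalities then forces $\dim_H\tau_{K,n}(X)=\dim_H X$: for $s<\dim_H X$ the left inequality keeps $\mathscr{H}^s(\tau_{K,n}(X))$ positive, and for $s>\dim_H X$ the right inequality keeps it zero. I do not expect a genuine obstacle here; the one point that rewards care is the block bookkeeping in (b), namely confirming that $\delta$ transforms by the floor of division by $n$ and that the two metric normalizations leave behind only the harmless factor $K^r$.
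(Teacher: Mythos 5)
Your proposal is correct and follows essentially the same route as the paper: the two-sided metric estimate in (b) via the identity $\delta(\tau_{K,n}(x),\tau_{K,n}(y))=\lfloor\delta(x,y)/n\rfloor$, the block-wise bijection and intertwining check for (a), and two applications of Lemma~\ref{detviw} (to $\tau_{K,n}$ and its inverse) for (c). The only cosmetic difference is that the paper deduces injectivity and continuity in (a) directly from (b) and constructs the inverse explicitly, whereas you argue block-wise and via cylinder preimages; both are routine and equivalent.
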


\begin{proof}
We prove (b) first. Let $x,y\in\Sigma_K$. If $x=y$, then the inequalities in (b) are obvious. Suppose $x\neq y$, $\delta(x,y)=in+j$, and $ 0\leq j<n$. Then $\delta(\tau_{K,n}(x),\tau_{K,n}(y))=i$. So
$$
\rho(x,y)=K^{-in-j}\leq K^{-in}=\rho(\tau_{K,n}(x),\tau_{K,n}(y))\leq K^{-in-j+n}=K^n\rho(x,y).
$$

Next we prove (a). By (b), $\tau_{K,n}$ is injective and continuous. Suppose $x\in\Sigma_{K^n}$. For $i\geq0$, $x_i\in\{0,\cdots,K^n-1\}$, and so there are unique $z_{i,j}\in \{0,\cdots,K-1\}$, $0\leq j< n$, with $x_i=\sum_{0\leq j< n}K^{j}z_{ij}$. Define $y\in\Sigma_K$ by $y_{in+j}=z_{ij}$ for $i\geq 0$ and $0\leq j< n$. Then $\tau_{K,n}(y)=x$. So $\tau_{K,n}$ is surjective. Now, using (b) again we see that $\tau_{K,n}^{-1}$ is continuous. So $\tau_{K,n}$ is a homeomorphism from $\Sigma_K$ to $\Sigma_{K^n}$.

Let $x\in\Sigma_K$. Then
$$
\begin{aligned}
(\sigma_{K^n}(\tau_{K,n}(x)))_i&=\sum_{0\leq j< n}K^jx_{(i+1)_{n+j}}\\
&=\sum_{0\leq j< n}K^j(\sigma_K^n(x))_{in+j}\\
&=(\tau_{K,n}(\sigma_K^n(x)))_i,\ i\geq 0.
\end{aligned}
$$
So $\sigma_{K^n}\circ \tau_{K,n}=\tau_{K,n}\circ \sigma_K^n$.

Now (c) follows from (b) and Lemma \ref{detviw}.
\end{proof}

Define
$$
\pi_K:\Sigma_K\times\Sigma_K  \rightarrow\Sigma_{K^2},\ (\pi_K(x_0,x_1))_i=x_{0,i}+Kx_{1,i}\in\{0,\cdots,K^{2}-1\},\ i\geq 0,
$$
where $(x_0,x_1)\in\Sigma_K\times\Sigma_K $, $x_{0,i},x_{1,i}\in\{0,\cdots,K-1\}$ are the $(i+1)$th coordinate values of $x_0,x_1$ respectively.

\begin{lemma}\label{detosw}
\rnc{\labelenumi}{(\alph{enumi})}
\begin{enumerate}
\item $\pi_K$ is a conjugation from $\sigma_K\times\sigma_K$ to $\sigma_{K^2}$.
\item For $(x_0,x_1),(y_0,y_1)\in\Sigma_K\times\Sigma_K $,
$$
\rho(\pi_K(x_0,x_1),\pi_K(y_0,y_1))=(\rho((x_0,x_1),(y_0,y_1)))^2.
$$
\item For $X\subset\Sigma_K\times\Sigma_K$ and $0\leq s\leq+\infty$,
$$
\mathscr{H}^{s}(X)=\mathscr{H}^{\frac{s}{2}}(\pi_K(X)).
$$
So
$$
\dim_H\pi_K(X)=\frac{1}{2}\dim_H X.
$$
\end{enumerate}
\end{lemma}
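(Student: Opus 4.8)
The plan is to mirror the proof of Lemma~\ref{detosx}: first establish the metric identity (b), then deduce the topological statement (a) from it together with an explicit formula for the inverse, and finally obtain the measure statement (c) by applying Lemma~\ref{detviw} to $\pi_K$ and to $\pi_K^{-1}$. The only facts I need about the metrics are their explicit forms: on $\Sigma_{K^2}$ one has $\rho(u,v)=K^{-2\delta(u,v)}$, and on $\Sigma_K\times\Sigma_K$ with the sup metric one has $\rho((x_0,x_1),(y_0,y_1))=K^{-\min(\delta(x_0,y_0),\delta(x_1,y_1))}$.

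For (b), the key observation is that for each index $i$ the equality $x_{0,i}+Kx_{1,i}=y_{0,i}+Ky_{1,i}$ in $\{0,\dots,K^2-1\}$ holds if and only if $x_{0,i}=y_{0,i}$ and $x_{1,i}=y_{1,i}$, by uniqueness of the base-$K$ expansion of an integer in $\{0,\dots,K^2-1\}$. Hence $\delta(\pi_K(x_0,x_1),\pi_K(y_0,y_1))=\min(\delta(x_0,y_0),\delta(x_1,y_1))$; writing $m$ for this common value gives $\rho(\pi_K(x_0,x_1),\pi_K(y_0,y_1))=K^{-2m}=(K^{-m})^2=(\rho((x_0,x_1),(y_0,y_1)))^2$, and the degenerate case $m=+\infty$ (i.e.\ $(x_0,x_1)=(y_0,y_1)$) works as well since both sides vanish. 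For (a), surjectivity and injectivity come from the same decomposition: every $z\in\Sigma_{K^2}$ has coordinates $z_i=a_i+Kb_i$ with uniquely determined $a_i,b_i\in\{0,\dots,K-1\}$, so the point $(x_0,x_1)$ with $x_{0,i}=a_i$ and $x_{1,i}=b_i$ is the unique preimage of $z$. Continuity of $\pi_K$ and of $\pi_K^{-1}$ is immediate from (b), since $t\mapsto t^2$ and $t\mapsto t^{1/2}$ are continuous, so $\pi_K$ is a homeomorphism. The intertwining relation follows coordinatewise: $(\sigma_{K^2}(\pi_K(x_0,x_1)))_i=(\pi_K(x_0,x_1))_{i+1}=x_{0,i+1}+Kx_{1,i+1}=(\sigma_K(x_0))_i+K(\sigma_K(x_1))_i=(\pi_K((\sigma_K\times\sigma_K)(x_0,x_1)))_i$ for all $i\ge0$.

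For (c), I would apply Lemma~\ref{detviw} twice. Taking $\pi=\pi_K|_X$ with exponent $2$ and constant $c=1$, part (b) gives $\mathscr{H}^t(\pi_K(X))\le\mathscr{H}^{2t}(X)$ for all $t\in(0,+\infty)$; substituting $t=s/2$ yields $\mathscr{H}^{s/2}(\pi_K(X))\le\mathscr{H}^s(X)$. Applying Lemma~\ref{detviw} instead to $\pi_K^{-1}:\pi_K(X)\to X$, which by (b) satisfies $\rho(\pi_K^{-1}(u),\pi_K^{-1}(v))=(\rho(u,v))^{1/2}$, with exponent $1/2$ and constant $1$, gives $\mathscr{H}^s(X)\le\mathscr{H}^{s/2}(\pi_K(X))$; the boundary values $s=0$ and $s=+\infty$ are trivial. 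Hence $\mathscr{H}^s(X)=\mathscr{H}^{s/2}(\pi_K(X))$, and then $\dim_H\pi_K(X)=\frac{1}{2}\dim_H X$ follows from the characterization of Hausdorff dimension as the critical exponent of the Hausdorff measure. I do not expect any real obstacle; the only point needing a little care is keeping the three metrics straight and invoking the base-$K$ uniqueness cleanly, since every remaining step is a direct substitution.
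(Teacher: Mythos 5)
Your proposal is correct and follows essentially the same route as the paper: prove the metric identity (b) via the uniqueness of the base-$K$ decomposition of each coordinate, deduce (a) from (b) plus the explicit inverse and the coordinatewise intertwining computation, and obtain (c) from (b) and Lemma~\ref{detviw}. The only difference is that you spell out the two-sided application of Lemma~\ref{detviw} (to $\pi_K$ and to $\pi_K^{-1}$) that the paper leaves implicit, which is a harmless elaboration.
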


\begin{proof}
We prove (b) first. Let $(x_0,x_1),(y_0,y_1)\in\Sigma_K\times\Sigma_K $. If $(x_0,x_1)=(y_0,y_1)$, then
$$
\rho(\pi_K(x_0,x_1),\pi_K(y_0,y_1))=0=(\rho((x_0,x_1),(y_0,y_1)))^2.
$$
Suppose $(x_0,x_1)\neq(y_0,y_1)$. Let $i$ be the least natural number satisfying
$$
x_{0,i}\neq y_{0,i}\ \text{or}\ x_{1,i}\neq y_{1,i}.
$$
Then $\delta({\pi_K(x_0,x_1)},{\pi_K(y_0,y_1)})=i$. So
$$
\rho(\pi_K(x_0,x_1),\pi_K(y_0,y_1))=(K^2)^{-i}=(K^{-i})^2=(\rho((x_0,x_1),(y_0,y_1)))^2.
$$

Next we prove (a). By (b), $\pi_K$ is injective and continuous. Suppose $x\in\Sigma_{K^2}$. For $i\geq0$, $x_i\in\{0,\cdots,K^{2}-1\}$, and so there are unique $z_{0,i},z_{1,i}\in\{0,\cdots,K-1\} $ with $x_i=z_{0,i}+Kz_{1,i}$. Put $y_0=(z_{0,i})_{i\geq 0}$, $y_1=(z_{1,i})_{i\geq 0}$. Then $(y_0,y_1)\in\Sigma_K\times\Sigma_K $ and $\pi_K(y_0,y_1)=x$. So $\pi_K$ is surjective. Now, use (b) again and we see $\pi_K^{-1}$ is continuous. So $\pi_K$ is a homeomorphism from $\Sigma_K\times\Sigma_K $ to $\Sigma_{K^2}$.

Let $(x_0,x_1)\in\Sigma_K\times\Sigma_K $. Then
$$
\begin{aligned}
(\sigma_{K^2}(\pi_K(x_0,x_1)))_i&=(\pi_K(x_0,x_1))_{i+1}\\
&=x_{0,i+1}+Kx_{1,i+1}\\
&=(\sigma_K(x_0))_i+K(\sigma_K(x_1))_i\\
&=(\pi_K(\sigma_K(x_0),\sigma_K(x_1)))_i,\ i\geq 0.
\end{aligned}
$$
So $\sigma_{K^2}\circ\pi_K=\pi_K\circ(\sigma_{k}\times \sigma_{k} )$.

Now (c) follows from (b) and Lemma \ref{detviw}.
\end{proof}

\section{Variational inequality}\label{desoos}
In this section, we will prove a variational inequality (Lemma 5.8) for calculating the Hausdoff dimensions and Hausdorff measures of the sets $E_{\sigma_K}([p,q])$ and $D_{\sigma_K}([p,q])$.

Let $m\geq2$. Denote
\begin{equation}\label{ivoevvvwiz}
Q_m=\left\{p=(p_{0},\cdots,p_{m-1})\in{[0,1]}^m:\sum_{0\leq i<m}p_i=1\right\}.
\end{equation}
Let $\mathcal{K}=(K_i)_{0\leq i<m}$ be a partition of $\{0,\cdots,K-1\}$. Define

\begin{equation} \label{deqwuw}
f_{\mathcal{K}}:Q_m\rightarrow[0,+\infty),\ f_\mathcal{K} (p)=\sum_{0\leq i<m} -p_i\ln\frac{p_i}{\#(K_i)},
\end{equation}
where $0\ln0=0$.

\begin{lemma}\label{detoui}
\rnc{\labelenumi}{(\alph{enumi})}
\begin{enumerate}
\item $f_\mathcal{K}$ is continuous and strictly concave.
\item $f_\mathcal{K}(Q_{m})\subset[0,\ln K]$ and $f_\mathcal{K}(p)=\ln K$ if and only if $p_i=\frac{\#(K_i)}{K}$ for $0\leq i<m$.
\item Let $p_i\in{[0,1]}$, $2\leq i<m$, be fixed and satisfy
$$
1-\sum_{2\leq i<m}p_i=a>0.
$$
For $c\in{[0,1]}$, define $q_c\in Q_m$ by
$$
(q_c)_i=
\begin{cases}
ca,&i=0,\\
(1-c)a,&i=1,\\
p_i,&2\leq i<m.
\end{cases}
$$
Define $H:{[0,1]}\to[0,\ln K]$ by
$$
H(c)=f_\mathcal{K} ({q_c})=-ca\ln\frac{ca}{\#(K_0)}- (1-c)a\ln\frac{(1-c)a}{\#(K_1)}+\sum_{2\leq i<m} -p_i\ln\frac{p_i}{\#(K_i)}.
$$
Then $H$ is continuous and strictly concave and takes its maximal value at
$$
c=\frac{\#(K_0)}{\#(K_0)+\#(K_1)}.
$$
\end{enumerate}
\end{lemma}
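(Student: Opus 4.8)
The plan is to reduce all three parts to elementary properties of the single-variable function $\phi(t)=-t\ln t$ on $[0,1]$ together with the Gibbs inequality for relative entropy. First I would record that $\phi$ is continuous on $[0,1]$ (since $\lim_{t\to0^+}\phi(t)=0=\phi(0)$) and strictly concave on $[0,1]$: on $(0,1]$ this follows from $\phi''(t)=-1/t<0$, and the boundary case reduces to $(1-\lambda)t'<t'$. For (a), write
$$f_\mathcal{K}(p)=\sum_{0\le i<m}\phi(p_i)+\sum_{0\le i<m}p_i\ln\#(K_i);$$
the second summand is linear (hence continuous and concave), and $p\mapsto\sum_i\phi(p_i)$ is strictly concave on the convex set $Q_m$, because if $p\ne p'$ in $Q_m$ there is at least one index $j$ with $p_j\ne p'_j$, and strict concavity of $\phi$ in that coordinate together with concavity of $\phi$ in the remaining coordinates gives a strict inequality after summing. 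Adding a linear term preserves strict concavity, giving (a).

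For (b) I would use that $\mathcal{K}$ partitions $\{0,\dots,K-1\}$, so $\sum_i\#(K_i)=K$ and each $\#(K_i)\ge1$. The bound $f_\mathcal{K}(p)\ge0$ is immediate, since $0<p_i/\#(K_i)\le1$ makes every term $-p_i\ln(p_i/\#(K_i))\ge0$ (terms with $p_i=0$ vanishing by convention). For the upper bound, restrict the sum to indices with $p_i>0$ and apply Jensen's inequality to the concave function $\ln$:
$$f_\mathcal{K}(p)=\sum_{p_i>0}p_i\ln\frac{\#(K_i)}{p_i}\le\ln\Bigl(\sum_{p_i>0}\#(K_i)\Bigr)\le\ln K.$$
Equality in the first inequality forces $\#(K_i)/p_i$ to be constant over $\{i:p_i>0\}$, and equality in the second forces every $p_i>0$; together these give $p_i=\#(K_i)/K$ for all $i$, and conversely that choice yields $f_\mathcal{K}(p)=\ln K$. (Equivalently, $f_\mathcal{K}(p)=\ln K-D(p\Vert q)$ with $q_i=\#(K_i)/K$ and $D$ the Kullback--Leibler divergence.)

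For (c), the map $c\mapsto q_c$ is an affine injection of $[0,1]$ into $Q_m$ (its first coordinate $ca$ determines $c$, as $a>0$, and the coordinate sum is $a+(1-a)=1$), so $H=f_\mathcal{K}\circ(c\mapsto q_c)$ is continuous and strictly concave and takes values in $[0,\ln K]$ by (a) and (b). To locate the maximum I would differentiate on $(0,1)$:
$$H'(c)=-a\ln\frac{ca}{\#(K_0)}+a\ln\frac{(1-c)a}{\#(K_1)}=a\ln\frac{(1-c)\#(K_0)}{c\,\#(K_1)},$$
which vanishes exactly when $(1-c)\#(K_0)=c\,\#(K_1)$, i.e. when $c=\#(K_0)/(\#(K_0)+\#(K_1))$; since $\#(K_0),\#(K_1)\ge1$ this lies in $(0,1)$, and being an interior critical point of a strictly concave function it is the unique maximizer.

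I do not expect a genuine obstacle here: the one point needing a little care is the step from strict concavity of $\phi$ in a single variable to strict concavity of $\sum_i\phi(p_i)$ on the simplex $Q_m$ (one must exhibit a coordinate in which the two points differ), and, for (b), the bookkeeping of the two separate equality conditions in Jensen's inequality that together pin down the maximizer.
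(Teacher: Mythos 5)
Your proof is correct and follows essentially the same route as the paper: strict concavity of $t\mapsto -t\ln t$ for (a), Jensen's inequality for (b), and a derivative computation locating the critical point of the strictly concave $H$ for (c). The only cosmetic difference is in (b), where you apply Jensen to $\ln$ with weights $p_i$ (the Gibbs/log-sum inequality), whereas the paper refines $\mathcal{K}$ to the singleton partition and factors $f_\mathcal{K}=f_{\mathcal{K}_1}\circ\psi$ before applying Jensen to $x\ln x$; the two computations are equivalent, and your equality analysis correctly pins down the maximizer $p_i=\#(K_i)/K$.
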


\begin{proof}
Define $\phi:[0,+\infty)\rightarrow\mathbb{R}$, $\phi (x)=x\ln x$. Then $\phi$ is continuous and, since $\phi''(x)=\frac{1}{x}>0$, strictly convex.

(a) The continuity of $f_\mathcal{K}$ can be shown by the uniform continuity of the functions
$$
f_{\mathcal{K},j}:{[0,1]}\rightarrow[0,+\infty),\ f_{\mathcal{K},j}(x)=-x\ln\frac{x}{\#(K_j)},\ 0\leq j<m .
$$

Let $p_i\in Q_m$, $0<c_i<1$, $0\leq i<n$, with $\sum_{0\leq i<n}c_i=1$ and, for some $i_0,i_1$, $p_{i_0}\neq p_{i_1}$. Recall that $p_{i,j}$ is used  to denote the $(j+1)$th coordinate of $p_i$. Then
$$
\begin{aligned}
&f_\mathcal{K}\left(\sum_{0\leq i<n} c_{i}p_{i}\right)-\sum_{0\leq i<n}c_{i}f_{\mathcal{K}}(p_{i})\\
=&\sum_{0\leq j<m}\left(-\sum_{0\leq i<n}c_{i}p_{i,j}\cdot\ln\frac{\sum_{0\leq i<n}c_{i}p_{i,j}}{\#(K_j)}\right)\\
 &-\sum_{0\leq i<n}c_{i}\sum_{0\leq j<m}\left(-p_{i,j}\ln\frac{p_{i,j}}{\#(K_j)}\right)\\
=&\sum_{0\leq j<m}\left(-\sum_{0\leq i<n}c_{i}p_{i,j}\cdot\ln{\sum_{0\leq i<n}c_{i}p_{i,j}}\right)\\
 &-\sum_{0\leq i<n}c_i\sum_{0\leq j<m}\left(-p_{i,j}\ln p_{i,j}\right)\\
=&\sum_{0\leq j<m}\left(\sum_{0\leq i<n}c_i\phi(p_{i,j})-\phi\left(\sum_{0\leq i<n}c_{i}p_{i,j}\right)\right)\\
>&0\quad \text{(by the strict convexity of $\phi$)}.
\end{aligned}
$$
So $f_{\mathcal{K}}$ is strictly concave.

(b) Let $\mathcal{K}_1=(\{i\})_{0\leq i<K}$. Then for $q\in Q_K$,
$$
\begin{aligned}
f_{\mathcal{K}_1}(q)&=\sum_{0\leq i<K}(-q_i\ln q_i)=-\sum_{0\leq i<K}\phi ({q_i})=-K\sum_{0\leq i<K}\frac{1}{K}\phi ({q_i})\\
&\leq-K\phi\left(\sum_{0\leq i<K}\frac{1}{K}q_i\right)=-K\phi\left(\frac{1}{K}\right)=\ln K
\end{aligned}
$$
and $f_{\mathcal{K}_1}(q)=\ln K$ if and only if each $q_i=\frac{1}{K}$. Define $\psi:Q_{m}\rightarrow Q_K$ by
$$
(\psi(p))_i=\frac{p_{j_i}}{\#(K_{j_i})},\ 0\leq i<K ,
$$
where $j_i$ is the unique number with $i\in K_{j_i}$. Suppose $p\in Q_m$. Then
$$
\begin{aligned}
f_{\mathcal{K}}(p)&=\sum_{0\leq j<m}\left(-p_j\ln\frac{p_j}{\#(K_j)}\right)=\sum_{0\leq j<m}\sum_{i\in K_j}\left(-\frac{p_j}{\#(K_j)}\ln\frac{p_j}{\#(K_j)}\right)\\
&=\sum_{0\leq i<K}(-(\psi(p))_i\ln(\psi (p))_i)=f_{\mathcal{K}_1}({\psi(p)}).
\end{aligned}
$$
Then $f_{\mathcal{K}}=f_{\mathcal{K}_1}\circ\psi$. So
$$
\sup f_{\mathcal{K}}(Q_m)\leq\sup f_{\mathcal{K}_1}(Q_K) =\ln K
$$
and
$$
\begin{aligned}
f_{\mathcal{K}}(p)=\ln K&\Leftrightarrow(\psi(p))_i=\frac{1}{K}\ \text{for}\ 0\leq i<K\\
&\Leftrightarrow p_j=\frac{\#(K_j)}{K}\ \text{for}\ 0\leq j<m.
\end{aligned}
$$

(c) We have
$$
H'(c)=a\left(\ln\frac{1-c}{c}-\ln\frac{\#(K_1)}{\#(K_0)}\right).
$$
So
$$
H'\left(\frac{\#( K_0)}{\#(K_0)+\#( K_1)}\right)=0.
$$
Now the conclusion follows from the fact that
$$
H''(c)=-\frac{a}{c}-\frac {a}{1-c}<0,\ c\in(0,1).
$$
\end{proof}

We define the function $g_{\mathcal{K}}:Q_m\rightarrow[0,+\infty)$ by

\begin{equation} \label{deqiie}
\ g_{\mathcal{K}}(p)=\frac{f_{\mathcal{K}}(p)}{\ln K}=\frac{\sum_{0\leq i<m}- p_i\ln\frac{p_i}{\#(K_i)}}{\ln K}.
\end{equation}
By Lemma \ref{detoui}, for $g_{\mathcal{K}}$, we have  the following properties.

\begin{lemma}\label{detvxw}
\rnc{\labelenumi}{(\alph{enumi})}
\begin{enumerate}
\item $g_{\mathcal{K}}$ is continuous and strictly concave.
\item $g_{\mathcal{K}}(Q_m)\subset [0,1]$ and $g_{\mathcal{K}}(p)=1$ if and only if $p_i=\frac{\#(K_i)}{K}$ for $0\leq i<m$.
\item Let $p_i\in{[0,1]}$, $2\leq i<m$, be fixed and satisfy
$$
1-\sum_{2\leq i<m}p_i=a>0.
$$
For $c\in{[0,1]}$, define $q_c\in Q_m$ by
$$
(q_c)_i=
\begin{cases}
ca,&i=0,\\
(1-c)a,&i=1,\\
p_i,&2\leq i<m.
\end{cases}
$$
Define $h:{[0,1]}\rightarrow[0,1]$ by
$$
h(c)=g_{\mathcal{K}}({q_c})=\frac{-ca\ln\frac{ca}{\#(K_0)}- (1-c)a\ln\frac{(1-c)a}{\#(K_1)}+\sum_{2\leq i<m} -p_i\ln\frac{p_i}{\#(K_i)}}{\ln K}.$$
Then $h$ is continuous and strictly concave and takes its maximal value at
$$
c=\frac{\#(K_0)}{\#(K_0)+\#(K_1)}.\ \Box
$$
\end{enumerate}
\end{lemma}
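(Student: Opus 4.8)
The plan is to derive everything in Lemma \ref{detvxw} directly from Lemma \ref{detoui} by exploiting the fact that $g_{\mathcal{K}}$ is nothing but a positive rescaling of $f_{\mathcal{K}}$. Precisely, since $K\geq2$ we have $\ln K>0$, and by definition $g_{\mathcal{K}}=\frac{1}{\ln K}f_{\mathcal{K}}$ on $Q_m$, while the function $h$ in part (c) satisfies $h=\frac{1}{\ln K}H$ with $H$ the function from Lemma \ref{detoui}(c). Multiplication by a fixed positive constant preserves continuity, preserves strict concavity, multiplies the range by that constant, and leaves the location of maxima unchanged.

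Concretely, for (a) I would note that continuity and strict concavity of $g_{\mathcal{K}}$ follow from continuity and strict concavity of $f_{\mathcal{K}}$ (Lemma \ref{detoui}(a)) together with $\frac{1}{\ln K}>0$. For (b), Lemma \ref{detoui}(b) gives $f_{\mathcal{K}}(Q_m)\subset[0,\ln K]$, so dividing by $\ln K$ yields $g_{\mathcal{K}}(Q_m)\subset[0,1]$; and the equivalence $g_{\mathcal{K}}(p)=1\iff f_{\mathcal{K}}(p)=\ln K\iff p_i=\frac{\#(K_i)}{K}$ for $0\leq i<m$ is immediate from the same lemma. For (c), since $h(c)=g_{\mathcal{K}}(q_c)=\frac{1}{\ln K}H(c)$, the continuity, strict concavity, and the fact that the maximum of $h$ is attained at $c=\frac{\#(K_0)}{\#(K_0)+\#(K_1)}$ are inherited verbatim from Lemma \ref{detoui}(c).

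There is essentially no obstacle here: the statement is a routine corollary of Lemma \ref{detoui}, and the only point worth a word is that $\ln K>0$, which holds because $K\geq2$. I would keep the proof to a few lines, simply citing Lemma \ref{detoui}(a), (b), (c) for the three parts respectively.
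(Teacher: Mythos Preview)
Your proposal is correct and matches the paper's approach exactly: the paper simply states that the properties of $g_{\mathcal{K}}$ follow from Lemma \ref{detoui} (since $g_{\mathcal{K}}=\frac{1}{\ln K}f_{\mathcal{K}}$ with $\ln K>0$) and gives no further proof.
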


For $n\geq1$, write
$$
Q_{m,n}=\{p\in Q_m:np_i\in\mathbb{N}\ \text{for}\ 0\leq i<m\}.
$$

\begin{lemma}\label{detous}
Let $p\in Q_{m}$ and $n\geq1$. Then there is $q\in Q_{m,n}$ with
\begin{equation} \label{deqoux}
\rho(p,q)<\frac{m-1}{n}.
\end{equation}
\end{lemma}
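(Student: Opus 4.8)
The plan is a straightforward rounding argument in which the rounding deficit is redistributed so that $q$ remains a probability vector. Recall that on $Q_m\subset[0,1]^m$ the metric is the sup metric, so $\rho(p,q)=\max_{0\le i<m}|p_i-q_i|$, and write $\{t\}=t-\lfloor t\rfloor$ for the fractional part of a real number $t$.

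First I would floor every coordinate: set $q^0_i=\lfloor np_i\rfloor/n$, so that $q^0_i\le p_i$ and $nq^0_i\in\mathbb{N}$ for each $i$. Since $\sum_{0\le i<m}np_i=n\in\mathbb{Z}$ and each $\lfloor np_i\rfloor\in\mathbb{Z}$, the number $r:=\sum_{0\le i<m}\{np_i\}=n-\sum_{0\le i<m}\lfloor np_i\rfloor$ is a nonnegative integer; and since each fractional part $\{np_i\}$ is strictly less than $1$, we have $0\le r\le m-1$. Thus $\sum_{0\le i<m}nq^0_i=n-r$, and to obtain an element of $Q_{m,n}$ I need to add $r$ units of size $1/n$ to $r$ distinct coordinates.

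Next I would select those coordinates carefully. Let $S=\{i:\{np_i\}>0\}$; from $\sum_{i\in S}\{np_i\}=r$ and each summand being $<1$ it follows that $\#(S)>r$, so I may fix $T\subseteq S$ with $\#(T)=r$. Define $q$ by $nq_i=\lfloor np_i\rfloor+1$ for $i\in T$ and $nq_i=\lfloor np_i\rfloor$ for $i\notin T$. Then $\sum_{0\le i<m}nq_i=(n-r)+r=n$ and each $nq_i\in\mathbb{N}$; moreover $0\le q_i\le1$, the only nontrivial point being that for $i\in T$ the condition $\{np_i\}>0$ forces $np_i<n$, hence $\lfloor np_i\rfloor\le n-1$ and $q_i\le1$. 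So $q\in Q_{m,n}$.

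Finally, the error estimate: for $i\notin T$ we have $|p_i-q_i|=\{np_i\}/n<1/n$, and for $i\in T$ we have $|p_i-q_i|=(1-\{np_i\})/n<1/n$ because $\{np_i\}>0$. Hence $\rho(p,q)<1/n\le(m-1)/n$ (using $m\ge2$), which is (\ref{deqoux}). There is no real obstacle in this argument; the only points to watch are that flooring produces an integer deficit $r\le m-1$, and that redistributing that deficit only over coordinates with nonzero fractional part both keeps $q$ inside $[0,1]^m$ and in fact yields the sharper bound $\rho(p,q)<1/n$, the stated $(m-1)/n$ being quite generous.
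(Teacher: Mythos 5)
Your argument is correct and proves a sharper statement than the lemma asks for. The paper's own proof also starts by flooring, but only on the first $m-1$ coordinates: it sets $q_i=\lfloor np_i\rfloor/n$ for $0\leq i<m-1$ and then defines $q_{m-1}=1-\sum_{0\leq i<m-1}q_i$, so the entire rounding deficit is absorbed by the last coordinate, which may therefore move by as much as (but strictly less than) $(m-1)/n$; that is exactly where the constant $m-1$ in (\ref{deqoux}) comes from. You instead floor all $m$ coordinates, observe that the total deficit is an integer $r\leq m-1$ of units of size $1/n$, and spread those units over $r$ distinct coordinates with positive fractional part, so no single coordinate moves by $1/n$ or more; this yields $\rho(p,q)<1/n$, which is strictly stronger for $m\geq 3$ and at least as strong for $m=2$. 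Your verification that the incremented coordinates stay in $[0,1]$ (via $\{np_i\}>0\Rightarrow\lfloor np_i\rfloor\leq n-1$) is the one point the paper's one-line proof does not need to address, and you handle it correctly. One microscopic quibble: when $r=0$ the set $S$ may be empty, so the inequality $\#(S)>r$ as literally stated fails; but in that case you need $T=\emptyset$, which always exists, so nothing breaks. Either proof suffices for the way the lemma is used later (only the fact that the error tends to $0$ as $n\to\infty$ matters, e.g.\ in the proof of Lemma \ref{detoso}), so the extra sharpness buys nothing downstream, but it costs nothing either.
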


\begin{proof}
For $0\leq i<m-1$, let $q_i$ be the number with $p_i-\frac{1}{n}<q_i\leq p_i$ and $nq_i\in\mathbb{N}$. Put $q_{m-1}=1-\sum_{0\leq i<m-1}q_i$. Put $q=(q_i)_{0\leq i<m}$. Then $q\in Q_{m,n}$ and
$$
\rho(p,q)=\sup_{0\leq i<m}|p_i-q_i|<\frac{m-1}{n}.
$$
\end{proof}

\begin{lemma}\label{detouz}
$\#(Q_{m,n})\leq(n+1)^m$.
\end{lemma}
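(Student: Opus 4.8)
The plan is to realize $Q_{m,n}$ as (a subset of) the image of a finite set under an injection, and then just count.

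First I would note that if $p=(p_0,\dots,p_{m-1})\in Q_{m,n}$, then by definition each $np_i$ is a nonnegative integer; moreover, since $p\in Q_m$ forces $p_i\in[0,1]$, we get $np_i\in\{0,1,\dots,n\}$ for every $i$. Hence the assignment $\Phi(p)=(np_0,\dots,np_{m-1})$ defines a map $\Phi\colon Q_{m,n}\to\{0,1,\dots,n\}^m$.

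Next I would observe that $\Phi$ is injective: from $\Phi(p)$ one recovers $p$ by dividing each coordinate by $n$. Therefore
$$
\#(Q_{m,n})\le\#\big(\{0,1,\dots,n\}^m\big)=(n+1)^m,
$$
which is the claim.

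There is essentially no obstacle here; the only point requiring a moment's care is invoking $p_i\le 1$ so that the integer $np_i$ lies in $\{0,\dots,n\}$ and the codomain of $\Phi$ has exactly $(n+1)^m$ elements. (If one wished, restricting the codomain to tuples of nonnegative integers summing to $n$ would yield the sharper bound $\binom{n+m-1}{m-1}$ via a stars-and-bars count, but the crude estimate $(n+1)^m$ is all that is used later, so I would not pursue the improvement.)
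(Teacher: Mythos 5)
Your proof is correct and is essentially the paper's argument: the paper simply includes $Q_{m,n}$ into the set $A=\{(i_j/n)_{0\leq j<m}:0\leq i_j<n+1\}$ of cardinality $(n+1)^m$, which is exactly your injection $p\mapsto(np_0,\dots,np_{m-1})$ into $\{0,\dots,n\}^m$ read backwards. No difference in substance.
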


\begin{proof}
Let
$$
A=\left\{\left(\frac{i_j}{n}\right)_{0\leq j<m}:0\leq i_j < n+1\right\}.
$$
Then $\#( A)=(n+1)^m$ and $Q_{m,n}\subset A$.
\end{proof}

For $\omega\in W_{K,n}$, define $\mu_{\mathcal{K},n}(\omega)\in Q_m$ by
$$
(\mu_{\mathcal{K},n}(\omega))_i=\frac{1}{n}\#(\{0\leq j<n:\omega_j\in K_i\}),\ 0\leq i<m.
$$
For $p\in Q_{m,n}$, write
$$
A_{\mathcal{K},p,n}=\{\omega\in W_{K,n}:\mu_{\mathcal{K},n}(\omega)=p\}
$$
and
$$
a_{\mathcal{K},p,n}=\#(A_{\mathcal{K},p,n}).
$$
For $P\subset Q_{m,n}$, write
$$
A_{\mathcal{K},P,n}=\{\omega\in W_{K,n}:\mu_{\mathcal{K},n}(\omega)\in P\}
$$
and
$$
a_{\mathcal{K},P,n}=\#(A_{\mathcal{K},P,n}).
$$

The following lemma is direct.
\begin{lemma}\label{detoio}
For $n \geq 1$ and $p\in Q_{m,n}$,
\begin{equation} \label{deqoiu}
a_{\mathcal{K},p,n}=\prod_{0\leq i<m}C_{n-\sum_{0\leq j<i}np_j}^{np_i}(\#(K_i))^{np_i}=\frac{n!\prod_{0\leq i<m}(\#(K_i))^{np_i}}{\prod_{0\leq i<m}(np_i)!}.\ \Box
\end{equation}
\end{lemma}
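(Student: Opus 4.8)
The plan is to prove this lemma by a straightforward multinomial counting argument. A word $\omega\in W_{K,n}$ with $\mu_{\mathcal{K},n}(\omega)=p$ is determined by two pieces of data: for each position $0\le j<n$, the block of the partition $\mathcal{K}$ to which the letter $\omega_j$ belongs, and then the choice of $\omega_j$ within that block. I would show these two choices are independent and count them separately.

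First I would count the block assignments. Since $p\in Q_{m,n}$ we have $np_i\in\mathbb{N}$ for each $i$ and $\sum_{0\le i<m}np_i=n$, so specifying $\omega$ up to the above data amounts to choosing a partition of $\{0,\dots,n-1\}$ into sets $S_0,\dots,S_{m-1}$ with $\#(S_i)=np_i$, namely $S_i=\{0\le j<n:\omega_j\in K_i\}$. Building such a partition one block at a time --- choosing $S_0$ among all $n$ positions, then $S_1$ among the remaining $n-np_0$ positions, and so on --- gives $\prod_{0\le i<m}C_{n-\sum_{0\le j<i}np_j}^{np_i}$ possibilities. Rewriting each binomial coefficient in factorial form and cancelling telescopically, using $n-\sum_{0\le j<m}np_j=0$ and $0!=1$, collapses this product to $\dfrac{n!}{\prod_{0\le i<m}(np_i)!}$.

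Second, once $S_0,\dots,S_{m-1}$ are fixed, for each $j\in S_i$ the letter $\omega_j$ can be any of the $\#(K_i)$ elements of $K_i$, and these $n$ choices are mutually independent, contributing the factor $\prod_{0\le i<m}(\#(K_i))^{np_i}$. Multiplying the two counts yields both expressions in (\ref{deqoiu}). I do not expect any genuine obstacle here: it only remains to note that distinct pairs of data produce distinct words and that every $\omega\in A_{\mathcal{K},p,n}$ comes from exactly one such pair, which is immediate from the definition of $\mu_{\mathcal{K},n}$; the one point requiring a little care is the bookkeeping in the telescoping product.
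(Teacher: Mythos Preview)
Your argument is correct and is exactly the standard multinomial count the paper has in mind; the paper gives no proof beyond calling the lemma ``direct,'' so your write-up simply fills in that omitted step.
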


We will now review Stirling's Formula, which says

\begin{equation} \label{deqoiz}
\ln n!=n\ln n-n+\ln\sqrt{2\pi n}+\epsilon_n\ \text{where}\ \lim_{n\rightarrow \infty}\epsilon_n=0.
\end{equation}

\begin{lemma}\label{detoei}
\begin{equation} \label{deqouo}
\left|\frac{1}{n}\ln a_{\mathcal{K},p,n}-f_{\mathcal{K}}(p)\right|\leq\frac{1}{n}{(m+1)(\ln\sqrt{2\pi n}+E)},\ \text{for}\ p\in Q_{m,n},
\end{equation}
where $E=\sup_{n\geq1}|\epsilon_n|$ with $\epsilon_n$ as in (\ref{deqoiz}). Thus
\begin{equation} \label{deqoez}
\lim_{n\rightarrow \infty}\sup\left\{\left|\frac{1}{n}\ln a_{\mathcal{K},p,n}-f_{\mathcal{K}}(p)\right|:p\in Q_{m,n}\right\}=0.
\end{equation}
\end{lemma}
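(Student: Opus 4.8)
The plan is to estimate $\ln a_{\mathcal{K},p,n}$ by inserting Stirling's formula into the closed expression (\ref{deqoiu}) and tracking the error terms. Writing $a_{\mathcal{K},p,n}=\dfrac{n!\prod_{0\leq i<m}(\#(K_i))^{np_i}}{\prod_{0\leq i<m}(np_i)!}$, I would take logarithms to get
$$
\ln a_{\mathcal{K},p,n}=\ln n!-\sum_{0\leq i<m}\ln (np_i)!+\sum_{0\leq i<m}np_i\ln\#(K_i).
$$
Now apply (\ref{deqoiz}) to $\ln n!$ and to each $\ln(np_i)!$. The leading terms combine: $n\ln n-\sum_i np_i\ln(np_i)=n\ln n-\sum_i np_i(\ln n+\ln p_i)=-\sum_i np_i\ln p_i$, using $\sum_i p_i=1$; the linear terms cancel since $n=\sum_i np_i$; and collecting with $\sum_i np_i\ln\#(K_i)$ yields exactly $n f_{\mathcal{K}}(p)=\sum_i -np_i\ln\frac{p_i}{\#(K_i)}$. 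What remains are the lower-order Stirling contributions $\ln\sqrt{2\pi n}$, the $m$ terms $-\ln\sqrt{2\pi np_i}$, and the error terms $\epsilon_n,\epsilon_{np_i}$.

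The one subtlety to address is that some coordinates $np_i$ may be $0$, in which case the term $\ln(np_i)!=\ln 0!=0$ is not of the Stirling form; but here the corresponding contribution to $nf_{\mathcal{K}}(p)$ is also $0$ (by the convention $0\ln 0=0$) and $\#(K_i)^{np_i}=1$, so such indices simply drop out of both sides and need no estimate. For the indices with $np_i\geq 1$ I would bound $|\ln\sqrt{2\pi np_i}|\leq\ln\sqrt{2\pi n}$ (since $p_i\le 1$, though when $np_i$ is small one should note $\ln\sqrt{2\pi np_i}$ could be negative — it is still bounded in absolute value by $\max\{\ln\sqrt{2\pi},\ln\sqrt{2\pi n}\}\le\ln\sqrt{2\pi n}+E$-type constants, which is why the $+E$ slack is built into the statement) and $|\epsilon_n|,|\epsilon_{np_i}|\leq E$. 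Counting: one $\ln\sqrt{2\pi n}+|\epsilon_n|$ from $n!$, and at most $m$ pairs $\ln\sqrt{2\pi n}+|\epsilon_{np_i}|$ from the factorials $(np_i)!$, gives the total bound $(m+1)(\ln\sqrt{2\pi n}+E)$, and dividing by $n$ gives (\ref{deqouo}).

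The main obstacle, such as it is, is purely bookkeeping: making the error bound uniform over all $p\in Q_{m,n}$ simultaneously, which is why one passes to the finite quantity $E=\sup_{n\ge 1}|\epsilon_n|$ (finite because $\epsilon_n\to 0$) rather than using the $n$-dependent $\epsilon_n$ directly, and why one is generous with the constant $(m+1)$ instead of trying to be sharp. Once (\ref{deqouo}) is in hand, (\ref{deqoez}) is immediate: the right-hand side of (\ref{deqouo}) is $\frac{(m+1)(\ln\sqrt{2\pi n}+E)}{n}$, which tends to $0$ as $n\to\infty$ with $m$ and $E$ fixed, and this bound does not depend on $p$, so the supremum over $p\in Q_{m,n}$ of the left-hand side also tends to $0$.
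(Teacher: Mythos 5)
Your proposal is correct and follows essentially the same route as the paper: insert Stirling's formula into the closed-form count (\ref{deqoiu}), observe that the leading terms collapse to $nf_{\mathcal{K}}(p)$ via $\sum_i p_i=1$, and bound the residual $\ln\sqrt{2\pi\,\cdot\,}$ and $\epsilon$ terms by $(m+1)(\ln\sqrt{2\pi n}+E)$ uniformly in $p$. Your explicit treatment of the indices with $np_i=0$ is a small point the paper passes over silently, and your worry about $\ln\sqrt{2\pi np_i}$ being negative is moot since $np_i\geq 1$ forces $2\pi np_i>1$; neither affects the argument.
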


\begin{proof}
Applying (\ref{deqoiz}) to (\ref{deqoiu}), we get
$$
\begin{aligned}
&\ln a_{\mathcal{K},p,n}\\
=&\sum_{0\leq i<m}np_i\ln\#(K_i)+\left(n\ln n-\sum_{0\leq i<m}np_i\ln np_i\right)-\left(n-\sum_{0\leq i<m}np_i\right)\\
&+\left(\ln\sqrt{2\pi n}-\sum_{0\leq i<m}\ln\sqrt{2\pi np_i}\right)+\left(\epsilon_n-\sum_{0\leq i<m}\epsilon_{np_i}\right)\\
=&\sum_{0\leq i<m}np_i\ln\#(K_i)+\left(n\ln n-\sum_{0\leq i<m}np_i\ln np_i\right)\\
&+\left(\ln\sqrt{2\pi n}-\sum_{0\leq i<m}\ln\sqrt{2\pi np_i}\right)+\left(\epsilon_n-\sum_{0\leq i<m}\epsilon_{np_i}\right)\\
=&\sum_{0\leq i<m}np_i\ln\#(K_i)-\sum_{0\leq i<m}np_i\ln p_i\\
&+\left(\ln\sqrt{2\pi n}-\sum_{0\leq i<m}\ln\sqrt{2\pi np_i}\right)+\left(\epsilon_n-\sum_{0\leq i<m}\epsilon_{np_i}\right)\\
=&-\sum_{0\leq i<m}np_i\ln\frac{p_i}{\#(K_i)}+\left(\ln\sqrt{2\pi n}-\sum_{0\leq i<m}\ln\sqrt{2\pi np_i}\right)+\left(\epsilon_n-\sum_{0\leq i<m}\epsilon_{np_i}\right)\\
=&nf_{\mathcal{K}}(p)+\left(\ln\sqrt{2\pi n}-\sum_{0\leq i<m}\ln\sqrt{2\pi np_i}\right)+\left(\epsilon_n-\sum_{0\leq i<m}\epsilon_{np_i}\right),
\end{aligned}
$$
i.e.,
\begin{equation} \label{deqoeo}
\begin{aligned}
&\frac{1}{n}\ln a_{\mathcal{K},p,n} \\
=&f_{\mathcal{K}}(p)+\frac{1}{n}\left(\ln\sqrt{2\pi n}-\sum_{0\leq i<m}\ln\sqrt{2\pi np_i}\right)+\frac{1}{n}\left(\epsilon_n-\sum_{0\leq i<m}\epsilon_{np_i}\right).
\end{aligned}
\end{equation}
Since $\left|\ln\sqrt{2\pi n}-\sum_{0\leq i<m}\ln\sqrt{2\pi np_i}\right|\leq(m+1)\ln\sqrt{2\pi n}$, (\ref{deqoeo}) leads to (\ref{deqouo}). Now (\ref{deqoez}) follows from (\ref{deqouo}) and the fact $\frac{\ln\sqrt{2\pi n}}{n}\rightarrow 0$.
\end{proof}

 Let $x\in\Sigma_K$. For $n\geq1$, define $\mu_{\mathcal{K},n}(x)\in Q_m$ by
$$
(\mu_{\mathcal{K},n}(x))_i=\frac{1}{n}\#(\{0\leq j<n:x_j\in K_i\}),\ 0\leq i<m.
$$
Define
$$
\mu_\mathcal{K} (x)=\omega(\mu_{\mathcal{K},n}(x):n\geq1)\subset Q_m.
$$
We call $\mu_\mathcal{K} (x)$ the {\bf distributional density spectrum} of $x$ on $\mathcal{K}$.

\begin{lemma}\label{detosi}
Let $x\in\Sigma_K$. Then $\mu_\mathcal{K} (x)\in\mathcal{C}(Q_m)$.
\end{lemma}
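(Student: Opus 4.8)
\textbf{Proof proposal for Lemma \ref{detosi}.}

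The plan is to recognize $\mu_{\mathcal{K}}(x)$ as the $\omega$-limit set of the sequence $(\mu_{\mathcal{K},n}(x))_{n\geq 1}$ inside the compact metric space $Q_m$, and then invoke Lemma \ref{detvxv}. For this it suffices to check two things: first, that $Q_m$ is a nonempty compact metric space (it is a closed bounded subset of $[0,1]^m$ with the sup metric, hence compact); and second, that consecutive terms of the sequence get arbitrarily close, i.e. $\lim_{n\to\infty}\rho(\mu_{\mathcal{K},n}(x),\mu_{\mathcal{K},n+1}(x))=0$. Granting both, Lemma \ref{detvxv} immediately yields $\omega(\mu_{\mathcal{K},n}(x):n\geq 1)\in\mathcal{C}(Q_m)$, which is exactly the claim.

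The only real work is the estimate on consecutive differences. First I would fix $x$ and $n$ and write, for each $0\leq i<m$, $\zeta_{i,n}=\#(\{0\leq j<n:x_j\in K_i\})$, so that $(\mu_{\mathcal{K},n}(x))_i=\zeta_{i,n}/n$. Passing from $n$ to $n+1$ changes each count $\zeta_{i,n}$ by at most $1$ (the new index $j=n$ lands in exactly one block $K_{i_0}$). A short computation then gives
$$
\left|\frac{\zeta_{i,n+1}}{n+1}-\frac{\zeta_{i,n}}{n}\right|
=\left|\frac{n\,\zeta_{i,n+1}-(n+1)\,\zeta_{i,n}}{n(n+1)}\right|
\leq\frac{n|\zeta_{i,n+1}-\zeta_{i,n}|+\zeta_{i,n}}{n(n+1)}
\leq\frac{n+n}{n(n+1)}=\frac{2}{n+1},
$$
using $\zeta_{i,n}\leq n$ and $|\zeta_{i,n+1}-\zeta_{i,n}|\leq 1$. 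Taking the supremum over $0\leq i<m$ gives $\rho(\mu_{\mathcal{K},n}(x),\mu_{\mathcal{K},n+1}(x))\leq 2/(n+1)\to 0$.

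The main (and essentially only) obstacle is this elementary bound on $\rho(\mu_{\mathcal{K},n}(x),\mu_{\mathcal{K},n+1}(x))$; everything else is a direct appeal to Lemma \ref{detvxv} together with the compactness of $Q_m$. Since the argument is so short and mirrors the earlier discussion of $\mu(N)$ for $N\subset\mathbb{N}$ (where the same reasoning shows $\mu(N)\in\mathcal{C}([0,1])$ via Lemma \ref{detvxv}), one could alternatively view each coordinate: $(\mu_{\mathcal{K},n}(x))_i=\mu_n(N_i)$ where $N_i=\{j\geq 0:x_j\in K_i\}$, and the coordinatewise estimate $|\mu_{n+1}(N_i)-\mu_n(N_i)|\leq 2/(n+1)$ is exactly what underlies the earlier claim that $\mu(N_i)\in\mathcal{C}([0,1])$. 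I expect the write-up to be only a few lines.
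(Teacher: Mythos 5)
Your proposal is correct and follows exactly the paper's argument: the paper's entire proof is a one-line application of Lemma \ref{detvxv} to the bound $\rho(\mu_{\mathcal{K},n}(x),\mu_{\mathcal{K},n+1}(x))\leq\frac{1}{n+1}\rightarrow 0$. Your constant $\frac{2}{n+1}$ is slightly weaker than the sharp $\frac{1}{n+1}$ (since $n\zeta_{i,n+1}-(n+1)\zeta_{i,n}=n\delta_i-\zeta_{i,n}$ with $\delta_i\in\{0,1\}$ already lies in $[-n,n]$), but this makes no difference to the conclusion.
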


\begin{proof}
Apply Lemma \ref{detvxv} to
$$
\rho(\mu_{\mathcal{K},n}(x),\mu_{\mathcal{K},n+1}(x))\leq\frac{1}{n+1}\rightarrow 0, \ n\rightarrow \infty.
$$
\end{proof}

\begin{lemma}\label{detoso}
Let $\emptyset\neq P\subset Q_m$. Then $\dim_H\{x\in \Sigma_K:\mu_\mathcal{K} (x)\cap P\neq\emptyset\}\leq\sup g_\mathcal{K} (P)$.
\end{lemma}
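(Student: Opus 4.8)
Throughout write $Z=\{x\in\Sigma_K:\mu_{\mathcal{K}}(x)\cap P\neq\emptyset\}$ and $s=\sup g_{\mathcal{K}}(P)$; by (\ref{deqiie}) and Lemma \ref{detvxw}(b) we have $0\le s\le 1$. The plan is to show $\mathscr{H}^t(Z)=0$ for every $t>s$ and then let $t$ decrease to $s$. So fix $t>s$ and set $\eta=\tfrac12(t-s)>0$.

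First I would reduce the condition ``$\mu_{\mathcal{K}}(x)\cap P\neq\emptyset$'' to a statement about the finite-level frequency vectors. If $x\in Z$, some $p\in P$ lies in $\mu_{\mathcal{K}}(x)=\omega(\mu_{\mathcal{K},n}(x):n\ge1)$, so there are $n_1<n_2<\cdots$ with $\mu_{\mathcal{K},n_j}(x)\to p$; hence for every $\delta>0$ there are infinitely many $n$ with $\rho(\mu_{\mathcal{K},n}(x),P)<\delta$. Since $\mu_{\mathcal{K},n}(x)=\mu_{\mathcal{K},n}(\omega)$ whenever $\omega=x_0\cdots x_{n-1}$, this gives, for every $N\ge1$,
\[
Z\subseteq\bigcup_{n\ge N}\bigl[A_{\mathcal{K},P_\delta\cap Q_{m,n},n}\bigr],\qquad P_\delta:=\{q\in Q_m:\rho(q,P)<\delta\},
\]
a countable cover of $Z$ by cylinders of diameter $\le K^{-N}$.

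Next I would pin down $\delta$. By Lemma \ref{detvxw}(a) the map $g_{\mathcal{K}}$ is continuous on the compact set $Q_m$, hence uniformly continuous, so there is $\delta>0$ with $|g_{\mathcal{K}}(q)-g_{\mathcal{K}}(q')|<\eta$ whenever $\rho(q,q')<\delta$. Then $\rho(q,P)<\delta$ forces $g_{\mathcal{K}}(q)<s+\eta$, i.e.\ $f_{\mathcal{K}}(q)<(s+\eta)\ln K$ by (\ref{deqiie}). Combining this with Lemma \ref{detouz} (so $\#(Q_{m,n})\le(n+1)^m$) and the bound (\ref{deqouo}) of Lemma \ref{detoei}, we get for every $n\ge1$
\[
a_{\mathcal{K},P_\delta\cap Q_{m,n},n}=\sum_{p\in P_\delta\cap Q_{m,n}}a_{\mathcal{K},p,n}\le(n+1)^m\,e^{(m+1)(\ln\sqrt{2\pi n}+E)}\,K^{n(s+\eta)},
\]
and therefore
\[
\sum_{n\ge N}\ \sum_{\omega\in A_{\mathcal{K},P_\delta\cap Q_{m,n},n}}|[\omega]|^{t}=\sum_{n\ge N}a_{\mathcal{K},P_\delta\cap Q_{m,n},n}\,K^{-nt}\le\sum_{n\ge N}(n+1)^m\,e^{(m+1)(\ln\sqrt{2\pi n}+E)}\,K^{-n(t-s-\eta)}.
\]
Since $t-s-\eta=\tfrac12(t-s)>0$, the summand decays geometrically (the polynomial factor $(n+1)^m$ and the factor $e^{(m+1)(\ln\sqrt{2\pi n}+E)}=e^{O(\ln n)}$ are harmless), so the right-hand side is the tail of a convergent series and tends to $0$ as $N\to\infty$. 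Hence $\mathscr{H}^{t}_{\delta'}(Z)\to0$ along $\delta'\downarrow0$, so $\mathscr{H}^{t}(Z)=0$ and $\dim_H Z\le t$; letting $t\searrow s$ yields $\dim_H Z\le s=\sup g_{\mathcal{K}}(P)$.

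The only genuine difficulty is that the cover of $Z$ ranges over all levels $n\ge N$ simultaneously, so one must sum a series in $n$ rather than control a single level; this is precisely why one needs the strict gap $s+\eta<t$ together with the subexponential control of $a_{\mathcal{K},p,n}$ from Lemma \ref{detoei} and the polynomial bound on $\#(Q_{m,n})$. The reduction in the first step — that $\mu_{\mathcal{K}}(x)$ meeting $P$ forces $\mu_{\mathcal{K},n}(x)$ to be $\delta$-close to $P$ for infinitely many $n$, so that $Z$ can be covered by cylinders of arbitrarily large length — is conceptually the heart of the argument but technically routine.
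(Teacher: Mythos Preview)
Your proof is correct and follows essentially the same route as the paper: both reduce membership in $Z$ to the condition that $\mu_{\mathcal{K},n}(x)$ lies $\delta$-close to $P$ for infinitely many $n$, cover $Z$ by cylinders $[A_{\mathcal{K},P_\delta\cap Q_{m,n},n}]$ over all $n\ge N$, and control the resulting series via the Stirling bound of Lemma~\ref{detoei}, the polynomial bound of Lemma~\ref{detouz}, and the uniform continuity of $g_{\mathcal{K}}$. The only cosmetic difference is that the paper absorbs the Stirling error into an extra $\epsilon\ln K$ by choosing $n$ large enough (arriving at $\mathscr{H}^{s+4\epsilon}\le1$), whereas you carry the factor $e^{(m+1)(\ln\sqrt{2\pi n}+E)}$ explicitly and kill it with the geometric decay to get $\mathscr{H}^t(Z)=0$; both are equally valid.
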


\begin{proof}
Suppose $\sup g_\mathcal{K}(P) =s$.

Let $\epsilon>0$. By the uniform continuity of $f_\mathcal{K}$, we may take $\delta>0$ such that

\begin{equation} \label{deqwux}
|f_\mathcal{K}(p)-f_\mathcal{K}(q)|<\epsilon\ln K\ \text{for}\ p,q\in Q_m\ \text{with}\ \rho(p,q)<\delta.
\end{equation}
Take $n_0$ such that

\begin{equation} \label{deqwuu}
\frac{(m+1)(\ln\sqrt{2\pi n}+E)}{n}<\epsilon\ln K\ \text{for}\ n\geq n_0,
\end{equation}
where $E=\sup_{n\geq1}|\epsilon_n|$ with $\epsilon_n$ as in (\ref{deqoiz}).

Let
$$
P_i=Q_{m,i+1}\cap B_\delta (P),\ i\geq0.
$$
If $i\geq\frac {m-1}{\delta}-1$, then $\frac{m-1}{i+1}\leq\delta$ and, by Lemma \ref{detous}, $P_i\neq\emptyset$ and $P\subset B_\delta (P_i)$.

Let $n_1\geq\sup(n_0,\frac{m-1}{\delta}-1)$. Suppose $i\geq n_1$ and $q\in P_i$. Then
\begin{equation} \label{deqwus}
\begin{aligned}
&\ln a_{\mathcal{K},q,i+1}\\
\leq&(i+1)f_\mathcal{K}(q)+(m+1)(\ln\sqrt{2\pi (i+1)}+E)\quad\text{(by (\ref{deqouo}))}\\
\leq&(i+1)(f_\mathcal{K}(p)+\epsilon\ln K)+(i+1)\epsilon\ln K\quad\text{(by (\ref{deqwux}) and (\ref{deqwuu}))}\\
=&(i+1)(s\ln K+\epsilon\ln K)+(i+1)\epsilon\ln K\\
=&(i+1)(s+2\epsilon)\ln K.
\end{aligned}
\end{equation}
By (\ref{deqwus}), we have

\begin{equation} \label{deqwuz}
a_{\mathcal{K},P_i,i+1}\leq(i+2)^me^{(i+1)(s+2\epsilon)\ln K}=(i+2)^mK^{(i+1)(s+2\epsilon)}\ \text{for}\ n\geq n_1.
\end{equation}

Now, if $n\geq n_1$, it follows from (\ref{deqwuz}) that
\begin{equation}\label{q11021119}
\begin{aligned}
\sum_{\omega\in A_{\mathcal{K},P_i,i+1}}|[\omega]|^{s+4\epsilon}
&=a_{\mathcal{K},P_i,i+1}K^{-(i+1)(s+4\epsilon)}\\
&\leq(i+2)^mK^{(i+1)(s+2\epsilon)}K^{-(i+1)(s+4\epsilon)}\\
&=(i+2)^mK^{-2(i+1)\epsilon}\\
&=(i+2)^mK^{-(i+1)\epsilon}\cdot K^{-(i+1)\epsilon}.
\end{aligned}
\end{equation}
Note that $(i+2)^mK^{-(i+1)\epsilon}\rightarrow 0$. So, by (\ref{q11021119}), we can take $n_2\geq n_1$ such that
\nc{\deqwue}{(\ref{deqwue})} 
\begin{equation} \label{deqwue}
\sum_{\omega\in A_{\mathcal{K},P_i,i+1}}|[\omega]|^{s+4\epsilon}<K^{-(i+1)\epsilon}\ \text{for}\ i\geq n_2.
\end{equation}
By
$$
\sum_{i\geq n}K^{-(i+1)\epsilon}=\frac{K^{-(n+1)\epsilon}}{1-K^{-\epsilon}}\rightarrow 0,\ n\rightarrow \infty,
$$
we take $n_3\geq n_2$ such that

\begin{equation} \label{deqwun}
\sum_{i\geq n}K^{-(i+1)\epsilon}<1\ \text{for}\ n\geq n_3.
\end{equation}
Eqs (\ref{deqwue}) and (\ref{deqwun}) lead to

\begin{equation} \label{deqwso}
\sum_{i\geq n}\sum_{\omega\in A_{\mathcal{K},P_i,i+1}}|[\omega]|^{s+4\epsilon}<1\ \text{for}\ n\geq n_3.
\end{equation}

Suppose $x\in\{y\in \Sigma_K:\mu_\mathcal{K}(y)\cap P\neq\emptyset\}$. Then there are infinitely many $i$ with $\rho (\mu_{\mathcal{K},i+1}(x),P)<\delta$. For such $i$, we have $\mu_{\mathcal{K},i+1}(x)\in P_i$ and thus $x\in[A_{\mathcal{K},P_i,i+1}]$. So
$$
\{x\in \Sigma_K:\mu_\mathcal{K}(x)\cap P\neq\emptyset\}\subset\bigcup_{i\geq n}[A_{\mathcal{K},P_i,i+1}]\ \text{for}\ n\geq0,$$
and thus, for $n\geq0$, $\mathcal{A}_n:=\{[\omega]:\omega\in\ A_{\mathcal{K},P_i,i+1},\ i\geq n\}$ is a countable cover of $\{x\in \Sigma_K:\mu_\mathcal{K}(x)\cap P\neq\emptyset\}$.
Note that, for $[\omega]\in\mathcal{A}_n$, $|[\omega]|\leq K^{-(n+1)}$.

Suppose $n\geq n_3$. Now we have
$$
\begin{aligned}
\mathscr{H}_{K^{-n}}^{s+4\epsilon}(\{x\in \Sigma_K:\mu_\mathcal{K} (x)\cap P\neq\emptyset\})
&\leq \sum_{[\omega]\in\mathcal{A}_n}|[\omega]|^{s+4\epsilon}\\
&=\sum_{i\geq n}\sum_{\omega\in A_{\mathcal{K},P_i,i+1}}|[\omega]|^{s+4\epsilon}\\
&<1.
\end{aligned}
$$
Letting $n\rightarrow \infty$, we have $$ \mathscr{H}^{s+4\epsilon}(\{x\in \Sigma_K:\mu_\mathcal{K}(x)\cap P\neq\emptyset\})\leq 1.$$ Then $$\dim_H\{x\in \Sigma_K:\mu_\mathcal{K}(x) \cap P\neq\emptyset\}\leq s+4\epsilon.$$ Since $\epsilon>0$ was arbitrary, we have $$\dim_H\{x\in \Sigma_K:\mu_\mathcal{K}(x)\cap P\neq\emptyset\}\leq s.$$
\end{proof}

\section{Hausdorff dimensions of $E_{\sigma_K}(\mathcal{J})$ and $D_{\sigma_K}(\mathcal{J})$}\label{desooz}

We will recall some of the notation defined at the beginning of this section.
$$
\tau_K:W_K\rightarrow\mathbb{N},\ \tau_K(\omega)=\sum_{0\leq i<|\omega|}K^i\omega_i
$$
$$
\tau_{K,n}:\Sigma_K\rightarrow\Sigma_{K^n},\ (\tau_{K,n}(x))_i=\tau_K(x|_{\{in,\cdots,(i+1)n-1\}}),\ i\geq0,\ n\geq1
$$
$$
\pi_K:\Sigma_K\times\Sigma_K  \rightarrow\Sigma_{K^2},\ (\pi_K(x_0,x_1))_i=x_{0,i}+Kx_{1,i}\in \{0,\cdots,K^2-1\},\ i \geq 0
$$
$$
Q_2=\left\{p=(p_0,p_1)\in{[0,1]}^2:p_0+p_1=1\right\}
$$
Additionally, we write
$$E_K =\{i+Ki:0\leq i<K\}\subset\{0,\cdots,K^2-1\}.$$
Then
$$E_{K}^{n}=\prod_{0\leq i<n}E_K\subset\{0,\cdots,K^2-1\}^n,\ n\geq1.$$

\begin{lemma}\label{detonx}
Let $\emptyset\neq {\mathcal{J}}\subset{\mathcal{C}({[0,1]})}$. Then
$$
\dim_H E_{\sigma_K}({\mathcal{J}})\leq2-\inf\{\sup I:I\in{\mathcal{J}}\}.
$$
\end{lemma}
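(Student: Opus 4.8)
We prove the stated inequality; write $s_{0}=\inf\{\sup I:I\in\mathcal{J}\}$. If $s_{0}=0$ there is nothing to prove, since $\dim_{H}(\Sigma_{K}\times\Sigma_{K})=2\dim_{H}\Sigma_{K^{2}}=2$ by Lemma~\ref{detosw}(c) and Lemma~\ref{detouw}. So assume $s_{0}>0$ and fix a large integer $n$ with $K^{-n}<s_{0}$; we shall let $n\to\infty$ at the end. The plan is to recode $\Sigma_{K}\times\Sigma_{K}$ as $\Sigma_{K^{2n}}$ in such a way that the $K^{-(n-1)}$-approach-time set of a pair \emph{for the map $\sigma_{K}^{n}$} becomes the set of positions at which the recoded point falls in the cell $E_{K^{n}}$, and then to apply Lemma~\ref{detoso} to the two-cell partition $\mathcal{K}=(E_{K^{n}},\{0,\dots,K^{2n}-1\}\setminus E_{K^{n}})$ of the alphabet of $\Sigma_{K^{2n}}$.

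Concretely, put $\Phi_{n}=\pi_{K^{n}}\circ(\tau_{K,n}\times\tau_{K,n})\colon\Sigma_{K}\times\Sigma_{K}\to\Sigma_{K^{2n}}$. Since $\tau_{K,n}\times\tau_{K,n}$ is bi-Lipschitz for the sup metric (Lemma~\ref{detosx}(b), hence Hausdorff-dimension preserving by Lemma~\ref{detviw}), and $\pi_{K^{n}}$ halves Hausdorff dimension (Lemma~\ref{detosw}(c)), we get $\dim_{H}\Phi_{n}(Z)=\tfrac12\dim_{H}Z$, i.e. $\dim_{H}Z=2\dim_{H}\Phi_{n}(Z)$, for every $Z\subset\Sigma_{K}\times\Sigma_{K}$. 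If $w=\Phi_{n}(x,y)$, then the $i$-th coordinate of $\tau_{K,n}(x)$ is the code of the block $x_{ni}\cdots x_{ni+n-1}$, so $w_{i}\in E_{K^{n}}$ holds exactly when the $i$-th length-$n$ blocks of $x$ and of $y$ coincide, that is, exactly when $i\in N_{\sigma_{K}^{n}\times\sigma_{K}^{n}}((x,y),\Delta_{K^{-(n-1)}})$. Writing $N(w)=\{i\ge0:w_{i}\in E_{K^{n}}\}$, this says $N(w)=N_{\sigma_{K}^{n}\times\sigma_{K}^{n}}((x,y),\Delta_{K^{-(n-1)}})$.

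Now take $(x,y)\in E_{\sigma_{K}}(\mathcal{J})$, so $\mathcal{E}_{\sigma_{K}}(x,y)=I$ for some $I\in\mathcal{J}$, and set $w=\Phi_{n}(x,y)$. By Lemma~\ref{detoss}, $\mathcal{E}_{\sigma_{K}^{n}}(x,y)=\mathcal{E}_{\sigma_{K}}(x,y)=I$, whence $\lim_{\epsilon\to0^{+}}\sup\mathcal{F}_{\sigma_{K}^{n}}((x,y),\epsilon)=\sup I\ge s_{0}$; by the monotonicity~(\ref{deqozz}) applied to $f=\sigma_{K}^{n}$ it follows that $\sup\mathcal{F}_{\sigma_{K}^{n}}((x,y),K^{-(n-1)})\ge s_{0}$, i.e. $\mu^{*}(N(w))\ge s_{0}$. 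The first-coordinate projection of $\mu_{\mathcal{K}}(w)$ is precisely the density spectrum $\mu(N(w))$, and $\mu_{\mathcal{K}}(w)$ is compact (Lemma~\ref{detosi}); hence $\mu^{*}(N(w))\ge s_{0}$ is equivalent to $\mu_{\mathcal{K}}(w)\cap P\ne\emptyset$, where $P=\{p\in Q_{2}:p_{0}\ge s_{0}\}$. Therefore $\Phi_{n}(E_{\sigma_{K}}(\mathcal{J}))\subset\{w\in\Sigma_{K^{2n}}:\mu_{\mathcal{K}}(w)\cap P\ne\emptyset\}$, and Lemma~\ref{detoso} gives $\dim_{H}\Phi_{n}(E_{\sigma_{K}}(\mathcal{J}))\le\sup_{p\in P}g_{\mathcal{K}}(p)$.

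It remains to estimate $\sup_{p\in P}g_{\mathcal{K}}(p)$. By Lemma~\ref{detvxw}, $g_{\mathcal{K}}$ is strictly concave with maximum at $p_{0}=\#(E_{K^{n}})/K^{2n}=K^{-n}$; since $K^{-n}<s_{0}$, the supremum over $P$ is attained at $p=(s_{0},1-s_{0})$, and from~(\ref{deqiie}), using $\#(E_{K^{n}})=K^{n}$, one computes
$$
g_{\mathcal{K}}(s_{0},1-s_{0})=1-\frac{s_{0}}{2}+\frac{H(s_{0})+(1-s_{0})\ln(1-K^{-n})}{2n\ln K},\qquad H(t):=-t\ln t-(1-t)\ln(1-t),
$$
and the last term tends to $0$ as $n\to\infty$. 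Hence $\dim_{H}E_{\sigma_{K}}(\mathcal{J})=2\dim_{H}\Phi_{n}(E_{\sigma_{K}}(\mathcal{J}))\le 2g_{\mathcal{K}}(s_{0},1-s_{0})\to 2-s_{0}$ as $n\to\infty$, which is the assertion. The crux is the block alignment in the second paragraph: only after passing to $\sigma_{K}^{n}$ (via Lemma~\ref{detoss}) does the approach-time set become block-aligned and thus coincide with a cell-frequency level set to which Lemma~\ref{detoso} applies directly; the choice $n=1$ yields only the trivial bound $\le 2$ when $s_{0}$ is small, so the passage $n\to\infty$ is genuinely needed.
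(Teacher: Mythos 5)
Your proof is correct and follows essentially the same route as the paper's: recode $\Sigma_K\times\Sigma_K$ into $\Sigma_{K^{2n}}$ (your $\pi_{K^n}\circ(\tau_{K,n}\times\tau_{K,n})$ agrees with the paper's $\tau_{K^2,n}\circ\pi_K$ up to a relabeling of the alphabet), use Lemma \ref{detoss} to identify the approach-time set at scale $K^{-(n-1)}$ with the frequency set of the diagonal cell, apply Lemma \ref{detoso} to the half-space $P=\{p_0\geq s_0\}$, evaluate $\sup g_{\mathcal{K}}(P)$ at the boundary via concavity (Lemma \ref{detvxw}), and let $n\to\infty$. The explicit formula you give for $g_{\mathcal{K}}(s_0,1-s_0)$ matches the paper's computation in (\ref{deqonu}), so no further comment is needed.
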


\begin{proof}
Let $X=\pi_K (E_{{\mathcal{J}}}(\sigma_K))$. Let

\begin{equation} \label{deqwoe}
q=\inf\{\sup I:I\in{\mathcal{J}}\}.
\end{equation}
By Lemma \ref{detosw}, to prove Lemma \ref{detonx}, it is enough to show $\dim_H X\leq1-\frac{q}{2}$. If $q=0$, the inequality is obvious. So we suppose $q>0$.

Let $P_{q}=\{r=(r_0,r_1)\in Q_2:r_0\geq {q}\}$. For $n \geq 1$, write
$$
\begin{aligned}
&K_{n,0}=\tau_{K^2}(E_{K}^{n}),\ K_{n,1}=\{0,\cdots,K^{2n}-1\}\setminus K_{n,0};\ \mathcal{K}_n=(K_{n,0},K_{n,1}).
\end{aligned}
$$

Suppose $x=\pi_K(y,z)\in X$, where $(y,z)\in E_{\mathcal{J}}({\sigma_K})=\mathscr{E}_{\sigma_K}^{-1}(\mathcal{J})$. Then for any $n\geq 1$,
$$
\begin{aligned}
q&\leq\sup \mathscr{E}_{\sigma_K}(y,z)\quad\text{(by $\mathscr{E}_{\sigma_K}(y,z)\in\mathcal{J}$ and (\ref{deqwoe}))}\\
&=\sup \mathscr{E}_{\sigma_{K}^{n}}(y,z)\quad\text{(by Lemma \ref{detoss})}\\
&=\lim_{i\rightarrow \infty}\limsup_{j\rightarrow \infty}\frac{1}{j}\#(\{0\leq k<j:y|_{\{kn,\cdots,(k+i)n-1\}} =z|_{\{kn,\cdots,(k+i)n-1\}}\})\\
&=\lim_{i\rightarrow \infty}\limsup_{j\rightarrow \infty}\frac{1}{j}\#(\{0\leq k<j:\sigma_{K^2}^{kn}(x)|_{\{0,\cdots,in-1\}}\in E_{K}^{in}\})\\
&=\lim_{i\rightarrow \infty}\limsup_{j\rightarrow \infty}\frac{1}{j}\#(\{0\leq k<j:\sigma_{K^{2n}}^{k}(\tau_{K^2,n}(x))|_{\{0,\cdots,i-1\}}\in K_{n,0}^{i}\})\\
&\leq\limsup_{j\rightarrow \infty}\frac{1}{j}\#(\{0\leq k<j:(\sigma_{K^{2n}}^{k}(\tau_{K^2,n}(x)))_0\in K_{n,0}\})\\
&=\sup\{p_0:p=(p_0,p_1)\in\mu_{\mathcal{K}_n}(\tau_{K^2,n}(x))\}\\
&\in\{p_0:p=(p_0,p_1)\in\mu_{\mathcal{K}_n}(\tau_{K^2,n}(x))\}\quad\text{(by the compactness of $\mu_{\mathcal{K}_n}(\tau_{K^2,n}(x))$)}.
\end{aligned}
$$
So $\mu_{\mathcal{K}_n}(\tau_{K^2,n}(x))\cap P_q\neq\emptyset$, i.e., $\tau_{K^2,n}(x)\in\{x'\in \Sigma_K:\mu_\mathcal{K} ({x'})\cap P_q\neq\emptyset\}$. Then

\begin{equation} \label{deqonw}
\tau_{K^2,n}(X)\subset\{x'\in \Sigma_K:\mu_\mathcal{K} ({x'})\cap P_q\neq\emptyset\}.
\end{equation}

Now

\begin{equation} \label{deqonx}
\begin{aligned}
\dim_H X&=\dim_H\tau_{K^2,n}(X)\quad\text{(by Lemma \ref{detosx})}\\
&\leq\dim_H\{x'\in \Sigma_K:\mu_\mathcal{K} ({x'})\cap P_q\neq\emptyset\}\quad\text{(by (\ref{deqonw}))}\\
&\leq\sup g_{\mathcal{K}_n}({P_q})\quad\text{(by Lemma \ref{detoso})}.\\
\end{aligned}
\end{equation}

Let $n\geq\frac{-\ln q}{\ln K}$. Then
\begin{equation}
\frac{\#( K_{n,0})}{\#(K_{n,0})+\#(K_{n,1})}=\frac{K^n}{K^{2n}}=\frac{1}{K^n}\leq q.
\label{1210290941}
\end{equation}
For $n\geq\frac{-\ln q}{\ln K}$, we have
\begin{equation} \label{deqonu}
\begin{aligned}
&\sup g_{\mathcal{K}_n}({P_q})\\
=&\sup g_{\mathcal{K}_n}\left(\left\{p=(p_0,p_1)\in Q_2:p_0\geq q\geq\frac{\#(K_{n,0})}{\#(K_{n,0})+\#(K_{n,1}}\right\}\right) \quad \text{by (\ref{1210290941})}\\
=&g_{\mathcal{K}_n}(q)\quad \text{(by Lemma \ref{detvxw} (c))}\\
=&\frac{-q\ln\frac q{K^n}-(1-q)\ln\frac {1-q}{K^{2n}-K^n}}{\ln K^{2n}}\\
=&\frac{-q\ln q-(1-q)\ln(1-q)}{\ln K^{2n}}\\
&+\frac{q\ln{K^n}+(1-q)\ln(K^{2n}-K^n)}{\ln K^{2n}}\\
\rightarrow&\frac12q+(1-q)\quad (n\rightarrow\infty)\\
=&1-\frac {q}{2}.
\end{aligned}
\end{equation}

In (\ref{deqonx}) let $n\rightarrow\infty$. Then using (\ref{deqonu}), we get $\dim_H X\leq1-\frac {q}{2}$.
\end{proof}

If $(X,\rho,f)$ is a TDS, then $(x,y)\in X\times X$ is said to be an {\bf asymptotical pair} if
$$
\lim_{i\rightarrow \infty}\rho(f^i(x),f^i(y))=0,
$$
a {\bf proximal pair} if
$$
\liminf_{i\rightarrow \infty}\rho(f^i(x),f^i(y))=0,
$$
a ($\delta$-){\bf distal pair} if
$$
\liminf_{i\rightarrow \infty}\rho(f^i(x),f^i(y))(\geq\delta)>0,
$$
and a ($\delta$-){\bf Li--Yorke pair} if
$$
\liminf_{i\rightarrow \infty}\rho(f^i(x),f^i(y))=0,\ \limsup_{i\rightarrow \infty}\rho(f^i(x),f^i(y))(\geq\delta)>0.
$$
We use Asym$(f)$, Prox$(f)$, Dist$(f)$ and LY$(f)$ to denote the set of asymptotical pairs, the set of proximal pairs,  the set of distal pairs and the set of Li--Yorke pairs of $f$, respectively.

The properties stated in the two lemmas below are direct.

\begin{lemma}\label{detiiu}
Let $(X,\rho,f)$ be a TDS. Then
$$
\begin{aligned}
&\mathrm{Asym} (f)\subset D_f({[1,1]}),\\
&\mathrm{Dist} (f)\subset D_f({[0,0]}),\\
&\mathrm{Prox} (f)=X\times X\setminus\mathrm{Dist}(f),\\
&\mathrm{LY} (f)=\mathrm{Prox}(f)\setminus\mathrm{Asym}(f)=X\times X\setminus(\mathrm{Dist}(f)\cup\mathrm{Asym}(f)).
\end{aligned}
$$
\end{lemma}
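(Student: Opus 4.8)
The plan is to verify each of the four assertions by unwinding the relevant definitions; all of them are elementary, the first two resting on the description of $D_f([p,p])$ through the density spectrum $\mathcal{F}_f$ introduced in Section~\ref{desoow}, and the last two being purely set-theoretic restatements of the definitions of proximal, distal, asymptotical and Li--Yorke pairs.

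First I would treat $\mathrm{Asym}(f)\subset D_f([1,1])$. Let $(x,y)\in\mathrm{Asym}(f)$, so $\rho(f^i(x),f^i(y))\to0$. Fix any $\epsilon>0$; then there is $N_\epsilon$ with $\rho(f^i(x),f^i(y))<\epsilon$ for all $i\geq N_\epsilon$, so $N_{f\times f}((x,y),\Delta_\epsilon)\supset\{N_\epsilon,N_\epsilon+1,\dots\}$ is cofinite and hence $\mu_n(N_{f\times f}((x,y),\Delta_\epsilon))\to1$, giving $\mathcal{F}_f((x,y),\epsilon)=\mu(N_{f\times f}((x,y),\Delta_\epsilon))=[1,1]$. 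Since $\epsilon>0$ was arbitrary, $\mathcal{F}_f((x,y),\epsilon)\equiv[1,1]$ on $(0,\epsilon_0]$ for every $\epsilon_0$, so $(x,y)\in D_f([1,1])$. Dually, for $(x,y)\in\mathrm{Dist}(f)$ I would set $d=\liminf_i\rho(f^i(x),f^i(y))>0$ and $\epsilon_0=d/2$; then for $0<\epsilon\leq\epsilon_0$ all but finitely many $i$ satisfy $\rho(f^i(x),f^i(y))>\epsilon$, so $N_{f\times f}((x,y),\Delta_\epsilon)$ is finite, $\mu_n\to0$, and $\mathcal{F}_f((x,y),\epsilon)=[0,0]$; hence $(x,y)\in D_f([0,0])$.

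For the two set identities, note that $\liminf_i\rho(f^i(x),f^i(y))\geq0$ always, and by definition $(x,y)$ is proximal exactly when this $\liminf$ is $0$ and distal exactly when it is positive, so $\mathrm{Prox}(f)$ and $\mathrm{Dist}(f)$ partition $X\times X$, which is the third line. For the fourth line, since $\rho\geq0$ one has $(x,y)\in\mathrm{Asym}(f)\Leftrightarrow\limsup_i\rho(f^i(x),f^i(y))=0$, hence $(x,y)\notin\mathrm{Asym}(f)\Leftrightarrow\limsup_i\rho(f^i(x),f^i(y))>0$; combining this with the proximality characterization of the condition $\liminf_i\rho(f^i(x),f^i(y))=0$ and the definition of a Li--Yorke pair yields $\mathrm{LY}(f)=\mathrm{Prox}(f)\setminus\mathrm{Asym}(f)$. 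Finally substituting $\mathrm{Prox}(f)=X\times X\setminus\mathrm{Dist}(f)$ and using $(U\setminus A)\setminus B=U\setminus(A\cup B)$ gives $\mathrm{LY}(f)=X\times X\setminus(\mathrm{Dist}(f)\cup\mathrm{Asym}(f))$.

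There is no genuine obstacle here; the only points demanding a little care are the passage from ``$\mathcal{F}_f((x,y),\epsilon)=[1,1]$ for every $\epsilon>0$'' to ``on some $(0,\epsilon_0]$'' (which is immediate) and, in the distal case, choosing the threshold $\epsilon_0$ strictly below $\liminf_i\rho(f^i(x),f^i(y))$ so that each approach time set $N_{f\times f}((x,y),\Delta_\epsilon)$ with $\epsilon\le\epsilon_0$ becomes finite. This is precisely why the statement can be called direct.
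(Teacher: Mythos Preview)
Your proposal is correct and is precisely the direct verification the paper has in mind: the paper does not give any argument for this lemma beyond declaring that ``the properties stated in the two lemmas below are direct,'' and your unwinding of the definitions of $\mathcal{F}_f$, $D_f([p,p])$, asymptotic, distal, proximal and Li--Yorke pairs is exactly the routine check that justifies that remark.
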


Recall that $N\subset\mathbb{N}$ is said to be {\bf syndetic} if for some $k\geq1$ we have $N-\{0,\cdots,k-1\} \supset \mathbb{N}$.

\begin{lemma}\label{detvvz}
Let $x,y\in\Sigma_K$.
\rnc{\labelenumi}{(\alph{enumi})}
\begin{enumerate}
\item $(x,y)$ is an asymptotic pair for $\sigma_K$ if and only if $\{i\geq0:x_i\neq y_i\}$ is finite.
\item $(x,y)$ is a distal pair for $\sigma_K$ if and only if $\{i\geq0:x_i\neq y_i\}$ is syndetic.  $\Box$
\end{enumerate}
\end{lemma}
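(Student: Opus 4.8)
The plan is to reduce both statements to elementary combinatorics of the difference set $D := \{i \ge 0 : x_i \ne y_i\} \subseteq \mathbb{N}$. The key preliminary observation I would record is that, since $(\sigma_K^j(x))_i = x_{i+j}$ and the metric on $\Sigma_K$ is $\rho(u,v) = K^{-\delta(u,v)}$ (Lemma \ref{detozi}), one has for every $j \ge 0$
$$\delta(\sigma_K^j(x), \sigma_K^j(y)) = \min\{\, d - j : d \in D,\ d \ge j\,\} =: g(j),$$
with the convention $\min \emptyset = +\infty$, so that $\rho(\sigma_K^j(x), \sigma_K^j(y)) = K^{-g(j)}$, where $K^{-\infty} := 0$ is exactly $\rho(\sigma_K^j(x),\sigma_K^j(y))$ when those two points agree. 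Since $t \mapsto K^{-t}$ is strictly decreasing, every $\liminf$/$\limsup$ condition in the definitions of asymptotic and distal pairs becomes a condition on the integer-valued function $g$.

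For (a): $(x,y)$ is asymptotic iff $K^{-g(j)} \to 0$ iff $g(j) \to \infty$ as $j \to \infty$. If $D$ is finite (in particular if $D = \emptyset$) then $g(j) = +\infty$ for every $j > \max D$, so $g(j) \to \infty$. Conversely, if $D$ is infinite then $g(d) = 0$ for each $d \in D$ and $D$ is unbounded, so $\limsup_j \rho(\sigma_K^j(x),\sigma_K^j(y)) = 1 > 0$ and $(x,y)$ is not asymptotic. This settles (a).

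For (b): $(x,y)$ is distal iff $\liminf_j K^{-g(j)} > 0$ iff $\limsup_j g(j) < \infty$, i.e. iff there exist $N \ge 0$ and $J_0 \ge 0$ with $D \cap \{j, j+1, \ldots, j+N\} \ne \emptyset$ for all $j \ge J_0$. On the other hand, unwinding the definition of syndeticity, $D$ is syndetic iff there is $k \ge 1$ with $D \cap \{n, n+1, \ldots, n+k-1\} \ne \emptyset$ for all $n \ge 0$. The direction ``syndetic $\Rightarrow$ the displayed boundedness condition'' is immediate (take $J_0 = 0$, $N = k-1$); for the converse I would take $k := J_0 + N + 1$ and note that a window $\{n, \ldots, n+k-1\}$ contains $\{n, \ldots, n+N\}$ when $n \ge J_0$ and contains the block $\{J_0, \ldots, J_0+N\}$ when $n < J_0$, each of which meets $D$. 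Hence the two conditions coincide, which is (b).

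The only point that is not purely mechanical is this last equivalence: distality is phrased via $\liminf_{i\to\infty}$ and is therefore blind to a finite initial segment of the orbit, whereas syndeticity is a statement about all of $\mathbb{N}$; the bridge is the elementary ``enlarge the gap bound by $J_0$'' trick that absorbs the finitely many exceptional starting indices into a larger syndeticity constant. Everything else — the formula for $\delta(\sigma_K^j x, \sigma_K^j y)$ and the monotone transfer of the limits through $K^{-(\cdot)}$ — is routine bookkeeping, which is presumably why the authors call the lemma ``direct.''
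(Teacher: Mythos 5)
Your proof is correct and is exactly the ``direct'' argument the paper omits: translating both conditions through $\rho(\sigma_K^j(x),\sigma_K^j(y))=K^{-g(j)}$ with $g(j)=\min\{d-j:d\in D,\ d\geq j\}$ and reading off the asymptotic/distal conditions as statements about the gaps of $D=\{i\geq 0:x_i\neq y_i\}$. You also correctly identify and handle the one non-trivial point, namely absorbing the finitely many initial indices (where the $\liminf$ gives no information) into a larger syndeticity constant $k=J_0+N+1$, so nothing is missing.
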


Let $N\subset\mathbb{N}$ with both $N$ and $N^c$ infinite. Write
$$
\begin{aligned}
&L_N=N\setminus(N+1)=\{l_{N,0}<l_{N,1}<\cdots\},\\
&R_N=(N+1)\setminus N=\{r_{N,0}<r_{N,1}<\cdots\}.
\end{aligned}
$$
Note that $l_{N,i}<r_{N,i}<l_{N,i+1}$ for $i\geq0$, $N=\mathbb{N}\cap\bigcup_{i\geq 0}[l_{N,i},r_{N,i})$ and

\begin{equation} \label{deqwou}
R_N\subset L_{N^c}\subset R_N\cup\{0\}.
\end{equation}

\begin{lemma}\label{detvoe}
Let $N\subset\mathbb{N}$ with both $N$ and $N^c$ infinite. Then for $n\geq1$,
$$
\{i\geq0:\{i,\cdots,i+n-1\}\subset N\}=N\setminus(R_N-\{0,\cdots,n-1\}).
$$
\end{lemma}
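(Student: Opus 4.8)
The plan is to reduce the set identity to a single pointwise equivalence. First I would observe that both sides are automatically contained in $N$: if $i$ belongs to the left-hand set then, taking the coordinate $j=0$, we get $i\in N$; and the right-hand set is $N$ minus something, so it too lies in $N$. Hence it suffices to fix an arbitrary $i\in N$ and prove
$$
\{i,\dots,i+n-1\}\subset N\iff i+j\notin R_N\ \text{for every}\ 0\le j\le n-1,
$$
because the condition on the right is, by definition of $R_N-\{0,\dots,n-1\}$, precisely the statement that $i\notin R_N-\{0,\dots,n-1\}$.

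For the forward implication nothing is needed beyond the fact that $R_N=(N+1)\setminus N$ is disjoint from $N$: if every $i+j$ with $0\le j\le n-1$ lies in $N$, then none of them can lie in $R_N$. For the reverse implication I would argue by taking a minimal counterexample. Suppose $i\in N$, none of $i,\dots,i+n-1$ lies in $R_N$, yet $\{i,\dots,i+n-1\}\not\subset N$. Since $i\in N$, there is a least index $k\in\{1,\dots,n-1\}$ with $i+k\notin N$. Minimality gives $i+k-1\in N$, so $i+k\in N+1$; together with $i+k\notin N$ this yields $i+k\in(N+1)\setminus N=R_N$. But $1\le k\le n-1$ means $i+k\in\{i,\dots,i+n-1\}$, contradicting the hypothesis. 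This establishes the equivalence, and hence the lemma; the case $n=1$ falls out trivially, the condition on the right then being the vacuously true $i\notin R_N$ for $i\in N$, consistent with $N\setminus R_N=N$.

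I do not expect a genuine obstacle here; the argument is a short combinatorial manipulation built on the block decomposition $N=\mathbb{N}\cap\bigcup_{i\ge0}[l_{N,i},r_{N,i})$ recorded just before the statement. The only points deserving a word of care are the trivial edge cases (when $i\notin N$, when $n=1$, and the harmless fact that $R_N-\{0,\dots,n-1\}$ may contain non-positive integers, which are simply not removed from $N$) and keeping $R_N$ and $N$ straight as disjoint sets. As an alternative, equally short route one can note directly that $\{i,\dots,i+n-1\}\subset N$ exactly when $i$ and $i+n-1$ lie in a common block $[l_{N,m},r_{N,m})$, i.e. when the interval $\{i,\dots,i+n-1\}$ contains no $r_{N,m}$, which is precisely the conjunction of $i\in N$ and $i\notin R_N-\{0,\dots,n-1\}$.
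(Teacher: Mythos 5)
Your proposal is correct and follows essentially the same route as the paper: the paper's proof is the one-line chain of equivalences $\{i,\dots,i+n-1\}\subset N\iff i\in N$ and $\{i,\dots,i+n-1\}\cap R_N=\emptyset\iff i\in N$ and $i\notin R_N-\{0,\dots,n-1\}$, which is exactly your pointwise equivalence. You merely supply the small detail the paper leaves implicit (the least-$k$ argument showing that an interval starting in $N$ can only leave $N$ by passing through a point of $R_N$), which is a harmless and correct elaboration.
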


\begin{proof}
Let $n\geq1$. Then $\{i,\cdots,i+n-1\}\subset N$ if and only if $i\in N$, and $\{i,\cdots,i+n-1\}\cap R_N=\emptyset$ if and only if $i\in N$ and $i\not\in R_N-\{0,\cdots,n-1\}$.
\end{proof}

Recall that $\zeta_n (N)=\#(N\cap\{0,\cdots,n-1\})$.

For $i\geq 0$, write
$$
\begin{aligned}
&t_{N,2i}=l_{N,i},\ t_{N,2i+1}=r_{N,i};\\
&d_{N,i}=t_{N,i+1}-t_{N,i};\\
&t_{N,i,j}=t_{N,i}+j,\ 0\leq j< d_{N,i};\\
&e_{N£¬i}=\zeta_{t_{N,i+1}} (N)=\sum_{0\leq 2j<i+1}d_{N,2j},\  f_{N,i}=\zeta_{t_{N,i+1}}(N^c)=t_{N,i+1}-e_{N,i}.\\
\end{aligned}
$$

\begin{lemma}\label{detosn}
Let $N\subset\mathbb{N}$ with both $N$ and $N^c$ infinite. Then
\rnc{\labelenumi}{(\alph{enumi})}
$$
\mu(N)=\left[\liminf_{i\rightarrow\infty}\frac{e_{N,2i+1}}{t_{N,2i+2}},\limsup_{i\rightarrow\infty}\frac{e_{N,2i}}{t_{N,2i+1}}\right].
$$
\end{lemma}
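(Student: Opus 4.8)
The plan is to translate the statement into one about the ordinary density sequence $\big(\mu_n(N)\big)_{n\ge 1}$ and then exploit a monotonicity of $n\mapsto\mu_n(N)$ across the runs and gaps of $N$.

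First I would record two elementary reductions. From the definitions $t_{N,2i+1}=r_{N,i}$, $t_{N,2i+2}=l_{N,i+1}$, $e_{N,2i}=\zeta_{t_{N,2i+1}}(N)$ and $e_{N,2i+1}=\zeta_{t_{N,2i+2}}(N)$ one reads off
$$
\frac{e_{N,2i}}{t_{N,2i+1}}=\mu_{r_{N,i}}(N)\qquad\text{and}\qquad\frac{e_{N,2i+1}}{t_{N,2i+2}}=\mu_{l_{N,i+1}}(N).
$$
Moreover, since $0\le\zeta_n(N)\le n$ and $\zeta_{n+1}(N)-\zeta_n(N)\in\{0,1\}$, one gets $\big|\mu_{n+1}(N)-\mu_n(N)\big|\le\frac2{n+1}\to0$, so by Lemma \ref{detvxv} the density spectrum $\mu(N)=\omega\big(\mu_n(N):n\ge1\big)$ is a closed subinterval of $[0,1]$; being the limit set of a real sequence it contains $\liminf_n\mu_n(N)$ and $\limsup_n\mu_n(N)$, hence $\mu(N)=\big[\liminf_n\mu_n(N),\limsup_n\mu_n(N)\big]$. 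Thus it suffices to prove $\limsup_n\mu_n(N)=\limsup_i\mu_{r_{N,i}}(N)$ and $\liminf_n\mu_n(N)=\liminf_i\mu_{l_{N,i+1}}(N)$.

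Next I would establish the monotonicity. A one-line computation gives
$$
\mu_{n+1}(N)-\mu_n(N)=
\begin{cases}
\dfrac{\zeta_n(N^c)}{n(n+1)}\ \ge\ 0,&n\in N,\\[2.2ex]
-\dfrac{\zeta_n(N)}{n(n+1)}\ \le\ 0,&n\notin N.
\end{cases}
$$
Since $N=\mathbb N\cap\bigcup_i[l_{N,i},r_{N,i})$, every $n\in\{l_{N,i},\dots,r_{N,i}-1\}$ lies in $N$ and every $n\in\{r_{N,i},\dots,l_{N,i+1}-1\}$ lies in $N^c$, so $n\mapsto\mu_n(N)$ is nondecreasing on each block $\{l_{N,i},\dots,r_{N,i}\}$ and nonincreasing on each block $\{r_{N,i},\dots,l_{N,i+1}\}$. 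In particular, over either type of block $\mu_n(N)$ is maximised at the endpoint equal to $r_{N,i}$, and over each block $\{r_{N,i},\dots,l_{N,i+1}\}$ or $\{l_{N,i+1},\dots,r_{N,i+1}\}$ it is minimised at the endpoint $l_{N,i+1}$.

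Finally I would run the squeeze. Passing to the strictly increasing subsequences $(r_{N,i})_i$ and $(l_{N,i+1})_i$ of $\mathbb N$ gives at once $\limsup_i\mu_{r_{N,i}}(N)\le\limsup_n\mu_n(N)$ and $\liminf_i\mu_{l_{N,i+1}}(N)\ge\liminf_n\mu_n(N)$. For the reverse inequalities, the finite blocks $\{l_{N,i},\dots,r_{N,i}\}$ and $\{r_{N,i},\dots,l_{N,i+1}\}$, $i\ge0$, cover $\{l_{N,0},l_{N,0}+1,\dots\}$, so for every sufficiently large $n$ there is an index $i=i(n)$, necessarily $\to\infty$, with $n$ in one of these two blocks; by the previous paragraph $\mu_n(N)\le\mu_{r_{N,i(n)}}(N)$, whence $\limsup_n\mu_n(N)\le\limsup_i\mu_{r_{N,i}}(N)$, and symmetrically for large $n$ there is $i=i(n)\to\infty$ with $n\in\{r_{N,i},\dots,l_{N,i+1}\}\cup\{l_{N,i+1},\dots,r_{N,i+1}\}$, so $\mu_n(N)\ge\mu_{l_{N,i(n)+1}}(N)$ and hence $\liminf_n\mu_n(N)\ge\liminf_i\mu_{l_{N,i+1}}(N)$. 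Combining with the first step gives the formula. The only point needing a little care is this last bookkeeping — choosing the right block for a given $n$ and verifying the extremum sits at the endpoint $r_{N,i}$ (resp. $l_{N,i+1}$) — together with the harmless caveat of discarding, when $0\notin N$, the initial segment $\{1,\dots,l_{N,0}-1\}$ on which $\mu_n(N)=0$; I do not expect a genuine obstacle beyond this.
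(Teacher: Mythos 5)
Your proposal is correct and follows essentially the same route as the paper: the paper's two displayed chains of inequalities for $\zeta_{t_{N,2i+1,j}+1}(N)/(t_{N,2i+1,j}+1)$ and $\zeta_{t_{N,2i+2,j}+1}(N)/(t_{N,2i+2,j}+1)$ are exactly your observation that $n\mapsto\mu_n(N)$ increases across runs of $N$ and decreases across gaps, so the peaks occur at the $r_{N,i}$ and the valleys at the $l_{N,i+1}$, and both arguments conclude by the same sandwich. Your derivation of the monotonicity from the one-step increment formula is a clean, equivalent packaging of what the paper verifies directly.
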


\begin{proof}
It is obvious that
$$
\mu(N)\supset\left[\liminf_{i\rightarrow\infty}\frac{e_{N,2i+1}}{t_{N,2i+2}},\limsup_{i\rightarrow\infty}\frac{e_{N,2i}}{t_{N,2i+1}}\right]
$$
Note that, for $0\leq j<d_{N,2i+2}$,

\begin{equation} \label{dsqoux}
\begin{aligned}
\frac{e_{N,2i+1}}{t_{N,2i+2}}&\leq\frac{e_{N,2i+1}+j+1}{t_{N,2i+2}+j+1}=\frac{\zeta_{t_{N,2i+2,j}+1}(N)}{t_{N,2i+2,j}+1}\\
&\leq\frac{e_{N,2i+1}+d_{N,2i+2}}{t_{N,2i+2}+d_{N,2i+2}}=\frac{e_{N,2i+2}}{t_{N,2i+3}}
\end{aligned}
\end{equation}
and for $0\leq j<d_{N,2i+1}$,

\begin{equation} \label{dsqouu}
\begin{aligned}
\frac{e_{N,2i+1}}{t_{N,2i+2}}&=\frac{e_{N,2i}}{t_{N,2i+2}}\leq\frac{e_{N,2i}}{t_{N,2i+1}+j+1}\\
&=\frac{\zeta_{t_{N,2i+1,j}+1}(N)}{t_{N,2i+1,j}+1}=\frac{\zeta_{t_{N,2i+1}}(N)}{t_{N,2i+1,j}+1}\\
&\leq\frac{\zeta_{t_{N,2i+1}}(N)}{t_{N,2i+1}}=\frac{e_{N,2i}}{t_{N,2i+1}}.
\end{aligned}
\end{equation}
Eqs (\ref{dsqoux}) and (\ref{dsqouu}) lead to
$$
\liminf_{i\rightarrow\infty}\frac{e_{N,2i+1}}{t_{N,2i+2}}\leq\inf\mu (N)\leq\sup\mu(N)\leq\limsup_{i\rightarrow\infty}\frac{e_{N,2i}}{t_{N,2i+1}}.
$$
So
$$
\mu(N)\subset\left[\liminf_{i\rightarrow\infty}\frac{e_{N,2i+1}}{t_{N,2i+2}},\limsup_{i\rightarrow\infty}\frac{e_{N,2i}}{t_{N,2i+1}}\right].
$$
\end{proof}

Let $[p,q]\in\mathcal{C}({[0,1]})$. Define
$$
\mathcal{M}({[p,q]})=\left\{N\subset\mathbb{N}:\frac{e_{N,2i+1}}{t_{N,2i+2}}\rightarrow p,\ \frac{e_{N,2i}}{t_{N,2i+1}}\rightarrow q\ \text{and}\ d_{N,i}\rightarrow\infty\right\}.
$$
Note that, by Lemma \ref{detosn}, for $N\in\mathcal{M}({[p,q]})$, $\mu(N)=[p,q]$.

\begin{lemma}\label{detvou}
(\cite{WS}) Let $[p,q]\in\mathcal{C}({[0,1]})$. Then $\mathcal{M}({[p,q]})\neq\emptyset$.
\end{lemma}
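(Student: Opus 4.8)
The plan is to exhibit, for each $[p,q]\in\mathcal{C}([0,1])$, a single set $N$ in $\mathcal{M}([p,q])$. A set $N\subset\mathbb{N}$ with $0\in N$ and $N,N^c$ both infinite is completely determined by its sequence of block lengths $d_{N,0},d_{N,1},d_{N,2},\dots$, the even-indexed blocks lying in $N$ and the odd-indexed ones in $N^c$. Writing $A_i=d_{N,0}+d_{N,2}+\cdots+d_{N,2i}$ for the $N$-mass accumulated through the first $i+1$ runs of $N$ and $C_i=d_{N,1}+d_{N,3}+\cdots+d_{N,2i+1}$ for the $N^c$-mass through the first $i+1$ gaps, one checks directly that $t_{N,2i+1}=A_i+C_{i-1}$, $t_{N,2i+2}=A_i+C_i$, and $e_{N,2i}=e_{N,2i+1}=A_i$. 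Hence, by the definition of $\mathcal{M}([p,q])$, it suffices to pick positive integers $d_{N,k}$ with
$$
d_{N,k}\to\infty,\qquad \frac{A_i}{A_i+C_{i-1}}\to q,\qquad \frac{A_i}{A_i+C_i}\to p.
$$

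First I would handle the generic range $0<p<q<1$ by an inductive ``target-chasing'' construction: having fixed $d_{N,0},\dots,d_{N,2i-1}$ (hence $A_{i-1},C_{i-1}$), choose the next $N$-run $d_{N,2i}$ so that the updated cumulative $N$-density equals $q$, and then the next $N^c$-run $d_{N,2i+1}$ so that it equals $p$. Solving $\frac{A_{i-1}+d}{A_{i-1}+C_{i-1}+d}=q$ and $\frac{A_i}{A_i+C_{i-1}+d'}=p$ gives $d_{N,2i}=\frac{q}{1-q}C_{i-1}-A_{i-1}$ and $d_{N,2i+1}=\frac{1-p}{p}A_i-C_{i-1}$; these are positive because inductively the density is $\approx p<q$ just after a gap and $\approx q>p$ just after a run. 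Rounding to integers costs $O(1)$ per step. Since the two relations force $C_i/C_{i-1}\to\frac{q(1-p)}{p(1-q)}>1$, the quantities $A_i,C_i$ and every block length grow geometrically, so $d_{N,k}\to\infty$ and the $O(1)$ rounding errors are negligible against the denominators; thus both prescribed limits hold. For the flat case $0<p=q<1$ there is no room to oscillate, so instead I would take the explicit choice $d_{N,2i}=i$ and $d_{N,2i+1}=\lceil\frac{1-p}{p}i\rceil$: then $A_i\sim i^2/2$, $C_i\sim\frac{1-p}{p}i^2/2$, both block lengths tend to $\infty$, and both sampled densities tend to $p=q$.

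For the boundary cases, where $p$ or $q$ equals $0$ or $1$, the same scheme applies with ``solve for the exact target'' replaced by monotonicity: when $q=1$, choose the $N$-run large enough that $\frac{A_{i-1}+d_{N,2i}}{t_{N,2i}+d_{N,2i}}>1-\frac1i$; when $p=0$, choose the following $N^c$-run large enough that $\frac{A_i}{t_{N,2i+1}+d_{N,2i+1}}<\frac1i$; these force $d_{N,k}\to\infty$ automatically. If $p=q\in\{0,1\}$ both conditions concern the same endpoint, and one simply takes a very sparse (resp.\ co-sparse) set, e.g.\ $d_{N,2i}=i$ and $d_{N,2i+1}=i!$ when $p=q=0$. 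In every case the sampled densities converge to the prescribed values and the runs grow to infinity, so the resulting $N$ lies in $\mathcal{M}([p,q])$; by Lemma~\ref{detosn} it moreover satisfies $\mu(N)=[p,q]$.

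The part needing the most care is keeping the three requirements mutually compatible: along the induction one must verify that the running density stays on the correct side of both $p$ and $q$, so that the derived block lengths come out positive, and that enforcing $d_{N,k}\to\infty$ (which is delicate precisely in the flat cases $p=q$, where geometric growth is unavailable) does not destroy convergence of the sampled densities. The explicit recipes above are chosen so that all of these verifications reduce to elementary asymptotic estimates on $A_i$ and $C_i$.
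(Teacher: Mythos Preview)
Your construction is correct and gives an $N\in\mathcal{M}([p,q])$ in every case. The paper does not prove Lemma~\ref{detvou} inline (it just cites \cite{WS}), but it supplies its own explicit construction later in Example~\ref{l:2.24}, and that construction differs in spirit from yours.

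You split into cases. In the generic range $0<p<q<1$ you chase the \emph{exact} targets $q$ and $p$ at alternate stages and observe that this forces geometric growth $A_i/A_{i-1}\to\frac{q(1-p)}{p(1-q)}>1$, so $d_{N,k}\to\infty$ comes for free and rounding errors are harmless. In the flat case $p=q\in(0,1)$ you switch to a polynomial recipe ($d_{N,2i}\asymp i$, $d_{N,2i+1}\asymp\frac{1-p}{p}i$), and you handle the endpoints $p=0$ or $q=1$ by monotone overshoot. (One cosmetic glitch: your flat recipe as written gives $d_{N,0}=0$; shifting to $d_{N,2i}=i+1$ fixes this without affecting the asymptotics.)

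The paper's Example~\ref{l:2.24} instead picks \emph{moving} targets $c_{2i+1}\to q$ and $c_{2i}\to p$, with $c_{2i+1}$ approaching $q$ at the explicit rate $q/\sqrt{2i+1}$ (or $1/\sqrt{2i+1}$ when $q=0$), and then defines each $t_k$ as the first time the running density crosses $c_k$. This single scheme covers all $[p,q]$ without case-splitting, and the $1/\sqrt{i}$ rate is chosen so that the resulting gaps satisfy $d_{N,i}\to\infty$ via the elementary estimates (\ref{e:2.29})--(\ref{e:2.30}). More importantly, the slow approach from below to $q$ yields the extra quantitative property
\[
\lim_{i\to\infty}\bigl(f_{N,2i}-(1-q)t_{N,2i+1}\bigr)=+\infty\quad(q>0),
\]
recorded as (\ref{e:2.31}), which is exactly what is used in the proof of Lemma~\ref{l:6.4}. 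Your construction proves the lemma as stated, but the $N$ you produce in the generic case has $e_{N,2i}/t_{N,2i+1}$ hitting $q$ essentially exactly, so (\ref{e:2.31}) need not hold; if you later want the specific $N$ of Example~\ref{l:2.24}, you would still have to build it separately.
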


Let $N\subset\mathbb{N}$ with both $N$ and $N^c$ infinite. Define
$$
\gamma_N:N\rightarrow\mathbb{N},\ \gamma_N(n)=\zeta_n(N)=\#(N\cap\{0,\cdots,n-1\}).
$$
In fact, $\gamma_N$ is the unique order preserving bijective map from $N$ to $\mathbb{N}$. More intuitively, if $N=\{n_{0}<n_{1}<\cdots\}$, then $\gamma_N(n_i)=i$, $i\geq0$. Define
$\Phi_{N,a}:\Sigma_K\rightarrow\Sigma_K$ by
$$
(\Phi_{N,a}(x))_i=
\begin{cases}
a_{\gamma N (i)},&i\in N,\\
x_{\gamma{N^c}(i)},&i\in N^c.
\end{cases}
$$
More intuitively, in the case $l_{N,0}=0$,
$$
\begin{aligned}
\Phi_{N,a}(x)=&a_0\cdots a_{r_{N,0}-1}x_0\cdots x_{(l_{N,1}-r_{N,0})-1}\\
&a_{r_{N,0}}\cdots a_{r_{N,0}+(r_{N,1}-l_{N,1})-1}x_{l_{N,1}-r_{N,0}}\cdots x_{(l_{N,1}-r_{N,0})+(l_{N,2}-r_{N,1})-1}\\
&\cdots\\
=&a_0\cdots a_{d_{N,0}-1}x_{0}\cdots x_{d_{N,1}-1}\\
&a_{d_{N,0}}\cdots a_{d_{N,0}+d_{N,2}-1}x_{d_{N,1}}\cdots x_{d_{N,1}+d_{N,3}-1}\\
&\cdots\\
=&a_0\cdots a_{e_{N,0}-1}x_{f_{N,0}}\cdots x_{f_{N,1}-1}\\
&a_{e_{N,1}}\cdots a_{e_{N,2}-1}x_{f_{N,2}}\cdots x_{f_{N,3}-1}\\
&\cdots.
\end{aligned}
$$
Note that $\Phi_{N,a}$ is a continuous injection. The idea of the definition of $\Phi_{N,a}$ comes from \cite{Mis}. Our definition is slightly different from the corresponding one in \cite{Mis}.

\begin{lemma}\label{dstowv}
Let $a,b,c\in\Sigma_K$. Suppose $(a,b)$ is $(K^{-k+1})$-distal for $\sigma_K$.
Let $[p,q]\in\mathcal{C}({[0,1]})$ and $N\in\mathcal{M}({[p,q]})$. Write $x=\Phi_{N,c}(a)$, $y=\Phi_{N,c}(b)$. Then
$$
\mathcal{F}_{\sigma_K}((x,y),\epsilon)\equiv [{p,q}]\ \text{for}\ 0<\epsilon\leq K^{-k+1}.
$$
Thus, $(x,y)\in D_{[p,q]}({\sigma_K})$.
\end{lemma}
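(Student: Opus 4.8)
The plan is to unwind the definition of $\Phi_{N,c}$ to pin down exactly where $x=\Phi_{N,c}(a)$ and $y=\Phi_{N,c}(b)$ disagree, and then to show that for every $\epsilon\in(0,K^{-k+1}]$ the approach time set $N_{\sigma_K\times\sigma_K}((x,y),\Delta_\epsilon)$ differs from $N$ only on a set of density zero; since $N\in\mathcal M([p,q])$ gives $\mu(N)=[p,q]$ by Lemma~\ref{detosn}, this yields $\mathcal F_{\sigma_K}((x,y),\epsilon)=[p,q]$ for all such $\epsilon$, which is precisely the assertion.

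First I would record that, by the definition of $\Phi_{N,c}$, one has $x_i=y_i=c_{\gamma_N(i)}$ for $i\in N$, while $x_i=a_{\gamma_{N^c}(i)}$ and $y_i=b_{\gamma_{N^c}(i)}$ for $i\in N^c$. Hence, with $D=\{j\ge0:a_j\ne b_j\}$, the disagreement set is $\{i:x_i\ne y_i\}=\{i\in N^c:\gamma_{N^c}(i)\in D\}$, and since $\gamma_{N^c}$ restricted to any single block $[t_{N,2j+1},t_{N,2j+2})$ of $N^c$ is a translation, this set meets such a block in a translated copy of $D$ intersected with an interval of the same length. Because $(a,b)$ is $(K^{-k+1})$-distal, Lemma~\ref{detvvz}(b) shows $D$ is syndetic, and the $\delta$-distal condition moreover forces $D\cap\{i,i+1,\dots,i+k-1\}\ne\emptyset$ for all large $i$; that is, the gaps of $D$ are eventually at most $k$.

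Next, fix $\epsilon\in(0,K^{-k+1}]$ and let $M$ be the integer with $K^{-M}<\epsilon\le K^{-M+1}$, so that $\rho(u,v)<\epsilon$ is equivalent to $\delta(u,v)\ge M$, and $\epsilon\le K^{-k+1}$ forces $M\ge k$. Then $N_{\sigma_K\times\sigma_K}((x,y),\Delta_\epsilon)$ equals $G_M:=\{i\ge0:x_j=y_j\text{ for }i\le j<i+M\}$. I would bound $G_M$ by $N$ from both sides up to a density-zero error. From below, Lemma~\ref{detvoe} gives $N\setminus(R_N-\{0,\dots,M-1\})=\{i:\{i,\dots,i+M-1\}\subset N\}\subset G_M$. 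From above, if $i\in N^c\cap G_M$ and $\gamma_{N^c}(i)$ is large, the window $\{i,\dots,i+M-1\}$ cannot lie entirely inside the $N^c$-block containing $i$, since otherwise its $\gamma_{N^c}$-translate would be an interval of length $M\ge k$ disjoint from $D$; hence $i$ must lie among the last $k-1$ positions of that block, so $G_M\setminus N$ meets each $N^c$-block in at most $k-1$ points apart from finitely many exceptions. Both $R_N-\{0,\dots,M-1\}$ and this second set lie in a bounded neighbourhood of the endpoints of the blocks of $N$; since $d_{N,i}\to\infty$, the number of blocks of $N$ and of $N^c$ meeting $\{0,\dots,n-1\}$ is $o(n)$, so $|(G_M\triangle N)\cap\{0,\dots,n-1\}|=o(n)$ and therefore $\mu(G_M)=\mu(N)=[p,q]$, i.e.\ $\mathcal F_{\sigma_K}((x,y),\epsilon)=[p,q]$. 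As $\epsilon\in(0,K^{-k+1}]$ was arbitrary, $\mathcal F_{\sigma_K}((x,y),\cdot)\equiv[p,q]$ on $(0,K^{-k+1}]$, so $(x,y)\in D_{\sigma_K}([p,q])$ with $\epsilon_0=K^{-k+1}$.

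The main obstacle is the boundary bookkeeping in the last step: one must carefully locate $G_M\triangle N$ within a bounded neighbourhood of the block endpoints of $N$, treating separately windows that stay inside one block and windows that straddle two blocks, and then convert ``boundedly many per block'' into ``density zero'' using $d_{N,i}\to\infty$. The observation that makes the upper bound work is that $\gamma_{N^c}$ acts as a pure translation on each individual $N^c$-block, so that the distal gap bound $k$ transfers verbatim to the disagreement set inside that block; everything else is routine once the disagreement set has been identified.
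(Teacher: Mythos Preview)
Your proposal is correct and follows essentially the same route as the paper: sandwich the approach set $G_M$ between $N\setminus(R_N-\{0,\dots,M-1\})$ and $N\cup(\text{last few positions of each }N^c\text{-block})\cup(\text{finite set})$, then use $d_{N,i}\to\infty$ to see both error sets have density zero. The paper packages the upper bound via the set $M_k=(R_{N^c}-\{0,\dots,k-1\})\cap\mathbb N$ and Lemma~\ref{detvoe} applied to $N^c$, which is exactly your ``last $k-1$ positions of the $N^c$-block'' observation; note that your sentence with the length-$M$ window literally yields only ``last $M-1$ positions'', but since $M$ is fixed this is equally a density-zero set, so the slip is harmless.
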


\begin{proof}
Suppose $0<\epsilon\leq K^{-k+1}$. Let $M=N_{\sigma_ K\times\sigma_K}((x,y),\Delta_\epsilon))$. Take $m$ with $K^{-m}<\epsilon$. Let
$$
M_m=(R_N-\{0,\cdots,m-1\})\cap\mathbb{N},\ M_k=(R_{N^c}-\{0,\cdots,k-1\})\cap\mathbb{N}.
$$
Since $\lim_{i\rightarrow \infty }t_{N,i+1}-t_{N,i}=+\infty$, Lemma \ref{dstoxm} implies $\mu (M_m)=\mu (M_k)=0$.

Suppose $i\in N\setminus M_m$. Then Lemma \ref{detvoe} implies that $\{i,\cdots,i+m-1\}\subset N$.
So, by the definition of $x,y$,
$$
\sigma_K^i(x)|_{\{0,\cdots,m-1\}}=\sigma_K^i(y)|_{\{0,\cdots,m-1\}}=\sigma_K^{\gamma_N(i)}(c)|_{\{0,\cdots,m-1\}},
$$
which means $\rho(\sigma_K^i(x),\sigma_K^i(y))\leq K^{-m}<\epsilon$ and thus $i\in M$. Then $N\setminus M_m\subset M$. So

\begin{equation} \label{deqwos}
\mu(M)\succeq\mu(N\setminus M_m)=\mu(N)=[p,q].
\end{equation}

Since $(a,b)$ is $(K^{-k+1})$-distal for $\sigma_K$, we may choose $t\in\mathbb{N}$ such that

\begin{equation} \label{deqoex}
\sigma_K^i(a)|_{\{0,\cdots,k-1\}}\neq\sigma_K^i(b)|_{\{0,\cdots,k-1\}}\ \text{for}\ i\geq t.
\end{equation}
Let $s$ be a number such that $\gamma_{N^c}(s)=t$. Suppose $i\in N^c\setminus M_k\setminus\{0,\cdots,s-1\}$. Then

\begin{equation} \label{deqoes}
\gamma_{N^c}(i)\geq t
\end{equation}
By Lemma \ref{detvoe},

\begin{equation} \label{deqoeu}
\{i,\cdots,i+k-1\}\subset N^c.
\end{equation}
Now (\ref{deqoes}), (\ref{deqoeu}), (\ref{deqoex}), and the definitions of $x$ and $y$ imply
$$
\sigma_K^i(x)|_{\{0,\cdots,k-1\}}=\sigma_K^{\gamma_{N^c}(i)}(a)|_{\{0,\cdots,k-1\}}\neq\sigma_K^{\gamma_{N^c}(i)}(b)|_{\{0,\cdots,k-1\}}=\sigma_K^i(y)|_{\{0,\cdots,k-1\}},
$$
which means $\rho(\sigma_K^i(x),\sigma_K^i(y))\geq K^{-k+1}\geq\epsilon$ and thus $i\in M^c$. Then
$$
N^c\setminus M_k\setminus\{0,\cdots,s-1\}\subset M^c,
$$
i.e. $M\subset N\cup M_k\cup\{0,\cdots,s-1\}$. Now

\begin{equation} \label{deqwoz}
\mu(M)\preceq\mu(N\cup M_k\cup\{0,\cdots,s-1\})=\mu(N)=[p,q].
\end{equation}
It follows from (\ref{deqwos}) and (\ref{deqwoz}) that $\mathcal{F}_{\sigma_K}((x,y),\epsilon)={[p,q]}$.
\end{proof}

\begin{lemma}\label{detosz}
Let ${[p,q]}\in\mathcal{C}({[0,1]})$. Then $\dim_H D_{\sigma_K}({[p,q]}) \geq2-{q}$.
\end{lemma}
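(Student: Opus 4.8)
The plan is to reduce to $\Sigma_{K^2}$ via the conjugacy $\pi_K$ and to build a suitable product set there. Since $\dim_H D_{\sigma_K}([p,q])=2\dim_H\pi_K(D_{\sigma_K}([p,q]))$ by Lemma \ref{detosw}(c), it suffices to exhibit, for each large $\ell\in\mathbb N$, a product subset $Z_\ell\subset\Sigma_{K^2}$ with $\pi_K^{-1}(Z_\ell)\subset D_{\sigma_K}([p,q])$ and $\dim_H Z_\ell$ as close to $1-\tfrac q2$ as we please; then $\dim_H D_{\sigma_K}([p,q])\ge 2\dim_H Z_\ell$ for every $\ell$, and letting $\ell\to\infty$ gives the claim. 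I would fix $N\in\mathcal M([p,q])$ (Lemma \ref{detvou}) with $0\in N$ (after a shift), so that $\mathbb N$ is the disjoint union of the alternating blocks $[t_{N,2i},t_{N,2i+1})\subset N$ and $[t_{N,2i+1},t_{N,2i+2})\subset N^c$, with all block lengths $d_{N,i}\to\infty$. Recall $E_K$ is the $\pi_K$-image of the diagonal letter pairs, so $(\pi_K(x,y))_j\in E_K\iff x_j=y_j$.

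Construction of $Z_\ell$. Require $z_j\in E_K$ for every $j\in N$. Inside each $N^c$-block, designate as \emph{gap coordinates} those whose position in the block is a multiple of $\ell+1$, together with the last one, and call the remaining coordinates \emph{free}; on a gap coordinate impose $z_j\in\{0,\dots,K^2-1\}\setminus E_K$ and on a free coordinate impose nothing. This presents $Z_\ell$ as a product $\prod_k B_k$ over pieces $B_k$ of length $1$ (each single coordinate in $N$, each gap coordinate) or $\le\ell$ (each maximal run of free coordinates), with $\#B_k$ equal to $K^{|B_k|}$, $(K^2-K)^{|B_k|}$, or $(K^{2})^{|B_k|}$ respectively; in particular all $|B_k|\le\ell$.

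Membership. Let $z\in Z_\ell$, $(x,y)=\pi_K^{-1}(z)$, and $S=\{j:x_j=y_j\}=\{j:z_j\in E_K\}$. Then $N\subset S$, and since each $N^c$-block begins and ends with a gap coordinate and every run of free coordinates has length $\le\ell$, no set of more than $\ell$ consecutive integers contained in $S$ can leave a single $N$-block; hence for $m>\ell$ the eroded set $\{i:\{i,\dots,i+m-1\}\subset S\}$ equals $\{i:\{i,\dots,i+m-1\}\subset N\}$. By Lemma \ref{detvoe} the latter is $N\setminus(R_N-\{0,\dots,m-1\})$, and since $d_{N,i}\to\infty$ the consecutive differences of $R_N$ tend to $\infty$, so by Lemma \ref{dstoxm}(b) it has density $\mu(N)=[p,q]$. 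As $\mathcal F_{\sigma_K}((x,y),\epsilon)$ is the density of $\{i:\{i,\dots,i+m\}\subset S\}$ whenever $K^{-m-1}<\epsilon\le K^{-m}$, it follows that $\mathcal F_{\sigma_K}((x,y),\cdot)\equiv[p,q]$ on $(0,K^{-\ell}]$, i.e.\ $\pi_K^{-1}(Z_\ell)\subset D_{\sigma_K}([p,q])$. (This step is in effect Lemma \ref{dstowv}: every such $(x,y)$ has the form $(\Phi_{N,c}(a),\Phi_{N,c}(b))$ for a suitably distal $(a,b)$.)

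Dimension, and the main difficulty. I would bound $\dim_H Z_\ell$ by Lemma \ref{detoue} (whose proof carries over verbatim to $\Sigma_{K^2}$); because the piece lengths are bounded by $\ell$ while $\sum_{k<j}|B_k|\to\infty$,
\[
\dim_H Z_\ell\ \ge\ \liminf_{j\to\infty}\frac{\sum_{0\le k<j}\ln\#B_k}{\sum_{0\le k<j+1}\ln (K^2)^{|B_k|}}\ \ge\ \liminf_{n\to\infty}\frac1{2n\ln K}\sum_{0\le j'<n}v(j'),
\]
where $v(j')=\ln K$ on $N$, $\ln(K^2-K)$ on gap coordinates, $2\ln K$ on free coordinates. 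Since $d_{N,i}\to\infty$, the proportion of gap coordinates inside $N^c\cap[0,n)$ tends to $\tfrac1{\ell+1}$, so with $\gamma=\tfrac{\ln(K^2-K)}{2\ln K}\in[\tfrac12,1)$ the right side equals $\liminf_n\big(\tfrac{\mu_n(N)}2+(1-\mu_n(N))\beta_\ell\big)$ where $\beta_\ell=1-\tfrac{1-\gamma}{\ell+1}\in(\tfrac12,1)$; as $x\mapsto\tfrac x2+(1-x)\beta_\ell$ is nonincreasing and $\limsup_n\mu_n(N)=\sup\mu(N)=q$, this equals $\tfrac q2+(1-q)\beta_\ell$, which $\to 1-\tfrac q2$ as $\ell\to\infty$. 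The heart of the matter is the trade-off forced by the construction: membership in $D_{\sigma_K}([p,q])$ forbids long $E_K$-runs off $N$ (otherwise the eroded approach-time sets lose the density $[p,q]$), and this costs coding freedom on $N^c$; only letting the admissible run length $\ell$ grow recovers the full exponent $2-q$. A secondary but essential point is that, $N$ having unbounded blocks $d_{N,i}$, one must apply Lemma \ref{detoue} along a cutting of $\mathbb N$ into bounded pieces rather than along the blocks of $N$ themselves, so that the $j{+}1$ shift in the denominator of that lemma becomes negligible.
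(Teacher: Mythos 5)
Your proposal is correct and is essentially the paper's own argument: reduce to $\Sigma_{K^2}$ via $\pi_K$, fix $N\in\mathcal M([p,q])$, build a product set that forces agreement ($E_K$) on $N$ and enough forced disagreement off $N$ to kill long agreement runs, verify membership through the $\Phi_{N,c}$/distality mechanism of Lemma \ref{dstowv}, lower-bound the dimension by Lemma \ref{detoue}, and let an auxiliary parameter tend to infinity to recover the exponent $1-\tfrac q2$. The only difference is cosmetic: the paper's set $X_n$ uses length-$n$ blocks constrained to $E_K^n$ on $N$ and to $W_{K^2,n}\setminus E_K^n$ on $N^c$ (so the lost coding freedom is $\tfrac{\ln(K^{2n}-K^n)}{\ln K^{2n}}\to1$ as $n\to\infty$), whereas you sprinkle single forced non-$E_K$ letters at density $\tfrac1{\ell+1}$ with free letters in between; both designs serve the identical purpose and the same trade-off drives the limit.
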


\begin{proof}
By Lemma \ref{detvou}, pick $N\in\mathcal{M}({[p,q]})$. For $n\geq 1$, we define
$$
X_n=\prod_{i\geq 0}C_i,
$$
where
$$
C_i=
\begin{cases}
E_{K}^n,&i\in N,\\
W_{K^2,n} \setminus E_{K}^n ,&i\in N^c.
\end{cases}
$$
Let $Y_n=\pi_K^{-1} ({X_n})$.

Suppose $x=\pi_K(y,z)\in X_n$, where $(y,z)\in Y_n\subset\Sigma_K\times\Sigma_K $. Then there are $a,b,c\in\Sigma_K$ with
\begin{equation} \label{deqosw}
y=\Phi_{nN+\{0,\cdots,n-1\},a}(b),\ z=\Phi_{nN+\{0,\cdots,n-1\},a}(c)
\end{equation}
and

\begin{equation} \label{deqosx}
\pi_K(b,c)\in\prod_{i\geq 0}C_i.
\end{equation}
By (\ref{deqosx}) and the definitions of $B_i$ and $\pi_K$, for each $i\geq 0$, $b|_{\{i,\cdots,i+{2n-2}\}}\neq c|_{\{i,\cdots,i+{2n-2}\}}$, i.e., $\rho(\sigma_K^i(b),\sigma_K^i(c))\geq K^{-2n+2}$. So $(b,c)$ is a  $(K^{-2n+2})$-distal pair for $\sigma_K$. Note that, since $N\in\mathcal{M}({[p,q]})$, part (c) of Lemma \ref{dstoxm} implies that  $nN+\{0,\cdots,n-1\}\in\mathcal{M}({[p,q]})$. Then, by (\ref{deqosx}), (\ref{deqosw}) and Lemma \ref{dstowv}, $(y,z)\in D_{[p,q]}({\sigma_K})$. So

\begin{equation} \label{deqosu}
Y_n\subset D_{\sigma_K}({[p,q]}).
\end{equation}

Note that since, for each $k\geq 0$,
$$
K^n=\#(\{i+Ki:0\leq i<K\}^{n})\leq\#(C_k)\leq\#( W_{K^2,n} \backslash \{i+Ki:0\leq i<K\}^{n})=K^{2n}-K^n,
$$
we have
$$
\frac{\ln K^n}{\ln K^{2n}}\leq\frac{\sum_{0 \leq k < t_{N,i,j} }\ln\#( C_k)}{\sum_{0 \leq k < t_{N,i,j} }\ln K^{2n}}\leq\frac{\ln (K^{2n}-K^n)}{\ln K^{2n}}.
$$
Then

\begin{equation} \label{deqoss}
\begin{aligned}
&\frac{\sum_{0 \leq k < t_{N,2i,j}}\ln\#(C_k)}{\sum_{0 \leq k < t_{N,2i,j}+1}\ln K^{2n}}\\
&=\frac{\sum_{0 \leq k < t_{N,2i}}\ln\#( C_k)+\sum_{0 \leq k < j}\ln\#( C_{t_{N,2i}+k})}{( t_{N,2i}+j)\ln K^{2n}}
 \cdot\frac{t_{N,2i}+j}{t_{N,2i}+j+1}\\
&=\frac{e_{N,2i-1}\ln K^n+f_{N,2i-1}\ln (K^{2n}-K^n)+j\ln K^n}{(t_{N,2i}+j)\ln K^{2n}}
 \cdot\frac{t_{N,2i}+j}{t_{N,2i}+j+1}\\
&\geq\frac{e_{N,2i-1}\ln K^n+f_{N,2i-1}\ln (K^{2n}-K^n)+d_{N,2i}\ln K^n}{(t_{N,2i}+d_{N,2i})\ln K^{2n}}
 \cdot \frac{t_{N,2i}}{t_{N,2i}+1}\\
&=\frac{e_{N,2i}\ln K^n+f_{N,2i}\ln (K^{2n}-K^n)}{t_{N,2i+1}\ln K^{2n}}\cdot\frac{t_{N,2i}}{t_{N,2i}+1}\\
&\rightarrow\frac{q}{2}+(1-{q})\cdot\frac{\ln(K^{2n}-K^n)}{\ln K^{2n}},\ i\rightarrow \infty,
\end{aligned}
\end{equation}
and

\begin{equation} \label{deqosz}
\begin{aligned}
&\frac{\sum_{0 \leq k <t_{N,2i+1,j}}\ln\#( C_k)}{\sum_{0 \leq k <t_{N,2i+1,j}+1}\ln K^{2n}}\\
&=\frac{\sum_{0 \leq k <t_{N,2i+1}}\ln\#( C_k)+\sum_{0 \leq k <j}\ln\#( C_{t_{N,2i+1}+k})}{(t_{N,2i+1}+j)\ln K^{2n}}
 \cdot\frac{t_{N,2i+1}+j}{t_{N,2i+1}+j+1}\\
&=\frac{e_{N,2i}\ln K^n+f_{N,2i}\ln (K^{2n}-K^n)+j\ln (K^{2n}-K^n)}{(t_{N,2i+1}+j)\ln K^{2n}}
 \cdot\frac{t_{N,2i+1}+j}{t_{N,2i+1}+j+1}\\
&\geq\frac{e_{N,2i}\ln K^n+f_{N,2i}\ln (K^{2n}-K^n)}{t_{N,2i+1}\ln K^{2n}}\cdot\frac{t_{N,2i+1}}{t_{N,2i+1}+1}\\
&\rightarrow\frac{q}{2}+(1-{q})\cdot\frac{\ln(K^{2n}-K^n)}{\ln K^{2n}},\ i\rightarrow \infty.
\end{aligned}
\end{equation}
Eqs (\ref{deqoss}) and (\ref{deqosz}) lead to

\begin{equation} \label{deqose}
\liminf\frac{\sum_{0\leq i<j}\ln\#( C_i)}{\sum_{0\leq i<j+1}\ln K^{2n}}\geq\frac {q}2+(1-{q})\cdot\frac{\ln(K^{2n}-K^n)}{\ln K^{2n}}.
\end{equation}
Applying Lemma \ref{detoue} to (\ref{deqose}), we get

\begin{equation} \label{deqosn}
\dim_H X_n\geq\frac{q}{2}+(1-{q})\cdot\frac{\ln(K^{2n}-K^n)}{\ln K^{2n}}.
\end{equation}
Now

\begin{equation} \label{deqozo}
\begin{aligned}
\dim_{H}D_{\sigma_K}({[p,q]})&\geq\dim_{H} Y_n \quad\text{(by (\ref{deqosu}))}\\
&= 2\dim_{H} X_n \quad\text{(by Lemma \ref{detosw})}\\
&\geq 2\left(\frac{q}{2}+(1-{q})\cdot\frac{\ln(K^{2n}-K^n)}{\ln K^{2n}}\right) \quad\text{(by (\ref{deqosn}))}\\
&\rightarrow 2-q,\ n\rightarrow \infty.
\end{aligned}
\end{equation}
Since (\ref{deqozo}) holds for any $n\geq 1$, we have $\dim_{H} D_{\sigma_K}({[p,q]})\geq2-{q}$.
\end{proof}

\begin{lemma} \label{p1202141124}
Let $\emptyset\neq {\mathcal{J}}\subset{\mathcal{C}({[0,1]})}$. Then
$$
\dim_{H} D_{\sigma_K}({\mathcal{J}})\geq2-\inf\{\sup I:I\in{\mathcal{J}}\}.
$$
\end{lemma}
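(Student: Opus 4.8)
The plan is to reduce immediately to the single-interval estimate of Lemma~\ref{detosz}. By definition $D_{\sigma_K}(\mathcal{J})=\bigcup_{I\in\mathcal{J}}D_{\sigma_K}(I)$, so for every $I\in\mathcal{J}$ we have $D_{\sigma_K}(I)\subset D_{\sigma_K}(\mathcal{J})$. Writing $I=[p,q]$ with $q=\sup I$, Lemma~\ref{detosz} gives $\dim_H D_{\sigma_K}(I)\geq 2-q$. Since Hausdorff dimension is monotone with respect to inclusion, it follows that $\dim_H D_{\sigma_K}(\mathcal{J})\geq \dim_H D_{\sigma_K}(I)\geq 2-\sup I$ for each $I\in\mathcal{J}$.

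Taking the supremum over $I\in\mathcal{J}$ then yields
$$
\dim_H D_{\sigma_K}(\mathcal{J})\;\geq\;\sup_{I\in\mathcal{J}}\bigl(2-\sup I\bigr)\;=\;2-\inf_{I\in\mathcal{J}}\sup I,
$$
which is exactly the asserted bound. The only input is the lower bound for a single interval, so there is no genuine obstacle here; the one point worth flagging is that $\mathcal{J}$ may be uncountable, so one should invoke plain monotonicity of $\dim_H$ rather than the countable-union identity~(\ref{deqvus}) of Lemma~\ref{detoex}, which would not apply directly. (Note also that $2-\sup I\geq 1>0$ for every $I\in\mathcal{C}([0,1])$, so each set $D_{\sigma_K}(I)$ is in particular nonempty, and the estimate is vacuously consistent.)
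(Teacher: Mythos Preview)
Your proof is correct and follows essentially the same approach as the paper: both reduce to the single-interval lower bound of Lemma~\ref{detosz} via the inclusion $D_{\sigma_K}(I)\subset D_{\sigma_K}(\mathcal{J})$ and monotonicity of Hausdorff dimension. The paper phrases this with an $\epsilon$-argument (pick $I\in\mathcal{J}$ with $\sup I<q+\epsilon$), while you take the supremum over $I$ directly; these are equivalent presentations of the same idea.
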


\begin{proof}
Let $q=\inf\{\sup I:I\in{{\mathcal{J}}}\}$. Suppose $\epsilon>0$. Pick $[p_0,p_1]\in{{\mathcal{J}}}$ with $p_1<q+\epsilon$. Using Lemma \ref{detosz}, we get
\begin{equation} \label{q1202141131}
\begin{aligned}
\dim_{H} D_{\sigma_K}({\mathcal{J}})\geq \dim_{H} D_{\sigma_K}({[p_0,p_1]}) =2-p_1>2-q-\epsilon.
\end{aligned}
\end{equation}
Since (\ref{q1202141131}) holds for any $\epsilon>0$, it follows that $\dim_{H} D_{\sigma_K}({\mathcal{J}})\geq2-q$.
\end{proof}

\begin{theorem}\label{detose}
Let $\emptyset\neq {{\mathcal{J}}}\subset{\mathcal{C}({[0,1]})}$. Then
$$
\dim_{H} E_{\sigma_K}({\mathcal{J}})=\dim_{H} D_{\sigma_K}({\mathcal{J}})=2-\inf\{\sup I:I\in{{\mathcal{J}}}\}.
$$
\end{theorem}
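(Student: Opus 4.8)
The plan is to obtain the identity by sandwiching the common value between bounds that have already been established, so that the proof of Theorem \ref{detose} reduces to chaining three inequalities. Write $q := \inf\{\sup I : I \in \mathcal{J}\}$. First I would record the elementary inclusion coming straight from the definitions in Section \ref{desoow}: since $D_f([p,q]) \subset E_f([p,q])$ for every $[p,q]\in\mathcal{C}([0,1])$, taking the union over all $I \in \mathcal{J}$ gives $D_{\sigma_K}(\mathcal{J}) \subset E_{\sigma_K}(\mathcal{J})$, whence, by the monotonicity of Hausdorff dimension (equivalently, by Lemma \ref{detoex}),
$$\dim_{H} D_{\sigma_K}(\mathcal{J}) \le \dim_{H} E_{\sigma_K}(\mathcal{J}).$$

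Next I would invoke the two substantive lemmas already proved: Lemma \ref{detonx} supplies the upper bound $\dim_{H} E_{\sigma_K}(\mathcal{J}) \le 2 - q$, and Lemma \ref{p1202141124} supplies the lower bound $\dim_{H} D_{\sigma_K}(\mathcal{J}) \ge 2 - q$. Combining the three displayed inequalities yields
$$2 - q \le \dim_{H} D_{\sigma_K}(\mathcal{J}) \le \dim_{H} E_{\sigma_K}(\mathcal{J}) \le 2 - q,$$
so all three quantities equal $2 - q$, which is the assertion. Specializing to $\mathcal{J} = \{[p,q]\}$ then recovers the stated formula $\dim_{H} E_{\sigma_K}([p,q]) = \dim_{H} D_{\sigma_K}([p,q]) = 2 - q$ of (\ref{q1203141204}).

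I do not expect any genuine obstacle at this stage. The only things to check are that the set-theoretic inclusion $D_f\subset E_f$ and the union conventions for $E_f(\mathcal{J})$ and $D_f(\mathcal{J})$ are exactly as defined, and both are immediate. All the real difficulty was absorbed into the preparatory results: the covering estimate of Lemma \ref{detonx}, which rests on the entropy/concavity behaviour of $g_{\mathcal{K}_n}$ (Lemma \ref{detvxw}) together with the dimension bound of Lemma \ref{detoso}; and the matching lower bound of Lemma \ref{detosz}, built from the explicit product Cantor sets $X_n$ inside $D_{\sigma_K}([p,q])$ via the insertion maps $\Phi_{N,c}$ and Lemma \ref{dstowv}. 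Those I take as given, so the remaining argument is the short concatenation above.
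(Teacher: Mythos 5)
Your argument is exactly the paper's: the authors also prove Theorem \ref{detose} by combining the upper bound of Lemma \ref{detonx}, the lower bound of Lemma \ref{p1202141124}, and the inclusion $D_{\sigma_K}(\mathcal{J})\subset E_{\sigma_K}(\mathcal{J})$. The chain of inequalities you write out is the intended (and correct) one-line proof.
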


\begin{proof}
Apply Lemma \ref{detonx} and Lemma \ref{p1202141124} to $ D_{\sigma_K}({\mathcal{J}})\subset E_{\sigma_K}({\mathcal{J}})$.
\end{proof}

\begin{corollary}\label{detonw}
Let $[p,q]\in\mathcal{C}({[0,1]})$. Then
$$
\dim_{H} E_ {\sigma_K}({[p,q]})=\dim_{H} D_{\sigma_K}({[p,q]}) =2-{q}.
$$
\end{corollary}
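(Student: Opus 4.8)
The plan is to derive Corollary~\ref{detonw} as the one-interval specialization of Theorem~\ref{detose}, which has already been established. First I would set $\mathcal{J}=\{[p,q]\}$ and note that, by the definitions $E_f(\mathcal{J})=\bigcup_{I\in\mathcal{J}}E_f(I)$ and $D_f(\mathcal{J})=\bigcup_{I\in\mathcal{J}}D_f(I)$, a singleton collection gives $E_{\sigma_K}(\{[p,q]\})=E_{\sigma_K}([p,q])$ and $D_{\sigma_K}(\{[p,q]\})=D_{\sigma_K}([p,q])$, so no ambiguity of notation arises.

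Next I would compute the index appearing in Theorem~\ref{detose} for this $\mathcal{J}$: since $\mathcal{J}$ has the single element $[p,q]$, we have
$$
\inf\{\sup I:I\in\mathcal{J}\}=\sup[p,q]=q.
$$
Substituting into the conclusion of Theorem~\ref{detose} then yields
$$
\dim_H E_{\sigma_K}([p,q])=\dim_H D_{\sigma_K}([p,q])=2-q,
$$
which is exactly the assertion of the corollary.

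There is no real obstacle here: the substantive work is already contained in the proof of Theorem~\ref{detose}, namely the upper bound $\dim_H E_{\sigma_K}(\mathcal{J})\le 2-\inf\{\sup I:I\in\mathcal{J}\}$ from Lemma~\ref{detonx} together with the lower bound $\dim_H D_{\sigma_K}(\mathcal{J})\ge 2-\inf\{\sup I:I\in\mathcal{J}\}$ from Lemmas~\ref{detosz} and \ref{p1202141124}, sandwiching both quantities via the inclusion $D_{\sigma_K}(\mathcal{J})\subset E_{\sigma_K}(\mathcal{J})$. The corollary is thus purely a matter of reading off the case $\mathcal{J}=\{[p,q]\}$, and the only thing to verify is the trivial evaluation of the infimum over a one-element set.
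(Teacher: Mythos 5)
Your proposal is correct and is exactly the argument the paper intends: the corollary is the specialization of Theorem~\ref{detose} to the singleton $\mathcal{J}=\{[p,q]\}$, for which $\inf\{\sup I:I\in\mathcal{J}\}=q$. The paper leaves this step implicit, and your evaluation of the infimum and the pointer to the underlying bounds (Lemma~\ref{detonx} for the upper bound, Lemmas~\ref{detosz} and~\ref{p1202141124} for the lower bound) match its reasoning.
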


\begin{corollary}\label{detonv}
For ${\mathcal{J}}\subset{\mathcal{C}({[0,1]})}$,
$$
\dim_{H} E_{\sigma_K}({\mathcal{J}})=\sup_{I\in{\mathcal{J}}}\dim_{H} E_{\sigma_K}({I}),\ \dim_{H} D_{\sigma_K}({\mathcal{J}})=\sup_{I\in{\mathcal{J}}}\dim_{H} D_{\sigma_K}({I}).
$$
\end{corollary}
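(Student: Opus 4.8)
The plan is to derive Corollary~\ref{detonv} directly from the closed-form value of the Hausdorff dimension established above, rather than from the countable-stability statement of Lemma~\ref{detoex}. Since $\mathcal{J}$ may be an uncountable subfamily of $\mathcal{C}([0,1])$, the subadditivity $\mathscr{H}^s(\bigcup_{I\in\mathcal{J}}E_{\sigma_K}(I))\le\sum_{I\in\mathcal{J}}\mathscr{H}^s(E_{\sigma_K}(I))$ is not available, so the naive union argument does not apply; I will instead route everything through the per-interval formula of Corollary~\ref{detonw} together with Theorem~\ref{detose}. I treat $\mathcal{J}\neq\emptyset$, the empty case being vacuous under the usual conventions.

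First I would record the trivial inequality. For every $I\in\mathcal{J}$ the definitions give $E_{\sigma_K}(I)\subset E_{\sigma_K}(\mathcal{J})$ and $D_{\sigma_K}(I)\subset D_{\sigma_K}(\mathcal{J})$, so monotonicity of $\dim_H$ under inclusion yields $\dim_H E_{\sigma_K}(I)\le\dim_H E_{\sigma_K}(\mathcal{J})$ and likewise for $D$; taking the supremum over $I\in\mathcal{J}$ gives
\[
\sup_{I\in\mathcal{J}}\dim_H E_{\sigma_K}(I)\le\dim_H E_{\sigma_K}(\mathcal{J}),\qquad
\sup_{I\in\mathcal{J}}\dim_H D_{\sigma_K}(I)\le\dim_H D_{\sigma_K}(\mathcal{J}).
\]

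For the reverse inequalities I would invoke Corollary~\ref{detonw}: for each $I=[p,q]\in\mathcal{J}$ one has $\dim_H E_{\sigma_K}(I)=\dim_H D_{\sigma_K}(I)=2-q=2-\sup I$. Hence, using $\sup_{I\in\mathcal{J}}(2-\sup I)=2-\inf_{I\in\mathcal{J}}\sup I$,
\[
\sup_{I\in\mathcal{J}}\dim_H E_{\sigma_K}(I)=\sup_{I\in\mathcal{J}}\dim_H D_{\sigma_K}(I)=2-\inf\{\sup I:I\in\mathcal{J}\}.
\]
By Theorem~\ref{detose} the right-hand side equals $\dim_H E_{\sigma_K}(\mathcal{J})=\dim_H D_{\sigma_K}(\mathcal{J})$, so combining this with the display of the previous paragraph forces equality throughout, which is exactly the two asserted identities.

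There is no genuine obstacle here. The only point worth flagging is that Lemma~\ref{detoex}, which would settle the statement immediately for a \emph{countable} family $\mathcal{J}$, is inapplicable when $\mathcal{J}$ is uncountable; passing through the explicit formulas of Corollary~\ref{detonw} and Theorem~\ref{detose} circumvents this, and makes transparent that the entire substance lies in the upper bound contained in Theorem~\ref{detose} (ultimately in Lemma~\ref{detonx}), the matching lower bound being the elementary monotonicity step above.
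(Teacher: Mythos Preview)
Your proof is correct and follows essentially the same approach as the paper, which simply cites Theorem~\ref{detose} and Corollary~\ref{detonw}: both sides of each identity equal $2-\inf\{\sup I:I\in\mathcal{J}\}$. Your monotonicity step is redundant (the explicit formulas already give equality outright), but it is harmless, and your remark on why Lemma~\ref{detoex} alone would not suffice for uncountable $\mathcal{J}$ is a helpful clarification.
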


\begin{proof}
This follows directly from Theorem \ref{detose} and Corollary \ref{detonw}.
\end{proof}

\begin{corollary}\label{detiio}
For $\sigma_K$, the distributional chaos relation with respect to DC1 and the distributional chaos relation with respect to DC2 are of Hausdorff dimension $1$.          $\Box$
\end{corollary}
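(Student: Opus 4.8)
The plan is to recognize both relations as instances of $D_{\sigma_K}(\mathcal{J})$ and $E_{\sigma_K}(\mathcal{J})$ for suitable families $\mathcal{J}\subset\mathcal{C}([0,1])$, and then to invoke Theorem \ref{detose}. By Remark \ref{dstios}, the distributional chaos relation with respect to DC1 for $\sigma_K$ equals $D_{\sigma_K}([0,1])=D_{\sigma_K}(\{[0,1]\})$, while the distributional chaos relation with respect to DC2 equals $E_{\sigma_K}(\{[p,1]:0\leq p<1\})$.

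For the DC1 relation I would take $\mathcal{J}=\{[0,1]\}$; then $\inf\{\sup I:I\in\mathcal{J}\}=\sup[0,1]=1$, so Theorem \ref{detose} gives $\dim_H D_{\sigma_K}([0,1])=2-1=1$. For the DC2 relation I would take $\mathcal{J}=\{[p,1]:0\leq p<1\}$; every member of $\mathcal{J}$ has supremum $1$, so again $\inf\{\sup I:I\in\mathcal{J}\}=1$, and Theorem \ref{detose} gives $\dim_H E_{\sigma_K}(\mathcal{J})=2-1=1$.

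No step presents a genuine difficulty: the entire content lies in Theorem \ref{detose} (which itself combines the upper bound of Lemma \ref{detonx} with the lower bound of Lemma \ref{p1202141124}), and all that remains is the bookkeeping, spelled out in Remark \ref{dstios}, identifying which family $\mathcal{J}$ of target intervals encodes each of the two classical notions of distributional chaos in terms of the distributional functions $F_{x,y}$ and $F_{x,y}^*$. If one wishes to make this self-contained, the only thing to verify is that monotonicity in $\epsilon$ of $\mathcal{F}_{\sigma_K}((x,y),\epsilon)$ turns the pointwise conditions $F_{x,y}^*\equiv 1$ on $(0,+\infty)$ and $F_{x,y}<1$ (resp. $=0$) on some $(0,\epsilon_0]$ into the membership conditions defining these $\mathcal{J}$.
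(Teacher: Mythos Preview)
Your proposal is correct and is exactly the argument the paper intends: the corollary is marked with a bare $\Box$ immediately after Theorem \ref{detose} and Corollary \ref{detonw}, so the authors regard it as a direct instantiation of Theorem \ref{detose} via the identifications in Remark \ref{dstios}, precisely as you have written it out. Nothing further is needed.
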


\begin{theorem}\label{detiiv}
$$
\dim_{H}\rm{Asym}(\sigma_K)=1
$$
and
$$
\dim_{H}\rm{Prox}(\sigma_K)=\dim_{H}\rm{Dist}(\sigma_K)=\dim_{H}\rm{LY}(\sigma_K)=2.
$$
Moreover,
$$
\begin{aligned}
&\mathscr{H}^1(\rm{Asym}(\sigma_K))=+\infty,\\
&\mathscr{H}^2(\rm{Prox}(\sigma_K))=\mathscr{H}^2(\rm{LY}(\sigma_K))=1,\\
&\mathscr{H}^2(\rm{Dist}(\sigma_K))=0.\\
\end{aligned}
$$
\end{theorem}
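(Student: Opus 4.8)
The plan is to treat the four relations separately and to link them through Lemma~\ref{detiiu}. For $\mathrm{Asym}(\sigma_K)$ I would use Lemma~\ref{detvvz}(a): putting $A_n=\{(x,y)\in\Sigma_K\times\Sigma_K:x_i=y_i\ \text{for all}\ i\ge n\}$, we have $\mathrm{Asym}(\sigma_K)=\bigcup_{n\ge0}A_n$. Each $A_n$ is the disjoint union over the $K^{2n}$ pairs $(u,v)\in W_{K,n}\times W_{K,n}$ of the sets $\Delta_{u,v}=\{(uz,vz):z\in\Sigma_K\}$; the map $z\mapsto(uz,vz)$ is a similarity of ratio $K^{-n}$ of $\Sigma_K$ onto $\Delta_{u,v}$, and distinct $\Delta_{u,v}$ lie at mutual distance $\ge K^{-n+1}$. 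Hence, by $\mathscr{H}^1(\Sigma_K)=1$ (Lemma~\ref{detouw}) and the additivity of Hausdorff measure on positively separated sets, $\mathscr{H}^1(\Delta_{u,v})=K^{-n}$ and $\mathscr{H}^1(A_n)=K^{2n}K^{-n}=K^n$, while $\dim_H A_n=1$. Monotonicity of $\mathscr{H}^1$ gives $\mathscr{H}^1(\mathrm{Asym}(\sigma_K))\ge K^n$ for all $n$, so $\mathscr{H}^1(\mathrm{Asym}(\sigma_K))=+\infty$, and Lemma~\ref{detoex} gives $\dim_H\mathrm{Asym}(\sigma_K)=\sup_n\dim_H A_n=1$ (consistently with $\mathrm{Asym}(\sigma_K)\subset D_{\sigma_K}([1,1])$ and Corollary~\ref{detonw}).

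For $\mathrm{Dist}(\sigma_K)$ the measure statement is immediate from Lemma~\ref{detiiu} and Theorem~\ref{t:main}: $\mathrm{Dist}(\sigma_K)\subset D_{\sigma_K}([0,0])$, so $\mathscr{H}^2(\mathrm{Dist}(\sigma_K))\le\mathscr{H}^2(D_{\sigma_K}([0,0]))=0$. The bound $\dim_H\mathrm{Dist}(\sigma_K)\le\dim_H(\Sigma_K\times\Sigma_K)=2\dim_H\Sigma_{K^2}=2$ is free from Lemma~\ref{detosw}. For the reverse inequality I would, for each $k\ge1$, set $X_k=\prod_{i\ge0}C_i\subset\Sigma_{K^2}$ with $C_i=\{0,\dots,K^2-1\}\setminus E_K$ if $k\mid i$ and $C_i=\{0,\dots,K^2-1\}$ otherwise, and $Y_k=\pi_K^{-1}(X_k)$. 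Since $x_i=y_i$ exactly when $\pi_K(x,y)_i\in E_K$, every $(x,y)\in Y_k$ has $x_{ki}\ne y_{ki}$ for all $i$, so $\{i:x_i\ne y_i\}$ contains the syndetic set $k\mathbb{N}$ and hence is syndetic; by Lemma~\ref{detvvz}(b), $Y_k\subset\mathrm{Dist}(\sigma_K)$. Estimating $\dim_H X_k$ by Lemma~\ref{detoue} over the alphabet $\{0,\dots,K^2-1\}$, just as in the proof of Lemma~\ref{detosz}, gives $\dim_H X_k\ge1-\frac{1}{k}\cdot\frac{\ln(K/(K-1))}{\ln K^2}$, whence $\dim_H\mathrm{Dist}(\sigma_K)\ge\dim_H Y_k=2\dim_H X_k$ by Lemma~\ref{detosw}(c); letting $k\to\infty$ forces $\dim_H\mathrm{Dist}(\sigma_K)\ge2$, so it equals $2$.

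For $\mathrm{Prox}(\sigma_K)$ and $\mathrm{LY}(\sigma_K)$ I would first note $\mathscr{H}^2(\Sigma_K\times\Sigma_K)=\mathscr{H}^1(\pi_K(\Sigma_K\times\Sigma_K))=\mathscr{H}^1(\Sigma_{K^2})=1$ by Lemma~\ref{detosw}(c) and Lemma~\ref{detouw}. As $\mathrm{Prox}(\sigma_K)=(\Sigma_K\times\Sigma_K)\setminus\mathrm{Dist}(\sigma_K)$ (Lemma~\ref{detiiu}) with $\mathscr{H}^2(\mathrm{Dist}(\sigma_K))=0$, subadditivity sandwiches $\mathscr{H}^2(\mathrm{Prox}(\sigma_K))$ between $\mathscr{H}^2(\Sigma_K\times\Sigma_K)-\mathscr{H}^2(\mathrm{Dist}(\sigma_K))=1$ and $\mathscr{H}^2(\Sigma_K\times\Sigma_K)=1$, so $\mathscr{H}^2(\mathrm{Prox}(\sigma_K))=1$ and $\dim_H\mathrm{Prox}(\sigma_K)=2$. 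The same two-sided estimate, applied to $\mathrm{LY}(\sigma_K)=\mathrm{Prox}(\sigma_K)\setminus\mathrm{Asym}(\sigma_K)$ (Lemma~\ref{detiiu}) and using $\mathscr{H}^2(\mathrm{Asym}(\sigma_K))=0$ (since $\dim_H\mathrm{Asym}(\sigma_K)=1<2$), gives $\mathscr{H}^2(\mathrm{LY}(\sigma_K))=1$ and $\dim_H\mathrm{LY}(\sigma_K)=2$.

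The only genuinely non-routine step is the lower bound $\dim_H\mathrm{Dist}(\sigma_K)\ge2$: unlike the other three relations it is not handed to us by the already-computed dimensions of the sets $D_{\sigma_K}([p,q])$, and it needs the explicit Cantor family $Y_k$ above together with the product dimension estimate of Lemma~\ref{detoue} transplanted to $\Sigma_{K^2}$; everything else is bookkeeping with Lemma~\ref{detiiu}, Theorem~\ref{t:main}, and the elementary properties of Hausdorff measure recalled in Section~\ref{desoou}.
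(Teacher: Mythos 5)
Your proposal is correct, and for $\mathrm{Asym}(\sigma_K)$, $\mathrm{Prox}(\sigma_K)$ and $\mathrm{LY}(\sigma_K)$ it is essentially the paper's argument: your decomposition of $A_n$ into $K^{2n}$ positively separated copies of $\Sigma_K$ scaled by $K^{-n}$ is exactly the paper's computation $\mathscr{H}^{\frac12}(X_n)=K^{2n}K^{-n}$, merely carried out directly in $\Sigma_K\times\Sigma_K$ instead of being pushed through $\pi_K$ into $\Sigma_{K^2}$, and the complement/subadditivity bookkeeping for $\mathrm{Prox}$ and $\mathrm{LY}$ is identical. The genuine divergence is in $\mathrm{Dist}(\sigma_K)$. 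The paper proves the \emph{exact} identity $\pi_K(\mathrm{Dist}(\sigma_K))=\bigcup_{n\geq1}Y_n$ with $Y_n=\prod_{i\geq0}\bigl(W_{K^2,n}\setminus E_K^n\bigr)$, and reads off both conclusions at once from Lemma~\ref{detoex}: since each $\dim_H Y_n=\frac{\ln(K^{2n}-K^n)}{\ln K^{2n}}$ is strictly less than $1$ but tends to $1$, the union has dimension $1$ and $\mathscr{H}^1$-measure $0$, whence $\dim_H\mathrm{Dist}=2$ and $\mathscr{H}^2(\mathrm{Dist})=0$ by Lemma~\ref{detosw}. You instead split the two conclusions: the measure-zero statement is outsourced to $\mathrm{Dist}(\sigma_K)\subset D_{\sigma_K}([0,0])$ together with $\mathscr{H}^2(D_{\sigma_K}([0,0]))=0$ from Theorem~\ref{t:main}, and the dimension lower bound uses a different Cantor family (forbidding $E_K$ only at the multiples of $k$ rather than forbidding the diagonal block in every window). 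Both steps are sound and there is no circularity --- Lemma~\ref{l:6.6}, which supplies the half of Theorem~\ref{t:main} you need, does not depend on this theorem --- but your route leans on the hardest estimate of the paper (the full multifractal upper bound of Section~\ref{desoos}) to obtain what the paper gets from a two-line exhaustion argument; in exchange, your family $Y_k$ only needs the easy inclusion $Y_k\subset\pi_K(\mathrm{Dist}(\sigma_K))$, whereas the paper must also verify the reverse inclusion $\pi_K(\mathrm{Dist}(\sigma_K))\subset\bigcup_n Y_n$ to make its measure argument work. If you want the proof to be self-contained at the point where it appears in the paper, replace your appeal to Theorem~\ref{t:main} by the paper's exhaustion $\pi_K(\mathrm{Dist}(\sigma_K))=\bigcup_n Y_n$; otherwise your argument stands as written.
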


\begin{proof}
Since $\rm{Asym}(\sigma_K)\subset D_{\sigma_K}([1,1])$ and $\dim_{H}D_{\sigma_K}([1,1])=1$, we have $\dim_{H}\rm{Asym}(\sigma_K)\leq1$. Since $\Delta({\Sigma_K})\subset\rm{Asym}(\sigma_K)$ and $\dim_{H}\Delta({\Sigma_K})=\dim_{H}\Sigma_K=1$, we have $\dim_{H}\rm{Asym}(\sigma_K)\geq1$. Thus $\dim_{H}\rm{Asym}(\sigma_K)=1$. Next we show $\mathscr{H}^1(\rm{Asym}(\sigma_K))=+\infty$.

Note that the map
$$
\varepsilon_K:\Sigma_K\rightarrow E_{K}^{\mathbb{N}}=\prod_{i\geq0}E_K\subset\Sigma_{K^2},\ (\varepsilon_K(x))_i=x_i+Kx_i,\ i\geq0
$$
is a homeomorphism with
\begin{equation} \label{deqwio}
\rho(\varepsilon_K(x),\varepsilon_K(y))=(K^2)^{-\delta(x,y)}=(K^{-\delta(x,y)})^2=(\rho(x,y))^2,\ x,y\in\Sigma_K.
\end{equation}
Eq. (\ref{deqwio}) and Lemma \ref{detviw} lead to
\begin{equation} \label{deqwon}
\mathscr{H}^{\frac{1}{2}}(E_{K}^{\mathbb{N}})=\mathscr{H}^1(\Sigma_K)=1.
\end{equation}

For $n\geq0$ and $\omega\in W_{K^2,n}$, write
$$
\omega E_{K}^{\mathbb{N}}=\{x\in\Sigma_{K^2}:x|_{\{0,\cdots,n-1\}}=\omega,\ x|_{\{n,n+1,\cdots\}}\in E_{K}^{\mathbb{N}}\}
$$
Then the map
$$
\sigma_{K^2}^{n}:\omega E_{K}^{\mathbb{N}}\rightarrow E_{K}^{\mathbb{N}}
$$
is a homeomorphism with
\begin{equation} \label{deqwii}
\rho(\sigma_{K^2}^{n}(x),\sigma_{K^2}^{n}(y))=K^{2n}\rho(x,y),\ x,y\in \omega E_{K}^{\mathbb{N}}.
\end{equation}
Applying Lemma \ref{detviw} to (\ref{deqwii}) and then using (\ref{deqwon}), we get

\begin{equation} \label{deqwiv}
\mathscr{H}^{\frac{1}{2}}(\omega E_{K}^{\mathbb{N}})=K^{-n}\mathscr{H}^{\frac{1}{2}}(E_{K}^{\mathbb{N}})=K^{-n}.
\end{equation}
Let
$$
X_n=\bigcup_{\omega\in W_{K^2,n}}\omega E_{K}^{\mathbb{N}}.
$$
Since each $\omega E_{K}^{\mathbb{N}}$ is compact and thus $\mathscr{H}^{\frac{1}{2}}$ measurable (see, e.g., \cite{Fal}), by (\ref{deqwiv}),
\begin{equation} \label{deqwiw}
\mathscr{H}^{\frac{1}{2}}(X_n)=\#(W_{K^2,n})K^{-n}=K^{2n}K^{-n}=K^n.
\end{equation}
Let $X=\pi_K(\rm{Asym}(\sigma_K))$. Then

\begin{equation} \label{deqwix}
X=\bigcup_{n\geq 0}X_n.
\end{equation}
By (\ref{deqwix}) and (\ref{deqwiw}),
$$
\mathscr{H}^{\frac{1}{2}}(X)\geq K^n\ \text{for}\ n\geq0.
$$
So $\mathscr{H}^{\frac{1}{2}}(X)=+\infty$. Now Lemma \ref{detosw} implies that $\mathscr{H}^{1}(\rm{Asym}(\sigma_K))=+\infty$.

For $n\geq 1$, let $Y_n=\prod_{i\geq 0}(W_{K^2,n} \backslash \{i+Ki:0\leq i<K\}^{n})$. We have
$$
\dim_{H} Y_n=\frac{\ln\#(C_n)}{\ln K^{2n}}=\frac{\ln(K^{2n}-K^n)}{\ln K^{2n}}.
$$
Then
\begin{equation} \label{deqiiw}
\dim_{H} Y_n<1\ \text{and}\ \dim_{H} Y_n\rightarrow1,\ n\rightarrow\infty.
\end{equation}

Suppose $y,z\in\Sigma_K$ with $\pi_K(y,z)\in Y_n$. Because $y|_{\{in,\cdots,(i+1)n-1\}}\neq z|_{\{in,\cdots,(i+1)n-1\}}$ for $i\geq 0$, we have $y|_{\{i,\cdots,i+2n-2\}}\neq z|_{\{i,\cdots,i+2n-2\}}$ for $i\geq 0$. So $(y,z)$ is a $(K^{-2n+2})$-distal pair for $\sigma_K$. Then

\begin{equation} \label{deqivv}
Y_n\subset\pi_K(\rm{Dist}(\sigma_K)).
\end{equation}
On the other hand, suppose $(y,z)$ is a distal pair for $\sigma_K$. Then there is some $n\geq 0$ with $\inf_{i\geq 0}\rho(\sigma_K^i(y),\sigma_K^i(z))\geq K^{-n}$, i.e., $x|_{\{i,\cdots,i+n\}}\neq y|_{\{i,\cdots,i+n\}}$ for each $i\geq 0$, thus $\pi_K(y,z)\in Y_{n+1}$. So

\begin{equation} \label{deqivw}
\pi_K(\rm{Dist}(\sigma_K))\subset\bigcup_{n\geq 1}Y_n.
\end{equation}
Eqs (\ref{deqivv}) and (\ref{deqivw}) lead to

\begin{equation} \label{deqiix}
\pi_K(\rm{Dist}(\sigma_K))=\bigcup_{n\geq 1}Y_n.
\end{equation}
Applying Lemma \ref{detoex} to (\ref{deqiiw}), we obtain

\begin{equation} \label{deqiiu}
\dim_{H} \bigcup_{n\geq 1}Y_n=1\ \text{and}\ \mathscr{H}^1\left(\bigcup_{n\geq 1}Y_n\right)=0.
\end{equation}
Applying Lemma \ref{detosw} and (\ref{deqiiu}) to (\ref{deqiix}), we have
$$
\dim_{H} \rm{Dist}(\sigma_K)=2\ \text{and}\ \mathscr{H}^2(\rm{Dist}(\sigma_K))=0.
$$
Since $\mathscr{H}^2(\rm{Dist}(\sigma_K))=\mathscr{H}^2(\rm{Asym}(\sigma_K))=0$, by Lemma \ref{detoex}, we have
$$
\mathscr{H}^2(\rm{Prox}(\sigma_K))=\mathscr{H}^2(X\times X\setminus\rm{Dist}(\sigma_K))=\mathscr{H}^2(X\times X)=1
$$
and
$$
\mathscr{H}^2(\rm{LY}(\sigma_K))=\mathscr{H}^2(\rm{Prox}(\sigma_K)\setminus\rm{Asym}(\sigma_K))=\mathscr{H}^2(\rm{Prox}(\sigma_K))=1.
$$
So
$$
\dim_{H}\rm{LY}(\sigma_K)=\dim_{H}\rm{Prox}(\sigma_K)=2.
$$
\end{proof}

\section{The Hausdorff measures of $E_{\sigma_K}({[p,q]})$ and $D_{\sigma_K}({[p,q]})$} \label{desooe}

We will now review some measure theoretical properties of $\mathscr{H}^1$ for $(\Sigma_K,\sigma_K)$.
For each column $[\omega]$ in $\Sigma_K$, $\mathscr{H}^1([\omega])=K^{-|\omega|}$.
Let $\mathcal{B}_{\Sigma_K}$ be the set of Borel subsets of $\Sigma_K$.
Then each member of $\mathcal{B}_{\Sigma_K}$ is $\mathscr{H}^1$ measurable, so $\mathscr{H}^1$ is a probability measure on $(\Sigma_K,\mathcal{B}_{\Sigma_K})$.
The measure $\mathscr{H}^1$ is ergodic for $\sigma_K$.
We say $x\in\Sigma_K$ is a \textbf{generic point} for $(\mathscr{H}^1,\sigma_K)$ provided $\frac{1}{j}\sum_{0\leq i<j}\delta_{\sigma_K^i(x)}\to \mathscr{H}^1$, $j\to\infty$, under the weak* topology, where $\delta_x$ is the measure $\delta_x(A)=1\Leftrightarrow x\in A$.
Let $G_{\mathscr{H}^1,\sigma_K}$ denote the set of generic points for $(\mathscr{H}^1,\sigma_K)$.
Then $G_{\mathscr{H}^1,\sigma_K}\in\mathcal{B}_{\Sigma_K}$, $\mathscr{H}^1(G_{\mathscr{H}^1,\sigma_K})=1$ and
\begin{equation}\label{e:6.1}
\mu(N_{\sigma_K}(x,A))=\mathscr{H}^1(A)\text{ for }x\in G_{\mathscr{H}^1,\sigma_K}\text{ and }A\in\mathcal{B}_{\Sigma_K}.
\end{equation}
See, e.g., \cite{b:42}.

\begin{lemma}\label{l:6.1}
Suppose $(x,y)\in\Sigma_K\times\Sigma_K$ with $\pi_K(x,y)\in G_{\mathscr{H}^1,\sigma_{K^2}}$. Then
$$\mathcal{F}_{\sigma_K}((x,y),K^{-k+1})=K^{-k}\text{ for }k\geq 0.$$
In particular, $(x,y)\in E_{\sigma_K}([0,0])$.
\end{lemma}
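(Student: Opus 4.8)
The plan is to translate the generic-point hypothesis on $\pi_K(x,y)$ into a statement about the recurrence time set $N_{\sigma_K\times\sigma_K}((x,y),\Delta_\epsilon)$ and then apply the ergodic-theoretic identity (\ref{e:6.1}). Fix $k\geq 0$ and set $\epsilon=K^{-k+1}$. The key observation is that for $0\leq j$, one has $\rho(\sigma_K^j(x),\sigma_K^j(y))<K^{-k+1}$ exactly when $x_{j+i}=y_{j+i}$ for $0\leq i<k$, which in turn is equivalent to $(\pi_K(x,y))_{j+i}\in E_K$ for $0\leq i<k$, i.e. to $\sigma_{K^2}^j(\pi_K(x,y))$ lying in the cylinder-type set $A_k:=\{z\in\Sigma_{K^2}:z_0,\dots,z_{k-1}\in E_K\}$. (For $k=0$ the set $A_0$ is all of $\Sigma_{K^2}$ and the approach condition is vacuous, giving value $1=K^0$.) Thus
$$
N_{\sigma_K\times\sigma_K}((x,y),\Delta_{K^{-k+1}})=N_{\sigma_{K^2}}(\pi_K(x,y),A_k).
$$

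Next I would compute $\mathscr{H}^1(A_k)$. Since $A_k$ is a union of $K^k$ cylinders of length $k$ in $\Sigma_{K^2}$ (one for each choice of $(z_0,\dots,z_{k-1})\in E_K^k$, and $\#E_K=K$), each of $\mathscr{H}^1$-measure $(K^2)^{-k}=K^{-2k}$, we get $\mathscr{H}^1(A_k)=K^k\cdot K^{-2k}=K^{-k}$. Moreover $A_k$ is closed and open, hence Borel, hence $\mathscr{H}^1$-measurable. Since $\pi_K(x,y)\in G_{\mathscr{H}^1,\sigma_{K^2}}$, the identity (\ref{e:6.1}) applied with $A=A_k$ yields
$$
\mu\bigl(N_{\sigma_{K^2}}(\pi_K(x,y),A_k)\bigr)=\mathscr{H}^1(A_k)=K^{-k}.
$$
Combining this with the set equality above and the definition (\ref{deqozw}) of $\mathcal{F}_{\sigma_K}$ gives $\mathcal{F}_{\sigma_K}((x,y),K^{-k+1})=\mu(N_{\sigma_K\times\sigma_K}((x,y),\Delta_{K^{-k+1}}))=K^{-k}$, which is the first assertion.

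Finally, for the "in particular" clause: every $\epsilon\in(0,+\infty)$ satisfies $K^{-k+1}\leq\epsilon$ for $k$ large enough, and by the monotonicity (\ref{deqozz}) we have $\mathcal{F}_{\sigma_K}((x,y),K^{-k+1})\preceq\mathcal{F}_{\sigma_K}((x,y),\epsilon)$, while also $\mathcal{F}_{\sigma_K}((x,y),\epsilon)\preceq\mathcal{F}_{\sigma_K}((x,y),K^{-k'+1})$ for suitable smaller $k'$; letting the relevant index tend to infinity, $\inf\mathcal{F}_{\sigma_K}((x,y),K^{-k+1})=\sup\mathcal{F}_{\sigma_K}((x,y),K^{-k+1})=K^{-k}\to 0$, so by the definition (\ref{deqozx}) of $\mathcal{E}_{\sigma_K}$ we obtain $\mathcal{E}_{\sigma_K}(x,y)=[0,0]$, i.e. $(x,y)\in E_{\sigma_K}([0,0])$. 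I do not expect any serious obstacle here; the only point requiring a little care is the bookkeeping in the equivalence "$\rho(\sigma_K^j(x),\sigma_K^j(y))<K^{-k+1}\iff$ agreement of the next $k$ coordinates," which hinges on the precise form of the metric on $\Sigma_K$ (diameter of a length-$\ell$ cylinder is $K^{-\ell}$), and the analogous identification of $E_K$-blocks under $\pi_K$.
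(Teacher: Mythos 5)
Your proposal is correct and follows essentially the same route as the paper: identify the $\epsilon=K^{-k+1}$ approach time set of $(x,y)$ with the recurrence time set of $\pi_K(x,y)$ into the clopen set $[E_K^k]$ of $\mathscr{H}^1$-measure $K^{-k}$, and invoke the generic-point identity (\ref{e:6.1}). The paper's proof is just a terser version of the same computation, so there is nothing to add.
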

\begin{proof}
Let $\pi_K(y,z)=x\in G_{\mathscr{H}^1,\sigma_{K^2}}$. For $k\geq 0$, by the definition of $\pi_K$ and (\ref{e:6.1}), we have
$$\begin{aligned}
\mathcal{F}_{\sigma_K}((y,z),K^{-k+1})&=\mu(\{i\ge 0:y|_{[i,i+k)}=z|_{[i,i+k)}\})\\
&=\mu(N_{\sigma_{K^2}}(x,[E_K^k]))=(K^2)^{-k}K^k=K^{-k}.
\end{aligned}$$
\end{proof}

\begin{lemma}\label{l:6.2}
$\mathscr{H}^2(E_{\sigma_K}([0,0]))=1$.
\end{lemma}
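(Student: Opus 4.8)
The plan is to transport the problem to $\Sigma_{K^2}$ via the conjugation $\pi_K$ and then exploit the ergodicity of $\mathscr{H}^1$ for $\sigma_{K^2}$. By part (c) of Lemma~\ref{detosw}, $\mathscr{H}^2(E_{\sigma_K}([0,0]))=\mathscr{H}^1(\pi_K(E_{\sigma_K}([0,0])))$, so it suffices to show $\mathscr{H}^1(\pi_K(E_{\sigma_K}([0,0])))=1$. For the upper bound, simply note $\pi_K(E_{\sigma_K}([0,0]))\subset\Sigma_{K^2}$ and, since $\Sigma_{K^2}$ is a Cantor space on $K^2$ symbols, $\mathscr{H}^1(\Sigma_{K^2})=1$ by Lemma~\ref{detouw}. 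Hence $\mathscr{H}^1(\pi_K(E_{\sigma_K}([0,0])))\le 1$, which already gives $\mathscr{H}^2(E_{\sigma_K}([0,0]))\le 1$.

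For the lower bound, Lemma~\ref{l:6.1} shows that $\pi_K^{-1}(G_{\mathscr{H}^1,\sigma_{K^2}})\subset E_{\sigma_K}([0,0])$, i.e. $G_{\mathscr{H}^1,\sigma_{K^2}}\subset\pi_K(E_{\sigma_K}([0,0]))$. Now I would invoke the recalled facts about $\mathscr{H}^1$ on symbolic spaces: applying the statements preceding Lemma~\ref{l:6.1} to $(\Sigma_{K^2},\sigma_{K^2})$ (which is again a full shift, so $\mathscr{H}^1$ is an ergodic probability measure there and a.e.\ point is generic), we have $G_{\mathscr{H}^1,\sigma_{K^2}}\in\mathcal B_{\Sigma_{K^2}}$ with $\mathscr{H}^1(G_{\mathscr{H}^1,\sigma_{K^2}})=1$. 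By monotonicity of $\mathscr{H}^1$ (Lemma~\ref{detoex}), $\mathscr{H}^1(\pi_K(E_{\sigma_K}([0,0])))\ge \mathscr{H}^1(G_{\mathscr{H}^1,\sigma_{K^2}})=1$. Combining with the upper bound gives $\mathscr{H}^1(\pi_K(E_{\sigma_K}([0,0])))=1$, and therefore $\mathscr{H}^2(E_{\sigma_K}([0,0]))=1$.

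I do not anticipate a serious obstacle here; the lemma is essentially a packaging of Lemma~\ref{l:6.1} together with the genericity facts and the metric behaviour of $\pi_K$. The only point requiring a little care is making sure the ``review'' facts about $G_{\mathscr{H}^1,\sigma_K}$ are applied with $K$ replaced by $K^2$ (so that $\mathscr{H}^1$ is the correct normalized Hausdorff measure on $\Sigma_{K^2}$, which is $1$-dimensional by Lemma~\ref{detouw}), and that $\pi_K$ being a bijection makes $G_{\mathscr{H}^1,\sigma_{K^2}}\subset\pi_K(E_{\sigma_K}([0,0]))$ a genuine set inclusion rather than just an a.e.\ statement. Everything else is immediate from the dimension/measure scaling in Lemma~\ref{detosw}(c) and the subadditivity in Lemma~\ref{detoex}.
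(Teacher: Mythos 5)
Your proposal is correct and follows essentially the same route as the paper: the paper likewise deduces the lower bound from Lemma \ref{l:6.1} via the inclusion $\pi_K^{-1}(G_{\mathscr{H}^1,\sigma_{K^2}})\subset E_{\sigma_K}([0,0])$ together with the measure scaling of Lemma \ref{detosw}(c) and $\mathscr{H}^1(G_{\mathscr{H}^1,\sigma_{K^2}})=1$, and the upper bound from $\mathscr{H}^2(\Sigma_K\times\Sigma_K)=\mathscr{H}^1(\Sigma_{K^2})=1$. No gaps.
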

\begin{proof}
Lemma \ref{l:6.1} implies $\pi_K^{-1}(G_{\mathscr{H}^1,\sigma_{K^2}})\subset E_{[0,0]}(\sigma_K)$. So it follows from Lemma $\ref{detosw}$ that
$$1\geq \mathscr{H}^2(E_{[0,0]}(\sigma_K))\ge \mathscr{H}^2(\pi_K^{-1}(G_{\mathscr{H}^1,\sigma_{K^2}}))=\mathscr{H}^1(G_{\mathscr{H}^1,\sigma_K})=1.$$
\end{proof}

\begin{lemma}\label{l:6.3}
Suppose $[p,q]\in\mathcal{C}([0,1])$, $N\in\mathcal{M}([p,q])$, $a,b,c\in\Sigma_K$ with $\pi_K(b,c)=x\in G_{\mathscr{H}^1,\sigma_K}$ and $y=\Phi_{N,c}(a)$, $z=\Phi_{N,c}(b)$. Then
$$\mathcal{F}_{\sigma_K}((y,z),t)=
\left\{\begin{aligned}
&[p+(1-p)K^{-k},q+(1-q)K^{-k}],&&K^{-k}<t\leq K^{-k+1},\,\,k\geq 1\\
&1,&&t>1.
\end{aligned}
\right.
$$
In particular, $(y,z)\in E_{\sigma_K}([p,q])$.
\end{lemma}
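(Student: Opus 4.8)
The plan is to turn the computation of $\mathcal{F}_{\sigma_K}((y,z),t)$ into a recurrence-frequency statement for the generic point $x$, and then invoke (\ref{e:6.1}). When $t>1$ the whole of $\mathbb{N}$ lies in $N_{\sigma_K\times\sigma_K}((y,z),\Delta_t)$ (since $|\Sigma_K|=1$), so $\mathcal{F}_{\sigma_K}((y,z),t)=\mu(\mathbb{N})=1$. For $K^{-k}<t\le K^{-k+1}$ with $k\ge 1$, the definition of the metric gives $\rho(\sigma_K^i(y),\sigma_K^i(z))<t\Leftrightarrow y|_{[i,i+k)}=z|_{[i,i+k)}$, hence $\mathcal{F}_{\sigma_K}((y,z),t)=\mu(M_k)$ with $M_k=\{i\ge 0:y|_{[i,i+k)}=z|_{[i,i+k)}\}$. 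Now $y=\Phi_{N,c}(a)$ and $z=\Phi_{N,c}(b)$ coincide at every coordinate in $N$ and carry $a$, resp. $b$ (reindexed by $\gamma_{N^c}$) on $N^c$, and the generic point $x=\pi_K(a,b)$ satisfies $x_l\in E_K\Leftrightarrow a_l=b_l$ (this is the reading of the hypothesis that matters, since it is the $a$–$b$ coincidences that govern $M_k$). Thus, writing $P_k=N_{\sigma_{K^2}}(x,[E_K^k])$, a coordinate $j$ has $y_j=z_j$ iff $j\in N$, or $j\in N^c$ and $x_{\gamma_{N^c}(j)}\in E_K$; equivalently $\pi_K(y,z)=\Phi_{N,\varepsilon_K(c)}(x)$ and $M_k=N_{\sigma_{K^2}}(\pi_K(y,z),[E_K^k])$.

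The next step is to compute $\mu(M_k)$ by a count over the maximal runs of $N$ and of $N^c$. Since $N\in\mathcal{M}([p,q])$ forces $d_{N,i}\to\infty$, for all large $i$ the block $\{i,\dots,i+k-1\}$ lies inside a single run of $N$, inside a single run of $N^c$, or straddles exactly one common boundary; the straddling indices form a set of density $0$, because $\mu(L_N)=\mu(R_N)=0$ by Lemma \ref{dstoxm}(b) and shifting a density-$0$ set by $\{0,\dots,k-1\}$ keeps it density $0$. If $\{i,\dots,i+k-1\}$ sits in an $N$-run then $y,z$ agree there, so $i\in M_k$; if it sits in an $N^c$-run then $\gamma_{N^c}$, the order isomorphism $N^c\to\mathbb{N}$, carries it onto $\{\gamma_{N^c}(i),\dots,\gamma_{N^c}(i)+k-1\}$, so $i\in M_k$ iff $\gamma_{N^c}(i)\in P_k$. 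Counting the right-ends of runs as part of the negligible set, and using that $\{i<n:i\in N^c\}$ consists of the first $\zeta_n(N^c)$ elements of $N^c$, one obtains
$$
\#(M_k\cap[0,n))=\zeta_n(N)+\zeta_{\zeta_n(N^c)}(P_k)+o(n),\qquad n\to\infty.
$$

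To finish I would feed in genericity. The set $[E_K^k]$ is clopen with $\mathscr{H}^1([E_K^k])=K^k(K^2)^{-k}=K^{-k}$, so (\ref{e:6.1}) (for $\sigma_{K^2}$) yields $\mu(P_k)=K^{-k}$, i.e. $\zeta_m(P_k)/m\to K^{-k}$; as $N^c$ is infinite, $\zeta_n(N^c)\to\infty$ and hence $\zeta_{\zeta_n(N^c)}(P_k)=K^{-k}\zeta_n(N^c)+o(n)=K^{-k}(n-\zeta_n(N))+o(n)$. Consequently $\mu_n(M_k)=g(\mu_n(N))+o(1)$, where $g(s)=(1-K^{-k})s+K^{-k}$ is increasing, affine, hence continuous; since $\mu(N)=[p,q]$, the set of limit points of $(\mu_n(M_k))_n$ equals $g([p,q])=[\,p+(1-p)K^{-k},\,q+(1-q)K^{-k}\,]$, which is the asserted value of $\mathcal{F}_{\sigma_K}((y,z),t)$. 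Letting $t\to0^+$, i.e. $k\to\infty$, the two endpoints converge to $p$ and $q$, so $\mathcal{E}_{\sigma_K}(y,z)=[p,q]$ and $(y,z)\in E_{\sigma_K}([p,q])$.

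The delicate point is the density bookkeeping of the previous paragraph: one must control the straddling indices and the right-ends of runs uniformly (this is exactly where $d_{N,i}\to\infty$ is essential), and—more importantly—transfer the genericity of $x$, which a priori governs frequencies only along initial segments $[0,m)$, to the window $[0,\zeta_n(N^c))$ produced by collapsing $N^c$ via $\gamma_{N^c}$; the exact identity $\#\{i<n:i\in N^c,\ \gamma_{N^c}(i)\in P_k\}=\zeta_{\zeta_n(N^c)}(P_k)$ is precisely what legitimizes this transfer.
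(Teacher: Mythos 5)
Your proof is correct, and it takes a genuinely cleaner route than the paper's. Both arguments share the same skeleton: reduce $\mathcal{F}_{\sigma_K}((y,z),t)$ for $K^{-k}<t\leq K^{-k+1}$ to $\mu(M_k)$ with $M_k=\{i:y|_{[i,i+k)}=z|_{[i,i+k)}\}$, discard the straddling indices as a density-zero set (this is where $d_{N,i}\to\infty$ and Lemma \ref{dstoxm} enter, exactly as in the paper's (\ref{e:6.4})), identify the $N$-run contribution as all of $N$ and the $N^c$-run contribution as $\gamma_{N^c}^{-1}(P_k)$, and feed in genericity to get $\mu(P_k)=K^{-k}$. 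Where you diverge is decisive: the paper evaluates $\mu_n$ of its set $N_4\sqcup(N_1\cap N_6)$ only at the special times $t_{N,2j}-k+1$ and $t_{N,2j+1}-k+1$ (its (\ref{e:6.13})--(\ref{e:6.14})) and then spends the bulk of the proof on interpolation — the sets $J_0,J_1$, the auxiliary function $g(s)=\frac{q+K^{-k}(1-q+s)}{1+s}$, an $\epsilon$- and $j^{**}$-argument, and a separate case $p=q=0$ — to show intermediate times cannot escape $[g(p),g(q)]$. You instead establish the single uniform asymptotic $\mu_n(M_k)=(1-K^{-k})\mu_n(N)+K^{-k}+o(1)$ along \emph{all} $n$, using the exact identity $\#\{i<n:i\in N^c,\ \gamma_{N^c}(i)\in P_k\}=\zeta_{\zeta_n(N^c)}(P_k)$ together with $\zeta_n(N^c)\leq n$ to absorb the error, and then push the whole density spectrum $\mu(N)=[p,q]$ through the continuous increasing affine map $s\mapsto(1-K^{-k})s+K^{-k}$. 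This makes the interpolation and the case split unnecessary and is, in my view, the better write-up; the paper's endpoint computation buys only slightly more explicit information about where the extremes are attained. You also correctly noticed that the lemma's hypotheses as printed are internally inconsistent (with $y=\Phi_{N,c}(a)$, $z=\Phi_{N,c}(b)$ the pair living on $N^c$ is $(a,b)$, so genericity must be imposed on $\pi_K(a,b)$, not $\pi_K(b,c)$; the paper's own proof silently uses the corrected convention $y=\Phi_{N,a}(b)$, $z=\Phi_{N,a}(c)$), and your repair is the right one.
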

\begin{proof}
If $t>1=diam(\Sigma_K)$, then $\mathcal{F}_{\sigma_K}((y,z),t)=1$. Let $k\geq 1$ be a fixed integer and $K^{-k}<t\leq K^{-k+1}$.

Let $$\begin{aligned}
N_1&=\{i\geq 0:\rho(\sigma_K^i(y),\sigma_K^i(z))<t\}=\{i\geq 0:\rho(\sigma_K^i(y),\sigma_K^i(z))<K^{-k}\},\\
N_2&=\{i\geq 0:\rho(\sigma_K^i(b),\sigma_K^i(c))<t\}=\{i\geq 0:\rho(\sigma_K^i(b),\sigma_K^i(c))<K^{-k}\}\\
&=\{i\geq 0:\sigma_{K^2}^i(x)\in [E_K^k]\},\\
N_3&=(\{t_{N,2j+1}:j\geq 0\}-\{0,1,\cdots,k-1\})\cap\mathbb{N},\\
N_4&=N\setminus N_3,\\
N_5&=(\{t_{N,2j}:j\geq 0\}-\{0,1,\cdots,k-1\})\cap\mathbb{N},\\
N_6&=N^c\setminus N_5,\\
N_7&=\gamma_{N^c}(N_5),\\
N_8&=\gamma_{N^c}(N_6)=N^c_7.\\
\end{aligned}$$
To prove the lemma, it is enough to show
\begin{equation}\label{e:6.2}
  \mu(N_1)=[p+(1-p)K^{-k},q+(1-q)K^{-k}].
\end{equation}

Since $x\in G_{\mathscr{H}^1,\sigma_{K^2}}$, we have
\begin{equation}\label{e:6.3}
  \mu(N_2)=\mathscr{H}^1([E_K^k])=(K^2)^{-k}K^k=K^{-k}.
\end{equation}
Since $t_{N,2j+3}-t_{N,2j+1}=d_{N,2j+1}+d_{N,2j+2}\rightarrow \infty$, $t_{N,2j+2}-t_{N,2j}=d_{N,2j}+d_{N,2j+1}\rightarrow \infty$,
$\gamma_{N^c}(t_{N,2j+2})-\gamma_{N^c}(t_{N,2j})=d_{N,2j+1}\rightarrow\infty$, then, by Lemma \ref{dstoxm},
\begin{equation}\label{e:6.4}
  \mu(N_3)=\mu(N_5)=\mu(N_7)=0.
\end{equation}
Then
\begin{equation}\label{e:6.5}
  \mu(N_4 \sqcup N_6)=\mu(N_8)=1
\end{equation}

Let
$$J=\omega\left(\frac{\zeta_n(N_1\cap(N_4\sqcup N_6))}{n}:n\in(N_4\sqcup N_6)+1\right).$$
By (\ref{e:6.5}) and Lemma \ref{dstoxm}, to prove (\ref{e:6.2}) it is enough to prove
\begin{equation}\label{e:6.6}
  \inf J=p+(1-p)K^{-k},\,\,\sup J=q+(q-1)K^{-k}.
\end{equation}

Suppose $i\in N_4$. Then, for some $j\geq 0$, $i\in[t_{N,2j},t_{N,2j+1}-k+1)\cap\mathbb{N}\subset N$.
Then $i+\{0,1,\cdots,k-1\}\subset N$ and $y|_{[i,i+k)}=z|_{[i,i+k)}=a|_{[i-f_{N,2j},i-f_{N,2j}+k)}$.
Thus $\rho(\sigma_K^i(y),\sigma_K^i(z))\le K^{-k}$, which means $i\in N_1$.
So
\begin{equation}\label{e:6.7}
  N_4\subset N_1.
\end{equation}

Suppose $i\in N_6$. Then, $i\in [0,t_{N,0}-k+1)\cap\mathbb{N}\subset N^c$ or, for some $j\geq 0$, $i\in [t_{N,2j+1},t_{N,2j+2}-k+1)\cap\mathbb{N}\subset N^c$.
Then $i+\{0,1,\cdots,k-1\}\subset N^c$ and $y|_{[i,i+k)}=b|_{[\gamma_{N^c}(i),\gamma_{N^c}(i)+k)}$, $z|_{[i,i+k)}=c|_{[\gamma_{N^c}(i),\gamma_{N^c}(i)+k)}$,
Thus
$$\rho(\sigma_K^i(y),\sigma_K^i(z))\leq K^{-k}\Leftrightarrow\rho(\sigma_K^{\gamma_{N^c}(i)}(b),\sigma_K^{\gamma_{N^c}(i)}(c))\leq K^{-k}.$$
So, if $i\in N_1$, then $\gamma_{N^c}(i)\in N_2$.
Hence
\begin{equation}\label{e:6.8}
  \gamma_{N^c}(N_1\cap N_6)\subset N_2\cap\gamma_{N^c}(N_6)=N_2\cap N_8.
\end{equation}
On the other hand, suppose $i\in N_8$. Then, $\gamma_{N^c}^{-1}(i)\in [0,t_{N,0}-k+1)\cap\mathbb{N}\subset N^c$ or, for some $j\ge 0$, $\gamma_{N^c}^{-1}(i)\in [t_{N,2j+1},t_{N,2j+2}-k+1)\cap\mathbb{N}\subset N^c$.
Then $\gamma_{N^c}^{-1}(i)+\{0,1,\cdots,k-1\}\subset N^c$ and $y|_{[\gamma_{N^c}^{-1}(i),\gamma_{N^c}^{-1}(i)+k)}=b|_{[i,i+k)}$, $z|_{[\gamma_{N^c}^{-1}(i),\gamma_{N^c}^{-1}(i)+k)}=c|_{[i,i+k)}$,
Thus
$$\rho(\sigma_K^{\gamma_{N^c}^{-1}(i)}(y),\sigma_K^{\gamma_{N^c}^{-1}(i)}(z))\leq K^{-k}\Leftrightarrow\rho(\sigma_K^{i}(b),\sigma_K^{i}(c))\leq K^{-k}.$$
So, if $i\in N_2$, then $\gamma_{N^c}^{-1}(i)\in N_1$.
Then
\begin{equation}\label{e:6.9}
  \gamma_{N^c}^{-1}(N_2\cap N_8)\subset N_1\cap\gamma_{N^c}^{-1}(N_8)=N_1\cap N_6.
\end{equation}
Eqs (\ref{e:6.8}) and (\ref{e:6.9}) lead to
\begin{equation}\label{e:6.10}
  N_1\cap N_6=\gamma_{N^c}^{-1}(N_2\cap N_8).
\end{equation}

Combining (\ref{e:6.7}) and (\ref{e:6.10}) we get
\begin{equation}\label{e:6.11}
  N_1\cap (N_4\sqcup N_6)=N_4 \sqcup (N_1\cap N_6)=N_4 \sqcup \gamma_{N^c}^{-1}(N_2\cap N_8).
\end{equation}

Suppose $p=q=0$, i.e., $\mu(N)=0$. For $n\in \gamma_{N^c}^{-1}(N_2\cap N_8)$ we have
$$\begin{aligned}
&\frac{\zeta_n(\gamma_{N^c}^{-1}(N_2\cap N_8)}{n}\\
=&\frac{\#(\gamma_{N^c}^{-1}(N_2\cap N_8)\cap\{0,1,\cdots,n-1\})}{n}\\
=&\frac{\#((N_2\cap N_8)\cap\{0,1,\cdots,\gamma_{N^c}(n)-1\})}{n}\text{ (since $\gamma_{N^c}(n)\in N_2\cap N_8$)}\\
=&\frac{\zeta_{\gamma_{N^c}(n)}(N_2\cap N_8)}{n}\\
=&\frac{\zeta_{\gamma_{N^c}(n)}(N_2\cap N_8)}{\gamma_{N^c}(n)}\cdot\frac{\gamma_{N^c}(n)}{n}\\
\rightarrow& K^{-k}\text{ while }n\rightarrow\infty\\
&\text{(by $\mu(N_2)=K^{-k}$, $\mu(N_8)=\mu(N^c)=1$ and Lemma \ref{dstoxm})}.
\end{aligned}$$
Applying Lemma \ref{dstoxm} for this limit we see $\mu(\gamma_{N^c}^{-1}(N_2\cap N_8))=K^{-k}$. Since $\mu(N_4)\leq\mu(N)=0$, we have
\begin{equation}\label{e:6.12}
  \mu(N_4\sqcup\gamma_{N^c}^{-1}(N_2\cap N_8))=\mu(\gamma_{N^c}^{-1}(N_2\cap N_8))=K^{-k}.
\end{equation}
Now (\ref{e:6.6}) follows from (\ref{e:6.12}) and (\ref{e:6.11}).

Suppose $q>0$.
As $d_{N,j}\rightarrow\infty$, we may choose $j^*\ge k$ such that if $j\ge j^*$, then $d_{N,j}\ge k$.

Suppose $j\ge j^*$. Then $t_{N,2j+1}-k\in N_4$ and $t_{N,2j+2}-k\in N_6$.
Now
$$\begin{aligned}
&\frac{\zeta_{t_{N,2j+1}-k+1}(N_4\sqcup(N_1\cap N_6))}{t_{N,2j+1}-k+1}\\
=&\frac{\zeta_{t_{N,2j+1}-k+1}(N_4)}{t_{N,2j+1}-k+1}
+\frac{\zeta_{t_{N,2j+1}-k+1}(N_1\cap N_6)}{t_{N,2j+1}-k+1}\\
=&\frac{\zeta_{t_{N,2j+1}-k+1}(N_4)}{t_{N,2j+1}-k+1}
+\frac{\zeta_{t_{N,2j}-k+1}(N_1\cap N_6)}{t_{N,2j+1}-k+1}
\text{ (since $[t_{N,2j}-k+1,t_{N,2j+1}-k+1)\cap N_6=\emptyset$)}\\
=&\frac{\zeta_{t_{N,2j+1}-k+1}(N_4)}{t_{N,2j+1}-k+1}
+\frac{\zeta_{t_{N,2j}-k+1}(\gamma_{N^c}^{-1}(N_2\cap N_8))}{t_{N,2j+1}-k+1}\text{ (by (\ref{e:6.10}))},
\end{aligned}$$
where
$$\begin{aligned}
&\frac{\zeta_{t_{N,2j+1}-k+1}(N_4)}{t_{N,2j+1}-k+1}\\
=&\left(\frac{\zeta_{t_{N,2j+1}}(N)}{t_{N,2j+1}}
-\frac{\zeta_{t_{N,2j+1}}(N_3)}{t_{N,2j+1}}\right)
\cdot\frac{t_{N,2j+1}}{t_{N,2j+1}-k+1}
\text{ (since $[t_{N,2j}-k+1,t_{N,2j+1})\cap\mathbb{N}\subset N_3$)}\\
\rightarrow&q\text{ while }i\rightarrow\infty\text{ (since $\frac{\zeta_{t_{N,2j+1}}(N)}{t_{N,2j+1}}\rightarrow q$ and $\mu(N_3)=0$)}
\end{aligned}$$
and
$$
\begin{aligned}
&\frac{\zeta_{t_{N,2j}-k+1}(\gamma_{N^c}^{-1}(N_2\cap N_8))}{t_{N,2j+1}-k+1}\\
=&\frac{\zeta_{\gamma_{N^c}(t_{N,2j}-k)+1}(N_2\cap N_8)}{t_{N,2j+1}-k+1}\,\,(\text{since } \gamma_{N^c}(t_{N,2j}-k)\in N_8)\\
=&\frac{\zeta_{\gamma_{N^c}(t_{N,2j}-k)+1}(N_2)}{t_{N,2j+1}-k+1}
-\frac{\zeta_{\gamma_{N^c}(t_{N,2j}-k)+1}(N_7)}{t_{N,2j+1}-k+1}\\
=&\left(\frac{\zeta_{\gamma_{N^c}(t_{N,2j}-k)+1}(N_2)}{\gamma_{N^c}(t_{N,2j}-k)+1}
-\frac{\zeta_{\gamma_{N^c}(t_{N,2j}-k)+1}(N_7)}{\gamma_{N^c}(t_{N,2j}-k)+1}\right)
\cdot\frac{\gamma_{N^c}(t_{N,2j}-k)+1}{t_{N,2j+1}-k+1}\\
=&\left(\frac{\zeta_{\gamma_{N^c}(t_{N,2j}-k)+1}(N_2)}{\gamma_{N^c}(t_{N,2j}-k)+1}
-\frac{\zeta_{\gamma_{N^c}(t_{N,2j}-k)+1}(N_7)}{\gamma_{N^c}(t_{N,2j}-k)+1}\right)
\cdot\frac{\zeta_{t_{N,2j}-k+1}(N^c)}{t_{N,2j+1}-k+1}\\
\rightarrow&K^{-k}(1-q)\text{ while }i\rightarrow\infty\,\,(\text{since } \mu(N_2)=K^{-k}, \mu(N_7)=0 \text{ and } \frac{\zeta_{t_{N,2j+1}}(N^c)}{t_{N,2j+1}}\rightarrow 1-q).
\end{aligned}
$$

$$\begin{aligned}
&\frac{\zeta_{t_{N,2j+2}-k+1}(N_4\sqcup(N_1\cap N_6))}{t_{N,2j+2}-k+1}\\
=&\frac{\zeta_{t_{N,2j+2}-k+1}(N_4)}{t_{N,2j+2}-k+1}
+\frac{\zeta_{t_{N,2j+2}-k+1}(N_1\cap N_6)}{t_{N,2j+2}-k+1}\\
=&\frac{\zeta_{t_{N,2j+2}-k+1}(N_4)}{t_{N,2j+2}-k+1}
+\frac{\zeta_{t_{N,2j+2}-k+1}(\gamma_{N^c}^{-1}(N_2\cap N_8))}{t_{N,2j+2}-k+1}\text{ (by (\ref{e:6.10}))},
\end{aligned}$$
where
$$\begin{aligned}
&\frac{\zeta_{t_{N,2j+2}-k+1}(N_4)}{t_{N,2j+2}-k+1}\\
=&\left(\frac{\zeta_{t_{N,2j+2}}(N)}{t_{N,2j+2}}
-\frac{\zeta_{t_{N,2j+2}}(N_3)}{t_{N,2j+2}}\right)
\cdot\frac{t_{N,2j+2}}{t_{N,2j+2}-k+1}
\text{ (since $[t_{N,2j+2}-k+1,t_{N,2j+2})\cap\mathbb{N}\subset N^c$)}\\
\rightarrow&p\text{ while }i\rightarrow\infty\text{ (since $\frac{\zeta_{t_{N,2j+2}}(N)}{t_{N,2j+2}}\rightarrow p$ and $\mu(N_3)=0$)}
\end{aligned}$$
and
$$
\begin{aligned}
&\frac{\zeta_{t_{N,2j+2}-k+1}(\gamma_{N^c}^{-1}(N_2\cap N_8))}{t_{N,2j+2}-k+1}\\
=&\frac{\zeta_{\gamma_{N^c}(t_{N,2j+2}-k)+1}(N_2\cap N_8)}{t_{N,2j+2}-k+1}\,\,(\text{since } \gamma_{N^c}(t_{N,2j+2}-k)\in N_8)\\
=&\frac{\zeta_{\gamma_{N^c}(t_{N,2j+2}-k)+1}(N_2)}{t_{N,2j+2}-k+1}
-\frac{\zeta_{\gamma_{N^c}(t_{N,2j+2}-k)+1}(N_7)}{t_{N,2j+2}-k+1}\\
=&\left(\frac{\zeta_{\gamma_{N^c}(t_{N,2j+2}-k)+1}(N_2)}{\gamma_{N^c}(t_{N,2j+2}-k)+1}
-\frac{\zeta_{\gamma_{N^c}(t_{N,2j+2}-k)+1}(N_7)}{\gamma_{N^c}(t_{N,2j+2}-k)+1}\right)
\cdot\frac{\gamma_{N^c}(t_{N,2j+2}-k)+1}{t_{N,2j+2}-k+1}\\
=&\left(\frac{\zeta_{\gamma_{N^c}(t_{N,2j+2}-k)+1}(N_2)}{\gamma_{N^c}(t_{N,2j+2}-k)+1}
-\frac{\zeta_{\gamma_{N^c}(t_{N,2j+2}-k)+1}(N_7)}{\gamma_{N^c}(t_{N,2j+2}-k)+1}\right)
\cdot\frac{\zeta_{t_{N,2j+2}-k+1}(N^c)}{t_{N,2j+2}-k+1}\\
\rightarrow&K^{-k}(1-p)\text{ while }i\rightarrow\infty\,\,(\text{since } \mu(N_2)=K^{-k}, \mu(N_7)=0 \text{ and } \frac{\zeta_{t_{N,2j+2}}(N^c)}{t_{N,2j+2}}\rightarrow 1-p).
\end{aligned}
$$
Then
\begin{equation}\label{e:6.13}
  \lim_{j\rightarrow\infty}\frac{\zeta_{t_{N,2j}-k+1}(N_4\sqcup(N_1\cap N_6))}{t_{N,2j}-k+1}=p+(1-p)K^{-k}
\end{equation}
and
\begin{equation}\label{e:6.14}
  \lim_{j\rightarrow\infty}\frac{\zeta_{t_{N,2j+1}-k+1}(N_4\sqcup(N_1\cap N_6))}{t_{N,2j+1}-k+1}=q+(1-q)K^{-k}
\end{equation}
So
\begin{equation}\label{e:6.15}
  \inf J\leq p+(1-p)K^{-k},\,\,\sup J\ge q+(1-q)K^{-k}.
\end{equation}

Let
$$J_0=\omega\left(\frac{\zeta_n(N_1\cap(N_4\sqcup N_6))}{n}:n\in N_4+1\right)$$
and
$$J_1=\omega\left(\frac{\zeta_n(N_1\cap(N_4\sqcup N_6))}{n}:n\in N_6+1\right)$$
Then
\begin{equation}\label{e:6.16}
  J=J_0\cup J_1.
\end{equation}

Suppose $t_{N,2j,r}\in N_4$ with $j\ge j^*+1$. Then $t_{N,2j}\leq t_{N,2j,r}<t_{N,2j+1}-k+1$.
We have
\begin{equation*}
  \begin{aligned}
    &\frac{\zeta_{t_{N,2j+1}-k+1}(N_4\sqcup(N_1\cap N_6))}{t_{N,2j+1}-k+1}\\
    \geq &\frac{\zeta_{t_{N,2j+1}-k+1}(N_4\sqcup(N_1\cap N_6))-((t_{N,2j+1}-k+1)-(t_{N,2j}+r+1))}{t_{N,2j+1}-k+1-((t_{N,2j+1}-k+1)-(t_{N,2j}+r+1))}\\
    =&\frac{\zeta_{t_{N,2j,r}+1}(N_4\sqcup(N_1\cap N_6))}{t_{N,2j,r}+1}\,\,(\text{since }[t_{N,2j}+r+1,t_{N,2j}-k+1)\cap\mathbb{N}\subset N_4)\\
    =&\frac{\zeta_{t_{N,2j}}(N_4\sqcup(N_1\cap N_6))+r+1}{t_{N,2j}+r+1}\,\,(\text{since }[t_{N,2j},t_{N,2j}+r+1)\cap\mathbb{N}\subset N_4)\\
    \geq&\frac{\zeta_{t_{N,2j}}(N_4\sqcup(N_1\cap N_6))}{t_{N,2j}}\\
    =&\frac{\zeta_{t_{N,2j}-k+1}(N_4\sqcup(N_1\cap N_6))}{t_{N,2j}}\,\,(\text{since }[t_{N,2j}-k+1,t_{N,2j})\cap (N_4\sqcup N_6)=\emptyset)\\
    =&\frac{\zeta_{t_{N,2j}-k+1}(N_4\sqcup(N_1\cap N_6))}{t_{N,2j}-k+1}
    \cdot\frac{t_{N,2j}-k+1}{t_{N,2j}}.
  \end{aligned}
\end{equation*}
That is,
\begin{equation}\label{e:6.17}
  \begin{aligned}
    &\frac{\zeta_{t_{N,2j}-k+1}(N_4\sqcup(N_1\cap N_6))}{t_{N,2j}-k+1}
    \cdot\frac{t_{N,2j}-k+1}{t_{N,2j}}\\
    \leq&\frac{\zeta_{t_{N,2j,r}+1}(N_4\sqcup(N_1\cap N_6))}{t_{N,2j,r}+1}\\
    \leq&\frac{\zeta_{t_{N,2j+1}-k+1}(N_4\sqcup(N_1\cap N_6))}{t_{N,2j+1}-k+1}.
  \end{aligned}
\end{equation}
It follows from (\ref{e:6.17}), (\ref{e:6.13}) and (\ref{e:6.14}) that
\begin{equation}\label{e:6.18}
  J_0\subset [p+(1-p)K^{-k},q+(1-q)K^{-k}].
\end{equation}

Suppose $t_{N,2j+1,r}\in N_6$ with $j\ge j^*$. Then $t_{N,2j+1}\leq t_{N,2j+1,r}<t_{N,2j+2}-k+1$.
We have
\begin{equation*}
  \begin{aligned}
  &\frac{\zeta_{t_{N,2j+1,r}+1}(N_4\sqcup(N_1\cap N_6))}{t_{N,2j+1,r}+1}\\
  =&\frac{\zeta_{t_{N,2j+1}-k+1}(N_4)
  +\zeta_{t_{N,2j+1,r}+1}(N_1\cap N_6)}{t_{N,2j+1}+r+1}\\
  =&\frac{\zeta_{t_{N,2j+1}-k+1}(N_4)}{t_{N,2j+1}+r+1}
  +\frac{\zeta_{\gamma_{N^c}(t_{N,2j+1,r})+1}(N_2\cap N_8)}{\gamma_{N^c}(t_{N,2j+1,r})+1}
  \cdot\frac{\gamma_{N^c}(t_{N,2j+1,r})+1}{t_{N,2j+1}+r+1}\\
  =&\frac{\zeta_{t_{N,2j+1}-k+1}(N_4)}{t_{N,2j+1}+r+1}
  +\frac{\zeta_{\gamma_{N^c}(t_{N,2j+1,r})+1}(N_2\cap N_8)}{\gamma_{N^c}(t_{N,2j+1,r})+1}
  \cdot\frac{\zeta_{t_{N,2j+1,r}}(N^c)+1}{t_{N,2j+1}+r+1}\\
  =&\frac{\zeta_{t_{N,2j+1}-k+1}(N_4)}{t_{N,2j+1}+r+1}
  +\frac{\zeta_{\gamma_{N^c}(t_{N,2j+1,r})+1}(N_2\cap N_8)}{\gamma_{N^c}(t_{N,2j+1,r})+1}
  \cdot\frac{\zeta_{t_{N,2j+1}}(N^c)+r+1}{t_{N,2j+1}+r+1}\\
  &(\text{since }[t_{N,2j+1},t_{N,2j+1,r})\cap\mathbb{N}\subset N^c)
  \end{aligned}
\end{equation*}
That is,
\begin{equation}\label{e:6.19}
  \begin{aligned}
  &\frac{\zeta_{t_{N,2j+1,r}+1}(N_4\sqcup(N_1\cap N_6))}{t_{N,2j+1,r}+1}\\
  =&\frac{\zeta_{t_{N,2j+1}-k+1}(N_4)}{t_{N,2j+1}+r+1}
  +\frac{\zeta_{\gamma_{N^c}(t_{N,2j+1,r})+1}(N_2\cap N_8)}{\gamma_{N^c}(t_{N,2j+1,r})+1}
  \cdot\frac{\zeta_{t_{N,2j+1}}(N^c)+r+1}{t_{N,2j+1}+r+1}
  \end{aligned}
\end{equation}

Let $\epsilon>0$. Define the function
$$g:\left[0,\frac{q}{p}-1+\epsilon\right]\rightarrow\mathbb{R},\,\,g(s)=\frac{q+K^{-k}(1-q+s)}{1+s}.$$
Since $q>0$, $g$ is well defined (and, in the case $p=0$, $g(+\infty)=K^{-k}$).
Note that $g(s)$ is non-increasing and hence
\begin{equation}\label{e:6.20}
  \begin{aligned}
    g\left(\left[0,\frac{q}{p}-1+\epsilon\right]\right)
    &=\left[g\left(\frac{q}{p}-1+\epsilon\right),g(0)\right]\\
    &=\left[\frac{pq+K^{-k}(1-p)q+K^{-k}p\epsilon}{q+p\epsilon},q+K^{-k}(1-q)\right].
  \end{aligned}
\end{equation}
Since $q>0$, the limit
$$\frac{t_{N,2j+2}}{t_{N,2j+1}}
=\frac{\frac{e_{N,2j}}{t_{N,2j+1}}}{\frac{e_{N,2j}}{t_{N,2j+2}}}
=\frac{\frac{e_{N,2j}}{t_{N,2j+1}}}{\frac{e_{N,2j+1}}{t_{N,2j+2}}}
\rightarrow \frac{q}{p},\,\,\rightarrow\infty$$
holds (and equals $+\infty$ when $p=0$). Then
$$\frac{d_{N,2j+1}}{t_{N,2j+1}}\rightarrow \frac{q}{p}-1,\,\,j\rightarrow\infty.$$
So we can take $j^{**}\geq j^*$ such that if $j\geq j^{**}+1$, then
$$\frac{d_{N,2j+1}}{t_{N,2j+1}}\rightarrow \frac{q}{p}-1+\epsilon.$$
Let $j\ge j^{**}+1$ and $t_{N,2j+1,r}\in N_6$.
Then $t_{N,2j+1,r}\in[t_{N,2j+1},t_{N,2j+2}-k+1)$.
Write
$$s=\frac{r+1}{t_{N,2j+1}}.$$
Then $0<s<\frac{q}{p}-1+\epsilon$. Using (\ref{e:6.19}) we get
\begin{equation*}
  \begin{aligned}
    &\frac{\zeta_{t_{N,2j+1,r}+1}(N_4\sqcup(N_1\cap N_6))}{t_{N,2j+1,r}+1}-g(s)\\
  =&\left(
  \frac{
    \frac{
        \zeta_{t_{N,2j+1}-k+1}(N_4)
    }{t_{N,2j+1}}
  }{1+s}
  -\frac{q}{1+s}\right)\\
  &+\left(\frac{\zeta_{\gamma_{N^c}(t_{N,2j+1,r})+1}(N_2\cap N_8)}{\gamma_{N^c}(t_{N,2j+1,r})+1}
  \cdot
  \frac{
    \frac{
        \zeta_{t_{N,2j+1}}(N^c)+r+1
    }{t_{N,2j+1}}+s
  }{1+s}
  -K^{-k}\cdot\frac{1-q+s}{1+s}\right)\\
  \rightarrow&0\text{ independent from }r,\,\,\text{ while }j\rightarrow\infty.
  \end{aligned}
\end{equation*}
This limit together with (\ref{e:6.20}) lead to
\begin{equation*}
  J_1\subset\left[\frac{pq+K^{-k}(1-p)q+K^{-k}p\epsilon}{q+p\epsilon},q+K^{-k}(1-q)\right].
\end{equation*}
Letting $\epsilon\rightarrow0$ we get
\begin{equation}\label{e:6.21}
  J_1\subset[p+K^{-k}(1-p),q+K^{-k}(1-q)].
\end{equation}
Now (\ref{e:6.6}) follows from (\ref{e:6.18}), (\ref{e:6.21}), (\ref{e:6.16}) and (\ref{e:6.15}).
\end{proof}

For convenience of our later use, we construct $N\in\mathcal{M}([p,q])$ in Example \ref{l:2.24} below.
Our constructions are similar to those in \cite{WS}.
\begin{example}\label{l:2.24}
  Let $[p,q]\in\mathcal{C}([0,1])$. We choose a $\delta\in(0,1)$, set $c_0=0$ and $c_1=1$, and choose a $c_i\in(0,1)$, $i\geq 2$, such that
  \begin{equation}\label{e:2.22}
    \text{for }i\ge 1,\,\,c_{2i+1}=\left\{
    \begin{aligned}
    &\frac{1}{\sqrt{2i+1}},&\text{if }&q=0,\\
    &q-\frac{q}{\sqrt{2i+1}},&\text{if }&0<q\leq 1,
    \end{aligned}
    \right.
  \end{equation}
  and
  \begin{equation}\label{e:2.23}
    c_{2i+2}<c_{2i+1}-\frac{\delta}{\sqrt{2i+2}},\,\,i\ge 0,\text{ and }c_{2i}\rightarrow p,\,\,i\rightarrow\infty.
  \end{equation}
  Set $t_0=0$ and $t_1=1$. Define iteratively $t_{2i+2}$ to be the least integer satisfying $t_{2i+2}\geq t_{2i+1}+1$ and
  \begin{equation}\label{e:2.24}
    \frac{\sum_{0\leq 2j<2i+2}(t_{2j+1}-t_{2j})}{t_{2i+2}}<c_{2i+2}
  \end{equation}
  and $t_{2i+3}$ the least integer satisfying $t_{2i+3}\ge t_{2i+2}+1$ and
  \begin{equation}\label{e:2.25}
    \frac{\sum_{0\leq 2j<2i+3}(t_{2j+1}-t_{2j})}{t_{2i+3}}\ge c_{2i+3}.
  \end{equation}
  Let
  $$N=\left(\bigcup_{j\ge 0}[t_{2j},t_{2j+1})\right)\cap\mathbb{N}.$$
Then for $j\ge 0$, $t_{N,j}=t_j$. By $(\ref{e:2.25})$, $(\ref{e:2.22})$, $(\ref{e:2.24})$ and $(\ref{e:2.23})$,
  \begin{equation}\label{e:2.26}
    \frac{e_{N,2j}}{t_{N,2j+1}}\rightarrow q \text{ and }\frac{e_{N,2j+1}}{t_{N,2j+2}}\rightarrow p\text{ while }j\rightarrow\infty.
  \end{equation}
and
\begin{equation}\label{e:2.27}
\left\{
  \begin{aligned}
&\frac{e_{N,2j+2}}{t_{N,2j+3}}
=\frac{e_{N,2j+1}++d_{N,2j+2}}{t_{N,2j+2}+d_{N,2j+2}}
\geq c_{2j+3},\\
&\frac{e_{N,2j+1}}{t_{N,2j+2}}
<c_{2j+1}-\frac{\delta}{\sqrt{2j+2}},
  \end{aligned}
\right.
\end{equation}
\begin{equation}\label{e:2.28}
\left\{
  \begin{aligned}
&\frac{f_{N,2j+1}}{t_{N,2j+2}}
=\frac{f_{N,2j+1}+d_{N,2j+1}}{t_{N,2j+1}+d_{N,2j+1}}
>1-c_{2j+1}+\frac{\delta}{\sqrt{2j+2}},\\
&\frac{f_{N,2j}}{t_{N,2j+1}}
\leq 1-c_{2j+1}.
  \end{aligned}
\right.
\end{equation}
From $(\ref{e:2.27})$ and $(\ref{e:2.22})$ we get
\begin{equation}\label{e:2.29}
  \begin{aligned}
d_{N,2j+2}&>\frac{c_{2j+3}-c_{2j+1}+\frac{\delta}{\sqrt{2j+2}}}{1-c_{2j+3}}\cdot t_{N,2j+2}\\
&\geq \frac{\frac{\delta}{\sqrt{2j+2}}
-\left|\frac{1}{\sqrt{2j+3}}
-\frac{1}{\sqrt{2j+1}}\right|}{1}\cdot(2j+2)\\
&\rightarrow \infty,\,\,j\rightarrow\infty.
  \end{aligned}
\end{equation}
From $(\ref{e:2.28})$ we get
\begin{equation}\label{e:2.30}
  \begin{aligned}
d_{N,2j+1}&>\frac{\frac{\delta}{\sqrt{2j+2}}}{1-c_{2j+1}
+\frac{\delta}{\sqrt{2j+2}}}\cdot t_{N,2j+1}\\
&\geq\frac{\frac{\delta}{\sqrt{2j+2}}}{1+\delta}\cdot (2j+1)\\
&\rightarrow\infty,\,\,j\rightarrow\infty.
  \end{aligned}
\end{equation}
By $(\ref{e:2.26})$, $(\ref{e:2.29})$ and $(\ref{e:2.30})$, $N\in\mathcal{M}([p,q])$.
Suppose $q>0$. Since
$$e_{N,2i}-1<c_{2i+1}(t_{N,2i+1}-1)
=\left(q-\frac{q}{\sqrt{2i+1}}\right)(t_{N,2i+1}-1)$$
for large $i$, then, for large $i$,
$$\begin{aligned}
&f_{N,2i}-(1-q)t_{N,2i+1}\\
=&f_{N,2i}-t_{N,2i+1}+qt_{N,2i+1}\\
=&qt_{N,2i+1}-e_{N,2i}\\
>&qt_{N,2i+1}-\left(q-\frac{q}{\sqrt{2i+1}}\right)(t_{N,2i+1}-1)-1\\
=&\frac{q}{\sqrt{2i+1}}\cdot t_{N,2i+1}+\left(q-\frac{q}{\sqrt{2i+1}}\right)-1\\
\geq&\frac{q}{\sqrt{2i+1}}(2i+1)-1\\
\rightarrow&+\infty,\,\,i\rightarrow\infty.
\end{aligned}$$
That is,
\begin{equation}\label{e:2.31}
  \lim_{i\rightarrow\infty}(f_{N,2i}-(1-q)t_{N,2i+1})=+\infty\text{ while }q>0.
\end{equation}
\end{example}

\begin{lemma}\label{l:6.4}
Let $0<q\leq 1$ and $0\leq p\leq q$. Then $\mathscr{H}^{2-q}(E_{\sigma_K}([p,q]))=+\infty$.
\end{lemma}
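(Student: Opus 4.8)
The plan is to transport the problem to $\Sigma_{K^2}$ via the conjugation $\pi_K$ and then run a mass‑distribution argument on a suitably interleaved set. Since $\mathscr{H}^{2-q}(X)=\mathscr{H}^{(2-q)/2}(\pi_K(X))$ for $X\subset\Sigma_K\times\Sigma_K$ by Lemma \ref{detosw}(c), it is enough to produce $S\subset\pi_K(E_{\sigma_K}([p,q]))$ with $\mathscr{H}^{1-q/2}(S)=+\infty$. I would fix $N\in\mathcal{M}([p,q])$ built as in Example \ref{l:2.24}; the feature to exploit is the second‑order estimate (\ref{e:2.31}), i.e. $f_{N,2i}-(1-q)t_{N,2i+1}\to+\infty$, equivalently $\zeta_{t_{N,2i+1}}(N)-q\,t_{N,2i+1}\to-\infty$. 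By Lemma \ref{l:6.3}, for every $a\in\Sigma_K$ and every $(b,c)$ with $\pi_K(b,c)\in G_{\mathscr{H}^1,\sigma_{K^2}}$ the pair $(\Phi_{N,a}(b),\Phi_{N,a}(c))$ lies in $E_{\sigma_K}([p,q])$, and computing its $\pi_K$‑image coordinate by coordinate (the agreeing part sits on $N$, the disagreeing part on $N^c$) gives $\pi_K(\Phi_{N,a}(b),\Phi_{N,a}(c))=\Phi_{N,\varepsilon_K(a)}(x)$ with $x=\pi_K(b,c)$ and $\varepsilon_K$ as in Section \ref{desooe}. So I would take $S:=\{\Phi_{N,w}(x):w\in E_K^{\mathbb{N}},\ x\in G_{\mathscr{H}^1,\sigma_{K^2}}\}$, which is then contained in $\pi_K(E_{\sigma_K}([p,q]))$.

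Next I would put a measure on $S$. The interleaving map $\Psi(w,x):=\Phi_{N,w}(x)$ is a homeomorphism of $E_K^{\mathbb{N}}\times\Sigma_{K^2}$ onto a compact subset of $\Sigma_{K^2}$; let $\nu:=\Psi_*(\lambda_1\otimes\mathscr{H}^1)$, where $\lambda_1$ is the Bernoulli probability measure on $E_K^{\mathbb{N}}$ giving mass $K^{-\ell}$ to each cylinder of length $\ell$ and $\mathscr{H}^1$ is the Hausdorff probability measure on $\Sigma_{K^2}$. Since $\mathscr{H}^1(G_{\mathscr{H}^1,\sigma_{K^2}})=1$ and $\Psi$ is injective (using that $N$ and $N^c$ are infinite), $\nu$ is a Borel probability measure carried by $S$, so $\nu(S)=1$. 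For a cylinder $[\omega]$ of length $m$ in $\Sigma_{K^2}$, $\Psi^{-1}([\omega])$ is the product of a cylinder of length $\zeta_m(N)$ in $E_K^{\mathbb{N}}$ (empty unless $\omega_i\in E_K$ for all $i\in N\cap[0,m)$) and a cylinder of length $m-\zeta_m(N)$ in $\Sigma_{K^2}$, whence
$$\nu([\omega])=K^{-\zeta_m(N)}\cdot(K^2)^{-(m-\zeta_m(N))}=K^{\zeta_m(N)-2m}=K^{\zeta_m(N)-mq}\,|[\omega]|^{1-q/2}$$
(or $\nu([\omega])=0$), using $|[\omega]|=K^{-2m}$.

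The decisive step, which I would carry out next, is to show $\varrho(M):=\sup_{m\ge M}\bigl(\zeta_m(N)-mq\bigr)\to-\infty$ as $M\to\infty$. When $q<1$, the map $m\mapsto\zeta_m(N)-mq$ increases with slope $1-q$ along every run of $N$ and decreases with slope $q$ along every run of $N^c$, so on each interval $[t_{N,2j},t_{N,2j+2}]$ it is maximized at $t_{N,2j+1}$, where the value $\zeta_{t_{N,2j+1}}(N)-q\,t_{N,2j+1}\to-\infty$ by (\ref{e:2.31}); when $q=1$ it is non‑increasing and equals $-\#(N^c\cap[0,m))\to-\infty$ since $N^c$ is infinite. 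In either case $\varrho(M)\to-\infty$. Then, given $\varepsilon>0$, I would choose $M$ with $K^{\varrho(M)}<\varepsilon$: since $\Sigma_{K^2}$ is ultrametric, any set $U$ with $0<|U|<K^{-2M}$ has $|U|=|[\omega]|$ for its shortest containing cylinder $[\omega]$, which has length $>M$, so $\nu(U)\le\nu([\omega])<\varepsilon|U|^{1-q/2}$; the mass distribution principle (see \cite{Fal}) then gives $\mathscr{H}^{1-q/2}_\delta(S)\ge\nu(S)/\varepsilon=1/\varepsilon$ for all small $\delta$, hence $\mathscr{H}^{1-q/2}(S)\ge1/\varepsilon$, and letting $\varepsilon\to0$ yields $\mathscr{H}^{1-q/2}(S)=+\infty$. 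Finally $\mathscr{H}^{2-q}(E_{\sigma_K}([p,q]))\ge\mathscr{H}^{2-q}(\pi_K^{-1}(S))=\mathscr{H}^{1-q/2}(S)=+\infty$ by Lemma \ref{detosw}(c).

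The hard part will be the verification that $\varrho(M)\to-\infty$ for \emph{all} scales $m$, not merely along the subsequence $(t_{N,2i+1})$ where (\ref{e:2.31}) is stated; this uniform second‑order deficit is precisely what separates the present conclusion $\mathscr{H}^{2-q}=+\infty$ from $\mathscr{H}^{2-q}<\infty$ for the $D$‑sets and from the finite value at $q=0$. Two further technical points requiring care are that $\Psi_*(\lambda_1\otimes\mathscr{H}^1)$ is genuinely concentrated on $S$ (so that $\nu(S)=1$, using that $G_{\mathscr{H}^1,\sigma_{K^2}}$ is Borel and $\Psi$ is a homeomorphism onto its image), and the exact coordinatewise identification $\pi_K(\Phi_{N,a}(b),\Phi_{N,a}(c))=\Phi_{N,\varepsilon_K(a)}(x)$.
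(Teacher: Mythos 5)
Your proposal is correct, and it proves the statement by the same underlying construction as the paper — the set $S$ you build is exactly the paper's set $X=\{\Phi_{N,w}(y):w\in E_K^{\mathbb{N}},\,y\in G_{\mathscr{H}^1,\sigma_{K^2}}\}$, the containment in $\pi_K(E_{\sigma_K}([p,q]))$ rests on Lemma \ref{l:6.3} in both cases, and your cylinder masses $\nu([\omega])=K^{\zeta_m(N)-2m}$ coincide with the paper's quantities $\int_G h_\omega\,\mathrm{d}\mathscr{H}^1=K^{-e_{N,2j}}(K^2)^{-(f_{N,2j}+r+1)}$ — but the packaging of the lower bound is genuinely different. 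The paper does not invoke the mass distribution principle: it starts from an arbitrary $\delta$-cover $\mathcal{B}$ and performs a multi-stage refinement $\mathcal{B}\to\mathcal{B}_0\to\mathcal{B}_1\to\mathcal{B}_2\to\mathcal{B}_3$ to reach a \emph{pairwise disjoint} cover by cylinders whose lengths all lie in $N^c+1$, because its key inequality $|[\omega]|^{1-q/2}\ge M\int_G h_\omega\,\mathrm{d}\mathscr{H}^1$ is only established for such lengths (via (\ref{e:2.31})); it then closes the argument with the partition-of-unity identity $\sum_{B\in\mathcal{B}_3}h_{\omega_B}\equiv 1$ on $G$. Your route replaces all of this with two observations: (i) the explicit product measure $\nu=\Psi_*(\lambda_1\otimes\mathscr{H}^1)$ is carried by $S$ with $\nu(S)=1$, and (ii) $\zeta_m(N)-mq$ is, on each block $[t_{N,2j},t_{N,2j+2}]$, maximized at $m=t_{N,2j+1}$, where (\ref{e:2.31}) forces it to $-\infty$ (and for $q=1$ it is monotone anyway). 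Point (ii) is the small extra lemma that the paper does not prove, and it is what buys you the uniform estimate $\nu([\omega])\le K^{\varrho(M)}|[\omega]|^{1-q/2}$ over \emph{all} cylinder lengths $>M$, which together with the ultrametric structure of $\Sigma_{K^2}$ makes the cover-refinement and disjointness bookkeeping unnecessary. The trade-off is that you must verify (ii) carefully (you do: the slopes $1-q$ on runs of $N$ and $-q$ on runs of $N^c$ give exactly the claimed local maxima) and check the measurability/injectivity points you flag at the end, all of which are routine here since $\Psi$ is a continuous injection on a compact product and $G_{\mathscr{H}^1,\sigma_{K^2}}$ is Borel of full $\mathscr{H}^1$-measure. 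One cosmetic remark: the paper's statement of Lemma \ref{l:6.3} has the roles of $a$ and $c$ (and of $\sigma_K$ versus $\sigma_{K^2}$ in the genericity hypothesis) garbled relative to its own proof; your reading — common symbol sequence on $N$, generic pair $\pi_K(b,c)\in G_{\mathscr{H}^1,\sigma_{K^2}}$ interleaved on $N^c$ — is the one the proof actually supports, and your coordinatewise identification $\pi_K(\Phi_{N,a}(b),\Phi_{N,a}(c))=\Phi_{N,\varepsilon_K(a)}(\pi_K(b,c))$ is exactly right.
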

\begin{proof}
Let $G=G_{\mathscr{H}^1,\sigma_{K^2}}$. Let $N\in\mathcal{M}([0,1])$ be as in Example \ref{l:2.24}.
Define
$$X=\{\Phi_{N,x}(y):x\in E_K^\mathbb{N}, y\in G\}.$$
Lemma \ref{l:6.3} implies that  $\pi_K(E_{\sigma_K}([p,q]))\supset X$. Then, by Lemma \ref{detosw}, to prove $\mathscr{H}^{2-q}(E_{\sigma_K}([p,q]))=+\infty$ it is enough to prove $\mathscr{H}^{1-\frac{q}{2}}(X)=+\infty$.

For $i\geq0$ let
$$V_i=\{v\in \Sigma_{K,i}:[v]\cap X\neq\emptyset\}.$$
Since $G$ is dense in $\Sigma_{K^2}$, by the definition of $X$, we can check
\begin{equation}\label{e:6.22}
  V_i=\{v\in \Sigma_{K,i}:v_j\in E_K\text{ for }j\in N\cap \{0,1,\cdots,i-1\}\}.
\end{equation}

Suppose $\omega\in V_{t_{N,2j+1,r}+1}$. Define the set
$$G_\omega=\{y\in G:y_i=\omega_{\gamma_{N^c}^{-1}(i)}\text{ for }i\in\{0,1,\cdots,\gamma_{N^c}(t_{N,2j+1,r})\}\}.$$
Let $v\in W_k$ be the longest word with $G_\omega\subset[v]$. Then
$$|v|=\gamma_{N^c}(t_{N,2j+1,r})+1=\zeta_{t_{N,2j+1,r}+1}(N^c)=f_{N,2j}+r+1.$$
So
\begin{equation}\label{e:6.23}
\mathscr{H}^1(G_\omega)=\mathscr{H}^1(G\cap[v])=\mathscr{H}^1([v])=(K^2)^{-(f_{N,2j}+r+1)}.
\end{equation}
Define the function $h_\omega:G\rightarrow\mathbb{R}$ by
$$h_\omega(y)=\left\{
\begin{aligned}
  &K^{-e_{N,2j}},&&y\in G_\omega,\\
  &0,&&\text{otherwise}.
\end{aligned}
\right.
$$
Clearly $h_\omega$ is a continuous function on $G$ and, by (\ref{e:6.23}),
\begin{equation}\label{e:6.24}
  \int_G h_\omega \mathrm{d} \mathscr{H}^1=K^{-e_{N,2j}}(K^2)^{-(f_{N,2j}+r+1)}.
\end{equation}
Moreover,
\begin{equation}\label{e:6.25}
  \begin{aligned}
    |[\omega]|^{1-\frac{q}{2}}&=(K^2)^{-(t_{N,2j+1,r}+1)(1-\frac{q}{2})}\\
    &=(K^2)^{-(e_{N,2j}+f_{N,2j}+r+1)(1-\frac{q}{2})}\\
    &=K^{-e_{N,2j}}(K^2)^{-(f_{N,2j}+r+1)}K^{f_{N,2j}q-e_{N,2j}(1-q)+(r+1)q}\\
    &\geq K^{-e_{N,2j}}(K^2)^{-(f_{N,2j}+r+1)}K^{f_{N,2j}q-e_{N,2j}(1-q)}\\
    &=K^{f_{N,2j}q-e_{N,2j}(1-q)}\int_G h_\omega \mathrm{d} \mathscr{H}^1\\
    &=K^{t_{N,2j+1}q-e_{N,2j}}\int_G h_\omega \mathrm{d} \mathscr{H}^1\\
    &=K^{f_{N,2j}-t_{N,2j+1}(1-q)}\int_G h_\omega \mathrm{d} \mathscr{H}^1.
  \end{aligned}
\end{equation}

Let $M>0$. Because of (\ref{e:2.31}), we may choose $j^*$ such that
\begin{equation}\label{e:6.26}
  K^{f_{N,2j}-t_{N,2j+1}(1-q)}\geq M,\,\,j\geq j^*.
\end{equation}
Then, for $\omega\in V_{t_{N,2j+1,r}+1}$, it follows from (\ref{e:6.25}) and (\ref{e:6.26}) that
\begin{equation}\label{e:6.27}
  |[\omega]|^{1-\frac{q}{2}}\geq M\int_G h_\omega \mathrm{d} \mathscr{H}^1,\,\,j\geq j^*.
\end{equation}

Let $k\geq t_{N,2j^*+1}$, $\mathcal{B}\in \mathcal{C}_{X,(K^2)^{-k}}$ and $\epsilon>0$.
Since $X$ is perfect, then we may choose $\mathcal{B}_0\in\mathscr{C}_{X,K^{-k}}$ such that for each $B_0\in\mathcal{B}_0$ we have $\#(B_0)\geq 2$, for each $B\in\mathcal{B}_0$ with $B\subset B_0$ and
\begin{equation}\label{e:6.28}
  \sum_{B\in\mathcal{B}_0}|B|^{1-\frac{q}{2}}<\sum_{B\in \mathcal{B}}|B|^{1-\frac{q}{2}}+\epsilon.
\end{equation}

Let $\mathcal{B}_1=\{[\omega_B]\cap X:B\in\mathcal{B}_0\}$, where $[\omega_B]$ is the longest column in $\Sigma_K$ that contains $B$.
Since each $|[\omega_B]|=|B|$,
\begin{equation}\label{e:6.29}
  \sum_{B\in \mathcal{B}_1}|B|^{1-\frac{q}{2}}\leq\sum_{B\in \mathcal{B}_0}|B|^{1-\frac{q}{2}}.
\end{equation}
Note that for any $v,\omega\in\{\omega_B:B\in \mathcal{B}_0\}$, one of the three statements
$$[v]\subset[\omega],\,\,[\omega]\subset[v],\,\,[v]=[\omega]$$
is true.
Then we may choose a subcover $\mathcal{B}_2\subset\mathcal{B}_1$ with $[\omega_B]$, $B\in \mathcal{B}_2$, pairwise disjoint.
Then
\begin{equation}\label{e:6.30}
  \sum_{B\in\mathcal{B}_2}|B|^{1-\frac{q}{2}}\leq\sum_{B\in\mathcal{B}_1}|B|^{1-\frac{q}{2}}
\end{equation}

Suppose $B\in\mathcal{B}_2$ with $|\omega_B|=t_{N,2j,r}+1\in N+1$. Put
$$\mathcal{C}_B=\{[\omega]\cap X:\omega\in V_{t_{N,2j,r}+1},\omega|_{[0,t_{N,2j,r}+1)}=\omega_B\}.$$
Then $\bigcup\mathcal{C}_B=B$. By (\ref{e:6.22}), it follows that $\mathcal{C}_B$ contains
$K^{d_{N,2j}-r}$ pairwise
disjoint members of diameter $(K^2)^{-t_{N,2j+1}}$.
Then
\begin{equation}\label{e:6.31}
\begin{aligned}
  \sum_{C\in\mathcal{C}_B}|C|^{1-\frac{q}{2}}&=K^{d_{N,2j}-r}(K^2)^{-t_{N,2j+1}(1-\frac{q}{2})}\\
  &=K^{d_{N,2j}-r}(K^2)^{-t_{N,2j+1,r}(1-\frac{q}{2})}(K^2)^{-(d_{N,2j}-r)(1-\frac{q}{2})}\\
  &=(K^2)^{-t_{N,2j+1,r}(1-\frac{q}{2})}(K^2)^{-(d_{N,2j}-r)(\frac{1}{2}-\frac{q}{2})}\\
  &\leq(K^2)^{-t_{N,2j+1,r}(1-\frac{q}{2})}\\
  &=|B|^{1-\frac{q}{2}}.
\end{aligned}
\end{equation}

Let
$$\mathcal{B}_3=\{C:C\in\mathcal{C}_B,\,\,B\in\mathcal{B}_2,\,\,|\omega_B|\in N+1\}\cup\{B:B\in \mathcal{B}_2,\,\,|\omega_B|\in N^c+1\}.$$
Then $\mathcal{B}_3$ is a cover of $X$ and a set of pairwise disjoint subsets of $X$, $|\omega_B|\in N^c+1$ for $B\in \mathcal{B}_3$ and, by (\ref{e:6.31}),
\begin{equation}\label{e:6.32}
  \sum_{B\in \mathcal{B}_3}|B|^{1-\frac{q}{2}}\leq \sum_{B\in \mathcal{B}_2}|B|^{1-\frac{q}{2}}
\end{equation}

Let $j_0\leq j_1$. Suppose $y\in G$ and $\omega\in V_{t_{N,2j_0+1,r+1}}$ such that
$$y_i=\omega_{\gamma^{-1}_{N^c}(i)}\text{ for }i\in\{0,1,\cdots,\gamma_{N^c}(t_{N,2j_0+1,r})\}$$
and $V_\omega$ is a maximal subset of $\bigcup_{t_{N,2j_1+1}+1\leq t \leq t_{N,2j_1+2}+1}V_t$ such that
\begin{description}
  \item{(i)} for $v\in V_\omega$, $|v|\geq |\omega|$ and $v|_{[0,|\omega|)}=\omega$,
  \item{(ii)} for $v\in V_\omega$,
  $$y_i=\omega_{\gamma^{-1}_{N^c}(i)}\text{ for }i\in\{0,1,\cdots,\gamma_{N^c}(|v|)\},$$
  \item{(iii)} for $v_0\neq v_1\in V_\omega$, $[v_0]\cap[v_1]=\emptyset$.
\end{description}
Then
$$\#(V_\omega)=K^{\#(N\cap[t_{N,2j_0+2},t_{N,2j_1+1}))}=K^{e_{N,2j_1}-e_{N,2j_0}}.$$
Thus
\begin{equation}\label{e:6.33}
  \sum_{v\in V_\omega}h_v(y)
  =K^{e_{N,2j_1}-e_{N,2j_0}}K^{-e_{N,2j_1}}
  =K^{-e_{N,2j_0}}=h_\omega(y).
\end{equation}

Let $y\in G$. Put $X_y=\{\Phi_{N,x}(y):x\in E_K^\mathbb{N}\}$ and $\mathcal{C}_y=\{B\in \mathcal{B}_3:B\cap X_y\neq\emptyset\}$.
Since $X_y$ is compact, $\mathcal{C}_y$ covers $X_y$ and the members of $\mathcal{C}_y$ are open in $X$ and are pairwise disjoint. Hence $\mathcal{C}_y$ is finite.
Suppose $\sup\{|\omega_B|:B\in\mathcal{C}_x\}\in [t_{N,2j_1+1}+1,t_{N,2j_1+2}+1)$.
For each $\omega\in \{\omega_B:B\in \mathcal{C}_x\}$ choose $V_\omega$ to be a maximal subset of $\bigcup_{t_{N,2j_1+1}+1\leq t \leq t_{N,2j_1+2}+1}V_t$ satisfying (i), (ii) and (iii).
Using (\ref{e:6.33}) we have
\begin{equation}\label{e:6.34}
  \sum_{B\in \mathcal{C}_x}h_{\omega_B}(y)=\sum_{B\in \mathcal{C}_x}\sum_{v\in V_{\omega_B}}h_v(y)=K^{e_{N,2j_1}}K^{-e_{N,2j_1}}=1.
\end{equation}
Eq. (\ref{e:6.34}) leads to
\begin{equation}\label{e:6.35}
  \sum_{B\in\mathcal{B}_3}h_{\omega_B}(y)\equiv 1\text{ for }y\in G.
\end{equation}
Note that, for $B\in\mathcal{B}_3$, $|\omega_B|\geq t_{N,2j^*+1}+1$.
Then it follows from (\ref{e:6.27}) and (\ref{e:6.35}) that
\begin{equation}\label{e:6.36}
\begin{aligned}
  \sum_{B\in\mathcal{B}_3}|B|^{1-\frac{q}{2}}&=\sum_{B\in\mathcal{B}_3}|[\omega_B]|^{1-\frac{q}{2}}\geq M\int_G h_\omega \mathrm{d} \mathscr{H}^1\\
  &=M\int_G \left(\sum_{B\in\mathcal{B}_3}h_{\omega_B}\right) \mathrm{d} \mathscr{H}^1=M\mathscr{H}^1(G)=M.
  \end{aligned}
\end{equation}

By (\ref{e:6.28}), (\ref{e:6.29}), (\ref{e:6.30}), (\ref{e:6.32}) and (\ref{e:6.36}),
\begin{equation}\label{e:6.37}
  \sum_{B\in\mathcal{B}}|B|^{1-\frac{q}{2}}\geq M-\epsilon.
\end{equation}
Since (\ref{e:6.37}) holds for each $B\in \mathscr{C}_{X,(K^2)^{-k}}$,
\begin{equation}\label{e:6.38}
  \mathscr{H}_{(K^2)^{-k}}^{1-\frac{q}{2}}(X)\geq M-\epsilon.
\end{equation}
Letting $k\rightarrow\infty$ in (\ref{e:6.38}) and then letting $M\rightarrow\infty$, we get $\mathscr{H}^{1-\frac{q}{2}}(X)=+\infty$.
\end{proof}

We sum up Lemma \ref{l:6.2} and Lemma \ref{l:6.4} into the following theorem.
\begin{theorem}\label{l:6.5}
$\mathscr{H}^2(E_{\sigma_K}([0,0]))=1$ and, for $0<q\leq 1$ and $0\leq p\leq q$, $\mathscr{H}^{2-q}(E_{\sigma_K}([p,q]))=+\infty$.
\end{theorem}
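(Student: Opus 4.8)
The statement is exactly the conjunction of the two lemmas just proved, so the plan is simply to assemble them. The displayed formula has two clauses: the value $1$ at $q=0$ (which forces $p=0$, since $0\le p\le q$), and the value $+\infty$ for $0<q\le 1$ and $0\le p\le q$. The first clause is precisely Lemma~\ref{l:6.2}, and the second clause is precisely Lemma~\ref{l:6.4}. Hence the proof reduces to a single sentence citing these two results, with no residual work.

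For the reader's convenience I would recall in one line why each ingredient holds. In the case $q=0$: by Lemma~\ref{l:6.1} the preimage $\pi_K^{-1}(G_{\mathscr{H}^1,\sigma_{K^2}})$ lies inside $E_{\sigma_K}([0,0])$; since $\mathscr{H}^1$ is the uniform measure on $\Sigma_{K^2}$, its set of generic points has full measure, so $\mathscr{H}^1(G_{\mathscr{H}^1,\sigma_{K^2}})=1$; transporting through the conjugacy $\pi_K$ via Lemma~\ref{detosw}(c) and bounding above by $\mathscr{H}^2(\Sigma_K\times\Sigma_K)=1$ yields $\mathscr{H}^2(E_{\sigma_K}([0,0]))=1$. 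In the case $0<q\le 1$: fixing $N\in\mathcal{M}([0,1])$ as in Example~\ref{l:2.24} and setting $X=\{\Phi_{N,x}(y):x\in E_{K}^{\mathbb{N}},\ y\in G_{\mathscr{H}^1,\sigma_{K^2}}\}$, Lemma~\ref{l:6.3} gives $\pi_K(E_{\sigma_K}([p,q]))\supset X$, and a mass-distribution argument — replacing an arbitrary cover by a cylinder cover refined so that cylinder lengths fall along $N+1$, then playing off the continuous weights $h_\omega$ against $|[\omega]|^{1-q/2}$ with the amplification factor $K^{f_{N,2j}-(1-q)t_{N,2j+1}}\to\infty$ from (\ref{e:2.31}), together with the partition of unity $\sum_B h_{\omega_B}\equiv 1$ on $G_{\mathscr{H}^1,\sigma_{K^2}}$ — shows $\mathscr{H}^{1-q/2}(X)=+\infty$, whence $\mathscr{H}^{2-q}(E_{\sigma_K}([p,q]))=+\infty$ again by Lemma~\ref{detosw}(c).

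There is no genuine obstacle left at this stage: the entire difficulty, which is the heart of Section~\ref{desooe}, was already carried out inside Lemma~\ref{l:6.4} — namely the careful refinement of covers that makes the weight functions $h_\omega$ applicable, and the verification via (\ref{e:2.31}) that the amplification factor diverges. The theorem follows by merely collecting Lemma~\ref{l:6.2} and Lemma~\ref{l:6.4}.
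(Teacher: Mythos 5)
Your proposal is correct and matches the paper exactly: the paper states Theorem~\ref{l:6.5} as a summary of Lemma~\ref{l:6.2} and Lemma~\ref{l:6.4}, with no further argument needed. Your one-line recapitulations of the two ingredients also accurately reflect how those lemmas are proved in Section~\ref{desooe}.
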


To calculate the Hausdorff measure of $D_{\sigma_K}([p,q])$, we need some lemmas.
\begin{lemma}\label{l:2.20}
Suppose $0\leq l<r$, $n\geq 1$ and $0\leq j_i<n$, $i=0,1$. Then
$$|\#((n\mathbb{N}+j_0)\cap[l,r))-\#((n\mathbb{N}+j_1)\cap[l,r))|\leq 1.$$
\end{lemma}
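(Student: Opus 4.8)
The plan is to count the elements of an arithmetic progression with common difference $n$ inside a half-open interval $[l,r)$ and to compare the counts for two different residues. First I would observe that for a fixed residue $j$ with $0\le j<n$, the set $(n\mathbb{N}+j)\cap[l,r)$ consists of exactly those integers of the form $nk+j$ with $k\ge 0$ and $l\le nk+j<r$, i.e.\ with $\frac{l-j}{n}\le k<\frac{r-j}{n}$; since $k$ is also constrained to be a nonnegative integer, the count is
$$
\#((n\mathbb{N}+j)\cap[l,r))=\#\left(\mathbb{Z}_{\ge 0}\cap\left[\tfrac{l-j}{n},\tfrac{r-j}{n}\right)\right).
$$
The length of the interval $\left[\tfrac{l-j}{n},\tfrac{r-j}{n}\right)$ is $\tfrac{r-l}{n}$, independent of $j$, so the number of integers it contains is either $\lfloor\tfrac{r-l}{n}\rfloor$ or $\lceil\tfrac{r-l}{n}\rceil$; these two values differ by at most $1$ (and coincide when $n\mid r-l$).

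The remaining point is the truncation at $0$. If $l\ge j$ for both $j=j_0,j_1$ (in particular whenever $l\ge n-1$), the constraint $k\ge 0$ is vacuous and the count is simply the number of integers in an interval of length $\tfrac{r-l}{n}$, so the bound $\le 1$ is immediate from the previous paragraph. If $l<n$, I would instead argue directly: the sets $(n\mathbb{N}+j_0)\cap[0,r)$ and $(n\mathbb{N}+j_1)\cap[0,r)$ partition into blocks $[0,n),[n,2n),\dots$, and each block of the form $[mn,(m+1)n)$ with $(m+1)n\le r$ contains exactly one element of each progression, contributing equally; only the first (possibly partial) block $[0,n)\cap[l,r)$ if $l>0$, and the last partial block, can create a discrepancy, and each of these can contribute a difference of at most... — but this double-counting risks giving a bound of $2$, so the cleaner route is the floor/ceiling argument above applied uniformly. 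To handle $l<n$ cleanly, note that $(n\mathbb{N}+j)\cap[l,r)=(n\mathbb{Z}+j)\cap[\max(l,j'),r)$ for a suitable representative, or more simply: replace $[l,r)$ by $[l',r)$ where $l'=l$ if $l\ge 0$ which it always is, and just verify that since $l\ge 0$ and $0\le j<n$ we have $\lceil\tfrac{l-j}{n}\rceil\in\{0\}$ when $l<j$ and the true lower endpoint for $k$ becomes $\max(0,\lceil\tfrac{l-j}{n}\rceil)$; comparing $\max(0,\lceil\tfrac{l-j_0}{n}\rceil)$ with $\max(0,\lceil\tfrac{l-j_1}{n}\rceil)$, both lie in $\{0,1\}$...

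Let me streamline: the count equals $\#\{k\in\mathbb{Z}: k\ge 0,\ l\le nk+j<r\}$. Write $a_j=\min\{k\in\mathbb{Z}:k\ge 0,\ nk+j\ge l\}$ and $b_j=\max\{k\in\mathbb{Z}:nk+j<r\}$; then the count is $\max(0,b_j-a_j+1)$. Now $b_j=\lceil\tfrac{r-j}{n}\rceil-1$, so $|b_{j_0}-b_{j_1}|\le 1$ since $\tfrac{r-j_0}{n}$ and $\tfrac{r-j_1}{n}$ differ by less than $1$; likewise $a_j=\max(0,\lceil\tfrac{l-j}{n}\rceil)$ and, as $\tfrac{l-j_0}{n},\tfrac{l-j_1}{n}$ differ by less than $1$, their ceilings differ by at most $1$, hence so do $a_{j_0},a_{j_1}$. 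Therefore $(b_{j_0}-a_{j_0})-(b_{j_1}-a_{j_1})$ has absolute value at most $2$ — which is one worse than claimed. The fix is the standard observation that for intervals of \emph{equal length}, not merely close endpoints, the floor/ceiling counts match to within $1$: I would argue that $\#((n\mathbb{N}+j)\cap[l,r))$ as a function of $j\in\{0,\dots,n-1\}$ takes at most two consecutive integer values, because $\bigsqcup_{j=0}^{n-1}\big((n\mathbb{N}+j)\cap[l,r)\big)=\mathbb{N}\cap[l,r)$, which has $\#(\mathbb{N}\cap[l,r))=\lceil r\rceil-\lceil l\rceil$ (using $l\ge 0$) elements distributed among $n$ classes, and consecutive residues $j,j+1$ give counts differing by at most $1$ since shifting the residue by one removes at most one element at the right end and adds at most one at the left end. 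Formally, $(n\mathbb{N}+j+1)\cap[l,r)$ and $(n\mathbb{N}+j)\cap[l,r)$ are related by the bijection $x\mapsto x-1$ off their boundary elements, so their sizes differ by at most $1$; since arbitrary $j_0,j_1$ are connected by such single steps this only re-proves a bound of $n-1$.

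The genuinely correct and short argument, which I will use: the two sets $S_i:=(n\mathbb{N}+j_i)\cap[l,r)$ satisfy $S_i=\{x\in\mathbb{Z}:x\equiv j_i\ (\mathrm{mod}\ n),\ l\le x<r,\ x\ge 0\}$; since $l\ge 0$ the condition $x\ge 0$ is redundant when $l\ge 1$ and also when $l=0$ (as then $j_i\ge 0$ already), so $S_i=(n\mathbb{Z}+j_i)\cap[l,r)$ outright. Thus $\#(S_i)$ is the number of integers congruent to $j_i$ modulo $n$ in a half-open interval of length $r-l$, and it is a classical fact that any such count equals $\lfloor\tfrac{r-l+(\text{fractional correction})}{n}\rfloor$ and lies in $\{\lfloor\tfrac{r-l}{n}\rfloor,\lceil\tfrac{r-l}{n}\rceil\}$; concretely, in any window of length $r-l$ the number of points of the lattice $n\mathbb{Z}+j_i$ is $\lfloor\tfrac{r-l}{n}\rfloor$ or $\lfloor\tfrac{r-l}{n}\rfloor+1$, the latter happening precisely when the window "captures an extra point." Hence $\#(S_0),\#(S_1)\in\{\lfloor\tfrac{r-l}{n}\rfloor,\lfloor\tfrac{r-l}{n}\rfloor+1\}$ and so $|\#(S_0)-\#(S_1)|\le 1$. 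The main obstacle, as the above meandering shows, is purely bookkeeping: making sure the truncation at $0$ is genuinely harmless (it is, because $l\ge 0$ forces the lattice point condition to coincide with the unrestricted one), after which the result is the elementary statement that a half-open real interval of length $L$ contains either $\lfloor L/n\rfloor$ or $\lceil L/n\rceil$ points of any fixed arithmetic progression of step $n$.
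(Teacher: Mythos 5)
Your final, streamlined argument is correct and rests on essentially the same elementary counting as the paper: the paper takes the maximal $k$ with $l+kn\leq r$, notes that $[l,l+kn)$ contains exactly $k$ points of each progression, and that the leftover $[l+kn,r)$ of length $<n$ contains $0$ or $1$ points of each, which is just your observation that each count lies in $\{\lfloor\tfrac{r-l}{n}\rfloor,\lceil\tfrac{r-l}{n}\rceil\}$ (together with your correct remark that $l\geq 0$ and $0\leq j_i<n$ make the truncation of $n\mathbb{Z}$ to $n\mathbb{N}$ harmless). The several abandoned attempts in the middle (which only yield bounds of $2$ or $n-1$) should simply be deleted; only the last paragraph is the proof.
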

\begin{proof}
Let $k$ be the maximal integer satisfying $l+kn\leq r$. Then
$$\#((n\mathbb{N}+j_0)\cap[l,l+kn))=\#((n\mathbb{N}+j_1)\cap[l,l+kn))$$
and
$$\#((n\mathbb{N}+j_0)\cap[l+kn,r)),\#((n\mathbb{N}+j_1)\cap[l+kn,r))\in\{0,1\}.$$
\end{proof}

\begin{lemma}\label{l:2.21}
  Let $N\subset\mathbb{N}$ with both $N$ and $N^c$ infinite. Suppose $t_k\rightarrow\infty$ satisfy
  \begin{equation}\label{e:2.16}
  \lim_{k\rightarrow\infty}\frac{\zeta_{t_k}(N)}{t_k}=p
  \text{ and }\lim_{k\rightarrow\infty}\frac{\zeta_{t_k}(L_N)}{t_k}=0.
  \end{equation}
  Then, for $n\geq 1$ and $0\leq j<n$,
  \begin{equation}\label{e:2.17}
  \lim_{k\rightarrow\infty}\frac{\zeta_{t_k}(N\cap(n\mathbb{N}+j))}{t_k}=\frac{p}{n}.
  \end{equation}
\end{lemma}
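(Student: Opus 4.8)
The plan is to split $N$ into its maximal runs of consecutive integers and to estimate the number of elements of the progression $n\mathbb{N}+j$ run by run. By definition $L_N=N\setminus(N+1)=\{l_{N,0}<l_{N,1}<\cdots\}$ is exactly the set of left endpoints of these runs, so if the $i$-th run is $[l_{N,i},r_{N,i})$ then
\[
N\cap\{0,\dots,t_k-1\}=\bigsqcup_{i:\,l_{N,i}<t_k}\bigl([l_{N,i},r_{N,i})\cap\{0,\dots,t_k-1\}\bigr),
\]
a disjoint union of $\zeta_{t_k}(L_N)$ (possibly truncated) intervals, say $[a_i,b_i)$ with $i$ ranging over those with $l_{N,i}<t_k$, and $\sum_i(b_i-a_i)=\zeta_{t_k}(N)$.

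First I would record the elementary one-interval estimate that for any $a<b$ and any $0\le j<n$,
\[
\Bigl|\#\bigl((n\mathbb{N}+j)\cap[a,b)\bigr)-\tfrac{b-a}{n}\Bigr|\le 1 .
\]
This follows from Lemma \ref{l:2.20}: summing the bound $\#((n\mathbb{N}+j)\cap[a,b))\le\#((n\mathbb{N}+j')\cap[a,b))+1$ over $j'=0,\dots,n-1$ and using $\sum_{j'=0}^{n-1}\#((n\mathbb{N}+j')\cap[a,b))=b-a$ gives $n\,\#((n\mathbb{N}+j)\cap[a,b))\le (b-a)+n$, and the lower bound is symmetric.

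Then I would sum this estimate over the runs. Since $\zeta_{t_k}(N\cap(n\mathbb{N}+j))=\sum_i\#\bigl((n\mathbb{N}+j)\cap[a_i,b_i)\bigr)$ and $\sum_i(b_i-a_i)=\zeta_{t_k}(N)$, the triangle inequality yields
\[
\Bigl|\zeta_{t_k}\bigl(N\cap(n\mathbb{N}+j)\bigr)-\tfrac1n\,\zeta_{t_k}(N)\Bigr|\le \zeta_{t_k}(L_N).
\]
Dividing by $t_k$ and letting $k\to\infty$, the hypotheses $\zeta_{t_k}(N)/t_k\to p$ and $\zeta_{t_k}(L_N)/t_k\to 0$ from (\ref{e:2.16}) give $\zeta_{t_k}(N\cap(n\mathbb{N}+j))/t_k\to p/n$, which is (\ref{e:2.17}).

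There is no real obstacle here: the whole content is that the second assumption in (\ref{e:2.16}) forces the number of runs of $N$ below $t_k$ to be $o(t_k)$, so the $O(1)$ rounding error incurred within each run sums to $o(t_k)$. The only mild care needed concerns the last, truncated run, but it is still an interval and is already counted among the $\zeta_{t_k}(L_N)$ runs, so nothing changes. (The assumption that $N$ and $N^c$ are both infinite is used only to make the notation $L_N$, $l_{N,i}$, $\zeta_{t_k}(L_N)$ meaningful.)
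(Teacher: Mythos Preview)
Your proof is correct and follows essentially the same approach as the paper: decompose $N\cap\{0,\dots,t_k-1\}$ into its $\zeta_{t_k}(L_N)$ maximal intervals, apply Lemma~\ref{l:2.20} on each interval, and sum to obtain $\bigl|\zeta_{t_k}(N\cap(n\mathbb{N}+j))-\tfrac{1}{n}\zeta_{t_k}(N)\bigr|\le\zeta_{t_k}(L_N)$, then divide by $t_k$ and pass to the limit. The only cosmetic difference is that the paper sums Lemma~\ref{l:2.20} over $j'$ \emph{after} summing over runs, whereas you average over $j'$ first on a single interval to get the estimate against $(b-a)/n$; the resulting key inequality is the same.
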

\begin{proof}
Let $n\geq 1$ and $0\leq j<n$. Note that $N\cap\{0,1,\cdots,t_k-1\}$ is the union of $\zeta_{t_k}(L_N)$ integer intervals. Then, for $0\leq j'<n$, by Lemma \ref{l:2.20},
\begin{equation}\label{e:2.18}
|\zeta_{t_k}(N\cap(n\mathbb{N}+j))-\zeta_{t_k}(N\cap(n\mathbb{N}+j'))|\leq\zeta_{t_k}(L_N).
\end{equation}
Sum (\ref{e:2.18}) over $0\leq j'<n$ and divide the resulting formula by $nt_k$. We get
\begin{equation}\label{e:2.19}
\left|\frac{\zeta_{t_k}(N\cap(n\mathbb{N}+j))}{t_k}-\frac{\zeta_{t_k}(N)}{nt_k}\right|\leq\frac{\zeta_{t_k}(L_N)}{nt_k}.
\end{equation}
Now (\ref{e:2.17}) follows from (\ref{e:2.19}) and (\ref{e:2.16}).
\end{proof}

\begin{lemma}\label{l:6.6}
For $0\leq p\leq q<1$, $\mathscr{H}^{2-q}(D_{\sigma_K}([p,q]))=0$.
\end{lemma}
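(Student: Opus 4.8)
The plan is to work in the symbolic model $\Sigma_{K^2}$. By Lemma~\ref{detosw}(c) it suffices to prove $\mathscr{H}^{1-q/2}(\pi_K(D_{\sigma_K}([p,q])))=0$. For $z=\pi_K(x,y)$ set $A(z)=\{i\ge 0:z_i\in E_K\}=\{i\ge 0:x_i=y_i\}$ and, for $k\ge 1$, $N_k(z)=\{i\ge 0:z|_{\{i,\dots,i+k-1\}}\in E_K^{k}\}$, so that $\mathcal{F}_{\sigma_K}((x,y),\epsilon)=\mu(N_k(z))$ for $K^{-k}<\epsilon\le K^{-k+1}$; hence $z\in\pi_K(D_{\sigma_K}([p,q]))$ iff $\mu(N_k(z))=[p,q]$ for all large $k$. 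The case $q=0$ (so $p=0$) is immediate: by Lemma~\ref{l:6.1}, $\mathcal{F}_{\sigma_K}((x,y),K^{-k+1})=K^{-k}\neq 0$ for every $k\ge 1$ whenever $\pi_K(x,y)\in G_{\mathscr{H}^1,\sigma_{K^2}}$, so $\pi_K(D_{\sigma_K}([0,0]))\subset\Sigma_{K^2}\setminus G_{\mathscr{H}^1,\sigma_{K^2}}$, which has $\mathscr{H}^1$-measure $0$, giving $\mathscr{H}^2(D_{\sigma_K}([0,0]))=0$. From now on assume $0<q<1$.

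Stratify: $\pi_K(D_{\sigma_K}([p,q]))=\bigcup_{n_0\ge 1}D_{n_0}$ with $D_{n_0}=\{z:\mu(N_k(z))=[p,q]\text{ for all }k\ge n_0\}$, so it is enough to show $\mathscr{H}^{1-q/2}(D_{n_0})=0$ for each $n_0$. Fix $n_0$, let $n$ be a large multiple of $n_0$, and write $w=\tau_{K^2,n}(z)\in\Sigma_{K^{2n}}$. Call a symbol of $\Sigma_{K^{2n}}$ \emph{good} if it codes a word in $E_K^{n}$ (there are $K^{n}$ such symbols), and observe that $w_i,\dots,w_{i+j-1}$ are all good iff $in\in N_{jn}(z)$. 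Runs of $N_{jn}(z)$ arise from pairwise disjoint $A$-runs of length $\ge jn$ separated by gaps of length $\ge 1$, so $\mu^{*}(L_{N_{jn}(z)})\le 1/(jn+1)$; hence Lemma~\ref{l:2.20} together with the argument behind Lemma~\ref{l:2.21} shows that the frequency of length-$j$ blocks of good symbols in long prefixes of $w$ agrees with $\mu(N_{jn}(z))=[p,q]$ up to an error $\le 1/j$. Applying this for $j=1,\dots,J$, using that a good $(j{+}1)$-block is a good $j$-block, and using the $\sigma_{K^2}$-invariance of $\pi_K(D_{\sigma_K}([p,q]))$ to pin down the frequency of good symbols from above, one gets: for every $J$ and $\delta>0$ there are infinitely many $L$ for which $w|_{\{0,\dots,L-1\}}$ has the frequency of good symbols in $[q-\delta-1/J,\,q+\delta]$ and the frequency of good $J$-blocks at least $q-\delta-1/J$.

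Now count such prefixes. Since the frequency of good $J$-blocks is within $O(1/J+\delta)$ of the frequency of good symbols, all but an $O(1/J+\delta)$-fraction of the good symbols lie in good runs of length $\ge J$, so the number of good/bad runs in $w|_{\{0,\dots,L-1\}}$ is $o(L)$ as $J\to\infty$ and $\delta\to 0$; consequently the number of admissible length-$L$ words is at most $(K^{2n})^{L(q/2+(1-q)\theta_n)+o(L)}$, where $\theta_n=\ln(K^{2n}-K^{n})/\ln(K^{2n})<1$ and the $o(L)$ is bounded in terms of $J,\delta$. Taking, for every large level, the cylinders of all these admissible words covers $\tau_{K^2,n}(D_{n_0})$, and because $q/2+(1-q)\theta_n<1-q/2$ strictly, the sums $\sum_\omega|[\omega]|^{1-q/2}$ tend to $0$; hence $\mathscr{H}^{1-q/2}(\tau_{K^2,n}(D_{n_0}))=0$, so $\mathscr{H}^{1-q/2}(D_{n_0})=0$ by Lemma~\ref{detosx}(c). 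Summing over $n_0$ and transferring back through $\pi_K$ via Lemma~\ref{detosw}(c) yields $\mathscr{H}^{2-q}(D_{\sigma_K}([p,q]))=0$.

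The main obstacle is the step from ``$\mu(N_{jn}(z))=[p,q]$'' to an honest frequency statement for $w=\tau_{K^2,n}(z)$: the error terms in Lemma~\ref{l:2.20}/\ref{l:2.21} carry a factor $n$ in front of $\mu^{*}(L_{N_{jn}(z)})$, which is only controlled by $1/(jn+1)$, so for single symbols ($j=1$) the bound is vacuous and one is forced to combine the long-block information over all $j\le J$ with the $\sigma_{K^2}$-invariance of the set and a pigeonhole over the $n$ phases of the block code to keep the good-symbol frequency near $q$; arranging the quantifiers in the right order (fix $\delta$, then $J$, then $n$, against $L\to\infty$) is the delicate point. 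The remaining ingredients — the description of $\pi_K(D_{\sigma_K}([p,q]))$, the entropy count of words with few long runs, and the transfer lemmas — are routine given the earlier sections.
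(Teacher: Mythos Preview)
Your overall architecture---reduce via $\pi_K$ to $\Sigma_{K^2}$, stratify by the threshold $n_0$ at which $\mu(N_k)=[p,q]$ stabilises, and show each stratum has Hausdorff dimension strictly below $1-q/2$---is exactly the paper's. The separate treatment of $q=0$ via Lemma~\ref{l:6.1} is fine but unnecessary; the paper's argument is uniform in $q<1$.

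There is a real gap at the heart of your frequency analysis. Your bound $\mu^{*}(L_{N_{jn}})\le 1/(jn+1)$ is correct (consecutive run-starts are at least $jn+1$ apart), but when you feed it through the inequality behind Lemma~\ref{l:2.21} the error in the $w$-frequency is $\zeta_t(L_{N_{jn}})/t\le 1/(jn+1)+1/t$, \emph{not} $1/j$. So the $j=1$ case is not vacuous---you misdiagnosed the obstacle---yet this uniform bound is still not what you need: it leaves a residual error of order $1/n$ that does not vanish at fixed $n$, and your appeal to $\sigma_{K^2}$-invariance and ``pigeonhole over phases'' does not repair it (shift-invariance of the \emph{set} says nothing about phase-averaging for a single point). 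The paper resolves this with a sharper device: choose a \emph{diagonal} sequence $t_k\to\infty$ with $\zeta_{t_k}(N_k)/t_k\to q$; then for every fixed $m\ge n$, since $N_m\supset N_k$ and $\mu^{*}(N_m)=q$, one gets $\zeta_{t_k}(N_m)/t_k\to q$ exactly, and the identity $N_m=N_{m+1}\sqcup(R_{N_m}-1)$ forces $\zeta_{t_k}(R_{N_m})/t_k\to 0$. This gives exact control (not just a $1/(jn)$ bound) on the run-boundary density along $t_k$, which is what makes Lemma~\ref{l:2.21} applicable with zero error.

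The second gap is the word-count. Your claim that the number of admissible length-$L$ prefixes is at most $(K^{2n})^{L(q/2+(1-q)\theta_n)+o(L)}$ is plausible but not argued; getting the $o(L)$ right requires bounding the number of good runs by $|G|-|G_J|\le 2\delta L$ and then a standard but nontrivial multinomial estimate. The paper sidesteps this by packaging the count inside the variational inequality (Lemma~\ref{detoso}): it introduces a \emph{three}-symbol partition $\mathcal{K}_{sn}=(E_{K,sn},\ \tau_{K^{2n}}(F_{K,n}^{s}),\ \text{rest})$ at scale $sn$, shows along the diagonal sequence that the empirical distribution of $\tau_{K^2,sn}(x)$ hits $r=(q,1-q,0)$ (i.e.\ the ``mixed'' class has density zero), and reads off $\dim_H X_n\le g_{\mathcal{K}_{sn}}(q,1-q,0)$, which for large $s$ is strictly below $1-q/2$. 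This third class with vanishing density is precisely the formalisation of your ``few runs'' heuristic, and Lemma~\ref{detoso} is the clean replacement for your hand-count.
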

\begin{proof}
Suppose $0\leq p\leq q<1$. Let $X=\pi_K(D_{\sigma_K}([p,q]))\subset \Sigma_{K^2}$. Then, by Lemma \ref{detosx}, to prove $\mathscr{H}^{2-q}(D_{\sigma_K}([p,q]))=0$ it is enough to prove $\mathscr{H}^{1-\frac{q}{2}}(X)=0$.

For $n\geq 1 $ let
\begin{equation}\label{e:6.39}
  X_n=\{x\in X:\mu^*(N_{\sigma_{K^2}}(x,[E_K^k]))\equiv q\text{ for }k\geq n\}.
\end{equation}
Then
\begin{equation}\label{e:6.40}
  X_n\subset X_{n+1}\text{ for } n\geq 1 \text{ and }X=\bigcup_{n\geq 1}X_n.
\end{equation}

Fix $n\geq 1$ and $x\in X_n$. For $k\geq 1$ write
$$N_k=N_{\sigma_{K^2}}(x,[E_K^k])=\{i\geq 0:x|_{[i,i+k)}\in E_K^k\}.$$
For $m\geq k\geq 1$,
$$\begin{aligned}
i\in N_m&\Leftrightarrow x|_{[i,i+m)}\in E_K^m\Leftrightarrow x|_{[i+j,i+j+k)}\in E_K^k\text{ for }j\in\{0,1,\cdots,m-k\}\\
&\Leftrightarrow i+\{0,1,\cdots,m-k\}\subset N_k.\\
\end{aligned}$$
Then it follows from Lemma \ref{detvoe} that
\begin{equation}\label{e:6.41}
  N_m=N_k\setminus (R_{N_k}-\{0,1,\cdots,m-k\})\text{ for }m\geq k\geq1.
\end{equation}

Since $x\in X_n$, by (\ref{e:6.40}) and (\ref{e:6.39}), we may choose $t_k\rightarrow \infty$ such that
\begin{equation}\label{e:6.42}
  \lim_{k\rightarrow\infty}\frac{\zeta_{t_k}(N_k)}{t_k}=q.
\end{equation}
For $m\geq n$, since $N_m\supset N_k$ for $k\geq m$, using (\ref{e:6.42}) we have
$$q\geq \limsup_{k\rightarrow\infty}\frac{\zeta_{t_k}(N_m)}{t_k}
\geq\liminf_{k\rightarrow\infty}\frac{\zeta_{t_k}(N_m)}{t_k}
\geq\lim_{k\rightarrow\infty}\frac{\zeta_{t_k}(N_k)}{t_k}=q.$$
Then
\begin{equation}\label{e:6.43}
  \frac{\zeta_{t_k}(N_m)}{t_k}=q\text{ for }m\geq n.
\end{equation}
By (\ref{e:6.41}), $N_m=N_{m+1}\sqcup(R_{N_m}-1)$. Then
\begin{equation}\label{e:6.44}
  0\leq \frac{\zeta_{t_k}(R_{N_m})}{t_k}
  \leq\frac{\zeta_{t_k}(R_{N_m}-1)}{t_k}
  =\frac{\zeta_{t_k}(N_m)}{t_k}-\frac{\zeta_{t_k}(N_{m+1})}{t_k}
\end{equation}
It follows from (\ref{e:6.44}) and (\ref{e:6.43}) that
\begin{equation}\label{e:6.45}
  \lim_{k\rightarrow\infty}\frac{\zeta_{t_k}(R_{N_m})}{t_k}=0\text{ for }m\geq n.
\end{equation}
Since $\zeta_{t_k}(R_{N_m})\leq\zeta_{t_k}(L_{N_m})\leq\zeta_{t_k}(R_{N_m})+1$, we know from (\ref{e:6.45}) that
\begin{equation}\label{e:6.46}
  \lim_{k\rightarrow\infty}\frac{\zeta_{t_k}(L_{N_m})}{t_k}
  =\lim_{k\rightarrow\infty}\frac{\zeta_{t_k}(R_{N_m})}{t_k}=0\text{ for }m\geq n.
\end{equation}

Let $m\geq n$. It follows from (\ref{e:6.43}), (\ref{e:6.46}) and Lemma \ref{l:2.21} that
\begin{equation}\label{e:6.47}
  \lim_{k\rightarrow\infty}\frac{\zeta_{t_k}(N_m\cap m\mathbb{N})}{t_k}=\frac{q}{m}.
\end{equation}
Note that
$$N_{\sigma_{K^{2m}}}(\tau_{K^2,m}(x),[E_{K,m}])\cap \left[0,\left\lfloor\frac{t_k}{m}\right\rfloor\right)=\frac{N_m\cap m\mathbb{N} \cap[0,t_k)}{m},$$
where $\lfloor a\rfloor$ denotes the maximal integer no larger than $a$ for $a\in\mathbb{R}$. Then
\begin{equation}\label{e:6.48}
  \begin{aligned}
    &\frac
    {
        \zeta_{\left\lfloor\frac{t_k}{m}\right\rfloor}
        (
            N_{\sigma_{K^{2m}}}(\tau_{K^2,m}(x),[E_{K,m}])
        )
    }
    {
        \left\lfloor\frac{t_k}{m}\right\rfloor
    }\\
    =&\frac
    {\#(N_m\cap m\mathbb{N} \cap[0,t_k))}
    {\left\lfloor\frac{t_k}{m}\right\rfloor}\\
    =&\frac
    {\zeta_{t_k}(N_m\cap m\mathbb{N})}
    {\frac{t_k}{m}}\cdot
    \frac
    {\frac{t_k}{m}}
    {\left\lfloor\frac{t_k}{m}\right\rfloor}.
  \end{aligned}
\end{equation}
It follows from (\ref{e:6.48}) and (\ref{e:6.47}) that
\begin{equation}\label{e:6.49}
  \lim_{k\rightarrow\infty}\frac
    {
        \zeta_{\left\lfloor\frac{t_k}{m}\right\rfloor}
        (
            N_{\sigma_{K^{2m}}}(\tau_{K^2,m}(x),[E_{K,m}])
        )
    }
    {
        \left\lfloor\frac{t_k}{m}\right\rfloor
    }
    =q.
\end{equation}
In particular, for $s\geq 1$,
\begin{equation}\label{e:6.50}
   \lim_{k\rightarrow\infty}\frac
    {
        \zeta_{\left\lfloor\frac{t_k}{sn}\right\rfloor}
        (
            N_{\sigma_{K^{2sn}}}(\tau_{K^2,sn}(x),[E_{K,sn}])
        )
    }
    {
        \left\lfloor\frac{t_k}{sn}\right\rfloor
    }
    =q.
\end{equation}

Now put $M_n=N_n^c$. Then, by (\ref{e:6.43}),
\begin{equation}\label{e:6.51}
  \lim_{k\rightarrow\infty}\frac{\zeta_{t_k}(M_n)}{t_k}=1-q.
\end{equation}
Since $R_{M_n}\subset L_{M_n}$ and $L_{M_n}\subset R_{N_n}\cup\{0\}$,
\begin{equation}\label{e:6.52}
  \lim_{k\rightarrow\infty}\frac{\zeta_{t_k}(L_{M_n})}{t_k}
  =\lim_{k\rightarrow\infty}\frac{\zeta_{t_k}(R_{M_n})}{t_k}=0.
\end{equation}
Let $s\geq 1$. Write
$$M_{n,s}=M_n\setminus(R_{M_n}-\{0,1,\cdots,sn-1\})=\{i\geq0:i+\{0,1,\cdots,sn-1\}\subset M_n\}.$$
Since $L_{M_{n,s}}\subset L_{M_n}$ and $R_{M_{n,s}}\subset R_{M_n}-sn+1$, we have by (\ref{e:6.52}) that
\begin{equation}\label{e:6.53}
\lim_{k\rightarrow\infty}\frac{\zeta_{t_k}(L_{M_{n,s}})}{t_k}
=\lim_{k\rightarrow\infty}\frac{\zeta_{t_k}(R_{M_{n,s}})}{t_k}=0.
\end{equation}
Note that $M_{n,s}\subset M_n\subset M_{n,s}\sqcup(R_{M_n}-1-\{0,1,\cdots,sn-1\})$. Then
\begin{equation}\label{e:6.54}
  \begin{aligned}
    \frac{\zeta_{t_k}(M_{n,s})}{t_k}
    &\leq\frac{\zeta_{t_k}(M_n)}{t_k}
    \leq\frac{\zeta_{t_k}(M_{n,s})}{t_k}
    +\frac{\zeta_{t_k}(R_{M_n}-1-\{0,1,\cdots,sn-1\})}{t_k}\\
    &\leq \frac{\zeta_{t_k}(M_{n,s})}{t_k}
    +sn\cdot\frac{\zeta_{t_k}(R_{M_n}-1)}{t_k}.
  \end{aligned}
\end{equation}
Eqs (\ref{e:6.54}), (\ref{e:6.51}) and (\ref{e:6.53}) lead to
\begin{equation}\label{e:6.55}
  \lim_{k\rightarrow\infty}\frac{\zeta_{t_k}(M_{n,s})}{t_k}=1-q.
\end{equation}
It follows from (\ref{e:6.55}), (\ref{e:6.53}) and Lemma \ref{l:2.21} that
\begin{equation}\label{e:6.56}
  \lim_{k\rightarrow\infty}\frac{\zeta_{t_k}(M_{n,s})}{t_k}=\frac{1-q}{sn}.
\end{equation}
Similar to (\ref{e:6.48}), we have
\begin{equation}\label{e:6.57}
  \begin{aligned}
    &\frac
    {
        \zeta_{\left\lfloor\frac{t_k}{sn}\right\rfloor}
        (
            N_{\sigma_{K^{2sn}}}(\tau_{K^2,sn}(x),[\tau_{K^{2n}}(F^s_{K,n})])
        )
    }
    {
        \left\lfloor\frac{t_k}{sn}\right\rfloor
    }\\
    =&\frac
    {\#(M_{n,s}\cap sn\mathbb{N} \cap[0,t_k))}
    {\left\lfloor\frac{t_k}{sn}\right\rfloor}\\
    =&\frac
    {\zeta_{t_k}(M_{n,s}\cap sn\mathbb{N})}
    {\frac{t_k}{sn}}\cdot
    \frac
    {\frac{t_k}{sn}}
    {\left\lfloor\frac{t_k}{sn}\right\rfloor}.
  \end{aligned}
\end{equation}
Eqs (\ref{e:6.57}) and (\ref{e:6.56}) lead to
\begin{equation}\label{e:6.58}
  \lim_{k\rightarrow\infty}\frac
    {
        \zeta_{\left\lfloor\frac{t_k}{sn}\right\rfloor}
        (
            N_{\sigma_{K^{2sn}}}(\tau_{K^2,sn}(x),[\tau_{K^{2n}}(F^s_{K,n})])
        )
    }
    {
        \left\lfloor\frac{t_k}{sn}\right\rfloor
    }=1-q.
\end{equation}

Write
$$K_{sn,0}=E_{K,sn},\quad K_{sn,1}=\tau_{K^{2n}}(F_{K,n}^s),\quad K_{sn,2}=\{0,1,\cdots K^{2sn}-1\}\setminus (K_{sn,0}\sqcup K_{sn,1})$$
and
$$\mathcal{K}_{sn}=(K_{sn,0},K_{sn,1},K_{sn,2}).$$
Let
$$r=(q,1-q,0).$$
By (\ref{e:6.50}) and (\ref{e:6.58}), $\tau_{K^2,sn}(x)\in V_{\mathcal{K}_{sn},r}$. As $x\in X_n$ was arbitrary,
\begin{equation}\label{e:6.59}
  \tau_{K^2,sn}(X_n)\subset V_{\mathcal{K}_{sn},r}.
\end{equation}
Using Lemma \ref{detosx} and Lemma \ref{detoso} for (\ref{e:6.59}) we have
$$
\begin{aligned}
  &\dim_HX_n\\
  \leq&g_{\mathcal{K}_{sn}}(r)\\
  =&\frac{-q\ln\frac{q}{\#(E_{K,sn})}-(1-q)\ln\frac{1-q}{\#(F^s_{K,n})}}
  {\ln K^{2sn}}\\
  =&\frac{-q\ln\frac{q}{K^{sn}}-(1-q)\ln\frac{1-q}{(K^{2n}-K^n)^s}}
  {\ln K^{2sn}}\\
  =&\frac{-q\ln q+q\ln K^{sn}-(1-q)\ln(1-q)+(1-q)\ln (K^{2n}-K^n)^s}
  {\ln K^{2sn}}
\end{aligned}
$$
Then
$$
\begin{aligned}
  &\dim_HX_n-(1-\frac{q}{2})\\
  \leq&\frac{-q\ln q+q\ln K^{sn}-(1-q)\ln(1-q)+(1-q)\ln (K^{2n}-K^n)^s}
  {\ln K^{2sn}}-(1-\frac{q}{2})\\
  =&\frac{-q\ln q-(1-q)\ln(1-q)+(1-q)\ln (K^{2n}-K^n)^s-(1-q)\ln K^{2sn}}{\ln K^{2sn}}\\
  =&\frac{-q\ln q-(1-q)\ln(1-q)+(1-q)s\ln (1-K^{-n})}{\ln K^{2sn}}\\
  <&0\text{ for large }s.
\end{aligned}
$$
Then $\dim_HX_n<1-\frac{q}{2}$ and thus $\mathscr{H}^{1-\frac{q}{2}}(X_n)=0$.

Now
$$\mathscr{H}^{1-\frac{q}{2}}(X)=\mathscr{H}^{1-\frac{q}{2}}(\bigcup_{n\geq 0}X_n)
\leq\sum_{n\geq 0}\mathscr{H}^{1-\frac{q}{2}}(X_n)=0.$$
\end{proof}

\begin{lemma}\label{l:6.7}
For $0\leq p\leq 1$, $\mathscr{H}^1(D_{\sigma_K}([p,1]))=+\infty$.
\end{lemma}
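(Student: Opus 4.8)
The plan is to reduce, by Lemma \ref{detosw}(c), to proving $\mathscr{H}^{1/2}(\pi_K(D_{\sigma_K}([p,1])))=+\infty$, and then to exhibit inside $\pi_K(D_{\sigma_K}([p,1]))$ closed subsets of arbitrarily large $\mathscr{H}^{1/2}$-measure. Fix $N\in\mathcal{M}([p,1])$ (which exists by Lemma \ref{detvou}) and fix an integer $n\ge2$. Let $X_n=\prod_{i\ge0}C_i\subset\Sigma_{K^2}$ be the set appearing in the proof of Lemma \ref{detosz}, where $C_i=E_K^n$ for $i\in N$ and $C_i=W_{K^2,n}\setminus E_K^n$ for $i\in N^c$; that proof shows, via Lemma \ref{dstowv}, that $Y_n:=\pi_K^{-1}(X_n)\subset D_{\sigma_K}([p,1])$. (When $p=1$ this is also immediate from $\mathrm{Asym}(\sigma_K)\subset D_{\sigma_K}([1,1])$ and Theorem \ref{detiiv}, but the argument below covers it too.)

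The first step is the estimate $\mathscr{H}^{1/2}(X_n)\ge1$. Let $\nu$ be the product probability measure on $X_n$ that is uniform on each coordinate block. For a cylinder $[\omega]$ in $\Sigma_{K^2}$ of length $\ell=jn+r$ with $0\le r<n$ that meets $X_n$, write $\beta=\#(N^c\cap\{0,\dots,j-1\})$. When $[\omega]$ consists of the full blocks $0,\dots,j-1$ one computes $\nu([\omega])/|[\omega]|^{1/2}=(K^n-1)^{-\beta}\le1$, and a partial last block can only decrease this ratio (it contributes a factor $1$ if that block lies in $N$ and a factor at most $K^{-r}/(1-K^{-n})\le1$ if it lies in $N^c$). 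Since in $\Sigma_{K^2}$ every set $U$ with $|U|>0$ is contained in a cylinder of diameter at most $|U|$, the mass distribution principle yields $\mathscr{H}^{1/2}(X_n)\ge\nu(X_n)=1$; only $\mathscr{H}^{1/2}(X_n)>0$ is actually needed below.

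The second step is a prefix amplification. For $m\ge1$ put $Z_m=\sigma_{K^2}^{-mn}(X_n)$, the disjoint union of the $K^{2mn}$ closed sets $\omega X_n=\{\omega z:z\in X_n\}$ over $\omega\in W_{K^2,mn}$. Using $\pi_K\circ(\sigma_K\times\sigma_K)=\sigma_{K^2}\circ\pi_K$ (Lemma \ref{detosw}) one has $(y,z)\in\pi_K^{-1}(Z_m)$ iff $(\sigma_K^{mn}y,\sigma_K^{mn}z)\in Y_n$; and since $N_{\sigma_K\times\sigma_K}((\sigma_K^{mn}y,\sigma_K^{mn}z),\Delta_\epsilon)$ is a fixed translate of $N_{\sigma_K\times\sigma_K}((y,z),\Delta_\epsilon)$ and translating a subset of $\mathbb{N}$ by a constant does not change its density spectrum (an elementary fact in the spirit of Lemma \ref{dstoxm}), we get $\mathcal{F}_{\sigma_K}((y,z),\epsilon)=\mathcal{F}_{\sigma_K}((\sigma_K^{mn}y,\sigma_K^{mn}z),\epsilon)$ for every $\epsilon>0$; hence $\pi_K^{-1}(Z_m)\subset D_{\sigma_K}([p,1])$, i.e. $Z_m\subset\pi_K(D_{\sigma_K}([p,1]))$. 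On the other hand $\sigma_{K^2}^{mn}$ maps each $\omega X_n$ bijectively onto $X_n$ while multiplying $\rho$ by exactly $(K^2)^{mn}$, so Lemma \ref{detviw} gives $\mathscr{H}^{1/2}(\omega X_n)\ge K^{-mn}\mathscr{H}^{1/2}(X_n)\ge K^{-mn}$; by additivity of $\mathscr{H}^{1/2}$ on disjoint Borel sets, $\mathscr{H}^{1/2}(Z_m)\ge K^{2mn}\cdot K^{-mn}=K^{mn}$. Letting $m\to\infty$ we conclude $\mathscr{H}^{1/2}(\pi_K(D_{\sigma_K}([p,1])))=+\infty$, and Lemma \ref{detosw}(c) then gives $\mathscr{H}^1(D_{\sigma_K}([p,1]))=+\infty$.

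The main obstacle is the lower bound $\mathscr{H}^{1/2}(X_n)>0$: the set $X_n$ is a Moran-type set sitting exactly at its critical dimension $\tfrac12$, where the value of the $\tfrac12$-dimensional Hausdorff measure is not free, so one must exhibit an explicit Frostman measure and check the cylinder estimate carefully across the three cases above. Once that is in hand, the "arbitrary prefix" amplification $X_n\rightsquigarrow\sigma_{K^2}^{-mn}(X_n)$ — the same device that forces $\mathscr{H}^1(\mathrm{Asym}(\sigma_K))=+\infty$ in Theorem \ref{detiiv} — is routine.
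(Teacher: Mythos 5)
Your proposal is correct, but it reaches the conclusion by a genuinely different route than the paper. The paper also starts from a block set of the form ``$E_K$-blocks on $N$, complement-blocks on $N^c$'' and also produces infinitely many pairwise disjoint measurable pieces of controlled $\mathscr{H}^{1/2}$-measure, but it gets the crucial positive lower bound for free: after recoding by $\tau_{K^2,2}$, the set $Z=(\tau_{K^2}(E_{K,2}))^{\mathbb{N}}$ is carried onto $\Sigma_{K^2}$ by an explicit distance-squaring bijection, so $\mathscr{H}^{1/2}(Z)=\mathscr{H}^1(\Sigma_{K^2})=1$ by Lemma \ref{detouw}; it then packs \emph{uncountably} many pairwise disjoint isometric copies $Y_\psi$ of $Z$ into $\pi_K(D_{\sigma_K}([p,1]))$, indexed by families of injections of $\tau_{K^2}(E_{K,2})$ into $\tau_{K^2}(F_{K,2})$, and concludes $\mathscr{H}^{1/2}=+\infty$ in one shot. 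You instead take the Moran set $X_n$ of Lemma \ref{detosz} head-on, prove $\mathscr{H}^{1/2}(X_n)\geq 1$ by exhibiting a Frostman measure and invoking the mass distribution principle, and then amplify by prepending the $K^{2mn}$ prefixes of length $mn$ and letting $m\to\infty$. Both steps of yours check out: the cylinder estimate $\nu([\omega])\leq|[\omega]|^{1/2}$ is correct in all three cases (full blocks give the ratio $(K^n-1)^{-\beta}\leq 1$, a partial $N$-block leaves the ratio unchanged, a partial $N^c$-block contributes at most $K^{-r}/(1-K^{-n})\leq 1$), the ultrametric structure does let you replace an arbitrary set by a cylinder of the same diameter, and translating the recurrence-time set by $mn$ does not change its density spectrum, so $\pi_K^{-1}(Z_m)\subset D_{\sigma_K}([p,1])$. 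The trade-off is that your argument rests on the mass distribution principle, which the paper never states (though it is standard and available from the cited Falconer reference), in exchange for which you avoid the paper's recoding $\phi$ and the combinatorics of the disjoint families $Y_\psi$; the paper's route stays entirely within the lemmas it has already proved.
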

\begin{proof}
Let $0\leq p\leq1$. Pick $M\in \mathcal{M}([p,1])$.
Let $N=2M+\{0,1\}$. Then $N\in \mathcal{M}([p,1])$.
Define $X\subset \Sigma_{K^2}$ by
$$
\begin{aligned}
  X&=\{x\in\Sigma_{K^2}:x|_{[2i,2i+2)}\in E_K^2\text{ for }i\in M\text{ and }x|_{[2i,2i+2)}\in F_{K,2}\text{ for }i\in M^c\}\\
  &=\{\Phi_{N,x}(y):x\in E_K^\mathbb{N},\,y\in F_{K,2}^\mathbb{N}\}.
\end{aligned}
$$
Since $\pi_K^{-1}(E_K^\mathbb{N})=\delta_{\Sigma_K}$ and $\pi_K^{-1}(F_{K,2}^\mathbb{N})=\text{Dist}(\sigma_K)$, by Lemma \ref{dstowv}, $\pi_K^{-1}(X)\subset D_{\sigma_K}([p,1])$.
Then, by Lemma \ref{detosw}, to prove $\mathscr{H}^1(D_{\sigma_K}([p,1]))=+\infty$ it is enough to prove $\mathscr{H}^{\frac{1}{2}}(X)=+\infty$.
Let
$$
\begin{aligned}
  Y&=\tau_{K^2,2}(X)\\
  &=\{x\in\Sigma_{K^4}:x_i\in \tau_{K^2}(E_{K,2})\text{ for }i\in M\text{ and }x_i\in \tau_{K^2}(F_{K,2})\text{ for }i\in M^c\}.
\end{aligned}
$$
By Lemma \ref{detosx}, to prove $\mathscr{H}^{\frac{1}{2}}(X)=+\infty$ it is enough to prove $\mathscr{H}^{\frac{1}{2}}(Y)=+\infty$.

Let
$$Z=(\tau_{K^2}(E_{K,2}))^\mathbb{N}\subset \Sigma_{K^4}.$$
Define $\phi:\{0,1,\cdots,K^2-1\}\rightarrow\tau_{K^2}(E_{K,2})$ and $\Phi:\Sigma_{K^2}\rightarrow Z$ by
$$\phi(i+Kj)=(i+Ki)+K^2(j+Kj),\,\,i,j\in\{0,1,\cdots,K-1\}$$
and
$$\Phi(x)=(\phi(x_i))_{i\geq 0},\,\,x\in \Sigma_{K^2}.$$
Then $\Phi$ is a bijection from $\Sigma_{K^2}$ to $Z$ with
$$\rho(\Phi(x),\Phi(y))=(\rho(x,y))^2,\,x,y\in\Sigma_{K^2}.$$
So
$$\mathscr{H}^\frac{1}{2}(Z)=\mathscr{H}^\frac{1}{2}(\Sigma_{K^2})=1.$$

Define
$$\Psi=\{(\psi_i)_{i\in M^c}:\text{ each $\psi_i$ is an injection from $\tau_{K^2}(E_{K,2})$ into }\tau_{K^2}(F_{K,2})\}.$$
For $\psi\in\Psi$ define
$$Y_{\psi}=\{x\in Y:x_i\in\psi_i(\tau_{K^2}(E_{K,2}))\text{ for }i\in M^2\}.$$
and define $T_\psi:Z\rightarrow Y_\psi$ by
$$(T_\psi(x))_i=x_i\text{ for }i\in M\text{ and }(T_\psi(x))_i=\psi_i(x_i)\text{ for }i\in M^2.$$
Then $T_\psi$ is an isometry between $Z$ and $Y_\psi$. So
$$\mathscr{H}^\frac{1}{2}(Y_\psi)=\mathscr{H}^\frac{1}{2}(Z)=1.$$
Since
$$\frac{\#(\tau_{K^2}(F_{K,2}))}{\#(\tau_{K^2}(E_{K,2}))}
=\frac{\#(F_{K,2})}{\#(E_{K,2})}
=\frac{K^4-K^2}{K^2}=K^2-1\geq 3,$$
then there is an uncountable set $\Psi_0\subset\Psi$ such that the sets $Y_\psi$ and $\psi\in\Psi_0$ are pairwise disjoint. Since each $Y_\psi$ is compact and thus $\mathscr{H}^\frac{1}{2}$ measurable,
$$\mathscr{H}^\frac{1}{2}(Y)\geq \sum_{\psi\in\Psi_0}\mathscr{H}^\frac{1}{2}(Y_\psi)=+\infty.$$

\end{proof}

We sum up Lemma \ref{l:6.6} and Lemma \ref{l:6.7} into the following theorem.
\begin{theorem}
For $0\leq p\leq q<1$, $\mathscr{H}^{2-q}(D_{\sigma_K}([p,q]))=0$. For $0\leq p\leq 1$, $\mathscr{H}^1(D_{\sigma_K}([p,1]))=+\infty$.
\end{theorem}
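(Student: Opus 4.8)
The asserted theorem is precisely the conjunction of Lemma~\ref{l:6.6} (the case $0\le q<1$) and Lemma~\ref{l:6.7} (the case $q=1$), both already established above, so the plan is simply to invoke those two lemmas. Still, let me record the shape of the two arguments, since they carry the substance. In both cases the opening move is the same: pass through the conjugation $\pi_K$ of Lemma~\ref{detosw}, whose part~(c) turns the computation of $\mathscr{H}^{2-q}\big(D_{\sigma_K}([p,q])\big)$ into that of $\mathscr{H}^{1-q/2}(X)$ for $X=\pi_K\big(D_{\sigma_K}([p,q])\big)\subset\Sigma_{K^2}$, where coincidence of two $\Sigma_K$-coordinates is encoded by the $\Sigma_{K^2}$-coordinate lying in $E_K$.

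For $0\le q<1$ the target is $\mathscr{H}^{1-q/2}(X)=0$. The plan is to stratify $X=\bigcup_{n\ge1}X_n$, where $X_n$ consists of points whose $\epsilon$-approach time set has upper density exactly $q$ at every scale $K^{-k}$ with $k\ge n$; this is exactly where membership in $D_{\sigma_K}([p,q])$ rather than merely $E_{\sigma_K}([p,q])$ enters, and it provides a single witnessing subsequence $t_k\to\infty$ for each $x\in X_n$. The recurrence-time calculus of Lemmas~\ref{detvoe}, \ref{l:2.20} and \ref{l:2.21}, applied to the nested sets $N_k=N_{\sigma_{K^2}}(x,[E_K^k])$ and to the complementary sets $M_{n,s}$, then forces the block-density vector of $\tau_{K^2,sn}(x)$ over the three-part partition $\mathcal{K}_{sn}=(E_{K,sn},\tau_{K^{2n}}(F_{K,n}^s),\text{rest})$ to meet $r=(q,1-q,0)$. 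Feeding this into Lemma~\ref{detoso} gives $\dim_H X_n\le g_{\mathcal{K}_{sn}}(r)$, and a direct estimate shows $g_{\mathcal{K}_{sn}}(r)<1-q/2$ once $s$ is large (the surplus is governed by the term $(1-q)s\ln(1-K^{-n})<0$), so $\mathscr{H}^{1-q/2}(X_n)=0$; countable subadditivity (Lemma~\ref{detoex}) finishes the case, and Lemma~\ref{detosw}(c) transfers back.

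For $q=1$ the target is $\mathscr{H}^{1}(D_{\sigma_K}([p,1]))=+\infty$, i.e.\ $\mathscr{H}^{1/2}(X)=+\infty$. The plan is to pick $M\in\mathcal{M}([p,1])$ (Lemma~\ref{detvou}), set $N=2M+\{0,1\}$, and realize $X$ as $\{\Phi_{N,x}(y):x\in E_K^{\mathbb N},\ y\in F_{K,2}^{\mathbb N}\}$; Lemma~\ref{dstowv}, applied to the diagonal and to distal pairs, yields $\pi_K^{-1}(X)\subset D_{\sigma_K}([p,1])$. Transferring through the conjugacy $\tau_{K^2,2}$ of Lemma~\ref{detosx} to a set $Y\subset\Sigma_{K^4}$, one produces a base copy $Z=(\tau_{K^2}(E_{K,2}))^{\mathbb N}$ with $\mathscr{H}^{1/2}(Z)=\mathscr{H}^{1/2}(\Sigma_{K^2})=1$, and then, exploiting that the alphabet ratio $\#(\tau_{K^2}(F_{K,2}))/\#(\tau_{K^2}(E_{K,2}))=K^2-1\ge3$ leaves room at the $M^c$-coordinates, one embeds an uncountable family of pairwise disjoint isometric copies $Y_\psi$ of $Z$ inside $Y$; each $Y_\psi$ being compact hence $\mathscr{H}^{1/2}$-measurable, $\mathscr{H}^{1/2}(Y)\ge\sum_\psi\mathscr{H}^{1/2}(Y_\psi)=+\infty$.

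The main obstacle is the $q<1$ half. The delicate point is not the closing entropy inequality (routine once everything is in place) but extracting, from the single hypothesis that the upper density of $N_{\sigma_{K^2}}(x,[E_K^k])$ equals $q$ for all large $k$, simultaneous control along one common subsequence $t_k$ of the densities of all derived sets $N_m\cap m\mathbb N$ and $M_{n,s}\cap sn\mathbb N$ that feed $\mathcal{K}_{sn}$. This rests on the identity $N_m=N_k\setminus(R_{N_k}-\{0,\dots,m-k\})$, on showing $R_{N_m}$ (equivalently $L_{N_m}$) has density $0$ along $t_k$, and on Lemma~\ref{l:2.21}, which guarantees that an arithmetic-progression slice of a set with density-zero boundary carries a proportional share of the density. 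In the $q=1$ half the only genuine work is the bookkeeping that exhibits the uncountable disjoint family $\Psi_0$, which is comparatively soft.
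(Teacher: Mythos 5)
Your proposal is correct and follows the paper's own route exactly: the theorem is stated in the paper as the summation of Lemma~\ref{l:6.6} and Lemma~\ref{l:6.7}, and your sketches of those two lemmas (the stratification $X=\bigcup_n X_n$ with the three-part partition $\mathcal{K}_{sn}$ and the entropy bound for $q<1$; the construction via $N=2M+\{0,1\}$ and the uncountable disjoint family $Y_\psi$ for $q=1$) reproduce the paper's arguments faithfully. No further comment is needed.
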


\section*{Acknowledgements}
The second author was supported by NNSF of China (11671208 and 11431012). We would like to express our gratitude to Tianyuan Mathematical Center in Southwest China, Sichuan University and Southwest Jiaotong University for their support and hospitality.


\begin{thebibliography}{9999}

\bibitem{Aki1} Akin, E.: Lectures on Cantor and Mycielski sets for dynamical systems. Chapel Hill Ergodic Theory Workshops. Contemp. Math., {\bf 356}, Amer. Math. Soc., Providence, RI, 2004, pp. 21--79

\bibitem{BOS} Baek, I., Olsen, L., Snigireva, N.: Divergence points of self-similar measures and packing dimension. Adv. Math. {\bf 214}, 267--287 (2007)

\bibitem{BSS} Balibrea, F., Smital, J., Stefankova, M.: The three versions of distributional chaos. Chaos, Solitons and Fractals {\bf 23}, 1581--1583 (2005)


\bibitem{Bes} Besicovitch, A: On the sum of digits of real numbers represented in the dyadic system. Math. Ann. {\bf 110}, 321--330 (1934)

\bibitem{BGKM} Blanchard, F., Glasner, E., Kolyada, S., Maass, A.: On Li--Yorke pairs. J. Reine Angew. Math. {\bf 547}, 51--68 (2002)

\bibitem{BL} Bruin, H., Lopez, V. J.: On the Lebesgue Measure of Li--Yorke
Pairs for Interval Maps. Commun. Math. Phys. {\bf 299}, 523--560 (2010)

\bibitem{Caj} Cajar, H.: {\it Billingsley Dimension in Probability Spaces.} Lecture Notes in Mathematics {\bf 892}. Berlin: Springer-Verlag, 1981

\bibitem{Col} Colebrook, C.: The Hausdorff dimension of certain sets of nonnormal numbers. Michigan Math. J. {\bf 17}, 103--116 (1970)

\bibitem{Dev} Devaney, R.: {\it An Introduction to Chaotic Dynamical Systems}, 2nd edition, Redwood City, CA: Addison-Wesley, 1989

\bibitem{Dow} Downarowicz, T.: Positive topological entropy implies chaos DC2, Proc. Amer. Math. Soc., {\bf 142}, 137--149 (2014)

\bibitem{Dom} Dominik Kwietniak: Topological entropy and distributional chaos in hereditary shifts with applications to spacing shifts and beta shifts, Discrete \& Continuous Dynamical Systems - A, {\bf 33}, 2451--2467 (2013).

\bibitem{Egg} Eggleston, H.: The fractional dimension of a set defined by decimal properties. Quart. J. Math., Oxford Ser. {\bf 20}, 31--36 (1949)

\bibitem{Fal} Falconer, K.: {\it The Geometry of Fractal Sets.} Canbridge: Canbridge University Press, 1985
.
\bibitem{Fal2} Falconer, K.: {\it Fractal Geometry -- Mathematical Foundations and Applications.} New York: John Wiley, Second Edition, 2003.

\bibitem{Fal22} Falconer, K.: {\it Techniques in Fractal Geometry.} New York: John Wiley, 1997.

\bibitem{FHYZ} Fang, C., Huang, W., Yi, Y., Zhang, P.: Dimension of stable sets and scrambled sets in positive finite entropy systems. Ergod. Theor. Dynam. Sys., {\bf 32}(2), 599--628 (2012).

\bibitem{Hua} Huang, W.: Stable sets and $\epsilon$-stable sets in positive-entropy systems, Commun. Math. Phys., {\bf 279} (2), 535--557 (2008)

\bibitem{Huang2014} Huang, Wen, Li, Jian, Ye, Xiangdong: Stable sets and mean Li-Yorke chaos in positive entropy systems, J. Funct. Anal., {\bf 266}, 3377--3394(2014).

\bibitem{Huang2015} Huang, Wen, Xu, Leiye, Yi, Yingfei: Asymptotic pairs, stable sets and chaos in positive entropy systems, J. Funct. Anal., {\bf 268}, 824--846(2015).

\bibitem{HY} Huang, W., Ye, X.: Devaney's chaos or 2-scattering implies Li--Yorke's chaos, Topol. Appl {\bf 117}, 259--272 (2002)

\bibitem{Li} Li, S.: $\omega$-chaos and topological entropy. Trans. Amer. Math. Soc., {\bf 399}(1), 243--249 (1993)

\bibitem{LY} Li, T., Yorke, J. A.: Periodic three implies chaos, Amer. Math. Monthly, {\bf 82}, 985--992 (1975)

\bibitem{MC} Maudlin, D., Cawley, R.: Multifractal decompositions of Moran fractals. Adv. Math., {\bf 92}, 196--236 (1992)

\bibitem{MU} Maudlin, D., Urbanski, M.: Dimensions and measures in infinite iterated function systems. Proc. London Math. Soc., {\bf 73}, 105--154 (1996)

\bibitem{Mis} Misiurewicz, M.: Chaos almost everywhere. In: {\it Iteration Theory and Its Functional Equations}, Lecture Notes in Mathematics {\bf 1163}, 125--130. Berlin: Springer-Verlag, 1985

\bibitem{Ols1} Olsen, L.: Applications of multifractal divergence points to sets of numbers defined by their $N$-adic expansion, Math. Proc. Cambridge Philos. Soc. {\bf 136} (1), 139--165 (2004)

\bibitem{Ols} Olsen, L.: Multifractal analysis of divergence points of deformed measure theoretical Birkhoff averages, J. Math. Pure Appl. {\bf 82}, 1591--1649 (2003)

\bibitem{Opr} Oprocha, P.: Relations between distributional and Devaney chaos. Chaos {\bf 16} 033112 (2006)

\bibitem{OS} Oprocha, P., Stefankova, M.: Specification property and distributional chaos. Proc. Amer. Math. Soc. {\bf 136}(11), 3931--3940 (2008)

\bibitem{PS} Paganoni, L., Smital, J.: Strange distributionally chaotic triangular maps. Chaos, Solitons and Fractals, {\bf 26}(2), 581--589 (2005)

\bibitem{PS1} Paganoni, L., Smital, J.: Strange distributionally chaotic triangular maps II. Chaos, Solitons and Fractals, {\bf 28} (5), 1356--1365 (2006)

\bibitem{Pes} Pesin, Y.: {\it Dimension Theory in Dynamical Systems, Contemporary Views and Applications}. Chicago: University of Chicago Press, 1997

\bibitem{Pio} Piorek, J.: On the generic chaos in dynamical systems. Univ. Iagel. Acta Math. {\bf 25}, 293--298 (1985)

\bibitem{SS} Schweizer, B., Smital, J.: Measures of chaos and a spectral decomposition of dynamical systems on the interval. Trans. Amer. Math. Soc. {\bf 344}, 737--754 (1994)

\bibitem{Smi2} Smital, J.: A chaotic function with a scrambled set of positive Lebesgue measure. Proc. Amer. Math. Soc. {\bf 92} 50--54 (1984)

\bibitem{Smi1} Smital, J.: A chaotic function with some extremal properties, Proc. Amer. Math. Soc. {\bf 87}, 54--56  (1983)

\bibitem{WS} Wagstaff, J., Samuel, S.: Sequences not containing an infinite arithmetic progression. Proc. Amer. Math. Soc, {\bf 36}(2), 395--397 (1972)

\bibitem{b:42} Walters, P.: {\it An Introduction to Ergodic Theory.} Berlin: Springer-Verlag, 2000

\bibitem{Xio1} Xiong, J.: The Hausdorff dimension of chaotic sets of self shift maps in symbolic spaces. Sci. China Ser. A {\bf 25}, 1--11 (1995)

\bibitem{XLT} Xiong, J., Lv, J., Tan, F.: Furstenberg family and Chaos. Sci. China Ser. A {\bf 50}(4), 1--11 (2007)

\end{thebibliography}
\end{document}